\documentclass[11pt]{amsart}
\usepackage{verbatim, latexsym, amssymb, amsmath,color}
\usepackage{hyperref} 
\usepackage{epsfig}
\usepackage{bm}
\usepackage{mathrsfs}
\usepackage[margin=1.25in]{geometry}

\newtheorem{theorem}{Theorem}[section]
\newtheorem{corollary}[theorem]{Corollary}
\newtheorem{claim}[]{Claim}
\newtheorem{lemma}[theorem]{Lemma}

\newtheorem{proposition}[theorem]{Proposition}

\theoremstyle{definition}
\newtheorem{definition}[theorem]{Definition}

\newtheorem{question}[theorem]{Question}

\newtheorem{remark}[theorem]{Remark}

\newtheorem*{acknowledgements}{Acknowledgements}

\numberwithin{equation}{section}



\newcommand{\al}{\alpha}

\newcommand{\V}{\mathcal{V}}
\newcommand{\RV}{\mathcal{RV}}
\newcommand{\IV}{\mathcal{IV}}
\newcommand{\R}{\mathbb{R}}
\newcommand{\N}{\mathbb{N}}
\newcommand{\mH}{\mathcal{H}}
\newcommand{\F}{\mathcal{F}}

\newcommand{\A}{\mathcal{A}}
\newcommand{\B}{\mathcal{B}}
\newcommand{\C}{\mathcal{C}}

\newcommand{\weakto}{\rightharpoonup}

\newcommand{\lc}{\llcorner}

\newcommand{\si}{\sigma}

\newcommand{\de}{\delta}

\newcommand{\ep}{\epsilon}
\newcommand{\pr}{\prime}

\newcommand{\Ga}{\Gamma}
\newcommand{\ga}{\gamma}

\newcommand{\mZ}{\mathbb{Z}}
\newcommand{\Z}{\mathcal{Z}}

\newcommand{\M}{\mathbf{M}}

\newcommand{\bL}{\mathbf{L}}

\newcommand{\bI}{\mathbf{I}}
\newcommand{\mf}{\mathbf{f}}

\newcommand{\mF}{\mathbf{F}}
\newcommand{\mI}{\mathbf{I}}
\newcommand{\mR}{\mathcal{R}}
\newcommand{\bmu}{\boldsymbol \mu}
\newcommand{\btau}{\boldsymbol \tau}
\newcommand{\bleta}{\boldsymbol \eta}

\newcommand{\ttimes}{\scalebox{0.7}{$\mathbb{X}$}}

\newcommand{\tM}{\widetilde{M}}

\newcommand{\tBcal}{\widetilde{\mathcal{B}}}
\newcommand{\tAcal}{\widetilde{\mathcal{A}}}
\newcommand{\tScal}{\widetilde{\mathcal{S}}}

\newcommand{\Area}{\textrm{Area}}

\newcommand{\VarTan}{\operatorname{VarTan}}

\newcommand{\spt}{\operatorname{spt}}
\newcommand{\dist}{\operatorname{dist}}
\newcommand{\Div}{\operatorname{div}}

\newcommand{\interior}{\operatorname{int}}
\newcommand{\Ric}{\operatorname{Ric}}
\newcommand{\Clos}{\operatorname{Clos}}

\newcommand{\rom}[1]{\expandafter\romannumeral #1}


\title[Min-max Theory for Free Boundary G-Invariant Minimal Hypersurfaces]{Min-max Theory for Free Boundary G-Invariant Minimal Hypersurfaces}
\author{Tongrui Wang}
\address{Institute for Theoretical Sciences, Westlake Institute for Advanced Study, Westlake University, Hangzhou, Zhejiang, 310024, China}
\email{wangtongrui@westlake.edu.cn}

\begin{document}
\maketitle
\begin{abstract}
	Given a compact Riemannian manifold $M^{n+1}$ with dimension $3\leq n+1\leq 7$ and $\partial M\neq\emptyset$, the free boundary min-max theory built by Li-Zhou \cite{li2021min} shows the existence of a smooth almost properly embedded minimal hypersurface with free boundary in $M$. 
  In this paper, we generalize their constructions into equivariant settings. 
  Specifically, let $G$ be a compact Lie group acting as isometries on $M$ with cohomogeneity at least $3$. 
 Then there exists a nontrivial smooth almost properly embedded free boundary $G$-invariant minimal hypersurface. 
 Moreover, if the Ricci curvature of $M$ is non-negative and $\partial M$ is strictly convex, then there exist infinitely many properly embedded $G$-invariant minimal hypersurfaces with free boundary. 
\end{abstract}

\section{Introduction}
Min-max theory was first proposed by Almgren \cite{almgren1962homotopy}\cite{almgren1965theory} in 1960s to construct minimal submanifolds in compact Riemannian manifolds $(M^{n+1},g_{_M})$. 
For the case that $\partial M=\emptyset$ (namely the {\em closed} case), weak solutions for minimal submanifolds in any dimension (i.e. stationary integer rectifiable varifolds) were constructed by Almgren in \cite[Theorem 15.1]{almgren1965theory}. 
After this, by proposing the {\em almost minimizing varifolds}, Pitts \cite{pitts2014existence} obtained smooth minimal hypersurfaces in closed manifolds with dimension $3\leq n+1\leq 6$. 
The regularity results in higher dimensional closed manifolds ($n+1\geq 7$) were established by Schoen-Simon in \cite{schoen1981regularity}. 
In recent years, the famous Almgren's isomorphism theorem (\cite[Theorem 7.5]{almgren1962homotopy}) extends the definition of sweepout from curves to mappings defined on high-dimensional parameter spaces. 
Hereafter, many longstanding conjectures have been solved in closed manifolds by the multi-parameter min-max theory (for instance, \cite{marques2014min}\cite{marques2017existence}).

For the case that $\partial M\neq\emptyset$, a {\em constrained free boundary problem} was proposed by Almgren and solved by Li-Zhou in \cite{li2021min} using the min-max theory with the relative cycle space. 
Namely, if $\partial M\neq\emptyset$ and $3\leq n+1\leq 7$, there exists a smooth embedded hypersurface $\Sigma\subset M$ so that $\Sigma$ is minimal in its interior and $\Sigma$ meets $\partial M$ orthogonally on $\partial \Sigma\subset\partial M$ (\cite[Theorem 1.1]{li2021min}). 
Similarly, many conclusions in the closed case can be extended to the free boundary case (\cite{guang2021min}\cite{wang2020existence}).

The main purpose of this paper is to establish an equivariant generalization of the min-max theory built by Li-Zhou in \cite{li2021min}. 
Specifically, let $M^{n+1}$ be a compact Riemannian manifold with boundary $\partial M$, and $G$ be a compact Lie group acting as isometries on $M$ with $\dim(G\cdot p)\leq n-2$, $\forall p\in M$. 
We have the following {\em equivariant constrained free boundary problem}:
\begin{question}\label{Q: equi free boundary problem}
	Is there a smooth, embedded, $G$-invariant minimal hypersurface $\Sigma$ contained in $M$ with boundary $\partial\Sigma\subset\partial M$ solving the free boundary problem?
\end{question}
Here, $\Sigma$ is $G$-invariant in the sense that $g\cdot \Sigma=\Sigma$ for all $g\in G$. 
To get such $G$-invariant hypersurfaces, we can use only $G$-invariant relative $n$-cycles in the min-max constructions. 
Hence, this kind of theory is known as the {\em equivariant min-max theory}. 
In the 1980s, Pitts-Rubinstein \cite{pitts1988equivariant}\cite{pitts1987applications} firstly proposed an equivariant min-max theory for finite $G$ acting on $3$-dimensional closed manifolds without a full proof. 
This construction was recently completed by Ketover in \cite{ketover2016equivariant} provided that the action of the finite group $G$ is orientation preserving and $M/G$ is an orientable orbifold without boundary. 
Since then, the equivariant min-max theory was extended by Liu in \cite{liu2021existence} to any compact connected Lie group $G$ acting as isometries on a closed manifold $M^{n+1}$ with $3\leq n+1\leq 7$ and ${\rm Cohom}(G)\neq 0 ,2$. 
Both Ketover's and Liu's equivariant constructions are under the smooth setting of min-max (c.f. \cite{de2013existence}) using generalized hypersurfaces instead of currents. 
Therefore, the author proposed in \cite{wang2022min} an equivariant min-max construction under the setting of Almgren-Pitts, where $M^{n+1}$ is assumed to be a closed manifold with dimension $3\leq n+1\leq 7$, $G$ is assumed to be a compact Lie group with ${\rm Cohom}(G)\geq 3$, and $M\setminus M^{reg}$ is assumed to be a submanifold of $M$ with dimension no more than $n-2$.  Here $M^{reg}$ is the union of all principal orbits, and ${\rm Cohom}(G)$ is the cohomogeneity of the action of $G$.

Note these equivariant results are built in the closed case $\partial M=\emptyset$ with some additional assumptions on the Lie group $G$ actions. 
For the case that $\partial M\neq\emptyset$, Ketover \cite{ketover2016free} first showed an equivariant min-max theory to construct new free boundary minimal surfaces in $\mathbb{B}^3\subset \R^3$ with dihedral symmetries, which can be generalized to any three-dimensional compact manifold with strictly mean convex boundary and a finite group $G$ acting by orientation-preserving isometries (see \cite{franz2021equivariant}). 
As before, these constructions are based on the smooth setting of min-max making some topological properties controllable. 
However, these existing results only solved Question \ref{Q: equi free boundary problem} in the three-dimensional case, and need additional assumptions on $G$ and $\partial M$. 
Hence, we set out this paper to establish a free boundary equivariant min-max theory under the setting of Almgren-Pitts, which not only gives a positive answer to Question \ref{Q: equi free boundary problem} in complete generality, but also provides more follow-up applications. 
Specifically, our main result is the following theorem:
 
\begin{theorem}\label{Thm:main}
	Let $M^{n+1}$ be a smooth connected compact Riemannian manifold with a compact Lie group $G$ acting as isometries of cohomogeneity ${\rm Cohom}(G)\geq 3$.
	Suppose $\partial M\neq\emptyset$ and $2\leq n\leq 6$. 
	Then there exists a hypersurface $\Sigma^n$ in $M$ with possibly empty boundary $\partial \Sigma\subset\partial M$ so that 
	\begin{itemize}
		\item[(i)] $\Sigma$ is invariant under the action of $G$, i.e. $g\cdot\Sigma=\Sigma$ for all $g\in G$,
		\item[(ii)] $\Sigma$ is smoothly embedded in $M$ up to its boundary $\partial\Sigma$,
		\item[(iii)] $\Sigma$ is minimal in its interior,
		\item[(iv)] $\Sigma$ meets $\partial M$ orthogonally along its boundary $\partial \Sigma$.
	\end{itemize}
\end{theorem}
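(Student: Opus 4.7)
The plan is to build a free boundary analogue of the equivariant Almgren-Pitts construction from \cite{wang2022min}, combining it with the relative cycle framework of Li-Zhou \cite{li2021min}. I would begin by introducing the space $\Z^G_n(M,\partial M;\mZ_2)$ of $G$-invariant relative $n$-cycles, equipped with the flat, mass and $\mF$-metric topologies inherited from Li-Zhou's space. The cohomogeneity hypothesis ${\rm Cohom}(G)\geq 3$ forces $\dim(G\cdot p)\leq n-2$ for every $p\in M$, so the orbit space $M/G$ has dimension at least three; this guarantees that $\Z^G_n(M,\partial M;\mZ_2)$ is non-empty and path-connected, and that a $G$-invariant Morse-type function on $M/G$ vanishing on $\partial M/G$ produces a $G$-equivariant relative sweepout detecting a nontrivial cohomology class $\bar{\lambda}^G\in H^1(\Z^G_n(M,\partial M;\mZ_2);\mZ_2)$ via an equivariant free-boundary analogue of Almgren's isomorphism.

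Next, I would define the equivariant min-max width $\bL^G(\Pi)$ over the $\bar{\lambda}^G$-homotopy class $\Pi$ of continuous maps from a compact cubical parameter space into $\Z^G_n(M,\partial M;\mZ_2)$ in the $\mF$-topology, and prove $\bL^G(\Pi)>0$ by an equivariant isoperimetric argument adapted from Li-Zhou's framework. I would then implement a $G$-equivariant version of Li-Zhou's free boundary pull-tight procedure, producing a minimizing sequence of $G$-invariant sweepouts whose critical slices converge, in the varifold topology, to a $G$-invariant integral varifold $V$ of mass $\bL^G(\Pi)$ that is stationary with free boundary in $M$ in the $G$-equivariant sense, i.e. with respect to $G$-invariant vector fields tangential to $\partial M$. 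The combinatorial argument of Pitts is then run inside this equivariant free-boundary setting to upgrade $V$ to a varifold which is also $G$-almost minimizing in small $G$-invariant annuli centered at every point of $M$; here one must carefully refine the discretization and interpolation lemmas of \cite{li2021min} so that every isotopy used is simultaneously $G$-equivariant and compatible with the boundary constraint.

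The final and most delicate step is regularity. Away from $M\setminus M^{\reg}$, the $G$-almost minimizing property descends to the quotient $M^{\reg}/G$, where $V$ appears as an almost minimizing varifold of effective codimension one; the Pitts-Schoen-Simon interior regularity (applicable because $n\leq 6$ bounds the effective dimension) then yields a smooth minimal hypersurface on $M^{\reg}\setminus\partial M$, while Li-Zhou's free boundary regularity, applied on the quotient with boundary $\partial M/G$, gives smoothness and orthogonal meeting along $\partial M\cap M^{\reg}$. Extension across the singular stratum $M\setminus M^{\reg}$ then uses the codimension bound $\dim(M\setminus M^{\reg})\leq n-2$ together with an equivariant tangent-cone and removable-singularity argument as in \cite{wang2022min}. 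The principal obstacle, and the genuinely new difficulty, is the simultaneous enforcement of $G$-invariance and the free boundary condition at the combinatorial level: one must construct $G$-equivariant, $\partial M$-respecting interpolations and isotopies so that the Pitts-type annular decomposition can be carried out in the relative setting, with particular care at points where singular orbits meet $\partial M$, a situation absent from both the closed equivariant theory of \cite{wang2022min} and the non-equivariant free boundary theory of \cite{li2021min}.
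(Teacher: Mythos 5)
Your high-level min-max skeleton (relative $(G,\mZ_2)$-cycles, equivariant homotopy classes, width, pull-tight, Pitts combinatorial argument yielding $(G,\mZ_2)$-almost minimizing varifolds in $G$-annuli) matches the paper's Sections 3--4 essentially step for step. The gap is in your regularity step, which takes a genuinely different route and, as written, would not go through.

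You propose descending to the quotient $M^{\reg}/G$, treating $V$ as "an almost minimizing varifold of effective codimension one," and then invoking Pitts--Schoen--Simon and Li--Zhou's free boundary regularity there. But the Riemannian quotient $M^{\reg}/G$ carries the area functional of $M$ only after weighting by the orbit-volume density $p\mapsto\mathcal{H}^{\dim(G\cdot p)}(G\cdot p)$, so the descended object is a stationary integral varifold for a \emph{weighted} area functional, not for Riemannian area. Neither \cite{schoen1981regularity} nor \cite[Theorem 5.2]{li2021min} applies off the shelf to such weighted minimal hypersurfaces, and this weight degenerates as one approaches the lower strata. The paper works entirely upstairs in $M$ instead: it constructs $G$-replacements directly in $Z_n^G(M,\partial M;\mZ_2)$ (Propositions \ref{Prop:good-replacement-property}, \ref{Prop:good-replace-2}, where a Fleming-type averaging trick upgrades $G$-equivariant minimality to honest local minimality), classifies tangent cones at boundary orbits via the splitting $C=T_pG\cdot p\times W$ (Lemma \ref{Lem: tangent cone}, Proposition \ref{Prop:classification tangent cone}) using Fermi half-tubes $\tBcal_r^{G,+}(p)$, and then runs the successive-replacement argument annulus by annulus.

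A second, more concrete error: you write that the extension across $M\setminus M^{\reg}$ "uses the codimension bound $\dim(M\setminus M^{\reg})\leq n-2$." That bound does \emph{not} follow from $\mathrm{Cohom}(G)\geq 3$ (a reflection $\mZ_2$ of cohomogeneity $n+1$ has fixed locus of dimension $n$), and the theorem you are proving does not assume it. It was exactly the extra hypothesis imposed in \cite{wang2022min}, and removing it is one of the paper's stated contributions: the cohomogeneity bound only controls $\dim(G\cdot p)\leq n-2$ for each \emph{individual} orbit, and the paper's removable-singularity step (Step 4 of Theorem \ref{Thm:main-regularity}) is applied orbit by orbit via Harvey--Lawson extension and an Allard-type theorem, while the uniformity across a stratum is obtained not from a global dimension bound but from the orbit-type stratification (equations (\ref{Eq: stratification of M})--(\ref{Eq: stratification of boundary and dimension}), Remark \ref{Rem:lower bound of Fermi radius}, and Appendix \ref{Sec:proof-of-equivalence-a.m.v}), where each stratum $M_i^j$ or $N_i^j$ is treated as a "boundary" in the spirit of \cite[Appendix B]{li2021min}. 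If you want your argument to cover the stated theorem you must replace the stratum-dimension bound by this per-orbit analysis and drop the descent to the quotient.
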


The compactness and the cohomogeneity assumptions on $G$ are very mild and clean. 
In particular, if $G=\{e\}$ is trivial, all the assumptions are satisfied. 
Thus, our result can be seen as a generalization of \cite[Theorem 1.1]{li2021min}. 
If $n+1=3$ and ${\rm Cohom}(G)= 3$, then $G$ is finite by the compactness assumption.  
Hence, Theorem \ref{Thm:main} also generalizes the results in \cite{ketover2016free} to a certain extent. 
Moreover, if $\partial M=\emptyset$, Theorem \ref{Thm:main} provides a {\em closed} embedded minimal $G$-invariant hypersurface. 
Therefore, our theorem not only extends the results in \cite{ketover2016equivariant}\cite{liu2021existence}\cite{wang2022min} into the free boundary case but also weakens the requirements for Lie groups. 

We mention that the dimension assumption $2 \leq n \leq 6$ is for the same reason as in \cite[Remark 1.2]{li2021min}. 
Additionally, the assumption on ${\rm Cohom}(G)\geq 3$ is used to guarantee $\dim(G\cdot p)\leq n-2$ for every $p\in M$. 
Thus, a small tube around $G\cdot p$ in $M$ will be mean convex to apply the maximum principle (\cite[Theorem 2.5]{li2021min}), and the standard extension arguments (c.f. the proof of \cite[Theorem 4.1]{harvey1975extending}) can be applied to each orbit. 

Moreover, when $n\geq 7$, we remark that the same results in Theorem \ref{Thm:main} shall be valid as long as $3\leq {\rm codim}(G\cdot p)\leq 7 $ for all $p\in M$. 
To make this paper more concise, we leave the details in another upcoming paper. 

In \cite{wang2022min}, the author used an assumption on $M\setminus M^{reg}$ to overcome a technical obstacle. 
To be exact, one should notice that the injectivity radii of orbits generally do not have a positive lower bound. 
This drawback presents difficulties whenever the normal exponential maps $\exp_{G\cdot p}^\perp$ are used to carry out a uniform result. 
Nevertheless, if every orbit in the compact $G$-set $\Clos(U)$ has the same orbit type (\cite[Section 2.1.4]{berndt2016submanifolds}), then there exists a uniform injectivity radius of orbits in $\Clos(U)$ (Remark \ref{Rem:lower bound of Fermi radius}). 
Therefore, a technical assumption is added in \cite{wang2022min} that $M\setminus M^{reg}$ is a submanifold with dimension no more than $n-2$, which helps to regard $M\setminus M^{reg}$ as a whole. 
Hence, noting the closure of the small $G$-annulus around $M\setminus M^{reg}$ is contained in $M^{reg}$, the interpolation arguments in \cite[Lemma 3.5-3.9]{pitts2014existence} would carry over with $\exp_{G\cdot p}^\perp$ in place of $\exp_p$ (see \cite[Appendix B]{wang2022min}). 
Inspired by the work of Li-Zhou\cite[Appendix B]{li2021min}, we can now overcome this technical problem without any further assumptions on $M\setminus M^{reg}$. 
Roughly speaking, by the orbit type stratification, we could regard every orbit type stratum $M_i$ as a `boundary'. 
Similar to \cite[Lemma B.4-B.7]{li2021min}, our constructions only take $\exp_{G\cdot p}^\perp$ with $G\cdot p\subset M_i$ into consideration. 
Hence, the estimates of $\exp_{G\cdot p}^\perp $ can be made uniformly for $p$ in a compact $G$-subset $\Clos(U)\cap M_i$.

As an application, we can use the upper bounds of $(G,p)$-width in \cite[Theorem 10]{wang2022min} (which also holds for the equivalent classes of relative cycles) and the arguments in \cite{marques2017existence} to have the following corollaries as equivariant generalizations of \cite[Corollary 1.3, 1.4]{li2021min}. 

\begin{corollary}\label{Cor:1}
	Under the same assumptions of Theorem \ref{Thm:main}, denote $l={\rm Cohom}(G)$. 
	Then either
	\begin{itemize}
		\item[(i)] there exists a disjoint collection $\{\Sigma_1,\cdots,\Sigma_{l}\}$ of $l$ compact, smoothly embedded, $G$-connected, free boundary $G$-invariant  minimal hypersurfaces in $M$; or
		\item[(ii)] there exist infinitely many compact, smoothly embedded, $G$-connected, free boundary $G$-invariant minimal hypersurfaces in $M$.
	\end{itemize}
\end{corollary}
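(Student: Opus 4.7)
The plan is to argue by contradiction: assume the set of compact, smoothly embedded, $G$-connected, free boundary $G$-invariant minimal hypersurfaces in $M$ is finite, and produce $l:={\rm Cohom}(G)$ pairwise disjoint members to establish (i).

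First, I would set up the $(G,p)$-widths $\omega_p^G(M)$ on the space of equivalence classes of $G$-invariant relative $n$-cycles, in parallel with the Marques--Neves construction \cite{marques2017existence}. By \cite[Theorem 10]{wang2022min} these widths satisfy a sublinear upper bound of the form $\omega_p^G(M)\leq C\,p^{1/l}$. Applying the equivariant free boundary min-max machinery of Theorem \ref{Thm:main} to admissible $p$-sweepouts whose masses approach $\omega_p^G(M)$ realises each $\omega_p^G(M)$ as an integer-weighted sum of areas of free boundary $G$-invariant minimal hypersurfaces drawn, under our assumption, from a finite pool.

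Next, a Lusternik--Schnirelmann argument in the spirit of Marques--Neves \cite{marques2017existence}, and of its free boundary relative-cycle version adapted from \cite{li2021min}, shows that a coincidence $\omega_p^G(M)=\omega_{p+1}^G(M)$ would force a continuous family of distinct $G$-invariant free boundary minimal hypersurfaces at that level, contradicting finiteness. Hence the widths $\omega_1^G(M)<\omega_2^G(M)<\cdots$ are strictly increasing, and the first $l$ of them witness at least $l$ pairwise distinct free boundary $G$-invariant minimal hypersurfaces $\Sigma_1,\dots,\Sigma_l$. I would then upgrade distinct to disjoint using the equivariant maximum principle \cite[Theorem 2.5]{li2021min}: a non-removable intersection between $\Sigma_i$ and $\Sigma_j$ along a $G$-orbit, combined with unique continuation for minimal hypersurfaces, would identify them and contradict the strict inequality of the widths they realise; a small perturbation argument, together with the counting of integer decompositions against the $p^{1/l}$ growth, lets one choose the $l$ representatives disjointly.

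I expect the Lusternik--Schnirelmann step to be the main obstacle, since the discrete-to-continuous deformation and multi-parameter estimates of Marques--Neves must be transferred to the $G$-invariant free boundary relative-cycle setting over an ambient with a nontrivial orbit-type stratification. This relies on the stratum-by-stratum interpolation machinery outlined in the discussion preceding this corollary, which extends \cite[Appendix B]{wang2022min} to the relative cycle space while simultaneously respecting the free boundary constraint along $\partial M$ and the $G$-equivariance uniform over each orbit type stratum $M_i$.
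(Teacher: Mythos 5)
Your overall scaffolding---$(G,p)$-widths on the relative $G$-cycle space, the sublinear bound $\omega_p^G\lesssim p^{1/l}$ from \cite[Theorem 10]{wang2022min}, and a Lusternik--Schnirelmann rigidity theorem in the style of \cite{marques2017existence} to force strict monotonicity of widths when there are only finitely many $G$-invariant free boundary minimal hypersurfaces---is exactly what the paper points to, and that part of your plan is sound. The gap is in the step that is supposed to produce a \emph{disjoint} collection of $l$ hypersurfaces.

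You cannot ``upgrade distinct to disjoint'' via the maximum principle. The strong maximum principle \cite[Theorem 2.5]{li2021min} only excludes one-sided tangential contact; two distinct minimal hypersurfaces can intersect transversally without being identified, and unique continuation gives no help with transversal intersections. The ``small perturbation argument'' is also not available, since a perturbed hypersurface is no longer minimal. Moreover, strict increase of $\omega_1^G<\cdots<\omega_l^G$ alone does not even yield $l$ \emph{distinct} hypersurfaces: the first $l$ widths could conceivably all be realized with high multiplicity on a single $\Sigma$ and some companions, so the first $l$ widths do not directly ``witness'' $l$ surfaces. In short, the logic ``find $l$ distinct $\Sigma_i$, then argue they are disjoint'' is the wrong structure.

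The argument that actually closes the gap never separates ``distinct'' from ``disjoint.'' By Theorem \ref{Thm:main-regularity}, each width $\omega_p^G$ is realized by a single varifold $V_p=\sum_i n_i|\Sigma_i|$ whose $G$-connected components $\Sigma_i$ form a pairwise-disjoint family (disjointness is automatic because they are distinct connected components of the smooth support of one embedded hypersurface). Suppose for contradiction that there are only finitely many $G$-connected minimal hypersurfaces, and that no $l$ of them are pairwise disjoint. Then every $\omega_p^G$ lies in the union, over disjoint subfamilies $\mathcal{D}$ of size at most $l-1$ drawn from the finite pool, of the semigroups generated by $\{\mathrm{Area}(\Sigma):\Sigma\in\mathcal{D}\}$. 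Each such semigroup has $O(t^{l-1})$ elements at most $t$, and there are only finitely many subfamilies, so the number of achievable values at most $t$ is $O(t^{l-1})$. On the other hand, the Lusternik--Schnirelmann step gives strict increase $\omega_1^G<\omega_2^G<\cdots$, and together with $\omega_p^G\leq C\,p^{1/l}$ this forces at least on the order of $t^{l}$ distinct widths at most $t$ as $t\to\infty$. This is a contradiction, so some disjoint subfamily has at least $l$ members, which is exactly alternative (i). Your closing clause gesturing at ``counting integer decompositions against $p^{1/l}$ growth'' is the germ of this, but it needs to replace, not supplement, the maximum-principle reasoning.
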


\begin{corollary}\label{Cor:2}
	Under the same assumptions of Theorem \ref{Thm:main}, if $\Ric_M \geq 0$ and the boundary $\partial M$ is strictly convex, then $M$ contains an infinite number of distinct compact, smooth, properly embedded, free boundary $G$-invariant minimal hypersurfaces.
\end{corollary}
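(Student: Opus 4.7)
The plan is to adapt the Marques--Neves argument for infinitely many minimal hypersurfaces under positive Ricci curvature (\cite{marques2017existence}), as carried over to the free boundary setting by Li--Zhou (\cite[Corollary 1.4]{li2021min}), to the $G$-equivariant framework of this paper. The three ingredients needed are: a sequence of equivariant widths with both divergence to infinity and a sublinear upper bound, an equivariant free boundary realization theorem that identifies each width with an honest $G$-invariant minimal hypersurface, and an equivariant free boundary Frankel-type property.

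First, I would consider the sequence of equivariant $(G,p)$-widths $\omega_p^G(M)$ for $p\in\N$, defined via $p$-sweepouts in the space of $G$-invariant equivalence classes of relative cycles. The upper bound from \cite[Theorem 10]{wang2022min} (applicable to equivalence classes as remarked just before Corollary \ref{Cor:1}) gives $\omega_p^G(M)\leq C p^{1/l}$ with $l={\rm Cohom}(G)\geq 3$, while a standard Lusternik--Schnirelmann argument shows $\omega_p^G(M)\to\infty$. Next, applying Theorem \ref{Thm:main} in its multi-parameter version to each $p$-sweepout class realizes $\omega_p^G(M)$ as the mass of a stationary $G$-invariant integer rectifiable varifold whose support is a smooth, properly embedded, free boundary $G$-invariant minimal hypersurface counted with integer multiplicity---the strict convexity of $\partial M$ upgrades the almost-proper embeddedness of Theorem \ref{Thm:main} to proper embeddedness via the boundary maximum principle.

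Second, I would prove an equivariant free boundary Frankel property: under $\Ric_M\geq 0$ and $\partial M$ strictly convex, any two connected, properly embedded, free boundary $G$-invariant minimal hypersurfaces in $M$ must intersect. The argument is classical in spirit---minimize length among paths joining the two hypersurfaces with free endpoints, obtain a geodesic meeting both orthogonally and contained in $\interior(M)$ (using strict convexity of $\partial M$ to push any boundary arc inward), and derive a contradiction from the second variation of arclength combined with $\Ric_M\geq 0$ and the stability of each hypersurface in the normal direction of the geodesic.

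Finally, suppose for contradiction that only finitely many such hypersurfaces $\Sigma_1,\ldots,\Sigma_N$ exist, with masses $m_1,\ldots,m_N$. Then every $\omega_p^G(M)$ lies in the discrete semigroup generated by $\{m_i\}$. Whenever $\omega_p^G(M)=\omega_{p+1}^G(M)$, an equivariant version of the Marques--Neves deformation/Lusternik--Schnirelmann argument produces an additional free boundary $G$-invariant minimal hypersurface, and after finitely many indices this exceeds $N$; hence the widths must eventually be strictly increasing, forcing linear growth $\omega_p^G(M)\geq cp$, in contradiction with the sublinear bound $\omega_p^G(M)\leq Cp^{1/l}$ since $l\geq 3$. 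I expect the Frankel step and the equivariant deformation step to be the main obstacles, as both require handling the orbit type stratification of $M$ along which $G$-invariant hypersurfaces may fail to be smooth and along which the usual variational comparisons need to be made uniform; once both are in place, the finite-then-grow dichotomy closes the argument as in \cite{li2021min,marques2017existence}.
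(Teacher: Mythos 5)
Your proposal follows essentially the same route the paper indicates: use the sublinear bound $\omega_p^G(M)\leq Cp^{1/l}$ from \cite[Theorem 10]{wang2022min}, apply the realization theorem (Theorem \ref{Thm:main}) with proper embeddedness forced by strict convexity and the maximum principle, prove a free boundary Frankel property, and run the Marques--Neves/Lusternik--Schnirelmann counting argument to derive linear width growth from finiteness, contradicting $l\geq 3$. One small inaccuracy: the Frankel argument does not invoke stability of the hypersurfaces --- after producing an interior length-minimizing geodesic $\gamma$ orthogonal to both $\Sigma_1,\Sigma_2$, one traces the second variation of arclength over a parallel orthonormal frame and uses only the \emph{minimality} of $\Sigma_1,\Sigma_2$ (so the boundary second-fundamental-form terms sum to the mean curvatures, which vanish) together with $\Ric_M\geq 0$ and the strict convexity of $\partial M$ to reach a contradiction; indeed, since the (non-equivariant) free boundary Frankel property already applies to any pair of disjoint properly embedded minimal hypersurfaces, no separate equivariant version is needed. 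Otherwise the plan is sound and matches the paper's cited chain of references.
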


\subsection{Outline}
    The structure of this paper is parallel to \cite{li2021min}. 
	In Section \ref{Sec:preliminary}, we set some basic notations and useful propositions. 
	We introduce the almost properly equivariantly embedded hypersurfaces and their variation formulas. 
	We also show the equivalence between the stability and $G$-stability for almost properly equivariantly embedded hypersurfaces with $G$-invariant normal vector fields, which indicates a compactness theorem for such hypersurfaces by \cite[Theorem 2.13]{li2021min}. 
	In Section \ref{Sec:geometry measure}, the definition of {\em $(G,\mZ_2)$-almost minimizing varifolds} will be introduced using {\em relative $(G,\mZ_2)$-cycles}. 
	Since relative $(G,\mZ_2)$-cycles form a closed subspace of the relative cycles space, all the lemmas in \cite[Section 3.1]{li2021min} remain true under equivariant settings. 
	In particular, we prove two isoperimetric lemmas for the equivalent classes of relative $(G,\mZ_2)$-cycles (Lemma \ref{Lem:F-isoperimetric}, \ref{Lem:M-isoperimetric}). 
	Additionally, we also show the equivalence of $(G,\mZ_2)$-almost minimizing varifolds defined by different metrics (Theorem \ref{Thm: equivalence-a.m.v}). 
	In Section \ref{Sec:min-max}, we introduce the min-max construction of Li-Zhou for free boundary settings and adapt their theory into our $G$-equivariant case. 
	As in \cite[Theorem 4.12, 4.14]{li2021min}, we show the discretization and interpolation Theorem \ref{Thm: discretization}, \ref{Thm:interpolation} for our $G$-equivariant free boundary min-max construction. 
	After that, the combinatorial arguments of Pitts \cite[Theorem 4.10]{pitts2014existence} are applied to show the existence of $(G,\mZ_2)$-almost minimizing varifolds with free boundary (Theorem \ref{Thm:exist amv in critical set}). 
	Finally, the regularity of such varifolds is given in Section \ref{Sec:regular}. 

\begin{acknowledgements}
	The author would like to thank Professor Gang Tian for his constant encouragement. 
	He also thanks Dr. Zhiang Wu for helpful discussions, and thanks the anonymous referees for helpful comments. 
	The author is partially supported by China Postdoctoral Science Foundation 2022M722844.
\end{acknowledgements}

\section{Preliminary}\label{Sec:preliminary}

In this paper, we always assume $(M^{n+1}, g_{_M})$ to be an compact connected Riemannian $(n+1)$-manifold with boundary $\partial M$, 
and assume $G$ to be a compact Lie group acting as isometries on $M$ of cohomogeneity ${\rm Cohom}(G)=l\geq 3$. 
Let $\mu$ be a bi-invariant Haar measure on $G$ which has been normalized to $\mu(G)=1$. 
Without loss of generality, we can also assume $M$ is orientable. 
Indeed, after taking the orientable double cover $\hat{M}:=\{(p,o_p): p\in M, o_p~\mbox{is an orientation of $T_pM$} \}$ of $M$, both the Riemannian structure and the isometric actions of $G$ can be lifted to $\hat{M}$ with $g\cdot(p, o_p) := (g\cdot p, g_*o_p)$. 
Since the involution map $\tau$ in the covering space $\hat{M}$ is a $G$-equivariant isometry, i.e. $\tau(p,o_p)=(p,-o_p)$ and $g\circ \tau = \tau\circ g$, we can firstly prove the main results for $(\hat{M}, G\times \mZ_2)$ with $\mZ_2=\{id, \tau\}$, and then quotient the $\mZ_2$-actions down to $M$. 

Now, let us collect some basic definitions and notations about the action of Lie groups and refer \cite{berndt2016submanifolds}\cite{bredon1972introduction}\cite{wall2016differential} for more details. 

Firstly, we claim $M$ can be extended to a closed Riemannian manifold $\tM$ with the same dimension such that 
\begin{itemize}
	\item[(1)] $G$ acts on $\tM$ as isometries with the same cohomogeneity;
	\item[(2)] $M\subset \tM$, and the inclusion map is a $G$-equivariant isometric embedding.
\end{itemize}
We left the proof in the Appendix \ref{Sec: extension}, which is a combination of the equivariant gluing results \cite{kankaanrinta2007equivariant} and the Riemannian extension \cite{pigola2016smooth}. 

Furthermore, $\tM$, as well as $M$, can be $G$-equivariantly isometrically embedded into some $\R^L$ by \cite{moore1980equivariant}. 
To be specific, there is an orthogonal representation $\rho:G\to O(L)$ and an isometric embedding $i$ from $\tM$ to $\R^L$, which is $G$-equivariant, i.e. $i(g\cdot x) = \rho(g)\cdot i(x) $. 
For simplicity, we always denote the acting of $g$ on $p$ by $g\cdot p:=\rho(g)(p)$ for every $p\in\R^L,g\in G$. 

Given any $p\in M$, let $G_p := \{g\in G: g\cdot p = p\}$ be the {\em isotropy group} of $p$ in $G$, and $(G_p)$ be the conjugate class of $G_p$ in $G$. 
Then we say $p\in M$ has the {\em orbit type} $(G_p)$. 
Denote $M_{(G_p)}:= \{q\in M: (G_q)=(G_p)\}$ to be the union of points with $(G_p)$ orbit type, which is a disjoint union of smooth embedded submanifolds of $M$(\cite[Chapter 6, Corollary 2.5]{bredon1972introduction}). 

For any $G$-invariant subset $U\subset M$ with connected components $\{U_i\}_{i=1}^I$, we say $U$ is {\em $G$-connected} if for any $i,j\in\{1,\dots,I\}$, there is $g_{i,j}\in G$ with $g_{i,j}\cdot U_j=U_i$. 
Given any orbit type stratum $M_{(G_p)}$, we can decompose $M_{(G_p)}$ into some $G$-connected components. 
Therefore, we can stratify $M$ by 
\begin{equation}\label{Eq: stratification of M}
	M=\cup_{i=1}^m M_i = \cup_{i=1}^m\cup_{j=1}^{m_i} M_i^j,
\end{equation}
where each $M_i$ is a stratum of an orbit type, and $\{M_i^j\}_{j=1}^{m_i}$ are the $G$-connected components of $M_i$. 
For simplicity, we denote 
\begin{equation}\label{Eq: stratification of M and dimension}
	n_{i,j} := \dim(M_i^j) \leq n+1,\quad l_{i,j} := \dim(M_i^j) - \dim(G\cdot p),
\end{equation}
where $p\in M_i^j$, $i=1,\dots, m, j=1,\dots, m_i$. 
Moreover, there exists a minimal conjugate class of isotropy groups $(P)$ such that $M_{(P)}$ forms an open dense submanifold of $M$, which is known as the {\em principal orbit type}. 
The {\em cohomogeneity} ${\rm Cohom}(G)= l$ of $G$ is defined as the codimension of a principal orbit. 
Clearly, $n_{i,j} = n+1$ if and only if $p\in M_i^j$ has the  principal orbit type. 
Since $G$ also acts by isometries on $N := \partial M$, there is also a stratification of $\partial M$ by the orbit types and their $G$-connected components:
\begin{equation}\label{Eq: stratification of boundary}
	N:= \partial M = \cup_{i=1}^{m'} N_i = \cup_{i=1}^{m'}\cup_{j=1}^{m_i'} N_i^j.
\end{equation}
Similarly, define 
\begin{equation}\label{Eq: stratification of boundary and dimension}
	n_{i,j}' :=\dim(N_i^j),\quad l_{i,j}' := \dim(N_i^j) - \dim(G\cdot p), ~{\rm for}~ p\in N_i^j. 
\end{equation}

\begin{definition}\label{Def: isolated orbit}
	Given any orbit $G\cdot p \subset M_i^j\subset M$, we say $G\cdot p$ is an {\em isolated orbit in $M$} if $l_{i,j}=0$. 
	Similarly, we say $G\cdot p\subset N_i^j\subset \partial M$ is {\em isolated in $\partial M$} if $l_{i,j}'=0$. 
	In general, we say an orbit $G\cdot p$ is {\em isolated} if it is isolated in $M$ or $\partial M$. 
\end{definition}
\begin{remark}\label{Rem: no isolated orbit}
	It follows from the Slice Theorem(c.f. \cite[Theorem 3.3.4]{wall2016differential}) that if $G\cdot p$ is isolated in $M$, then there is an open neighborhood $U\subset M$ of $G\cdot p$ so that $M_{(G_p)}\cap U = G\cdot p$. 
\end{remark}

 \subsection{Basic notations}\label{notation} 

For the action of $G$, we use the following notations:
\begin{itemize}
	\item $\pi$ : the projection $\pi:M\mapsto M/G$  given by $p \mapsto [p]:=\{g\cdot p: g\in G\}$;
	\item $T_qG\cdot p$ : the tangent space of the orbit $G\cdot p$ at $q\in G\cdot p$;
	\item ${\bf N}_qG\cdot p$ : the normal vector space of the orbit $G\cdot p$ in $M$ at  $q\in G\cdot p$;
	\item ${\bf N}(G\cdot p)$ : the normal vector bundle of the orbit $G\cdot p$ in $M$;
	\item $M^{reg}$ : the union of orbits in $M$ with the principal orbit type (as orbits in $M$);
	\item $(\partial M)^{reg}$ : the union of orbits in $\partial M$ with the principal orbit type (as orbits in $\partial M$);
\end{itemize}
Sometimes, we also add a superscript like ${\bf N}_p^{\R^L}G\cdot p$, ${\bf N}_p^{\tM}G\cdot p$, to indicate the normal space is taken in $T_p\R^L$ or $T_p\tM$. 
We borrow the following notations in $\R^L$ from \cite{li2021min} and add a superscript $G$ to signify the $G$-invariance. 
\begin{itemize}
	\item $B_r(p),~B^G_r(p)$ : Euclidean open ball and tube of radius $r$ centered at $p$ and $G\cdot p$;
	\item $A_{s,r}(p),~A^G_{s,r}(p)$ : Euclidean open annulus  $B_r(p) \setminus \Clos(B_s(p))$ and open $G$-annulus $B^G_r(p) \setminus \Clos(B^G_s(p))$;
	\item $\bmu_r$ : the homothety map  $x \mapsto r \cdot x$;
	\item $\btau_p$ : the translation map $x \mapsto x-p $;
	\item $\bleta_{p,r}$ : the composition $\bmu_{r^{-1}} \circ \btau_p $;
	\item $\Clos(A)$ : the closure of a subset $A \subset \mathbb{R}^L$;
	\item $\mathcal{H}^k$ : the $k$-dimensional Hausdorff measure in $\mathbb{R}^L$;
	\item $\omega_k$ : the volume of the $k$-dimensional unit ball $ B_1(0) \subset \R^k$;
	\item $\mathfrak{X}(\mathbb{R}^L)$ : the space of smooth vector fields on $\mathbb{R}^L$. 
\end{itemize}

The following geodesic notations are also borrowed from \cite{li2021min} and a superscript $G$ will be added to signify the $G$-invariance (see Appendix \ref{Sec:Fermi half-tubes} and \cite[Appendix A]{li2021min} for the notations involving Fermi coordinates):
\begin{itemize}
	\item $\nu_{\partial M}$ : the inward unit normal of $\partial M$ with respect to $M$;
	\item $\tBcal_r(p), ~\tBcal^G_r(p)$ : the open geodesic ball and tube in $\tM$ of radius $r$ around $p$ and $G\cdot p$;
	\item $\tScal_r(p), ~\tScal^G_r(p)$ : the geodesic sphere and cylinder in $\tM$ of radius $r$ around $p$ and $G\cdot p$;
	\item $\tAcal_{s,r}(p)$ : the open geodesic annulus $\tBcal_r(p) \setminus \Clos(\tBcal_s(p))$ in $\tM$;
	\item $\tAcal_{s,r}^G(p)$ : the open geodesic $G$-annulus $\tBcal^G_r(p) \setminus \Clos(\tBcal^G_s(p))$ in $\tM$;
	\item $\tBcal^+_r(p)$ : the (relative) open Fermi half-ball of radius $r$ centered at $p \in \partial M$;
	\item $\tBcal^{G,+}_r(p)$ : the (relative) open Fermi half-tube of radius $r$ centered at $G\cdot p \subset \partial M$;
	\item $\tScal^+_r(p)$ : the Fermi half-sphere of radius $r$ centered at $p \in \partial M$;
	\item $\tScal^{G,+}_r(p)$ : the Fermi half-cylinder of radius $r$ centered at $G\cdot p \subset \partial M$;
	\item $\interior_M(A)$ : the interior of $A\subset M$ with respect to the subspace topology of $M$;
	\item $\partial_{rel} A$ : the relative boundary of $A\subset M$ given by $\Clos(A)\setminus \interior_M(A)$.
\end{itemize}

Sometimes, we add $G$- in front of objects meaning they are $G$-invariant:
  \begin{itemize}
    \item a $G$-varifold $V$ satisfies $g_{\#} V=V$ for all $g\in G$;
    \item a $G$-current $T$ satisfies $g_{\#} T=T$ for all $g\in G$;
    \item a $G$-vector field $X$ satisfies $g_{*} X=X$ for all $g\in G$;
    \item a $G$-set ($G$-neighborhood) is a (relative open) set which is a union of orbits.
  \end{itemize}

As in \cite{li2021min}, we define the following space of vector fields
\begin{eqnarray}
	\mathfrak{X}(M) &:=& \{X\in\mathfrak{X}(\R^L) :  X(p)\in T_p M,~\forall p\in M\}\nonumber;
	\\
	\mathfrak{X}^G(M) &:=& \{X\in\mathfrak{X}(M) :  g_*X=X,~\forall g\in G\};
	\\
	\mathfrak{X}_{tan}(M) &:=& \{X\in\mathfrak{X}(M) : X(p)\in T_p \partial M,~\forall p\in \partial M\};\nonumber
	\\
	\mathfrak{X}_{tan}^G(M) &:=& \mathfrak{X}^G(M)\cap \mathfrak{X}_{tan}(M).\nonumber
\end{eqnarray}
Next, we introduce some varifolds spaces (see \cite[2.1(18)]{pitts2014existence}\cite{simon1983lectures}). 
Denote $\RV_k(M)$ (resp. $\IV_k(M)$) as the space of (integer) rectifiable $k$-varifolds in $\R^L$ supported in $M$. 
Moreover, let $\RV^G_k(M)$ (resp. $\IV^G_k(M)$) be the set of $G$-varifolds in $\RV_k(M)$ (resp. $\IV_k(M)$). 
Then the closure of $\RV_k(M)$ in the weak topology is denoted by $\mathcal{V}_k(M)$, and $\V^G_k(M)$ is the set of $G$-varifolds in $\V_k(M)$. 
We also denote by $\|V\|$ and $\VarTan(V,p)$ the {\em weight measure} and the {\em varifold tangents} of $V\in \V_k(M)$ (\cite[$\S$38, 42.3]{simon1983lectures}). 
The $\mF$-metric on $\V_k(M)$ is defined in \cite[2.1(19)]{pitts2014existence}, which induces the weak topology on any mass bounded subset of $\V_k(M)$

For any $V\in \V_k(M)$ and $X\in \mathfrak{X}_{tan}(M)$, the first variation of $V$ along $X$ is given by 
\begin{equation}
	\delta V(X) := \frac{d}{dt}\Big|_{t=0}\|(\phi_t)_\# V\|(M) = \int \Div_SX(p) ~dV(p,S),
\end{equation}
where $\phi_t$ are the diffeomorphisms generated by $X$. 
Thus, we have the following definition:

\begin{definition}[$G$-stationary varifolds with free boundary]\label{Def: G-stationary}
	Let $U \subset M$ be a relative open $G$-subset. 
	A varifold $V\in\V_k^G(M)$ is said to be {\it $G$-stationary in $U$ with free boundary} if $\delta V(X) =0$ for all $X \in \mathfrak{X}^G_{tan}(M)$ with compact support in $U$. 
\end{definition}

As in the closed case (see \cite{liu2021existence}), we also have the equivalence between $G$-stationary with free boundary and stationary with free boundary \cite[Definition 2.1]{li2021min}. 
\begin{lemma}\label{Lem: G-stationary with free boundary and stationary}
	Let $V\in\V_k^G(M)$ and $U\subset M$ be a relative open $G$-subset. 
	Then $V$ is $G$-stationary in $U$ with free boundary if and only if $V$ is stationary in $U$ with free boundary. 
\end{lemma}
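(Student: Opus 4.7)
The proof divides into two implications, only the nontrivial one requiring work. The forward direction ("stationary" implies "$G$-stationary") is immediate because $\mathfrak{X}^G_{tan}(M) \subset \mathfrak{X}_{tan}(M)$, so a varifold whose first variation vanishes on all compactly supported tangent fields in $U$ certainly has vanishing first variation on the $G$-invariant subclass.

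For the reverse direction my plan is a standard Haar-averaging argument adapted to the free-boundary setting. Given an arbitrary $X \in \mathfrak{X}_{tan}(M)$ with compact support in $U$, I would define the averaged vector field
\begin{equation*}
\bar{X}(p) \ := \ \int_G (g^{-1})_* X(p) \, d\mu(g) \ = \ \int_G dg^{-1}(g\cdot p)\, X(g\cdot p) \, d\mu(g),
\end{equation*}
where $\mu$ is the normalized bi-invariant Haar measure on $G$. I would then verify, in order: (a) $\bar{X}$ is smooth, (b) $\bar{X}$ is tangent to $M$ and to $\partial M$ along $\partial M$ — which follows since each isometry $g$ preserves both $M$ and $\partial M$, so $dg^{-1}(g\cdot p)$ maps $T_{g\cdot p}\partial M$ into $T_p \partial M$ — (c) $\bar{X}$ is $G$-invariant, via a change-of-variable $g\mapsto gh^{-1}$ in the Haar integral together with the chain rule $dh(h^{-1}p)\, dh^{-1}(p)=\mathrm{id}$, and (d) $\bar{X}$ has compact support in $U$, since $\mathrm{spt}(\bar{X})\subset G\cdot \mathrm{spt}(X)$, which is compact and contained in the $G$-invariant set $U$. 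Thus $\bar{X}\in\mathfrak{X}^G_{tan}(M)$ with compact support in $U$.

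The crux is then to relate $\delta V(\bar{X})$ to $\delta V(X)$. For each fixed $g \in G$, I would use the fact that $g_\# V = V$ and that $g$ is an isometry so that $g_\#$ preserves mass on the $G$-invariant set $M$: if $\psi_t$ is the flow of $X$, then $g^{-1}\circ\psi_t\circ g$ is the flow of $(g^{-1})_* X$, and
\begin{equation*}
\bigl\| (g^{-1}\circ\psi_t\circ g)_\# V \bigr\|(M) \ = \ \bigl\| (g^{-1})_\# (\psi_t)_\# g_\# V \bigr\|(M) \ = \ \bigl\|(\psi_t)_\# V\bigr\|(M),
\end{equation*}
whence differentiating at $t=0$ gives $\delta V((g^{-1})_* X) = \delta V(X)$. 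Integrating over $G$ and exchanging integrals (justified by the continuity of $(p,S,g)\mapsto \mathrm{div}_S\,(g^{-1})_* X(p)$ on the compact set where the integrand is nonzero, together with $\|V\|(M)<\infty$), I obtain
\begin{equation*}
\delta V(\bar{X}) \ = \ \int_G \delta V\bigl((g^{-1})_* X\bigr)\, d\mu(g) \ = \ \delta V(X)\cdot\mu(G) \ = \ \delta V(X).
\end{equation*}
Applying the $G$-stationary hypothesis to $\bar{X}\in\mathfrak{X}^G_{tan}(M)$ gives $\delta V(\bar{X})=0$, hence $\delta V(X)=0$, completing the proof.

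The only genuinely delicate point is verifying tangency and the $G$-invariance of $\bar{X}$ cleanly — in particular, making sure the Haar integral is taken with the correct twist by $dg^{-1}$ so that the output is a well-defined vector field at $p$ (and not at $g\cdot p$) — and ensuring that $\bar{X}\in\mathfrak{X}_{tan}(M)$ rather than merely a section along $M$. Everything else is a direct transcription of the closed-case argument in \cite{liu2021existence}, since the free-boundary condition is automatically preserved by the averaging because each $g\in G$ sends $\partial M$ to itself.
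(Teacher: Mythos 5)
Your proof is correct and takes essentially the same Haar-averaging approach as the paper: both define $X_G = \int_G (g^{-1})_* X\,d\mu(g)$, verify $X_G \in \mathfrak{X}^G_{tan}(M)$, and conclude $\delta V(X) = \delta V(X_G) = 0$. The only difference is that the paper delegates the identity $\delta V(X) = \delta V(X_G)$ to a citation of Liu's Lemma~3.2, whereas you supply the flow-pushforward computation $(g^{-1}\circ\psi_t\circ g)_\# V$ and Fubini argument explicitly.
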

\begin{proof}
	Since $\mathfrak{X}^G_{tan}(M)\subset \mathfrak{X}_{tan}(M)$, the `if' part is obvious. 
	Suppose $V$ is $G$-stationary in $U$ with free boundary. 
	Then for any $X\in \mathfrak{X}_{tan}(M)$ with $\spt(X)\subset U$, define $X_g:=(g^{-1})_* X$ and $X_G:=\int_G X_g~d\mu(g)$. 
	One verifies that $X_g \in \mathfrak{X}_{tan}(M)$, since $U$ is a $G$-invariant set and $G$ acts on $M$ as isometries. 
	Thus, $X_G\in \mathfrak{X}_{tan}(M)$ and
	\begin{eqnarray*}
		g_*X_G = g_*\int_G X_h~d\mu(h) = \int_G g_*h^{-1}_*X~d\mu(h) = \int_G X_{h\circ g^{-1}} ~d\mu(h) = X_G,
	\end{eqnarray*}
	where the third equality comes from the bi-invariance of $\mu$. 
	This suggests $X_G\in \mathfrak{X}_{tan}^G(M)$. 
	As in \cite[Lemma 3.2]{liu2021existence}, we further have $\delta V(X)=\delta V(X_G)$, which gives the lemma. 
\end{proof}

By the above lemma, we can apply the monotonicity formula \cite[Theorem 2.3]{li2021min} and the maximum principle \cite[Theorem 2.5]{li2021min} to any varifolds that are $G$-stationary with free boundary.

\subsection{Almost proper equivariant embeddings and $G$-stability}
We now modify the definitions given in \cite[Section 2.3]{li2021min} into $G$-invariant forms. 
A smooth manifold is said to be a {\em $G$-manifold} if $G$ acts on it by diffeomorphisms. 
\begin{definition}[Almost proper equivariant embeddings]
	Let $\Sigma^n$ be a smooth $n$-dimensional $G$-manifold with boundary $\partial \Sigma$ (possibly empty). 
	A smooth $G$-equivariant embedding $\phi:\Sigma \to \tM$ is said to be an {\it almost proper equivariant embedding of $\Sigma$ into $M$} if 
\[ \phi(\Sigma) \subset M \qquad \text{and} \qquad \phi(\partial \Sigma) \subset \partial M,\] 
which is written as $\phi:(\Sigma,\partial \Sigma) \stackrel{G}{\to} (M,\partial M)$. 
For simplicity, we often take $\phi$ as the inclusion map $\iota$, and write $(\Sigma,\partial \Sigma) \stackrel{G}{\subset} (M,\partial M)$. 

Given an almost properly equivariantly embedded hypersurface $(\Sigma,\partial \Sigma) \stackrel{G}{\subset} (M,\partial M)$, we say that $p \in \Sigma \cap \partial M$ is 
\begin{itemize}
\item a {\it true boundary point} if $p \in \partial \Sigma$; or
\item a {\it false boundary point} otherwise.
\end{itemize}
If every $p \in \Sigma \cap \partial M$ is a true boundary point, i.e. $\Sigma \cap \partial M=\partial \Sigma$, we say that $(\Sigma,\partial \Sigma) \stackrel{G}{\subset} (M,\partial M)$ is {\it properly equivariantly embedded}.
\end{definition}

\begin{remark}
	$(\Sigma,\partial \Sigma)$ is almost properly equivariantly embedded in $(M,\partial M)$ if and only if it is $G$-invariant and almost properly embedded in the sense of \cite[Definition 2.6]{li2021min}, i.e. 
	$$ (\Sigma,\partial \Sigma) \stackrel{G}{\subset} (M,\partial M) \iff (\Sigma,\partial \Sigma) \subset (M,\partial M) ~{\rm and}~\Sigma ~{\rm is}~ \mbox{$G$-invariant}. $$
	Thus, we often omit the term `equivariant' and say $(\Sigma,\partial \Sigma) \stackrel{G}{\subset} (M,\partial M)$ is an {\it almost properly embedded $G$-hypersurface} for simplicity. 
\end{remark}

\begin{definition}\label{Def:minimal hypersurface with free boundary}
	An almost properly (equivariantly) embedded hypersurface $(\Sigma,\partial\Sigma)\subset (M,\partial M)$ is said to be {\it a free boundary ($G$-invariant) minimal hypersurface} if the mean curvature of $\Sigma$ vanishes identically and $\Sigma$ meets $\partial M$ orthogonally along $\partial \Sigma$. 
\end{definition}

For an almost properly embedded $G$-hypersurface $(\Sigma,\partial \Sigma) \stackrel{G}{\subset} (M,\partial M)$, we define the following spaces of vector fields 
\begin{eqnarray}
	\mathfrak{X}(M,\Sigma) &:=& \left\{ X \in \mathfrak{X}(M): \begin{array}{c}
		X(q) \in T_q(\partial M) \text{ for all $q$ in an open }\\
		\text{neighborhood of $\partial \Sigma$ in $\partial M$.} \end{array} \right\},
	\\
	\mathfrak{X}^G(M,\Sigma) &:=& \mathfrak{X}(M,\Sigma) \cap \mathfrak{X}^G(M). 
\end{eqnarray}
The first variation of $\Sigma$ along any compactly supported $X\in \mathfrak{X}(M,\Sigma) $ is given by 
\begin{equation}\label{Eq: first variation}
	\delta \Sigma (X) := \frac{d}{dt}\Big|_{t=0} \Area(\phi_t(\Sigma)) =  -\int_\Sigma \langle H,X\rangle +\int_{\partial \Sigma} \langle \eta ,X\rangle,
\end{equation}
where $\phi_t$ are the diffeomorphisms generated by $X$, $H$ is the mean curvature vector of $\Sigma$, and $\eta$ is the unit co-normal of $\partial \Sigma$ pointing outward. 

\begin{definition}\label{Def: G-stationary for hypersurface}
	An almost properly embedded $G$-hypersurface $(\Sigma,\partial\Sigma)\stackrel{G}{\subset}(M,\partial M)$ is said to be {\it $G$-stationary} if $\delta \Sigma(X)=0$ for any $X\in\mathfrak{X}^G(M,\Sigma) $ with compact support. 
	Moreover, if $\delta \Sigma(X)=0$ for any $X\in\mathfrak{X}(M,\Sigma) $, then $(\Sigma,\partial\Sigma)$ is said to be {\em stationary}. 
\end{definition}
\begin{remark}
	Given $(\Sigma,\partial\Sigma)\stackrel{G}{\subset}(M,\partial M)$ and $X\in \mathfrak{X}^G(M,\Sigma)$, we do not require $X(q)\in T_q\partial M$ for any false boundary point $q\in (\Sigma\setminus\partial\Sigma)\cap\partial M$ in the above definition. 
	Thus, a $G$-stationary varifold $V$ with free boundary and smooth support may not be $G$-stationary in the sense of Definition \ref{Def: G-stationary for hypersurface}. 
	However, an almost properly embedded $G$-stationary hypersurface $(\Sigma,\partial\Sigma)\stackrel{G}{\subset}(M,\partial M)$ must be $G$-stationary with free boundary in the sense of Definition \ref{Def: G-stationary}. 
\end{remark}

As in Lemma \ref{Lem: G-stationary with free boundary and stationary}, we also have the equivalence between $G$-stationary and stationary for almost properly embedded $G$-hypersurfaces. 
\begin{lemma}\label{Lem: $G$-stationary for hypersurface and stationary}
	Let  $(\Sigma,\partial\Sigma)\stackrel{G}{\subset}(M,\partial M)$ be an almost properly embedded $G$-hypersurface. 
	Then $(\Sigma,\partial \Sigma)$ is $G$-stationary if and only if $(\Sigma,\partial \Sigma)$ is stationary. 
\end{lemma}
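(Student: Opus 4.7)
The ``if'' direction is immediate from $\mathfrak{X}^G(M,\Sigma) \subset \mathfrak{X}(M,\Sigma)$, so the substance is in the ``only if'' direction, and the plan is to mimic the averaging argument in Lemma \ref{Lem: G-stationary with free boundary and stationary}. Given $X \in \mathfrak{X}(M,\Sigma)$ with compact support, I would define $X_g := (g^{-1})_* X$ and $X_G := \int_G X_g \, d\mu(g)$. Smoothness of $X_G$ and the fact that $X_G(p) \in T_pM$ follow from the compactness of $G$ and the $G$-invariance of $M$ under isometries. Bi-invariance of $\mu$ yields $g_* X_G = X_G$ exactly as before, and compactness of $\spt(X_G) \subset \bigcup_{g \in G} g \cdot \spt(X)$ follows from the compactness of $G$.

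The step that requires genuine care, and which I expect to be the main obstacle, is verifying that $X_G$ actually belongs to $\mathfrak{X}(M,\Sigma)$, i.e.\ that the tangency condition to $\partial M$ holds on a \emph{single} open neighborhood of $\partial \Sigma$ in $\partial M$, uniformly in $g$. By assumption, there is an open neighborhood $U \subset \partial M$ of $\partial \Sigma$ with $X|_U$ tangent to $\partial M$. Since $\partial \Sigma$ is $G$-invariant, each $g^{-1} U$ is an open neighborhood of $\partial \Sigma$, but the family $\{g^{-1} U\}_{g \in G}$ varies with $g$. To extract a common neighborhood I would consider the continuous action map $\Phi \colon G \times \partial M \to \partial M$, $\Phi(g,q) := g \cdot q$, and the open set $\Phi^{-1}(U)$. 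For each $q \in \partial \Sigma$, the compact slice $G \times \{q\}$ lies inside $\Phi^{-1}(U)$; by the tube lemma there is an open $W_q \ni q$ in $\partial M$ with $G \times W_q \subset \Phi^{-1}(U)$. Setting $V := \bigcup_{q \in \partial \Sigma} W_q$ produces an open neighborhood of $\partial \Sigma$ in $\partial M$ such that $g \cdot q \in U$ for every $g \in G$ and $q \in V$. Consequently $X_g(q) = (dg^{-1})_{g \cdot q}\bigl(X(g \cdot q)\bigr) \in T_q \partial M$ for every such $q$ and $g$, and integrating over $G$ gives $X_G(q) \in T_q \partial M$ on $V$, so $X_G \in \mathfrak{X}^G(M,\Sigma)$.

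Finally, I need to show $\delta\Sigma(X) = \delta\Sigma(X_G)$. Because $g$ is an isometry and $\Sigma$ is $G$-invariant, the flow $\phi_t^g = g^{-1} \circ \phi_t \circ g$ generated by $X_g$ satisfies $\Area(\phi_t^g(\Sigma)) = \Area(g^{-1} \phi_t(\Sigma)) = \Area(\phi_t(\Sigma))$, whence $\delta\Sigma(X_g) = \delta\Sigma(X)$ for every $g$. Fubini applied to \eqref{Eq: first variation} (all integrands are continuous on compact sets) gives $\delta\Sigma(X_G) = \int_G \delta\Sigma(X_g) \, d\mu(g) = \delta\Sigma(X)$. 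Since $\Sigma$ is $G$-stationary and $X_G \in \mathfrak{X}^G(M,\Sigma)$ has compact support, the right-hand side vanishes, proving stationarity of $\Sigma$.
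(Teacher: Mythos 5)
Your proof is correct and follows essentially the same averaging strategy as the paper: form $X_G = \int_G (g^{-1})_*X\,d\mu(g)$, verify $X_G\in\mathfrak{X}^G(M,\Sigma)$, and conclude $\delta\Sigma(X)=\delta\Sigma(X_G)=0$. Two minor points of difference: you explicitly supply the tube-lemma argument for producing a single open neighborhood of $\partial\Sigma$ on which every $X_g$ is simultaneously tangent to $\partial M$ (a uniformity the paper subsumes under ``one verifies''), and you derive $\delta\Sigma(X_g)=\delta\Sigma(X)$ by conjugating the flow, $\phi_t^g=g^{-1}\circ\phi_t\circ g$, and using invariance of area under isometries, whereas the paper instead substitutes into \eqref{Eq: first variation} and appeals to the $G$-invariance of $H$ and $\eta$ together with a change of variables; the two computations are equivalent.
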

\begin{proof}
	Since $\mathfrak{X}^G(M,\Sigma)\subset \mathfrak{X}(M,\Sigma)$, we only need to show the `only if' part.  
	Suppose $(\Sigma,\partial\Sigma)\stackrel{G}{\subset}(M,\partial M)$ is $G$-stationary. 
	Then for any $X\in \mathfrak{X}(M,\Sigma)$ with compact support, we define $X_g:=(g^{-1})_* X$ and $X_G:=\int_G X_g~d\mu(g)$. 
	One verifies that $X_g \in \mathfrak{X}(M,\Sigma)$, which indicates $X_G\in \mathfrak{X}^G(M,\Sigma)$. 
	Additionally, sine the vector fields $H$ and $\eta$ in (\ref{Eq: first variation}) are both $G$-invariant, a direct compute shows that $ 0=\delta \Sigma (X_G) = \delta\Sigma(X)$, which gives the lemma. 
\end{proof}

Let $(\Sigma,\partial\Sigma)\stackrel{G}{\subset}(M,\partial M)$ be an almost properly embedded $G$-hypersurface.
By the first variation formula (\ref{Eq: first variation}) and Lemma \ref{Lem: $G$-stationary for hypersurface and stationary}, it is clear that $(\Sigma,\partial \Sigma)$ is $G$-stationary if and only if it is a {\em free boundary $G$-invariant minimal hypersurface} in the sense of Definition \ref{Def:minimal hypersurface with free boundary}. 

Suppose $(\Sigma,\partial\Sigma)\stackrel{G}{\subset}(M,\partial M)$ is a free boundary $G$-invariant minimal hypersurface with a {\em $G$-invariant} unit normal vector field $\nu$. 
Let $X=f\nu$ be a $G$-invariant normal vector field which extends to a smooth $G$-vector field in $\mathfrak{X}^G(M,\Sigma) $. 
The second variation of $\Sigma$ along $X$ is then given by 
\begin{equation}\label{Eq: second variation}
	\delta^2\Sigma(X) := Q_\Sigma(f, f) =\int_\Sigma |\nabla^\Sigma f|^2-(\Ric_M(\nu,\nu)+|A^\Sigma|^2)f^2~da - \int_{\partial\Sigma}h(\nu,\nu)f^2~ds,
\end{equation}
where $A^\Sigma$ and $h$ are the second fundamental forms of $\Sigma$ and $\partial M$ with respect to $\nu$ and $\nu_{\partial M}$. 

\begin{definition}\label{Def: G-stability}
	An almost properly embedded $G$-hypersurface $(\Sigma,\partial\Sigma)\stackrel{G}{\subset}(M,\partial M)$ is said to be {\it $G$-stable} if 
	\begin{itemize}
		\item $\Sigma$ admits a $G$-invariant unit normal vector field $\nu$,
		\item $\Sigma$ is stationary in the sense of Definition \ref{Def: G-stationary for hypersurface},
		\item $\delta^2\Sigma(X)\geq 0$ for all $X= f\nu \in\mathfrak{X}^G(M,\Sigma)$ with compact support.
	\end{itemize}
\end{definition}

Note the existence of a $G$-invariant unit normal vector field $\nu$ is a priori requirement of our definition for the $G$-stability. 
We mention that a two-sided $G$-invariant hypersurface may not admit such a $G$-vector field when $G$ is not connected. 
However, if $\Sigma$ is a part of the smooth boundary of some relative open $G$-subset $U$, then $\Sigma$ has a $G$-invariant unit normal vector field pointing inside (or outside) of $U$. 

\begin{lemma}\label{Lem: G-stable and stable}
	Let $(\Sigma,\partial\Sigma)\stackrel{G}{\subset}(M,\partial M)$ be an almost properly embedded $G$-hypersurface. 
	Then $(\Sigma,\partial\Sigma)$ is $G$-stable if and only if it has a $G$-invariant unit normal vector field $\nu$ and is stable in the sense of \cite[Definition 2.10]{li2021min}. 
	
	In particular, if $(\Sigma,\partial\Sigma)\stackrel{G}{\subset}(M,\partial M)$ is a part of the boundary of some relative open $G$-subset $U\subset M$, then the $G$-stability of $(\Sigma,\partial\Sigma)$ is equivalent to the stability. 
\end{lemma}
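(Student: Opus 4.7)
The \emph{if} direction is immediate: stability restricts automatically to the subspace $\mathfrak{X}^G(M,\Sigma)\subset \mathfrak{X}(M,\Sigma)$, $G$-stationarity is equivalent to stationarity by Lemma~\ref{Lem: $G$-stationary for hypersurface and stationary}, and the hypothesis that $\nu$ is $G$-invariant supplies the remaining prerequisite of $G$-stability.

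For the \emph{only if} direction, assume $(\Sigma,\partial\Sigma)$ is $G$-stable, so in particular $\nu$ is $G$-invariant. My plan is to argue by contradiction via principal-eigenfunction symmetrization. Suppose $Q_\Sigma(f_0,f_0)<0$ for some compactly supported $f_0$ with $f_0\nu\in\mathfrak{X}(M,\Sigma)$. Choose a relatively compact $G$-invariant subdomain $\Omega\stackrel{G}{\subset}\Sigma$ containing $\spt f_0$ in its interior, whose interior boundary $\partial\Omega\setminus\partial\Sigma$ is smooth and $G$-invariant; such an $\Omega$ can be obtained by smoothing a $G$-equivariant sublevel set of $\dist_\Sigma(\spt f_0,\cdot)$. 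On $\Omega$ consider the Jacobi mixed eigenvalue problem $L_\Sigma\phi=\la\phi$ with Dirichlet condition on $\partial\Omega\setminus\partial\Sigma$ and the natural Robin condition $\partial\phi/\partial\eta=h(\nu,\nu)\phi$ on $\partial\Omega\cap\partial\Sigma$, where $L_\Sigma=-\Delta^\Sigma-(\Ric_M(\nu,\nu)+|A^\Sigma|^2)$. Standard elliptic spectral theory produces a principal eigenvalue $\la_1(\Omega)\leq Q_\Sigma(f_0,f_0)/\|f_0\|_{L^2}^2<0$, attained by an eigenfunction $\phi>0$ in the interior (via the strong maximum principle applied to $|\phi|$).

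The crux is $G$-symmetrization of $\phi$. Because $\nu$ is $G$-invariant, $G$ acts by isometries, and $\partial M$ and $\partial\Omega$ are both $G$-invariant, the operator $L_\Sigma$ together with both boundary conditions commute with the $G$-action on $C^\infty(\Omega)$. Hence the Haar average $\phi_G(p):=\int_G\phi(g^{-1}\cdot p)\,d\mu(g)$ is a $G$-invariant eigenfunction for the same eigenvalue $\la_1(\Omega)$, satisfies the same mixed boundary conditions, and remains strictly positive in the interior since $\mu$ is a probability measure and $\phi>0$. Extending $\phi_G$ by zero across the Dirichlet portion yields a $G$-invariant compactly supported test function whose associated vector field $X_G=\phi_G\nu\in\mathfrak{X}^G(M,\Sigma)$ satisfies $\delta^2\Sigma(X_G)=\la_1(\Omega)\int_\Omega\phi_G^2<0$, contradicting $G$-stability. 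I expect the main technical hurdle to be constructing $\Omega$ with sufficient $G$-invariant boundary regularity and handling the Robin condition along the free boundary portion correctly; the symmetrization step itself then proceeds by the standard Schr\"odinger-operator principal-eigenvalue argument.

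Finally, when $\Sigma$ lies in the boundary of some relative open $G$-subset $U\subset M$, the inward unit normal to $U$ along $\Sigma$ is automatically $G$-invariant (since $U$ is $G$-invariant), so the $G$-invariant unit normal prerequisite of Definition~\ref{Def: G-stability} holds automatically and the equivalence of $G$-stability with stability follows from the general case.
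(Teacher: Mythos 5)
Your proof is correct, and the core mechanism — Haar-averaging a positive principal eigenfunction of the Jacobi operator to produce a $G$-invariant destabilizing variation — is exactly the paper's. The difference is in where the eigenvalue problem is set up. The paper exploits the fact that $\Sigma$ is a compact manifold with boundary $\partial\Sigma$: it solves the global Robin eigenvalue problem $L_\Sigma f=-\lambda_1 f$ on all of $\Sigma$ with $\partial f/\partial\nu_{\partial M}=h(\nu,\nu)f$ on $\partial\Sigma$, then symmetrizes $f$ to get $f_G>0$ and concludes $0\leq\delta^2\Sigma(f_G\nu)=\lambda_1\int_\Sigma f_G^2$, hence $\lambda_1\geq 0$. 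Since there is no Dirichlet boundary, there is no corner set and hence no regularity hurdle at all. You instead pass to a compactly contained $G$-invariant subdomain $\Omega$ and solve a mixed Dirichlet--Robin problem; the corner $\Gamma_3=\partial_{\rm Dir}\Omega\cap\partial\Sigma$ then forces the standard logarithmic-cutoff trick to make the symmetrized eigenfunction admissible in $\mathfrak{X}^G(M,\Sigma)$ (or a $W^{1,2}$ relaxation of the test class), which is precisely what the paper does when it proves the \emph{localized} statement, Lemma~\ref{Lem: G-stable and stable in U}. So your argument is the one needed for the local version and is more general, but is slightly more elaborate than necessary for the global lemma; the paper's global proof is the cleaner of the two for this statement because compactness of $\Sigma$ makes the mixed-boundary corner set vanish. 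You correctly flag this regularity point as the technical hurdle, and the resolution is the cutoff on $\Gamma_3$ (valid since $\dim\Gamma_3\leq n-2$), exactly as the paper uses in Lemma~\ref{Lem: G-stable and stable in U}. Everything else — equivariance of the operator and of the Robin condition, positivity and $G$-invariance of $\phi_G$, and the final contradiction — matches the paper's reasoning.
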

\begin{proof}
	The if part is clear due to the fact that $\mathfrak{X}^G(M,\Sigma)\subset \mathfrak{X}(M,\Sigma)$. 
	Next, suppose $(\Sigma,\partial\Sigma)\stackrel{G}{\subset}(M,\partial M)$ is $G$-stable. 
	Let $X=f\nu\in \mathfrak{X}(M,\Sigma)$ be the first (Robin) eigenvector field of the Jacobi operator $L_\Sigma$ of $\Sigma$ so that $L_\Sigma f=-\lambda_1f$ on $\Sigma$, and $\partial f/\partial\nu_{\partial M}= h(\nu,\nu) f$ on $\partial \Sigma$. 
	Define $X_G :=\int_G (g^{-1})_* X ~d\mu(g)$ and $f_G(p) := \int_G f(g\cdot p)~d\mu(g)$. 
	Since $\nu$ is $G$-invariant, one verifies that $X_G = f_G\cdot\nu$ on $\Sigma$, and $X_G\in \mathfrak{X}^G(M,\Sigma)$. 
	Additionally, since the first (Robin) eigenfunction $f$ is positive, 
	we have $f_G\neq 0$. 
	A direct computation shows $Lf_G=-\lambda_1f_G$ (\cite[Lemma 7]{wang2022min}) and $\partial f_G/\partial  \nu_{\partial M} = h(\nu,\nu) f_G$ (since $\nu_{\partial M} $ and $\nu$ are $G$-invariant), which implies 
	$0 \leq \delta^2\Sigma (X_G) = \lambda_1\int_\Sigma f_G^2$. 
	Hence, $\lambda_1\geq 0$ and $\Sigma$ is stable. 
\end{proof}

By the above lemma, we can apply the Compactness Theorem \cite[Theorem 2.13]{li2021min} for $G$-stable almost properly embedded $G$-hypersurfaces with a uniform area bound. 
Note the limit of two-sided hypersurfaces coming from the compactness theorem may be globally one-sided hypersurfaces, which are not stable (or $G$-stable) due to the prior requirement of the ($G$-invariant) unit normal in the definitions. 
Thus, the compactness theorem \cite[Theorem 2.13]{li2021min} is localized in simply connected relative open subsets, which guarantees that every embedded hypersurface is two-sided (c.f. \cite{samelson1969orientability}). 
Therefore, we make the following localized definition and then state our compactness theorem. 

\begin{definition}
	Let $U \subset M$ be a relative open $G$-subset. 
	An almost properly embedded $G$-hypersurface $(\Sigma,\partial\Sigma)\stackrel{G}{\subset}(U,U \cap \partial M)$ is said to be {\em $G$-stationary in $U$} if $\delta \Sigma(X)=0$ for any $X \in \mathfrak{X}^G(M,\Sigma)$ compactly supported in $U$. 
	Moreover, if $(\Sigma,\partial\Sigma)$ has a $G$-invariant unit normal vector field on $\Sigma\cap U$ and $\delta^2 \Sigma (X) \geq 0$ for any $X \in \mathfrak{X}^G(M,\Sigma)$ compactly supported in $U$, then $(\Sigma,\partial\Sigma)$ is {\em $G$-stable in $U$}. 
\end{definition}

\begin{lemma}\label{Lem: G-stable and stable in U}
	Let $U \subset M$ be a relative open $G$-subset, $(\Sigma,\partial\Sigma)\stackrel{G}{\subset}(U,U \cap \partial M)$ be an almost properly embedded $G$-hypersurface. 
	Then $\Sigma$ is $G$-stationary in $U$ if and only if it is stationary in $U$. 
	Moreover, $\Sigma$ is $G$-stable in $U$ if and only if it has a $G$-invariant unit normal vector field and is stable in $U$ in the sense of \cite[Definition 2.12]{li2021min}. 
\end{lemma}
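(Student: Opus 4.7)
The statement is the localized analogue of Lemmas \ref{Lem: G-stationary with free boundary and stationary} and \ref{Lem: G-stable and stable}, so my plan is to adapt those proofs to the relatively open $G$-subset $U$. For the stationarity equivalence, the ``if'' direction is immediate from the inclusion $\mathfrak{X}^G(M,\Sigma) \subset \mathfrak{X}(M,\Sigma)$. For the ``only if'' direction, given $X \in \mathfrak{X}(M,\Sigma)$ compactly supported in $U$, I form the $G$-average $X_G := \int_G (g^{-1})_* X\, d\mu(g)$. Since $U$ is $G$-invariant and $G$ is compact, $\spt(X_G)\subset G\cdot\spt(X)$ is a compact subset of $U$; moreover, the bi-invariance of $\mu$ and the preservation of $\partial M$ by $G$ yield $X_G \in \mathfrak{X}^G(M,\Sigma)$. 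Combined with the $G$-invariance of $\Sigma$, its mean curvature $H$, and its co-normal $\eta$, the first variation formula \eqref{Eq: first variation} gives $\delta\Sigma(X_G) = \delta\Sigma(X)$, and applying the $G$-stationarity hypothesis to $X_G$ closes the argument exactly as in Lemma \ref{Lem: $G$-stationary for hypersurface and stationary}.

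For the stability equivalence the ``if'' direction is again immediate. For the ``only if'' direction, given $X = f\nu \in \mathfrak{X}(M,\Sigma)$ compactly supported in $U$, I want to show $\delta^2\Sigma(X)\geq 0$. The direct averaging argument used in Lemma \ref{Lem: G-stable and stable} does not apply here because $Q_\Sigma(f_G,f_G)\neq Q_\Sigma(f,f)$ in general; instead I plan to use a spectral argument on a $G$-invariant relatively compact subdomain. Pick an open set $\Omega_0$ with $\spt(X)\subset \Omega_0$ and $\Clos(\Omega_0)\subset U$ compact, and set $\Omega := \bigcup_{g\in G} g\cdot \Omega_0$, which is still relatively compact in $U$ (by compactness of $G$) and manifestly $G$-invariant. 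On $\Sigma\cap\Omega$ I then consider the first eigenvalue $\lambda_1(\Omega)$ of the Jacobi operator $L_\Sigma$ with Dirichlet condition on $\partial_{rel}(\Sigma\cap\Omega)\setminus\partial\Sigma$ and Robin condition $\partial\phi/\partial\nu_{\partial M} = h(\nu,\nu)\phi$ on $\partial\Sigma\cap\Omega$.

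Let $\phi>0$ denote the associated positive first eigenfunction. Since $\nu$, $\nu_{\partial M}$, $A^\Sigma$, $h$, and $\Omega$ are all $G$-invariant, the operator and boundary conditions are $G$-equivariant; consequently $\phi_G(p) := \int_G \phi(g\cdot p)\, d\mu(g)$ is again a positive first eigenfunction with eigenvalue $\lambda_1(\Omega)$. Approximating $\phi_G\,\nu$ by $G$-invariant smooth vector fields in $\mathfrak{X}^G(M,\Sigma)$ compactly supported in $\Omega$ and applying the $G$-stability hypothesis yields $\lambda_1(\Omega)\int_{\Sigma\cap\Omega}\phi_G^2\geq 0$, so $\lambda_1(\Omega)\geq 0$. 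The Rayleigh--Ritz characterization of $\lambda_1(\Omega)$ then gives $Q_\Sigma(f,f)\geq 0$ for every test function supported in $\Omega$, and in particular $\delta^2\Sigma(X)\geq 0$. The main obstacle I anticipate is the clean handling of the mixed Dirichlet/Robin eigenvalue problem on the subdomain $\Sigma\cap\Omega$: in particular, verifying that a positive first eigenfunction exists with enough regularity, and that its $G$-average can be lifted to compactly supported $G$-invariant smooth test fields in $\mathfrak{X}^G(M,\Sigma)$.
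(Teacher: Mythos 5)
Your approach coincides with the paper's up to a contrapositive reframing: both reduce the local stability comparison to a mixed Dirichlet/Robin eigenvalue problem for the Jacobi operator on a $G$-invariant subdomain $\Omega\subset\Sigma\cap U$, and both pass the sign information from the $G$-averaged first eigenfunction back through the $G$-stability hypothesis. The obstacle you flag at the end --- producing admissible compactly supported $G$-invariant test fields from the averaged eigenfunction despite its loss of regularity along the Dirichlet/Robin interface --- is exactly the crux, and the paper resolves it as follows. One chooses $\Omega$ to be a compact $G$-invariant domain whose relative boundary decomposes into smooth $G$-invariant pieces $\Gamma_1=\Omega\cap\partial\Sigma$ (Robin) and $\Gamma_2=\Clos(\partial\Omega\setminus\Gamma_1)$ (Dirichlet) meeting along a smooth $(n-2)$-dimensional corner $\Gamma_3=\Gamma_1\cap\Gamma_2$; the first eigenfunction (hence its $G$-average $f_G$) is smooth away from $\Gamma_3$, and since $\dim\Gamma_3\leq n-2$ a logarithmic cut-off near $\Gamma_3$ yields admissible $G$-invariant test fields with arbitrarily small change to the Rayleigh quotient. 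A minor practical advantage of the paper's contrapositive framing (assume $G$-stable but unstable in $U$, derive a contradiction) is that it lets one build $\Omega$ from scratch with this clean boundary structure; your direct version starts from the support of a prescribed test field and would need a further perturbation step to arrange that $\Gamma_1,\Gamma_2,\Gamma_3$ are smooth. The paper also notes that when $\Gamma_1=\emptyset$ or $\Gamma_3=\emptyset$ the simpler averaging argument from Lemma~\ref{Lem: G-stable and stable} carries over verbatim.
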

\begin{proof}
	This first statement follows from the arguments in Lemma \ref{Lem: $G$-stationary for hypersurface and stationary}. 
	As for the second statement, the if part is obvious. 
	Suppose $\Sigma$ is $G$-stable in $U$ and is unstable in $U$. 
	Then we can find a compact $G$-invariant domain $\Omega\subset \Sigma\cap U$ so that $\Gamma_1 = \Omega\cap \partial \Sigma$ and $\Gamma_2 = \Clos(\partial\Omega\setminus \Gamma_1)$ are smooth $G$-invariant $(n-1)$-submanifolds, $\Gamma_3 = \Gamma_1\cap \Gamma_2$ is a smooth $G$-invariant $(n-2)$-submanifold, and $\lambda_1(\Omega)<0$, where
	$$ \lambda_1(\Omega) := \inf \{ Q_\Sigma(f, f):  f\in W^{1,2}(\Omega), ~\|f\|_{L^2(\Omega)}=1, ~\mbox{$f=0$ on $\interior (\Gamma_2)$ in the sense of traces} \}. $$
	(Note that if $\Gamma_1 = \emptyset$ or $\Gamma_3 = \emptyset$, then the proof in \cite[Lemma 7]{wang2022min} and Lemma \ref{Lem: G-stable and stable} would carry over to show $\Sigma$ is stable in $U$.) 
	By a standard argument, there exists $f\in W^{1,2}(\Omega)$ so that $f$ is smooth except at the corner $\Gamma_3$,  
	\begin{equation}\label{Eq: first eigenvalue}
		\mbox{$f > 0$ in $\interior(\Omega), \quad $ and } \quad  \left\{\begin{array}{ll} L_{\Sigma}(f) = -\lambda_1(\Omega) f > 0 & \text { in } \quad \interior(\Omega), \\ \frac{\partial f}{\partial \nu_{\partial M}} = h(\nu,\nu) f& \text { on } \quad \interior(\Gamma_1), \\ f=0 & \text { on } \quad \interior(\Gamma_2 ) ,\end{array}\right.
	\end{equation}
	where $L_\Sigma f = \triangle_\Sigma f+ (|A^\Sigma|^2 + \Ric_M(\nu, \nu)  )f$. 
	Since $G$ acts by isometries, for all $g\in G$, we have $f\circ g$ as well as $f_G(p):= \int_G f\circ g(p) d\mu(g)$ also satisfies (\ref{Eq: first eigenvalue}) and is smooth except at $\Gamma_3$. 
	Since $\Gamma_3$ is $G$-invariant and $\dim(\Gamma_3 )\leq n-2$, we can apply the standard logarithmic cut-off trick to $f_G$ on $\Gamma_3$, 
	and see $\Sigma$ is $G$-unstable in $\Omega\subset U$, which is a contradiction. 
\end{proof}

\begin{theorem}[Compactness theorem for $G$-stable $G$-hypersurfaces]\label{Thm:compactness $G$-stable hypersurface} 
Let $2 \leq n \leq 6$ and $U$ be a relative open $G$-subset of $M$. 
Suppose $(\Sigma_k,\partial \Sigma_k) \stackrel{G}{\subset} (U,U \cap \partial M)$, $k\in\N$, is a sequence of almost properly embedded free boundary minimal $G$-hypersurfaces that are $G$-stable in $U$ and $\sup_{k} {\rm Area}(\Sigma_k) < \infty$. 

Then after passing to a subsequence, $(\Sigma_k,\partial \Sigma_k)$ converges (possibly with multiplicity) to an almost properly embedded free boundary minimal $G$-hypersurface $(\Sigma_\infty,\partial \Sigma_\infty) \stackrel{G}{\subset} (U,U \cap \partial M)$, which is also stable in any simply connected relative open subset of $U$. 
Moreover, the convergence is uniform and smooth on simply connected compact subsets of $U$.
\end{theorem}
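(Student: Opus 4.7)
The plan is to reduce to the non-equivariant free boundary compactness theorem \cite[Theorem 2.13]{li2021min} via Lemma \ref{Lem: G-stable and stable in U}, and then upgrade the limit to be $G$-invariant using that $G$ acts on $M$ by isometries.

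First, by Lemma \ref{Lem: G-stable and stable in U}, each $\Sigma_k$ admits a $G$-invariant unit normal on $\Sigma_k\cap U$ and is stable in $U$ in the sense of \cite[Definition 2.12]{li2021min}. In particular, on any simply connected relative open subset $W\subset U$, $(\Sigma_k,\partial\Sigma_k)$ is a two-sided almost properly embedded stable free boundary minimal hypersurface with $\sup_k \Area(\Sigma_k\cap W)<\infty$. I would fix a countable exhaustion $\{W_j\}_{j\in\N}$ of $U$ by such simply connected relative open subsets with $\Clos(W_j)$ compact in $U$, apply \cite[Theorem 2.13]{li2021min} on each $W_j$, and run a standard diagonal argument to extract a single subsequence $\Sigma_{k_l}$ that converges on every $W_j$. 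The convergence is uniform and smooth on compact subsets of each $W_j$ (possibly with integer multiplicity), and the local limits glue to a globally defined almost properly embedded free boundary minimal hypersurface $(\Sigma_\infty,\partial\Sigma_\infty)\subset (U,U\cap\partial M)$ that is stable in every simply connected relative open subset of $U$.

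It remains to verify the $G$-invariance of $\Sigma_\infty$. Since $G$ acts on $M$ by isometries and each $\Sigma_{k_l}$ satisfies $g\cdot\Sigma_{k_l}=\Sigma_{k_l}$, the associated integral varifolds satisfy $g_{\#} |\Sigma_{k_l}|=|\Sigma_{k_l}|$ for all $g\in G$. Taking the smooth varifold limit on each $W_j$ and using the continuity of the $G$-action yields $g\cdot\Sigma_\infty=\Sigma_\infty$ for every $g\in G$, so $(\Sigma_\infty,\partial\Sigma_\infty)\stackrel{G}{\subset}(U,U\cap\partial M)$, which finishes the proof.

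The main obstacle lies in the interplay between $G$-invariance and two-sidedness: although each $\Sigma_k$ carries a $G$-invariant unit normal on all of $U$, sheets of $\Sigma_{k_l}$ may collapse together with integer multiplicity, so $\Sigma_\infty$ need not be two-sided globally. Consequently a $G$-invariant unit normal on $\Sigma_\infty$ cannot be asserted across non-simply-connected regions, which is precisely why stability of $\Sigma_\infty$ (rather than $G$-stability) is claimed only in simply connected relative open subsets of $U$. This restriction mirrors the one already present in \cite[Theorem 2.13]{li2021min}, and beyond careful bookkeeping of compatibility between the $G$-action and smooth varifold convergence, no new technical difficulty is introduced by the equivariant setting.
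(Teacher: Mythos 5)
Your proof is correct and follows essentially the same route as the paper: reduce to \cite[Theorem 2.13]{li2021min} via Lemma \ref{Lem: G-stable and stable in U}, extract a convergent subsequence on simply connected subsets, and recover $G$-invariance of the limit from the fact that varifold limits of $G$-varifolds are $G$-varifolds. The only difference is that you spell out the exhaustion-and-diagonalization step that the paper compresses into ``Since $U'$ is arbitrary,'' which is a reasonable amount of extra bookkeeping but not a different argument.
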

\begin{proof}
	By Lemma \ref{Lem: G-stable and stable in U}, the $G$-stability of $\Sigma_k$ implies the stability. 
	Fix any simply connected relative open subset $U'$ of $U$.
	By \cite[Theorem 2.13]{li2021min}, after passing to a subsequence, $(\Sigma_k,\partial \Sigma_k)$ converges in $U'$ to some almost properly embedded free boundary minimal hypersurface $(\Sigma_\infty,\partial \Sigma_\infty) \subset (U',U' \cap \partial M)$ which is also stable in $U'$. 
	Moreover, the convergence is uniform and smooth on compact subsets of $U'$. 
	Since $U'$ is arbitrary, we have an almost properly embedded free boundary minimal hypersurface $(\Sigma_\infty,\partial \Sigma_\infty) \subset (U,U \cap \partial M)$. 
	Noting the varifold limit of $G$-varifolds is also a $G$-varifold, we have $(\Sigma_\infty,\partial \Sigma_\infty)$ is a $G$-invariant hypersurface by the varifold convergence $\Sigma_k\to \Sigma_\infty$. 
	Thus, $(\Sigma_\infty,\partial \Sigma_\infty) \stackrel{G}{\subset} (U,U \cap \partial M)$. 
\end{proof}


\section{$(G,\mZ_2)$-almost minimizing varifolds with free boundary}\label{Sec:geometry measure}

\subsection{Equivalence classes of relative $G$-cycles}\label{Sec:relative G-cycle} 
In this subsection, we suppose $K$ is a compact subset in $M$ and $U\subset M$ is a relative open $G$-subset containing $K$. 
We also denote $(A,B) :=(U,U\cap\partial M)$ for simplicity. 
Note both $A$ and $B$ are $G$-sets. 

Denote $\mR_k(M;\mZ_2)$ ($\mR_k^G(M;\mZ_2)$) as the space of integer rectifiable $\mZ_2$-coefficients ($G$-invariant) $k$-currents in $\R^L$ supported in $M$ (see \cite{federer2014geometric}). 
Let $\M$ and $\F^K$ be the mass norm and the flat semi-norm (relative to $K\subset M$) in $\mR_k(M;\mZ_2)$. 
Denote by $|T|$ and $\|T\|$ the integer rectifiable varifold and the Radon measure induced by $T\in \mR_k(M;\mZ_2)$. 
Next, the spaces of {\em relative $\mZ_2$-cycles} are defined as:
\begin{eqnarray}
	&Z_k(A,B;\mZ_2)& := \{ T\in \mR_k(M;\mZ_2) : \spt(T)\subset A,~\spt(\partial T)\subset B   \},
	\\
	&Z_k^G(A,B;\mZ_2)& := \{ T\in \mR_k^G(M;\mZ_2) :  \spt(T)\subset A,~\spt(\partial T)\subset B   \},
	\\
	&Z_k(B,B;\mZ_2)& := \{ T\in \mR_k(M;\mZ_2) :  \spt(T)\subset B \}.
\end{eqnarray}
As in \cite[Definition 3.1]{li2021min}, the space of {\it equivalence classes of relative $\mZ_2$-cycles} is defined as the quotient group
$$ \Z_k(A, B;\mZ_2):=Z_k(A, B;\mZ_2)/Z_k(B, B;\mZ_2).$$
Given $\tau \in \Z_k(A, B;\mZ_2)$, the {\it support} of $\tau$ is defined as
$$ \spt(\tau):=\bigcap_{T \in \tau} \spt(T).$$
Moreover, the (relative) {\it mass norm} and the {\it flat (semi)-norm} (with respect to the compact subset $K \subset A$) is defined in $\Z_k(A, B;\mZ_2)$ by
$$ \M(\tau):=\inf_{T \in \tau} \M(T), \qquad \F^K(\tau):=\inf_{T \in \tau} \F^K(T), \qquad \forall\tau\in\Z_k(A, B;\mZ_2).$$
We say a sequence $\tau_i\in \Z_k(A, B;\mZ_2)$ {\em weakly converges} to $\tau_\infty\in \Z_k(A, B;\mZ_2)$ and write $\tau_i\rightharpoonup \tau_\infty$, if $\F^K(\tau_i-\tau_\infty)\to 0$ for every compact $K\subset A$. 

If $T\in Z_k(A,B;\mZ_2)$ and $T' := T\llcorner(A\setminus B)=T-T\llcorner B$, then $[T]=[T']\in \Z_k(A, B;\mZ_2)$. 
This element $T'\in Z_k(A, B;\mZ_2)$ is known as the {\it canonical representative of $\tau=[T]\in \Z_k(A, B;\mZ_2)$}, which is the unique element in $\tau$ with a vanishing restriction on $B$ (see \cite[Definition 3.2]{li2021min}). 

For any $\tau,\sigma\in \Z_k(A, B;\mZ_2)$, let $T,S\in Z_k(A, B;\mZ_2)$ be the canonical representative of $\tau$ and $\sigma$ respectively. 
Then the {\it $\mF$-distance between $\tau$ and $\sigma$} is defined as 
$$ \mF(\tau,\sigma):= \F^M(\tau-\sigma)+\mF(|T|,|S|), $$ 
where $|T|,|S|$ are the rectifiable varifolds induced by $T$ and $S$ (see \cite[Definition 3.12]{li2021min}). 
For any $g\in G$ and $\tau\in \Z_k(A, B;\mZ_2)$, because $G$ acts by isometries on $M$, we can define $g_{\#}\tau := [g_{\#}T] = [g_{\#} T']\in \Z_k(A, B;\mZ_2)$, where $T$ is the canonical representative of $\tau$, $T'\in \tau$. 

\begin{definition}\label{Def:relative G-cycle}
	The space of {\it equivalence classes of relative $(G,\mZ_2)$-cycles} is defined as a subspace of $\Z_k(A, B;\mZ_2)$ by:
	$$ \Z_k^G(A, B;\mZ_2) := \{\tau \in  \Z_k(A, B;\mZ_2) : g_{\#}\tau = \tau ,~\forall g\in G\} . $$
	Moreover, the definitions of $\M,~\F^K,~\spt,~\mF,$ and the canonical representative are naturally induced from $\Z_k(A, B;\mZ_2)$ to $\Z_k^G(A, B;\mZ_2)$. 
\end{definition}

\begin{remark}
	By definitions, $\tau\in  \Z_k^G(A, B;\mZ_2)$ if and only if the canonical representative $T$ of $\tau$ is $G$-invariant, i.e. $T\in Z_k^G(A,B;\mZ_2)$. 
	Thus, $\Z_k^G(A, B;\mZ_2) = Z_k^G(A, B;\mZ_2)/Z_k(B, B;\mZ_2)$. 
\end{remark}

We now generalize the lemmas in \cite[Section 3.1]{li2021min} to the equivalence classes of relative $(G,\mZ_2)$-cycles. 
Since $G$ acts by isometries, $\Z_k^G(A, B;\mZ_2)$ is a closed subspace of $\Z_k(A, B;\mZ_2)$. 
Thus, we have the following results as simple generalizations of \cite[Lemma 3.3, 3.5, 3.7]{li2021min}. 
We leave the proof to readers. 

\begin{lemma}\label{Lem:canonical representative}
	Given $\tau\in\Z_k^G(A, B;\mZ_2)$, let $T\in\tau$ be the canonical representative of $\tau$. 
	Then $T\in Z_k^G(A, B;\mZ_2) $ and $\M(\tau)=\M(T)$, $\spt(\tau)=\spt(T)$. 
\end{lemma}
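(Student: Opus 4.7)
The plan is to observe that once $G$-invariance of the canonical representative is established, the mass and support identities reduce immediately to the corresponding non-equivariant statements from \cite[Lemma 3.3, 3.5]{li2021min}, since $\M$ and $\spt$ on $\Z_k^G(A,B;\mZ_2)$ are inherited from $\Z_k(A,B;\mZ_2)$. So the first and only substantial step is to show that the canonical representative $T$ of a $G$-invariant class $\tau$ is itself $G$-invariant.

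For that step I would argue via the uniqueness characterization of $T$: it is the unique element of $\tau$ whose restriction to $B$ vanishes. Fix any $g\in G$. Because $G$ acts by isometries on $M$ and $B=U\cap\partial M$ is a $G$-set, $g$ restricts to a diffeomorphism of $M$ carrying $B$ to $B$, and therefore $g_\#(T\llcorner B)=(g_\# T)\llcorner(g\cdot B)=(g_\# T)\llcorner B$. Since $T\llcorner B=0$, this gives $(g_\# T)\llcorner B=0$. Moreover, $g_\# T\in Z_k(A,B;\mZ_2)$ because $A$ and $B$ are $G$-invariant and $g_\#$ commutes with the boundary operator. By the assumption $\tau\in\Z_k^G(A,B;\mZ_2)$ we have $g_\#\tau=\tau$, so $g_\# T\in\tau$. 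Being an element of $\tau$ with vanishing restriction on $B$, it must equal $T$ by uniqueness. Since $g$ was arbitrary, $T\in Z_k^G(A,B;\mZ_2)$.

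With $G$-invariance secured, the remaining identities follow by the standard decomposition: for each $T'\in\tau$ write $T'=T+S$ with $S\in Z_k(B,B;\mZ_2)$, and observe that $T$ and $S$ have mutually singular weight measures since $T\llcorner B=0$ and $\spt(S)\subset B$. This yields $\M(T')=\M(T)+\M(S)\geq\M(T)$, hence $\M(\tau)=\M(T)$; and $T=T'\llcorner(A\setminus B)$ gives $\spt(T)\subseteq\spt(T')\cap\Clos(A\setminus B)\subseteq\spt(T')$ for every $T'\in\tau$, which combined with $T\in\tau$ produces $\spt(\tau)=\spt(T)$.

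I anticipate no serious obstacle. The only point demanding care is the compatibility of $g_\#$ with the restriction $\llcorner B$, which rests squarely on $B$ being a $G$-set and $G$ acting by isometric diffeomorphisms; everything else is a direct quotation of the non-equivariant arguments in \cite{li2021min}.
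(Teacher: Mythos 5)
Your proof is correct and fills in exactly the argument the paper elides when it says the lemma is a "simple generalization" of \cite[Lemma 3.3, 3.5]{li2021min} and "leaves the proof to readers." Establishing $G$-invariance of the canonical representative via its uniqueness characterization (using that $g$ is an isometry preserving $B$, so $g_\#T$ also restricts to zero on $B$ and lies in $\tau=g_\#\tau$), and then inheriting the mass and support identities from the non-equivariant case, is precisely the intended route — the paper's Remark immediately following the lemma confirms this reading.
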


\begin{lemma}\label{Lem:weak convergence}
Given a sequence $\{\tau_i\}\subset \Z_k^G(A,B;\mZ_2)$ and $\tau_\infty \in\Z_k^G(A,B;\mZ_2)$, denote by $T_i \in \tau_i$ and $T_\infty \in \tau_\infty$ the canonical representatives. 
If $ \tau_i \weakto \tau_\infty$, i.e. for every compact $K \subset A$, $\F^K(\tau_i -\tau_\infty) \to 0$ as $i \to \infty$. 
Then $T_i$ converges weakly to $T_\infty$ in the open subset $A \setminus B$. 
\end{lemma}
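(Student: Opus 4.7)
The plan is to reduce directly to the non-equivariant analogue \cite[Lemma 3.7]{li2021min}, exploiting the fact that $\Z_k^G(A,B;\mZ_2)$ sits inside $\Z_k(A,B;\mZ_2)$ as a closed subspace with all of its structure naturally inherited, as already flagged in the text preceding the statement.

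First, I would use Lemma \ref{Lem:canonical representative} to record that the canonical representatives $T_i,T_\infty$ lie in $Z_k^G(A,B;\mZ_2)$ and satisfy the vanishing condition $T_i\llcorner B=0=T_\infty\llcorner B$ that characterizes canonical representatives in the ambient space too. Hence $T_i$ and $T_\infty$ are simultaneously the canonical representatives of $\tau_i$ and $\tau_\infty$ when these classes are viewed as elements of $\Z_k(A,B;\mZ_2)$. Next, by Definition \ref{Def:relative G-cycle} the flat seminorm $\F^K$ on $\Z_k^G(A,B;\mZ_2)$ is the restriction of the one on $\Z_k(A,B;\mZ_2)$, so the hypothesis $\F^K(\tau_i-\tau_\infty)\to 0$ for every compact $K\subset A$ persists for the classes viewed in the larger quotient. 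Applying \cite[Lemma 3.7]{li2021min} to $\tau_i\weakto\tau_\infty$ in $\Z_k(A,B;\mZ_2)$ then yields $T_i\weakto T_\infty$ as currents in $A\setminus B$, which is the desired conclusion.

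The main — and essentially only — subtlety is to confirm that the infimum defining $\F^K(\tau)$ over all representatives of $\tau$ agrees with the one restricted to $G$-invariant representatives when $\tau\in\Z_k^G$. This is built into the way Definition \ref{Def:relative G-cycle} inherits $\F^K$ from the ambient space, but should clarification be needed, it is the direct $G$-averaging argument: for any representative $T\in\tau$, set $T_G:=\int_G g_{\#}T\,d\mu(g)$, which is a $G$-invariant representative of the same class and satisfies $\F^K(T_G)\le\F^K(T)$ by convexity of $\F^K$ and the fact that $G$ acts by isometries — exactly the trick used in the proof of Lemma \ref{Lem: G-stationary with free boundary and stationary}. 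Beyond this book-keeping, no new difficulty arises relative to \cite{li2021min}, which is why the author leaves the proof as an exercise.
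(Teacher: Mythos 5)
Your reduction is exactly what the paper intends: the text preceding the lemma says the three lemmas are ``simple generalizations of \cite[Lemma 3.3, 3.5, 3.7]{li2021min}'' and ``We leave the proof to readers,'' so the correct move is precisely to observe that $\Z_k^G(A,B;\mZ_2)$ is a closed subspace of $\Z_k(A,B;\mZ_2)$ with $\F^K$, $\M$, and the canonical representative defined by restriction, and then to quote the non-equivariant version. Two corrections, however. First, from the paper's own parenthetical bijection, the weak-convergence statement corresponds to \cite[Lemma 3.5]{li2021min}, not Lemma 3.7 (Lemma 3.7 of Li--Zhou is the lower-semicontinuity statement, which becomes Lemma \ref{Lem:lower-semicontinue of M} here); you should cite 3.5. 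Second, the ``subtlety'' you flag and propose to resolve by $G$-averaging is not a real issue, and the averaging argument you give does not actually make sense here: there is no $\mathbb{R}$-linear structure on the space of $\mZ_2$-coefficient rectifiable currents, so $\int_G g_\# T\,d\mu(g)$ is not a $\mZ_2$-current and cannot produce a $G$-invariant representative the way $\int_G (g^{-1})_*X\,d\mu(g)$ does for smooth vector fields in Lemma \ref{Lem: G-stationary with free boundary and stationary}. No such device is needed: Definition \ref{Def:relative G-cycle} declares $\F^K$ on $\Z_k^G(A,B;\mZ_2)$ to be the restriction of $\F^K$ on $\Z_k(A,B;\mZ_2)$ (and the ambient infimum runs over all representatives in $\tau$, which already differ by elements of $Z_k(B,B;\mZ_2)$ that need not be $G$-invariant), so the hypothesis $\F^K(\tau_i-\tau_\infty)\to 0$ transfers verbatim and \cite[Lemma 3.5]{li2021min} applies with no further verification.
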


\begin{lemma}[Lower semi-continuity of $\M$]\label{Lem:lower-semicontinue of M}
Given $\tau_i \weakto \tau_\infty$ in $\Z_k^G(A,B;\mZ_2)$, then 
$$ \M(\tau_{\infty})\leq\liminf_{i\rightarrow\infty}\M(\tau_i).$$
If $\spt(\tau_i)\subset K$, $i\in\N$, for some compact $K \subset A$, then $\spt(\tau_{\infty})\subset K$.
\end{lemma}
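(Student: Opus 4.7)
The plan is to reduce both claims to the classical lower semi-continuity of mass by passing to canonical representatives. First, I will invoke Lemma \ref{Lem:canonical representative} to let $T_i \in \tau_i$ and $T_\infty \in \tau_\infty$ denote the canonical representatives, so that $\M(\tau_i) = \M(T_i)$, $\M(\tau_\infty) = \M(T_\infty)$, $\spt(\tau_i) = \spt(T_i)$, and $\spt(\tau_\infty) = \spt(T_\infty)$, with all of these currents $G$-invariant and having vanishing restriction on $B$.

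Next, Lemma \ref{Lem:weak convergence} provides that $T_i$ converges weakly as currents to $T_\infty$ on the open set $A \setminus B$. Because the canonical representatives satisfy $T_i \llcorner B = 0 = T_\infty \llcorner B$, the mass is concentrated on $A \setminus B$:
$$\M(T_i) = \|T_i\|(A \setminus B), \qquad \M(T_\infty) = \|T_\infty\|(A \setminus B).$$
Applying the standard lower semi-continuity of the mass measure under weak convergence on the open set $A \setminus B$ then gives $\M(T_\infty) \le \liminf_{i} \M(T_i)$, which yields the first inequality.

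For the support statement, I would assume $\spt(T_i) \subset K$ for all $i$ and argue pointwise in $A \setminus K$. For any $p \in A \setminus K$, choose a relatively open neighborhood $V \subset A$ of $p$ disjoint from $K$. Then $\|T_i\|(V) = 0$ for every $i$, so by the weak convergence on $A \setminus B$ we get $\|T_\infty\|(V \cap (A \setminus B)) = 0$. Combined with $T_\infty \llcorner B = 0$, this gives $\|T_\infty\|(V) = 0$, hence $p \notin \spt(T_\infty)$. Therefore $\spt(T_\infty) \subset K$, as desired.

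I do not expect a serious obstacle: the $G$-invariance plays no role in the estimates themselves and enters only through Lemma \ref{Lem:canonical representative} (ensuring canonical representatives lie in $Z_k^G(A,B;\mZ_2)$) and Lemma \ref{Lem:weak convergence} (which transfers weak convergence of equivalence classes to weak convergence of canonical currents on $A\setminus B$). After these reductions, the argument is a direct adaptation of the non-equivariant case in \cite[Lemma 3.7]{li2021min}. The only point requiring care is that mass and support are defined as an infimum/intersection over an entire equivalence class, so working throughout with the canonical representative is essential.
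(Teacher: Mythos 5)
Your proof is correct, and it follows exactly the route the paper intends: the text remarks that Lemma \ref{Lem:lower-semicontinue of M} is a ``simple generalization'' of \cite[Lemma 3.7]{li2021min} obtained via the fact that $\Z_k^G(A,B;\mZ_2)$ is a closed subspace of $\Z_k(A,B;\mZ_2)$, with the proof left to the reader, and your argument is precisely the expected unpacking — pass to canonical representatives via Lemma \ref{Lem:canonical representative}, transfer to weak convergence of currents on $A\setminus B$ via Lemma \ref{Lem:weak convergence}, and invoke standard lower semi-continuity of mass and the vanishing-on-$B$ property for both the mass and support conclusions.
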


Denote by ${\bf I}_k(A;\mZ_2)\subset Z_k(A,A;\mZ_2)$ the space of integral $\mZ_2$-coefficients $k$-currents supported in $A$, i.e. $\partial T\in \mR_{k-1}(A;\mZ_2) $ if $T\in {\bf I}_k(A;\mZ_2)$. 
Let $${\bf I}_k^G(A;\mZ_2) := \{T\in {\bf I}_k(A;\mZ_2) : g_\#T=T, \forall g\in G \},$$ which is a {\em closed} subspace of $ {\bf I}_k(A;\mZ_2) $ since $G$ acts by isometries. 
Noting the sets $A$ and $B$ are both $G$-invariant, the distance function $\dist_A(\cdot, B)$ is clearly a $G$-invariant function. 
Hence, the slice of a $G$-current $T\in Z_k^G(A, B;\mZ_2) $ by $\dist_A(\cdot, B)$ is also a $G$-current. 
Additionally, let $\pi'$ be the nearest point projection map onto $B$ defined on a small $G$-neighborhood of $B$ in $A$. 
Then $\pi'$ is $G$-equivariant. 
Therefore, the slicing trick and the cutoff trick in the proof of \cite[Lemma 3.8, 3.10, 3.13]{li2021min} can be applied in our equivariant settings, which gives the following three lemmas: 

\begin{lemma}\label{Lem:mass compute among integral currents}
Given $\tau\in\Z_k^G(A,B;\mZ_2)$, there exists a sequence $T_i\in \tau$ such that 
\begin{itemize}
	\item $T_i\in \bI^G_k(A;\mZ_2)$ for every $i$,
	\item $\lim_{i\to\infty}\M(T_i)=\M(\tau)$.
\end{itemize}
\end{lemma}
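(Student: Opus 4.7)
The plan is to adapt the non-equivariant proof \cite[Lemma 3.8]{li2021min} and observe that every step in that construction automatically preserves $G$-invariance when the starting datum is $G$-invariant. To begin, I would apply Lemma~\ref{Lem:canonical representative} and replace $\tau$ by its canonical representative $T\in Z_k^G(A,B;\mZ_2)$, so that $T\llcorner B=0$ and $\M(T)=\M(\tau)$; this reduces the problem to modifying the $G$-invariant current $T$ slightly so that its boundary becomes rectifiable in $A$ (rather than just supported in $B$), while keeping the equivalence class and the mass essentially fixed.

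The key geometric ingredients of the Li--Zhou construction are the distance function $d(x):=\dist_A(x,B)$ and the nearest-point projection $\pi'$ from a small neighborhood of $B$ in $A$ onto $B$. As noted in the discussion preceding the lemma, both are available with $G$-symmetry: $d$ is $G$-invariant and $\pi'$ is $G$-equivariant, because $B$ is a $G$-set and $G$ acts by isometries. For almost every small $r>0$, the slice $\langle T,d,r\rangle$ is a rectifiable $(k-1)$-current, and since it is the slice of a $G$-invariant current by a $G$-invariant function, it is automatically $G$-invariant. Following the Li--Zhou recipe, I would let $T_r$ be the modification of $T$ obtained by truncating $T$ at $\{d\geq r\}$ and adding the correction term produced by the standard homotopy along the normal geodesics connecting $\pi'(x)$ to $x$; this correction is supported in $B$, so $T_r$ represents the same class $\tau$, and $\partial T_r$ is rectifiable in $A$. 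Since $\pi'$ and the homotopy are $G$-equivariant and the truncation level is a $G$-invariant sublevel set, $T_r$ is $G$-invariant, whence $T_r\in\bI_k^G(A;\mZ_2)\cap\tau$.

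For the mass, since $T\llcorner B=0$ we have $\|T\|(\{0<d\leq r\})\to 0$ as $r\to 0^+$, and the mass of the homotopy correction is controlled by a constant multiple of $\|T\|(\{d\leq r\})$; hence $\M(T_r)\to\M(T)=\M(\tau)$. Choosing a sequence $r_i\to 0^+$ of admissible slicing values yields the required sequence $T_i:=T_{r_i}$. The main technical point is simply to verify that every ingredient of the construction in \cite[Lemma 3.8]{li2021min} remains $G$-equivariant when applied to a $G$-invariant current, which ultimately boils down to the $G$-invariance of $d$ and the $G$-equivariance of $\pi'$ noted above; once this is checked, the rest of the argument is a direct transcription from the non-equivariant setting.
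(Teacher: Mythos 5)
Your overall strategy matches the paper: reduce to the canonical representative, slice by the $G$-invariant distance $d(\cdot)=\dist_A(\cdot,B)$, and use the $G$-equivariant nearest-point projection $\pi'$, observing that everything in the Li--Zhou construction commutes with the $G$-action. That is exactly the observation the paper makes before stating this lemma, and your remarks about $G$-invariance of $d$, $G$-invariance of the slices, and $G$-equivariance of $\pi'$ are all correct.

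However, the specific construction you describe has a genuine gap. You set $T_r := T\llcorner\{d\geq r\}$ plus "the correction term produced by the standard homotopy along the normal geodesics connecting $\pi'(x)$ to $x$", and then claim that "this correction is supported in $B$, so $T_r$ represents the same class $\tau$." Both halves of that sentence fail. First, the homotopy fill $H_\#([[0,1]]\times\langle T,d,r\rangle)$ is supported in the full collar $\{0\le d\le r\}$, not in $B$: it is a cone between the level set $\{d=r\}$ and $B$, so it sweeps through the entire slab. Second, and more fundamentally, even if you replaced the homotopy fill by a current genuinely supported in $B$, the truncation itself destroys membership in $\tau$: we would have $T_r - T = -\,T\llcorner\{0<d<r\} + (\text{something in }B)$, and $T\llcorner\{0<d<r\}$ is not supported in $B$ (generically its $\|T\|$-measure is positive off $B$). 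So $T_r\notin\tau$. The correct modification must leave $T$ itself intact and add a $B$-supported current. Concretely, with $P_r := T\llcorner\{d<r\} + H_\#([[0,1]]\times\langle T,d,r\rangle)$ one checks (in $\mZ_2$) that $\partial P_r = \partial T + \pi'_\#\langle T,d,r\rangle$ is supported in $B$, and defining $S_r := \pi'_\#P_r$ (supported in $B$, with $\partial S_r = \partial P_r$) and $T_r := T + S_r$ gives $T_r - T = S_r$ supported in $B$ (so $T_r\in\tau$), $\partial T_r = \pi'_\#\langle T,d,r\rangle$ rectifiable (so $T_r\in\bI_k^G(A;\mZ_2)$), and $\M(T_r) = \M(T) + \M(S_r)$ with $\M(S_r)\to 0$ along a suitable sequence $r_i\to 0$ by the coarea estimate. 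All the maps and slices used are $G$-equivariant/invariant as you note, so $T_r$ is $G$-invariant; but the addition-to-$T$ structure, rather than truncation-of-$T$, is what makes $T_r$ remain in $\tau$.
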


\begin{lemma}[Compactness theorem for relative $(G,\mZ_2)$-cycles]\label{Lem:compactness for cycles}
Given $C>0$ and compact Lipschitz neighborhood retracts $K,K' \subset A$ (see \cite{almgren1962homotopy}) such that $K \subset \interior_M(K')$, the set
$$ \Z_{k, K, C}^G(A, B;\mZ_2):=\{\tau\in\Z_{k}^G(A, B;\mZ_2) : \spt(\tau)\subset K,\ \M(\tau)\leq C\}$$
is (sequentially) compact in the $\F^{K^{\pr}}$ topology. 
\end{lemma}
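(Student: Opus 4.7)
The plan is to reduce to the non-equivariant compactness theorem and then verify that $G$-invariance is preserved in the limit. Specifically, given a sequence $\{\tau_i\} \subset \Z_{k,K,C}^G(A,B;\mZ_2)$, I view it as a sequence in the larger set $\Z_{k,K,C}(A,B;\mZ_2)$ of (not necessarily $G$-invariant) equivalence classes of relative $\mZ_2$-cycles. The corresponding non-equivariant relative compactness theorem (the analogue of \cite[Lemma 3.13]{li2021min}, which our excerpt cites as the model for this statement) yields, after passing to a subsequence, an $\F^{K'}$-limit $\tau_\infty \in \Z_k(A,B;\mZ_2)$ with $\spt(\tau_\infty) \subset K$ and $\M(\tau_\infty) \leq C$ (the latter two by Lemma \ref{Lem:lower-semicontinue of M}).

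The remaining task is to show $\tau_\infty \in \Z_k^G(A,B;\mZ_2)$. The key observation is that because $G$ acts by isometries on $M$, for each $g \in G$ the pushforward $g_\#$ preserves mass and the flat semi-norm, hence acts as an isometry on $(\Z_k(A,B;\mZ_2), \F^{K'})$. More concretely, if $T_i \in \tau_i$ is the canonical representative and $T_\infty \in \tau_\infty$ is the canonical representative (so $T_\infty$ is $G$-invariant if and only if $\tau_\infty$ is, by Lemma \ref{Lem:canonical representative}), then for every $g \in G$
\[
\F^{K'}(g_\# \tau_\infty - \tau_\infty) \leq \F^{K'}(g_\# \tau_\infty - g_\# \tau_i) + \F^{K'}(g_\# \tau_i - \tau_i) + \F^{K'}(\tau_i - \tau_\infty),
\]
where the middle term vanishes by $G$-invariance of $\tau_i$, and the outer terms tend to zero as $i \to \infty$ by the isometry property of $g_\#$ and the assumed convergence. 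Hence $g_\# \tau_\infty = \tau_\infty$ for every $g \in G$, so $\tau_\infty \in \Z_k^G(A,B;\mZ_2)$.

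I expect no serious obstacle: the whole argument is essentially the observation that $\Z_k^G(A,B;\mZ_2)$ is a closed subspace of $\Z_k(A,B;\mZ_2)$ in the $\F^{K'}$-topology, so that sequential compactness is inherited. The only subtle point to double-check is that the $g_\#$ action is well-defined and continuous on \emph{equivalence classes} (not merely on currents), which follows because $g$ preserves $B$ (and hence maps $Z_k(B,B;\mZ_2)$ to itself) and is an isometry. Consequently, the canonical representative of $g_\#\tau$ is $g_\#$ applied to the canonical representative of $\tau$, matching the definition given just before Definition \ref{Def:relative G-cycle}.
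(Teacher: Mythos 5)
Your approach is correct in substance, but it is not the route the paper sketches. The paper lumps this compactness lemma together with Lemmas~\ref{Lem:mass compute among integral currents} and~\ref{Lem:F-isoperimetric} and says the \emph{equivariant slicing and cutoff trick} from the proof of \cite[Lemma 3.10]{li2021min} carries over --- i.e., it redoes the Li--Zhou argument intrinsically inside $\Z_k^G$, using that $\dist_A(\cdot,B)$ is $G$-invariant (so the slices of a $G$-current are $G$-currents) and that the nearest-point projection onto $B$ is $G$-equivariant. Your argument instead treats the non-equivariant compactness theorem as a black box and appeals to the closedness of $\Z_k^G$ inside $\Z_k$, which is the observation the paper uses only for the ``simple'' Lemmas~\ref{Lem:canonical representative}--\ref{Lem:lower-semicontinue of M}. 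Both routes are valid; yours is the more economical one.

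Two minor points to tighten. First, you cite ``the analogue of \cite[Lemma 3.13]{li2021min}'' as the non-equivariant compactness theorem, but the paper's list pairs this lemma with \cite[Lemma 3.10]{li2021min}; Lemma 3.13 is the $\mF$-metric comparison (which corresponds here to Lemma~\ref{Lem:F-metric}). Second, your isometry estimate $\F^{K'}(g_\#\tau_\infty - g_\#\tau_i) = \F^{K'}(\tau_\infty - \tau_i)$ is not quite immediate because $K'$ is not assumed $G$-invariant: what one gets from the orthogonal action on $\R^L$ is $\F^{g(K')}(g_\#S) = \F^{K'}(S)$, not equality for a fixed $K'$. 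This is easily patched --- either replace $K,K'$ by $G$-invariant compact neighborhood retracts (which one may do since $\spt(\tau_i)$ is $G$-invariant and contained in $\bigcap_{g\in G} g(K)$), or skip the $\F^{K'}$-isometry altogether and argue via Lemma~\ref{Lem:weak convergence}: the canonical representatives $T_i$ converge weakly as currents in $A\setminus B$, weak limits of $G$-invariant currents are $G$-invariant (since $g_\#T_i(\omega)=T_i(g^\#\omega)$), and by Lemma~\ref{Lem:canonical representative} the $G$-invariance of the canonical representative is equivalent to $\tau_\infty\in\Z_k^G(A,B;\mZ_2)$. With either fix the argument is complete.
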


\begin{lemma}\label{Lem:F-metric}
Let $\tau, \tau_i \in \Z_k^G(M, \partial M;\mZ_2)$, $i\in\N$. 
Then $\mF(\tau_i,\tau) \to 0$ if and only if $\F(\tau_i-\tau) \to 0$ and $\M(\tau_i) \to \M(\tau)$.
\end{lemma}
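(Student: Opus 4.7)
The plan is to follow the strategy of \cite[Lemma 3.13]{li2021min}, noting that the arguments transfer directly to the equivariant setting because $\Z_k^G(M,\partial M;\mZ_2)$ is a closed subspace of $\Z_k(M,\partial M;\mZ_2)$ and, by Lemma \ref{Lem:canonical representative}, the canonical representative of any $\tau \in \Z_k^G(M,\partial M;\mZ_2)$ is a $G$-invariant current with $\M(\tau) = \M(T)$. Throughout, write $T_i$ and $T$ for the canonical representatives of $\tau_i$ and $\tau$.

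The forward direction is immediate from the definition $\mF(\tau_i,\tau) = \F^M(\tau_i-\tau) + \mF(|T_i|,|T|)$. Indeed, $\mF(\tau_i,\tau)\to 0$ gives both $\F^M(\tau_i-\tau)\to 0$ and $\mF(|T_i|,|T|) \to 0$. The latter implies $\|T_i\|(M)\to \|T\|(M)$, and by Lemma \ref{Lem:canonical representative} this is exactly $\M(\tau_i)\to\M(\tau)$.

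For the backward direction, assume $\F(\tau_i-\tau)\to 0$ and $\M(\tau_i)\to \M(\tau)$. Since $\M(T_i) = \M(\tau_i)$ is uniformly bounded, by the compactness of Radon measures, every subsequence of $|T_i|$ admits a further subsequence converging as varifolds to some $V \in \V_k^G(M)$ (the $G$-invariance of $V$ follows from that of each $|T_i|$). The flat convergence, combined with Lemma \ref{Lem:weak convergence}, gives that $T_i \to T$ weakly as currents in $A\setminus B = M\setminus \partial M$; hence $|T_i| \to |T|$ as varifolds on $M\setminus \partial M$, forcing $V\llcorner (M\setminus \partial M) = |T|$. Because the canonical representatives satisfy $\|T_i\|(\partial M) = 0 = \|T\|(\partial M)$, the mass convergence $\|V\|(M) = \lim\|T_i\|(M) = \|T\|(M) = \|T\|(M\setminus \partial M)$ forces $\|V\|(\partial M) = 0$, so $V = |T|$ globally. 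As the subsequence was arbitrary, the full sequence satisfies $|T_i|\to |T|$ as varifolds. Since the masses $\|T_i\|(M)$ are uniformly bounded and converge to $\|T\|(M)$, the $\mF$-metric of \cite[2.1(19)]{pitts2014existence} metrizes this convergence on mass-bounded sets, yielding $\mF(|T_i|,|T|)\to 0$. Together with the hypothesis $\F^M(\tau_i-\tau)\to 0$, this gives $\mF(\tau_i,\tau)\to 0$.

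The only subtlety, and the point requiring care, is that flat convergence of equivalence classes only gives current convergence in the interior $M\setminus \partial M$; the boundary behavior must be recovered separately. This is accomplished by exploiting that canonical representatives carry no mass on $\partial M$, so the mass convergence of $\tau_i$ to $\tau$ rigidifies any varifold accumulation point to coincide with $|T|$.
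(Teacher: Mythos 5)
Your forward direction and the concluding appeal to the fact that the $\mF$-metric metrizes varifold convergence on mass-bounded sets are both fine, and your outline for the backward direction (extract a subsequential varifold limit $V$ and show $V=|T|$) is the right skeleton. There is, however, a genuine gap in the middle step. The sentence that $T_i\to T$ weakly as currents in $M\setminus\partial M$ \emph{hence} $|T_i|\to |T|$ as varifolds on $M\setminus\partial M$ is not a valid implication: weak convergence of integer rectifiable currents does not imply varifold convergence of the induced varifolds. With $\mZ_2$ coefficients, take two parallel sheets at distance $1/i$: as currents mod $2$ they cancel and converge weakly to $0$, while $|T_i|$ converges to a multiplicity-two plane. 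In that example $\M(T_i)\not\to\M(T)$, which is exactly the point -- the mass-convergence hypothesis is what rules out this failure, so it must enter at this step, not only afterwards. As written, you establish $V\llcorner(M\setminus\partial M)=|T|$ using only the flat convergence and the existence of the subsequential limit $V$, so the same reasoning would apply verbatim to the counterexample and reach a false conclusion; the hypothesis $\M(\tau_i)\to\M(\tau)$ is brought in only later, to conclude $\|V\|(\partial M)=0$.

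The repair is to feed the mass convergence into the identification of $V$ on $M\setminus\partial M$: combine lower semicontinuity of mass on open sets $U\subset\subset M\setminus\partial M$, the global convergence $\M(T_i)\to\M(T)$, the fact that the canonical representatives carry no mass on $\partial M$, and the weak convergence of weight measures $\|T_i\|\to\|V\|$ to first get $\|V\|=\|T\|$ as Radon measures, and then upgrade this to $V=|T|$ (this last step, controlling the tangent-plane part, is exactly what the argument in \cite[Lemma 3.13]{li2021min}, cf. \cite[2.1(18)]{pitts2014existence}, packages). That is precisely what the paper invokes: the statement is purely about relative $\mZ_2$-cycles and is blind to the group action, so the Li--Zhou proof carries over once one records, as you did at the outset, that the canonical representative of a $G$-invariant class is $G$-invariant with the same mass.
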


\medskip
We now show the isoperimetric lemmas for equivalent classes of relative $(G,\mZ_2)$-cycles. 
Since $\Z_k^G(A, B;\mZ_2)$ is a subspace of $\Z_k(A, B;\mZ_2)$, we can apply the isoperimetric lemmas \cite[Lemma 3.15, 3.17]{li2021min} directly. 
The main difficulty here is to show the isoperimetric choice is still $G$-invariant.
In what follows, we denote $\F=\F^M$ for simplicity.

\begin{lemma}[$G$-invariant $\F$-isoperimetric lemma]\label{Lem:F-isoperimetric}
	There exist $\ep_M>0$ and $C_M \geq 1$ depending only on the isometric embedding $M \hookrightarrow \R^L$, such that for any $\tau_1, \tau_2\in \Z_n^G(M, \partial M;\mZ_2)$ with $ \F(\tau_2-\tau_1) < \ep_M$,
	there exists $Q\in\mI_{n+1}^G(M;\mZ_2)$, called \emph{the $\F$-isoperimetric choice} of $\tau_1,\tau_2$, satisfying
	\begin{itemize}
		\item $\spt(T_2-T_1-\partial Q)\subset\partial M$, 
		\item $\M(Q)\leq C_M \F(\tau_2-\tau_1)$, 
	\end{itemize}
	where $T_1,T_2$ are the canonical representatives of $\tau_1,\tau_2$ respectively. 
\end{lemma}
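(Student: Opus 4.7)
The plan is to reduce to the non-equivariant isoperimetric lemma \cite[Lemma 3.15]{li2021min} and observe that, in the present top-dimensional setting, the resulting filling is automatically $G$-invariant once $\ep_M$ is chosen sufficiently small.

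First, I would apply \cite[Lemma 3.15]{li2021min} directly to $\tau_1,\tau_2$ viewed as elements of $\Z_n(M,\partial M;\mZ_2)$. This yields constants $\ep_M^{LZ}>0$, $C_M^{LZ}\geq 1$ and a filling $Q_0\in\mI_{n+1}(M;\mZ_2)$ such that $\spt(T_2-T_1-\partial Q_0)\subset\partial M$ and $\M(Q_0)\leq C_M^{LZ}\F(\tau_2-\tau_1)$. For each $g\in G$, the pushforward $g_\# Q_0$ is again a valid filling: since the canonical representatives $T_1,T_2$ are $G$-invariant, $g_\#(T_2-T_1)=T_2-T_1$; and the error $S_0:=T_2-T_1-\partial Q_0$ satisfies $\spt(g_\# S_0)\subset g\cdot\partial M=\partial M$, with $\M(g_\# Q_0)=\M(Q_0)$.

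The crux is then the following observation: $Q_0-g_\# Q_0\in\mR_{n+1}(M;\mZ_2)$ has boundary $g_\# S_0+S_0$ supported in $\partial M$, so it represents an element of $\Z_{n+1}(M,\partial M;\mZ_2)$. Since $M$ is connected and $\partial M$ has dimension $n$, every element of $Z_{n+1}(M,\partial M;\mZ_2)$ is either $0$ or $[[M]]$ (a codimension-$0$ rectifiable $\mZ_2$-set with vanishing interior boundary is either empty or full in connected $M$), and $Z_{n+1}(\partial M,\partial M;\mZ_2)=\{0\}$ by dimension. Hence the class of $Q_0-g_\# Q_0$ in $\Z_{n+1}(M,\partial M;\mZ_2)$ is either $0$ or $[[M]]$.

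To rule out the second alternative, I would set $\ep_M:=\min\{\ep_M^{LZ},\,\mathcal{H}^{n+1}(M)/(3C_M^{LZ})\}$ and $C_M:=C_M^{LZ}$. Then $\M(Q_0-g_\# Q_0)\leq 2\M(Q_0)\leq 2C_M^{LZ}\F(\tau_2-\tau_1)<\mathcal{H}^{n+1}(M)=\M([[M]])$, so $Q_0-g_\# Q_0$ cannot equal $[[M]]$; therefore $Q_0-g_\# Q_0=0$ in $\Z_{n+1}(M,\partial M;\mZ_2)$, and by the dimension gap this forces $g_\# Q_0=Q_0$ as currents for every $g\in G$. This shows $Q:=Q_0\in\mI_{n+1}^G(M;\mZ_2)$ is the desired $\F$-isoperimetric choice. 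The main obstacle is the calibration of $\ep_M$ so that the (possibly non-invariant) ``defect'' $Q_0-g_\# Q_0$ is of mass strictly less than $\mathcal{H}^{n+1}(M)$, thereby triggering the top-dimensional rigidity; the argument would break down in positive codimension, where an honest $G$-invariant symmetrization (e.g.\ via coarea thresholding of $\int_G\chi_{g\cdot E_0}\,d\mu(g)$) would be required instead.
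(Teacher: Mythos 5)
Your proposal is correct and takes essentially the same route as the paper: apply the non-equivariant isoperimetric lemma of Li--Zhou, observe via the Constancy Theorem that the difference of any two admissible fillings is either $0$ or $[[M]]$ (using $Z_{n+1}(\partial M,\partial M;\mZ_2)=\{0\}$ by dimension), calibrate $\ep_M$ so that the mass bound excludes $[[M]]$, and then conclude $g_\#Q=Q$ for all $g\in G$ because $g_\#Q$ is also an admissible filling. The paper phrases this as uniqueness of the $\F$-isoperimetric choice rather than directly bounding $\M(Q_0-g_\#Q_0)$, but the underlying mechanism is identical.
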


\begin{proof}
	Let $\epsilon_1>0$ and $C_M\geq 1$ be the constants in \cite[Lemma 3.15]{li2021min}. 
	Take $\epsilon_M\in (0, \epsilon_1)$ sufficiently small so that $2C_M \epsilon_M<\M(M)$. 
	Then, for any $\tau_1, \tau_2\in \Z_n^G(M, \partial M;\mZ_2)$ with $ \F(\tau_2-\tau_1) < \ep_M$, there exists $Q\in\mI_{n+1}(M;\mZ_2)$ so that $\spt(T_2-T_1-\partial Q)\subset\partial M$ and $\M(Q)\leq C_M \F(\tau_2-\tau_1)$ by \cite[Lemma 3.15]{li2021min}. 
	We claim this $Q$ is unique. 
	Indeed, if there is another $P\in \mI_{n+1}(M;\mZ_2)$ satisfies $\spt(T_2-T_1-\partial P)\subset\partial M$ and $\M(P)\leq C_M \F(\tau_2-\tau_1)$. 
	Then we have $\spt(\partial(Q-P))\subset \partial M$, which implies $Q-P = kM$ for some $k\in\{0,1\}$ by the Constancy Theorem \cite[Theorem 26.27]{simon1983lectures} and $\mZ_2$-coefficients. 
	Additionally, 
	$$\M(P-Q) \leq \M(P) + \M(Q) \leq 2C_M\epsilon_M<\M(M).$$
	Thus, we have $P-Q=0$, i.e. the $\F$-isoperimetric choice in \cite[Lemma 3.15]{li2021min} is unique with such $\epsilon_M$. 
	Furthermore, since $G$ acts by isometries and $T_1, T_2$ are $G$-invariant, it is clear that $g_\# Q$ is also an $\F$-isoperimetric choice of $\tau_1$ and $\tau_2$ in the sense of \cite[Lemma 3.15]{li2021min} for every $g\in G$. 
	The uniqueness suggests $Q = g_\# Q$, i.e. $Q\in  \mI_{n+1}^G(M;\mZ_2)$. 
\end{proof}

\begin{lemma}[$G$-invariant $\M$-isoperimetric lemma]\label{Lem:M-isoperimetric}
	There are $\ep_M>0$ and $C_M \geq 1$ depending only on the isometric embedding $M \hookrightarrow \R^L$, such that for any $\tau_1, \tau_2\in \Z_n^G(M, \partial M;\mZ_2)$ with $ \M(\tau_2-\tau_1) < \ep_M,$
	there exist $Q\in\mI_{n+1}^G(M;\mZ_2)$ and $R\in \mR_n^G(\partial M;\mZ_2)$ satisfying
	\begin{itemize}
		\item $T_2-T_1=\partial Q+R$,
		\item $\M(Q)+\M(R)\leq C_M \M (\tau_2-\tau_1)$,
	\end{itemize}
	where $T_1,T_2$ are the canonical representatives of $\tau_1,\tau_2$ respectively.
\end{lemma}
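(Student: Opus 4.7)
The plan is to mirror closely the proof of Lemma~\ref{Lem:F-isoperimetric} (the $\F$-version just given): first apply the non-equivariant $\M$-isoperimetric lemma \cite[Lemma 3.17]{li2021min} to obtain some pair $(Q,R) \in \mI_{n+1}(M;\mZ_2) \times \mR_n(\partial M;\mZ_2)$ with $T_2-T_1=\partial Q+R$ and $\M(Q)+\M(R)\leq C_M\M(\tau_2-\tau_1)$; then establish a uniqueness statement for this pair under a sufficiently small $\epsilon_M$; and finally invoke uniqueness together with the isometric action of $G$ to upgrade $(Q,R)$ to a $G$-invariant pair.

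More precisely, let $\epsilon_1>0$ and $C_M\geq 1$ be the constants from \cite[Lemma 3.17]{li2021min}, and fix $\epsilon_M\in(0,\epsilon_1)$ so small that $2C_M\epsilon_M<\M(M)$. Given $(Q,R)$ and another candidate $(Q',R')$ from the cited lemma, the identity $\partial(Q-Q')=R'-R$ shows that $\partial(Q-Q')$ is supported in $\partial M$; since $Q-Q'\in \mI_{n+1}(M;\mZ_2)$ has support in $M$, the Constancy Theorem \cite[Theorem 26.27]{simon1983lectures} together with the $\mZ_2$ coefficients gives $Q-Q'=kM$ for some $k\in\{0,1\}$. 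But $\M(Q-Q')\leq 2C_M\epsilon_M<\M(M)$ forces $k=0$, hence $Q=Q'$ and consequently $R=R'$. This is the key uniqueness step.

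With uniqueness in hand, the equivariance is automatic: for every $g\in G$, the pair $(g_\#Q,g_\#R)$ satisfies $g_\#T_2-g_\#T_1=\partial(g_\#Q)+g_\#R$ with the same mass bound, and since $\tau_1,\tau_2$ are $G$-invariant so are their canonical representatives $T_1,T_2$ by Lemma~\ref{Lem:canonical representative}. Moreover $g_\#R$ is still supported in $\partial M$ because $g$ is an isometry preserving $\partial M$. Hence $(g_\#Q,g_\#R)$ is also a valid pair in the sense of \cite[Lemma 3.17]{li2021min}, so by uniqueness $g_\#Q=Q$ and $g_\#R=R$, yielding $Q\in\mI^G_{n+1}(M;\mZ_2)$ and $R\in\mR^G_n(\partial M;\mZ_2)$.

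The main (mild) obstacle is the uniqueness argument: the non-equivariant lemma in \cite{li2021min} does not assert uniqueness, and one must verify that the $\mZ_2$-Constancy Theorem applies cleanly to $Q-Q'$ on all of $M$. This is exactly the place where the smallness of $\epsilon_M$ relative to $\M(M)$ is essential; everything else is formal manipulation identical to the $\F$-isoperimetric case.
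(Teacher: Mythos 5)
Your proposal is correct and follows essentially the same approach as the paper: invoke the non-equivariant $\M$-isoperimetric lemma \cite[Lemma 3.17]{li2021min}, establish uniqueness of the pair $(Q,R)$ via the Constancy Theorem and the smallness condition $2C_M\epsilon_M<\M(M)$, and then deduce $G$-invariance from uniqueness. The only difference is cosmetic — the paper defers the uniqueness argument by citing the identical reasoning in its proof of Lemma~\ref{Lem:F-isoperimetric}, whereas you spell it out, which is harmless.
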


\begin{proof}
	Let $\epsilon_2>0$ and $C_M\geq 1$ be the constants in \cite[Lemma 3.17]{li2021min}. 
	Take $\epsilon_M\in (0, \epsilon_2)$ small enough so that $2C_M\epsilon_M < \M(M)$. 
	Then, for any $\tau_1, \tau_2\in \Z_n^G(M, \partial M;\mZ_2)$ with $ \M(\tau_2-\tau_1) < \ep_M$, there exist $Q\in\mI_{n+1}(M;\mZ_2)$ and $R\in \mR_n(\partial M;\mZ_2)$ so that $T_2-T_1 = \partial Q + R$ and $\M(Q) + \M(R)\leq C_M \M(\tau_2-\tau_1)$ by \cite[Lemma 3.17]{li2021min}. 
	If there is another $P\in \mI_{n+1}(M;\mZ_2)$ satisfying $ R' := \partial P - (T_2 - T_1)\in \mR_n(\partial M;\mZ_2)$ and $\M(P) + \M(R')\leq C_M \M(\tau_2-\tau_1)$. 
	Then the arguments in Lemma \ref{Lem:F-isoperimetric} imply $Q=P$. 
	Thus, the uniqueness of such $Q$ indicates $Q\in \mI_{n+1}^G(M;\mZ_2)$ is $G$-invariant. 
	Obviously, $R = T_2 - T_1 - \partial Q$ is also $G$-invariant. 
\end{proof}

\subsection{Definitions of almost minimizing varifolds}

In this subsection, we introduce the $G$-almost minimizing varifolds with free boundary. 
In the rest of this section, take a relative open $G$-subset $U\subset M$, and let $\F=\F^M$.

\begin{definition}\label{a.m.deform}
	Let $\underline{\underline{\nu}} = \F$, $\M$ or $\mF$. 
	For any $\epsilon>0,~\delta>0$, and $\tau\in \Z_{n}^G(M,\partial M;\mZ_2)$, we call a finite sequence $\{\tau_i\}_{i=1}^q\subset \Z_{n}^G(M,\partial M;\mZ_2)$ a {\em $G$-invariant $(\epsilon,\delta)$-deformation} of $\tau$ in $U$ under the metric $\underline{\underline{\nu}} $, if
	\begin{itemize}
		\item $\tau_0=\tau$ and ${\rm spt}(\tau-\tau_i)\subset U$ for all $i=1,\dots, q;$
		\item $\underline{\underline{\nu}} (\tau_i-\tau_{i-1})\leq \delta$ for all $i=1,\dots, q;$
		\item ${\bf M}(\tau_i)\leq {\bf M}(\tau)+\delta$ for all $i=1,\dots, q;$
		\item ${\bf M}(\tau_q)< {\bf M}(\tau)-\epsilon$.
	\end{itemize}
	Moreover, we define
	$$ \mathfrak{a}^G_n(U;\epsilon,\delta; \underline{\underline{\nu}}  ) $$
	to be the set of all $\tau\in \Z_{n}^G(M,\partial M;\mZ_2)$ that does not admit any $G$-invariant $(\epsilon,\delta)$-deformation under the metric $\underline{\underline{\nu}} $.
\end{definition}

\begin{definition}[$(G,\mZ_2)$-almost minimizing varifolds with free boundary]\label{Def:G-am-varifolds}
	A varifold $V\in\V_n^G(M)$ is {\em $(G,\mZ_2)$-almost minimizing in $U$ with free boundary} if there exist sequences $\ep_i \to 0$, $\de_i \to 0$, and $\tau_i\in \mathfrak{a}_n^G(U; \ep_i, \de_i; \F)$, such that $\mF(|T_i|, V)\leq \ep_i$, where $\{T_i \in \tau_i\}_{i\in\N}$, are the canonical representatives. 
	
	Furthermore, we say $V$ is $(G,\mZ_2)$-almost minimizing {\em of boundary type} in $U$ with free boundary, if for each $\tau_i$, $i\in\N$, there exists $Q_i\in\mI_{n+1}^G(M;\mZ_2)$ such that $\tau_i=[\partial Q_i]$. 
\end{definition}

We mention that the term `boundary type' means each $\tau_i$ is induced by a boundary $\partial Q_i$ for some $Q_i\in\mI_{n+1}^G(M;\mZ_2)$, and $V$ is approximated by the `relative boundary' $|\partial Q_i\llcorner (M\setminus\partial M) |$.

\begin{proposition}\label{Prop:am-varifold is stable}
If $V \in \V_n^G(M)$ is $(G,\mathbb{Z}_2)$-almost minimizing with free boundary in a relative open $G$-subset $U \subset M$, then $V$ is stationary in $U$ with free boundary.
\end{proposition}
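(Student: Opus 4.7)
\medskip
\noindent\textbf{Proof plan.} The plan is to argue by contradiction: if $V$ were not stationary with free boundary, I would use a $G$-invariant decreasing flow to construct a $G$-invariant $(\epsilon,\delta)$-deformation of one of the approximating cycles $\tau_i$ in Definition \ref{Def:G-am-varifolds}, contradicting $\tau_i\in \mathfrak{a}^G_n(U;\ep_i,\de_i;\F)$.

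First, by Lemma \ref{Lem: G-stationary with free boundary and stationary} it suffices to show $\delta V(X) = 0$ for every $X\in\mathfrak{X}^G_{tan}(M)$ compactly supported in $U$. Suppose toward a contradiction that some such $X$ has $\delta V(X) < -2c$ for some $c > 0$ (after possibly replacing $X$ by $-X$, and using the averaging trick $X\mapsto X_G$ from Lemma \ref{Lem: G-stationary with free boundary and stationary} to ensure $G$-invariance). Let $\{\phi_t\}$ be the flow of $X$. Since $X$ is tangent to $\partial M$, each $\phi_t$ preserves both $M$ and $\partial M$; since $X$ is $G$-invariant, $\phi_t\circ g = g\circ \phi_t$ for all $g\in G$ and all $t$; and since $\spt(X)\subset U$ is compact, $\phi_t$ is the identity outside a fixed compact $G$-subset of $U$.

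Let $\tau_i\in\mathfrak{a}^G_n(U;\ep_i,\de_i;\F)$ and $T_i$ be canonical representatives with $\mF(|T_i|,V)\le\ep_i$. By the $\mF$-convergence $|T_i|\to V$ and the continuity of the first variation in the varifold topology, $\delta|T_i|(X)\to \delta V(X) < -2c$, so for $i$ sufficiently large, $\delta|T_i|(X) < -c$. Choose a small $t_0 > 0$ (independent of $i$, using that the $\|T_i\|$ are uniformly mass-bounded near $V$) such that, uniformly in $i$ large,
\begin{equation*}
\M\bigl((\phi_{t_0})_\# T_i\bigr) \le \M(T_i) - \tfrac{c}{2}t_0.
\end{equation*}
Now discretize: partition $[0,t_0]$ into $N$ equal pieces $s_k = kt_0/N$, and set $\tau_i^k := \bigl[(\phi_{s_k})_\# T_i\bigr]\in\Z_n^G(M,\partial M;\mZ_2)$, which is $G$-invariant because $\phi_t$ commutes with $G$. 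Between consecutive steps the current $(\phi_{s_k})_\# T_i - (\phi_{s_{k-1}})_\# T_i$ bounds the standard homotopy current on $[s_{k-1},s_k]\times \spt(T_i)$, whose mass is bounded by $(t_0/N)\cdot\sup|X|\cdot \M(T_i)$; hence $\F(\tau_i^k-\tau_i^{k-1})\to 0$ as $N\to\infty$, uniformly in $k$. Similarly $\M(\tau_i^k)\le \M(T_i) + O(t_0)$ can be kept within $\de_i$ of $\M(T_i)=\M(\tau_i)$ by taking $t_0$ small. Moreover $\spt(\tau_i-\tau_i^k)\subset \spt(X)\subset U$.

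Thus for $i$ large and $N$ large, $\{\tau_i^k\}_{k=1}^N$ is a $G$-invariant $(\ep_i,\de_i)$-deformation of $\tau_i$ in $U$ under $\F$ with $\M(\tau_i^N) < \M(\tau_i) - \tfrac{c}{4}t_0 < \M(\tau_i) - \ep_i$, contradicting $\tau_i\in \mathfrak{a}^G_n(U;\ep_i,\de_i;\F)$. The main technical point I expect to require care is the uniform $\F$-estimate $\F(\tau_i^k-\tau_i^{k-1})\le \delta_i$ together with the mass bound; this is the standard cycle homotopy estimate, but one must verify it respects the canonical-representative/quotient structure of $\Z_n^G(M,\partial M;\mZ_2)$, which works because $\phi_t$ preserves $\partial M$ setwise, so push-forwards of canonical representatives remain relative $(G,\mZ_2)$-cycles.
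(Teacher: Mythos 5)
Your proposal is correct and takes essentially the same approach as the paper: the paper reduces to $G$-stationarity via Lemma \ref{Lem: G-stationary with free boundary and stationary} and then cites the proof of Pitts' Theorem 3.3, which is exactly the contradiction-by-flow argument you spell out (flow along a $G$-invariant tangential vector field with negative first variation, discretize into an $(\epsilon,\delta)$-deformation, and contradict almost-minimality). Your filled-in details — uniform second-variation bounds to choose $t_0$ independently of $i$, the homotopy-current flat estimate, and the observation that $\phi_t$ preserves $M$, $\partial M$, and the $G$-action so that the pushforwards stay in $\Z_n^G(M,\partial M;\mZ_2)$ — are all the right ingredients and match what Pitts' argument requires in this setting.
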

\begin{proof}
	By Lemma \ref{Lem: G-stationary with free boundary and stationary}, the proof of \cite[Theorem 3.3]{pitts2014existence} would carry over.
\end{proof}

Recall the definition of isolated orbits (Definition \ref{Def: isolated orbit}). 
We have the following theorem which is parallel to \cite[Theorem 3.20]{li2021min}.

\begin{theorem}\label{Thm: equivalence-a.m.v}
	Let $V\in\mathcal{V}^G_n(M)$. 
	The following statements satisfy $(a) \Rightarrow (b) \Rightarrow (c)\Rightarrow(d)$:
	\begin{itemize}
		\item[(a)] $V$ is $(G,\mathbb{Z}_2)$-almost minimizing in $U$ with free boundary;
		\item[(b)] for any $\epsilon>0$, there exist $\delta>0$ and $\tau \, \in \, \mathfrak{a}_n^G(U; \ep, \de; \mF)$ such that ${\bf F}(V,|T|)<\epsilon$, where $T \in \tau$ is the canonical representative;
		\item[(c)] for any $\epsilon>0$, there exist $\delta>0$ and $\tau \, \in \, \mathfrak{a}_n^G(U; \ep, \de; \M)$ such that ${\bf F}(V,|T|)<\epsilon$, where $T \in \tau$ is the canonical representative;
		\item[(d)] $V$ is $(G,\mathbb{Z}_2)$-almost minimizing in $W$ with free boundary for any relative open $G$-subset $W\subset\subset U$ containing no isolated orbit of $M$ or $\partial M$. 
	\end{itemize}
\end{theorem}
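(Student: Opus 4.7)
The plan is to adapt the proof of the non-equivariant analogue \cite[Theorem 3.20]{li2021min} to the $G$-equivariant setting. The unifying theme is that $G$-invariance is preserved throughout by invoking the $G$-equivariant isoperimetric lemmas \ref{Lem:F-isoperimetric} and \ref{Lem:M-isoperimetric} in place of their classical counterparts, so that every intermediate cycle that arises stays in $\Z_n^G(M, \partial M;\mZ_2)$; the implications are of increasing difficulty, and I would treat them in order.

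For $(a) \Rightarrow (b)$, I would exploit the trivial inequality $\F \leq \mF$ on $\Z_n^G(M, \partial M;\mZ_2)$, immediate from the decomposition $\mF(\tau,\sigma) = \F(\tau-\sigma) + \mF(|T|,|S|)$ recorded in Section \ref{Sec:relative G-cycle}. Every $G$-invariant $(\epsilon,\delta)$-deformation under $\mF$ is in particular one under $\F$, hence $\mathfrak{a}_n^G(U;\epsilon,\delta;\F) \subseteq \mathfrak{a}_n^G(U;\epsilon,\delta;\mF)$, and the sequence $\{\tau_i\}$ provided by $(a)$ witnesses $(b)$ as soon as the index $i$ is large enough.

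For $(b) \Rightarrow (c)$, I would prove the reverse inclusion $\mathfrak{a}_n^G(U;\epsilon, C\delta;\mF) \subseteq \mathfrak{a}_n^G(U;\epsilon,\delta;\M)$ for a constant $C>0$ depending only on the embedding $M \hookrightarrow \R^L$. The point is that $\M(\tau-\sigma) \leq \delta$ trivially controls $\F(\tau-\sigma)$, while the standard estimate $\mF(|T|,|S|) \leq C_0 \M(T-S)$ (from the duality definition of the varifold $\mF$-metric combined with the total variation bound $\||T|-|S|\|(M) \leq \M(T-S)$) yields $\mF(\tau,\sigma) \leq (1+C_0)\M(\tau-\sigma)$. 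Rescaling $\delta$ gives the inclusion, and $(b)$ applied at scale $(\epsilon, C\delta)$ witnesses $(c)$ at scale $(\epsilon,\delta)$.

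The main difficulty is $(c) \Rightarrow (d)$, which I would prove by contradiction. If $V$ were not $(G,\mZ_2)$-almost minimizing in some $W \subset\subset U$ containing no isolated orbits, then for some $\epsilon_0>0$ every $\tau$ sufficiently close to $V$ in $\mF$ would admit a $G$-invariant $\F$-$(\epsilon_0, \delta)$-deformation $\{\tau_i\}_{i=0}^q$ supported in $W$ for arbitrarily small $\delta>0$. Applying this to the $\tau$ produced by $(c)$ at a scale $\delta'$ to be determined, I would refine each step $\tau_{i-1} \to \tau_i$ into finitely many $G$-invariant $\M$-small intermediate steps by taking the $\F$-isoperimetric choice $Q_i \in \mI_{n+1}^G(M;\mZ_2)$ from Lemma \ref{Lem:F-isoperimetric} and slicing it by a $G$-invariant Lipschitz function; this produces a $G$-equivariant coarea decomposition whose interpolants lie in arbitrarily small $G$-neighborhoods of $\spt Q_i \subset W$. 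For $\delta$ small enough these neighborhoods stay inside $U$, yielding an $\M$-$(\epsilon_0/2, \delta'')$-deformation of $\tau$ in $U$ that contradicts $(c)$. The main technical obstacle I expect is the $G$-invariant slicing step: the no-isolated-orbit assumption on $W$ is precisely what ensures uniform positive lower bounds on the Fermi radii of orbits through the normal exponential maps $\exp^{\perp}_{G \cdot p}$ (compare \cite[Appendix B]{wang2022min} and \cite[Appendix B]{li2021min}), so that a $G$-invariant Lipschitz slicing function exists on a $G$-compact subset of $U$ and Pitts's interpolation argument goes through in its $G$-equivariant form.
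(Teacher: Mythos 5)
Your handling of $(a)\Rightarrow(b)\Rightarrow(c)$ is correct and matches the paper: these implications really do reduce to the elementary inequality $\F(\tau-\sigma)\leq\mF(\tau,\sigma)\leq C\M(\tau-\sigma)$ (the paper uses $C=2$), so that $\mathfrak{a}_n^G(U;\ep,\de;\F)\subset\mathfrak{a}_n^G(U;\ep,\de;\mF)$ and, after rescaling $\de$, $\mathfrak{a}_n^G(U;\ep,\de;\mF)\subset\mathfrak{a}_n^G(U;\ep,\de';\M)$; your derivation of $\mF(|T|,|S|)\leq\M(T-S)$ via total variation of the induced measures is a fine way to verify the second inequality.

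The sketch of $(c)\Rightarrow(d)$, however, contains a genuine gap, and the attribution of the role of the no-isolated-orbit hypothesis is not accurate. The core problem is your proposed refinement step: taking the $\F$-isoperimetric choice $Q_i\in\mI_{n+1}^G(M;\mZ_2)$ with $\M(Q_i)\leq C_M\F(\tau_i-\tau_{i-1})$ and then ``slicing it by a $G$-invariant Lipschitz function'' does \emph{not} produce interpolants with controlled mass. If one slices $Q_i$ by a function $f$, the intermediate cycles are of the form $\tau_{i-1}+[\partial(Q_i\llcorner\{f<t\})]$, whose mass involves $\M(\partial Q_i)$ via the coarea formula; $\M(\partial Q_i)$ is \emph{not} bounded by $\M(Q_i)$, so the condition $\M(\tau_j)\leq\M(\tau)+\delta$ in Definition~\ref{a.m.deform} can fail badly. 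This is precisely the difficulty that forces the paper (following Pitts \cite[Theorem~3.10]{pitts2014existence} and Li--Zhou \cite[Appendix~B]{li2021min}) to replace naive slicing with (i) a covering of $W$ by equivariant tubes $\tBcal^G_{r_i}(p_i)$ chosen so that $\|V\|$ has small mass in each tube, together with a choice of slicing radii at which the slice currents of $Q_j$ have small mass, and (ii) when $\|V\|$ concentrates mass on an orbit $G\cdot q$, a \emph{coning} argument (Lemma~\ref{L:pre-interpolation-1}, the equivariant analogue of Pitts's Lemma 3.5) that radially contracts the cycle in a tube onto $G\cdot q$ via $E\circ\bmu_\lambda\circ E^{-1}$, with the mass estimate supplied by the equivariant tube formula~(\ref{Eq:weyl-tube}). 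This two-case analysis in Lemma~\ref{L:pre-interpolation} is essential and is entirely absent from your sketch.

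The no-isolated-orbit condition also plays a different role than you state. It is not about obtaining uniform lower bounds on Fermi radii (the paper's Remark~\ref{Rem:lower bound of Fermi radius} concerns uniformity \emph{within} a compact piece of a single orbit-type stratum, and uses only compactness). Rather, the hypothesis $l_{i,j}\geq 1$, i.e.\ $\dim K > \dim G\cdot q$ for the orbit-type stratum $K\ni q$, is needed in Lemma~\ref{L:zero-boundary}: there one must find, for $\mathcal H^{\dim K}$-a.e.\ $p\in K$, tubes $\B^G_r(p)$ whose relative boundary is $\|V\|$-null. The proof is a Fubini/dimension-count: the set of $p\in K$ with $T_xV\perp\nabla r_{G\cdot p}(x)$ has dimension at most $k$ or $k+1$, which must be strictly below $\dim K$. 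If $G\cdot q$ were isolated in $M$ or $\partial M$, then $\dim K=k$ and this count collapses, and one cannot vary the center of the tube transversally to the orbit to obtain generic slices. So ``no isolated orbit'' is a transversality/genericity hypothesis for the tube family, not an injectivity-radius hypothesis.
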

\begin{proof}
	See Appendix \ref{Sec:proof-of-equivalence-a.m.v}.
\end{proof}

\section{The equivariant min-max construction}\label{Sec:min-max}

This section is parallel to \cite[Section 4]{li2021min} with $\Z_n^G(M,\partial M;\mathbb{Z}_2)$ in place of $\Z_n(M,\partial M)$.

\subsection{Cubical complex}

Denote $I^m=[0, 1]^m$, $I^m_0=\partial I^m$ to be the unit cube and its boundary in $\R^m$.  
For any $j\in \mathbb{N}$, we denote $I(1,j)$ to be the cube complex on $I=[0,1]$ with $1$-cells 
   $$[0,3^{-j}], [3^{-j},2 \cdot 3^{-j}],\dots,[1-3^{-j}, 1],$$
   and $0$-cells (vertices)
$$[0], [3^{-j}],\dots,[1-3^{-j}], [1].$$
Then the cell complex on $I^m$ is defined as:
$$I(m,j):=I(1,j)\otimes\dots \otimes I(1,j)\quad (\mbox{$m$ times}).$$
Define then the boundary homeomorphism $\partial: I(m, j)\rightarrow I(m, j)$ as
$$\partial(\al_1\otimes\cdots\otimes\al_m):=\sum_{i=1}^m(-1)^{\si(i)}\al_1\otimes\cdots\otimes \partial \al_i\otimes\cdots\otimes\al_m,$$
where $\si(i)=\sum_{l<i}\dim(\al_l)$, $\partial[a, b]=[b]-[a]$ if $[a, b]\in I(1, j)_1$, and $\partial[a]=0$ if $[a]\in I(1, j)_0$.

We say $\alpha=\alpha_1 \otimes \cdots\otimes \alpha_m$ is a {\em $q$-cell} of $I(m,j)$, if $\alpha_i$ is a cell
of $I(1,j)$ for each $i$, and $\sum_{i=1}^m {\rm dim}(\alpha_i) =q$. 
Let $I(m,j)_p$ be the set of all $p$-cells in $I(m,j)$, and $I_0(m,j)_p$ be the set of all $p$-cells in $I(m,j)$ supported in $I_0^m$. 


The {\em distance} between two vertices $x, y\in I(m,j)_0$ is defined by ${\bf d}(x,y) := 3^j\cdot\sum_{i=1}^m|x_i-y_i|$. And we say $x,y$ are {\em adjacent} if ${\bf d}(x,y)=1$.
Given $i,j\in \mathbb{N}$ we define ${\bf n}(i,j):I(m,i)_0\rightarrow I(m,j)_0$ as a map so that ${\bf n}(i,j)(x)$ is the unique element of $I(m,j)_0$ with 
$${\bf d}(x,{\bf n}(i,j)(x)) := \inf\{{\bf d}(x,y) : y\in I(m,j)_0\}.$$
For any map $\phi:I(m,j)_0\rightarrow  \mathcal{Z}_n^G(M,\partial M;\mathbb{Z}_2)$, the {\em $\M$-fineness} of $\phi$ is defined as
$${\bf f}_{\M}(\phi) := \sup\left\{{\bf M}(\phi(x)-\phi(y)) :  {\bf d}(x,y)=1,~ x,y\in  I(m,j)_0\right\}.$$

\subsection{Homotopy notions in discrete settings} 

In the rest of this paper, we denote $\phi: I(m, j)_{0}\rightarrow\big(\Z_{n}^G(M, \partial M;\mZ_2), \{0\}\big)$ as a mapping such that $\phi\big(I(m, j)_{0}\big)\subset\Z_{n}^G(M, \partial M;\mZ_2)$ and $\phi|_{I_{0}(m, j)_{0}}=0$.

\begin{definition}[Homotopy for mappings]\label{Def:homotpy for maps}
	Let $\phi_i:I(m,k_i)_0\rightarrow  \big(\Z_{n}^G(M, \partial M;\mZ_2), \{0\}\big)$, $i=1,2$. We say $\phi_1$ and $\phi_2$ are {\it $m$-homotopic in $\big(\Z_{n}^G(M, \partial M;\mZ_2), \{0\}\big)$ with $\M$-fineness $\delta$}  if there exists
	$$\psi: I(1,k)_0\times I(m,k)_0\rightarrow  \Z_{n}^G(M, \partial M;\mZ_2)$$
	for some $k\geq\max\{k_1,k_2\}$ such that
	\begin{itemize}
		\item[(i)] ${\bf f}_\M (\psi)<\delta$;
		\item[(ii)] if $i\in\{1,2\}$ and $x\in I(m,k)_0$, then $\psi([i-1],x)=\phi_i( {\bf n}(k,k_i)(x))$;
		\item[(iii)] $\psi(I(1,k)_0\times I_0(m,k)_0)=0 $.
	\end{itemize}
\end{definition}

\begin{definition}\label{Def:homotopy sequence}
	A sequence of mappings $S=\{\phi_i\}_{i=1}^\infty$,
	$\phi_i:I(m,k_i)_0\rightarrow \big(\Z_{n}^G(M, \partial M;\mZ_2), \{0\}\big),$ is called an
 	$$\mbox{{\it $(m,{\bf M})$-homotopy sequence of mappings into $\big(\Z_{n}^G(M, \partial M;\mZ_2), \{0\}\big)$}},$$ 
	if $\phi_i$ and $\phi_{i+1}$ are $m$-homotopic in $\big(\Z_{n}^G(M, \partial M;\mZ_2), \{0\}\big)$ with $\M$-fineness $\delta_i$ such that
	\begin{itemize}
		\item[(i)] $\lim_{i\rightarrow\infty} \delta_i=0$;
		\item[(ii)]$\sup\{{\bf M}(\phi_i(x)) :  x\in I(m,k_i)_0, ~i\in \mathbb{N}\}<+\infty.$
	\end{itemize}
\end{definition}

\begin{definition}[Homotopy for sequences of mappings]\label{Def:homotopy for sequence}
	Given $S^j=\{\phi^j_i\}_{i=1}^\infty$, $j=1,2$, two $(m,{\bf M})$-homotopy sequences of mappings into $\big(\Z_{n}^G(M, \partial M;\mZ_2), \{0\}\big)$, we say {\em $S^1$ is $G$-homotopic to $S^2$} if there exists a sequence $\{\delta_i\}_{i\in \mathbb{N}}$ such that
 	\begin{itemize}
		\item[(i)] $\phi^1_i$  is $m$-homotopic to $\phi^2_i$ in $\big(\Z_{n}^G(M, \partial M;\mZ_2), \{0\}\big)$ with $\M$-fineness $\delta_i$;
		\item[(ii)] $\lim_{i\rightarrow\infty} \delta_i=0.$
 	\end{itemize}
	Moreover, we call the equivalence class of any such sequence an
	$$\mbox{{\it $(m,{\bf M})$-homotopy class of mappings into $ \big(\Z_{n}^G(M, \partial M;\mZ_2), \{0\}\big)$}} ,$$
	and denote by $\pi_m^\sharp \big(\Z_{n}^G(M, \partial M; \M; \mZ_2), \{0\}\big)$ the set of all such equivalence classes.
\end{definition}

For any $\Pi \in  \pi_m^\sharp \big(\Z_{n}^G(M, \partial M; \M; \mZ_2), \{0\}\big)$, define the function ${\bf L} : \Pi\rightarrow [0,+\infty]$ by
\begin{eqnarray*}
	{\bf L}(S) := \limsup_{i\rightarrow\infty} \max_{x\in \mathrm{dmn}(\phi_i)} {\bf M}(\phi_i(x)), \qquad \forall S=\{\phi_i\}_{i\in \mathbb{N}}\in\Pi
\end{eqnarray*}

\begin{definition}[Width]\label{Def:width}
	Given $\Pi \in \pi_m^\sharp \big(\Z_{n}^G(M, \partial M; \M; \mZ_2), \{0\}\big)$, the {\em width} of $\Pi$ is 
$${\bf L}(\Pi):=\inf_{S\in \Pi}{\bf L}(S).$$
We say  $S\in \Pi$ is a {\it critical sequence} for $\Pi$ if ${\bf L}(S)={\bf L}(\Pi)$. 
\end{definition}

\begin{definition}[Critical set]\label{Def:critical set}
	Let $\Pi \in \pi_m^\sharp \big(\Z_{n}^G(M, \partial M; \M; \mZ_2), \{0\}\big)$. 
	For any $S=\{\phi_i\}_{i\in \mathbb{N}}\in \Pi$, we define the {\em image set} of $S$ as the compact subset ${\bf K}(S)\subset\mathcal{V}^G_n(M)$ given by
	\begin{multline*}
		{\bf K}(S) := \{V\in\V^G_n(M)  : V=\lim_{j\rightarrow\infty}|T_j|\mbox{ for some sequence}~ i_1<i_2<\dots,\\
		\mbox{some $x_j\in \mathrm{dmn}(\phi_{i_j})$, and $T_j\in \phi_{i_j}(x_j)$ is canonical}\}. 
	\end{multline*}
	The {\em critical set}  ${\bf C}(S)$ of a critical sequence $S\in \Pi$ is given by
	$${\bf C}(S) := \{V\in {\bf K}(S) : \|V\|(M)={\bf L}(S)\}.$$
\end{definition}

\subsection{Homotopy notions in continuous settings}\label{Sec:homotopy continuous}

The notions in the above subsection are the discrete analogs of the usual notions for continuous maps. 
It seems to be essential to use those discrete families in the proof of the regularity result. 
Nevertheless, one can also restrict the proof to apply a more reasonable continuous theory. 
In this subsection, we generalize the continuous settings in \cite[Section 3]{marques2016morse} to our $G$-invariant free boundary case. 
To distinguish from the discrete case, we use bold symbols and capital letters here. 

We denote $\Phi:I^m\to \big(\Z_{n}^G(M, \partial M;\underline{\underline{\nu}} ;\mZ_2), \{0\}\big)$ as a map from $I^m$ to $\Z_{n}^G(M, \partial M;\mZ_2)$, which is continuous in the $\underline{\underline{\nu}} $-topology ($\underline{\underline{\nu}}  = \F$, $\M$ or $\mF$) and $\Phi\vert_{\partial I}=0$. 

\begin{definition}\label{Def:homotopy continuous}
	Let $\Phi_i:I^m\to \big(\Z_{n}^G(M, \partial M;\mF;\mZ_2), \{0\}\big)$, $i=1,2$, be two $\mF$-continuous maps. 
	We say $\Phi_1$ is {\em $G$-homotopic} to $\Phi_2$ if there exists a map $\Psi:I\times I^m\to \Z_{n}^G(M, \partial M;\F;\mZ_2)$ continuous in the  $\F$-topology so that 
	\begin{itemize}
		\item $\Psi(0,x)=\Phi_1(x),~\Psi(1,x)=\Phi_2(x)$, for any $x\in I^m$; 
		\item $\Psi(t,x) =0 $ for any $t\in I,~x\in \partial I^m$. 
	\end{itemize}
	We denote by $\bm{\Pi}$ a {\em (continuous) $G$-homotopy class}, and denote by $\pi_m' \big(\Z_{n}^G(M, \partial M;\mF;\mZ_2), \{0\}\big)$ the set of all continuous $G$-homotopy classes.
\end{definition}

Note the maps $\Phi_i$ are continuous in the $\mF$-topology, but the homotopy map $\Psi$ is only $\F$ continuous. 

Given $\bm{\Pi}\in \pi_m' \big(\Z_{n}^G(M, \partial M;\mF;\mZ_2), \{0\}\big)$, let ${\bf L}: \bm{\Pi}\rightarrow [0,+\infty)$ be the function defined as
\begin{eqnarray*}
	{\bf L}(\Phi) := \sup_{x\in I^m}\M(\Phi(x)), \quad \forall \Phi\in\bm{\Pi}. 
\end{eqnarray*}
Furthermore, if $\{\Phi_i\}_{i=1}^\infty$ is a sequence in $\bm{\Pi}$, we denote 
$${\bf L}(\{\Phi_i\}_{i=1}^\infty):=\limsup_{i\to\infty}\sup_{x\in I^m}\M(\Phi_i(x)).$$

\begin{definition}[Width]\label{Def:width}
	Let $\bm{\Pi}\in \pi_m' \big(\Z_{n}^G(M, \partial M;\mF;\mZ_2), \{0\}\big)$ be a continuous $G$-homotopy class. 
	The {\em width} of $\bm{\Pi}$ is defined by
	$${\bf L}(\bm{\Pi}):=\inf_{\Phi\in \bm{\Pi}}{\bf L}(\Phi).$$
	We call $\{\Phi_i\}_{i=1}^\infty\subset \bm{\Pi}$ a {\it min-max sequence for $\bm{\Pi}$} if 
	${\bf L}(\{\Phi_i\}_{i=1}^\infty)={\bf L}(\bm{\Pi}).$
\end{definition}

Clearly, there exists a min-max sequence for any $\bm{\Pi}\in \pi_m' \big(\Z_{n}^G(M, \partial M;\mF;\mZ_2), \{0\}\big)$.

\begin{definition}[Critical set]\label{Def:critical set}
	Let $\bm{\Pi}\in \pi_m' \big(\Z_{n}^G(M, \partial M;\mF;\mZ_2), \{0\}\big)$ be a continuous $G$-homotopy class. 
	For any $\{\Phi_i\}_{i\in \mathbb{N}}\subset \bm{\Pi}$, we define the {\em image set} of $\{\Phi_i\}_{i\in\N}$ as the compact subset ${\bf K}(\{\Phi_i\}_{i\in\N})\subset\mathcal{V}^G_n(M)$ given by
	\begin{multline*}
		{\bf K}(\{\Phi_i\}_{i\in\N}) := \{V\in\V^G_n(M)  : V=\lim_{j\rightarrow\infty}|T_j|\mbox{ for some sequence}~ i_1<i_2<\dots,\\
		\mbox{some }x_j\in \mathrm{dmn}(\Phi_{i_j}), \mbox{ and $T_j\in \Phi_{i_j}(x_j)$ is canonical}\}.
	\end{multline*}
	Moreover, if $\{\Phi_i\}_{i\in\N}$ is a min-max sequence, then the {\em critical set} ${\bf C}(\{\Phi_i\}_{i\in\N})$ is given by
	$${\bf C}(\{\Phi_i\}_{i\in\N}) := \{V\in {\bf K}(\{\Phi_i\}_{i\in\N}) :  \|V\|(M)={\bf L}(\bm{\Pi})\}.$$
\end{definition}

\subsection{Discretization and interpolation}

The discretization and interpolation results have been proved by Li-Zhou in \cite[Section 4.2]{li2021min} for manifolds with boundary (see also \cite[Section 13,14]{marques2014min} for the closed case), which build a bridge between continuous and discrete mappings into $\Z_n(M,\partial M)$. 
In this section, we go further to show these results under $G$-invariant restrictions. 
To begin with, we introduce the following equivariantly modified technical assumption:

\begin{definition}\label{Def:no concent mass on orbit}
	Given $\Phi:I^m\rightarrow \mathcal{Z}_n^G(M,\partial M;\mathbb{Z}_2)$ continuous in the flat topology, define 
	$${\bf m}^G(\Phi,r) :=\sup\{\|T_x\|(\tBcal_r^G(p)) : x\in I^m,~p\in M  \},$$
	where $T_x$ is the canonical representative of $\Phi(x)$, and $\tBcal_r^G(p)$ is the open geodesic tube in $\tM$ of radius $r$ centered at $G\cdot p$. 
	Then, $\Phi$ is said to {\it have no concentration of mass on orbits} if 
	$$\lim_{r\rightarrow 0}{\bf m}^G(\Phi,r)=0.$$
\end{definition}

Without the actions of $G$, this definition was first made by Marques-Neves in \cite[Definition 4.2]{marques2014min} for closed manifolds. 
For compact manifolds with boundary, a similar definition was given by Li-Zhou in \cite[Section 4.2]{li2021min}. 
It is a mild technical condition and we also have the following lemma for relative $G$-cycles. 

\begin{lemma}\label{Lem: mass.continu.no.concent}
	Suppose $\Phi:I^m\to \Z_{n}^G(M, \partial M;\mZ_2)$ is continuous in the mass norm, then $\sup_{x\in I^m}\M(\Phi(x)) < \infty$ and $\Phi$ has no concentration of mass on orbits. 
	
	Furthermore, if $\Phi:I^m\to \Z_{n}^G(M, \partial M;\mZ_2)$ is continuous in the ${\bf F}$-metric, then $\Phi$ has mass bounded images and has no concentration of mass on orbits.
\end{lemma}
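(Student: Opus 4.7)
The plan is to handle both statements in parallel by first establishing a uniform mass bound and then ruling out concentration by a contradiction argument that exploits the low dimension of $G$-orbits.

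\textbf{Uniform mass bound.} In the $\M$-continuous case, the triangle inequality $|\M(\Phi(x))-\M(\Phi(y))|\le \M(\Phi(x)-\Phi(y))$ shows $x\mapsto \M(\Phi(x))$ is continuous on the compact cube $I^m$, hence bounded. For the $\mathbf{F}$-continuous case, note that by the definition $\mathbf{F}(\tau_1,\tau_2)=\F^M(\tau_1-\tau_2)+\mathbf{F}(|T_1|,|T_2|)$ in Section~\ref{Sec:relative G-cycle}, continuity in $\mathbf{F}$ implies that $x\mapsto |T_x|$ is continuous in the varifold $\mathbf{F}$-metric of Pitts; since the mass functional is $\mathbf{F}$-continuous on $\V_n(M)$, the mass is again a continuous function on $I^m$ and therefore bounded.

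\textbf{Reduction to a limit orbit.} For no concentration, suppose otherwise: there exist $\epsilon>0$, $r_k\to 0$, $x_k\in I^m$ and $p_k\in M$ with $\|T_{x_k}\|(\tBcal_{r_k}^G(p_k))\ge\epsilon$. By compactness of $I^m$ and $M$ we may pass to subsequences so that $x_k\to x_\infty$ and $p_k\to p_\infty$. Because $G$ acts as isometries, $d(g\cdot p_k,g\cdot p_\infty)=d(p_k,p_\infty)$ for every $g\in G$, so the Hausdorff distance between the orbits satisfies $d_H(G\cdot p_k,G\cdot p_\infty)\le d(p_k,p_\infty)\to 0$. Consequently, for any $R>0$ one has $\tBcal_{r_k}^G(p_k)\subset \tBcal_R^G(p_\infty)$ for all sufficiently large $k$, and hence $\|T_{x_k}\|(\tBcal_R^G(p_\infty))\ge\epsilon$ eventually.

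\textbf{Passing to the limit current.} In the $\M$-continuous case, the inequality $|\|T_{x_k}\|(A)-\|T_{x_\infty}\|(A)|\le \M(T_{x_k}-T_{x_\infty})\to 0$ (valid for every Borel $A$, using that canonical representatives are stable under subtraction modulo boundary pieces) gives $\|T_{x_\infty}\|(\tBcal_R^G(p_\infty))\ge\epsilon$. In the $\mathbf{F}$-continuous case, $\mathbf{F}(|T_{x_k}|,|T_{x_\infty}|)\to 0$ yields both weak convergence $\|T_{x_k}\|\rightharpoonup \|T_{x_\infty}\|$ as Radon measures and mass convergence $\|T_{x_k}\|(M)\to\|T_{x_\infty}\|(M)$; applying Portmanteau to the closed set $\Clos(\tBcal_R^G(p_\infty))$ gives $\|T_{x_\infty}\|(\Clos(\tBcal_R^G(p_\infty)))\ge\epsilon$, and we absorb the $\Clos$ by letting $R$ shrink through a slightly larger radius.

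\textbf{Contradiction via dimension.} In either case, we obtain $\|T_{x_\infty}\|(\tBcal_R^G(p_\infty))\ge\epsilon$ for all $R$ in a sequence tending to $0$. The intersection $\bigcap_{R>0}\tBcal_R^G(p_\infty)=G\cdot p_\infty$ has dimension at most $n-2$ by the cohomogeneity hypothesis $\mathrm{Cohom}(G)\ge 3$, so $\mathcal{H}^n(G\cdot p_\infty)=0$. Since $T_{x_\infty}\in \mR_n^G(M;\mZ_2)$ is $n$-rectifiable, $\|T_{x_\infty}\|\ll \mathcal{H}^n$, giving $\|T_{x_\infty}\|(G\cdot p_\infty)=0$. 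Continuity from above of the finite Radon measure $\|T_{x_\infty}\|$ yields $\|T_{x_\infty}\|(\tBcal_R^G(p_\infty))\to 0$ as $R\to 0$, contradicting the previous step.

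The main obstacle will be the current-theoretic passage to the limit on the moving tubes $\tBcal_{r_k}^G(p_k)$: the $\M$-continuous case is straightforward via variation inequalities, while the $\mathbf{F}$-continuous case requires careful use of Portmanteau together with the combined weak+mass convergence provided by the varifold $\mathbf{F}$-metric, as the sets $\tBcal_{r_k}^G(p_k)$ themselves are not nested.
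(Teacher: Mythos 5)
Your proof is correct and follows essentially the same strategy as the paper's (which cites a compactness argument from \cite[Lemma 8]{wang2022min}): a sequential-compactness blow-up to a limit point $(x_\infty,p_\infty)$, transfer of the mass lower bound to the limit canonical representative, and a contradiction from $\dim(G\cdot p_\infty)\le n-2$ forcing $\|T_{x_\infty}\|(G\cdot p_\infty)=0$. Your use of the closed-set Portmanteau inequality together with shrinking $R$ in the $\mathbf F$-continuous case is in fact a slightly more careful treatment than the paper's brief claim of continuity of $x\mapsto\|T_x\|(\tBcal_r^G(p))$, which is only lower/upper semi-continuous in general in the $\mathbf F$-topology.
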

\begin{proof}
	The proof is essentially the same as \cite[Lemma 8]{wang2022min}. 
	By Lemma \ref{Lem:canonical representative} and \ref{Lem:F-metric}, we have the continuity of maps $x\mapsto \M(\Phi(x))$ and $x \mapsto \|T_x\|(\tBcal_r^G(p))$ for $x\in I^m$, where $T_x$ is the canonical representative of $\Phi(x)$ and $\tBcal_r^G(p)$ is the open geodesic tube in $\tM$ of radius $r$ centered at $G\cdot p$. 
	Then the compactness arguments in \cite[Lemma 8]{wang2022min} can be applied to deliver the result. 
\end{proof}

Next, we show the discretization theorem, which generates an $(m,{\bf M})$-homotopy sequence of mappings into $\mathcal{Z}_n^G(M, \partial M; \mathbb{Z}_2)$ from a flat continuous map with no concentration of mass on orbits.

\begin{theorem}[Discretization Theorem]\label{Thm: discretization}
	Let $\Phi:I^m\rightarrow \mathcal{Z}_n^G(M,\partial M;\mathbb{Z}_2)$ be a continuous map in the flat topology satisfying:
	\begin{itemize}
		\item[$(a)$] $\sup_{x\in I^m}\M (\Phi(x))<\infty$,
		\item[$(b)$] $\Phi$ has no concentration of mass on orbits,
		\item[$(c)$] $\Phi|_{I^m_0}$ is continuous in the $\mF$-metric.
	\end{itemize}
	Then there exists a sequence of maps
	$$\phi_i:I(m,j_i)_0 \rightarrow \mathcal{Z}_n^G(M,\partial M;\mathbb{Z}_2),$$
	with $j_i<j_{i+1}$, and a sequence of positive numbers $\{\delta_i\}_{i\in\mathbb{N}}$ converging to zero such that
	\begin{itemize}
		\item[(i)] $S=\{\phi_i\}_{i\in\mathbb{N}}$ is an $(m,{\bf M})$-homotopy sequence into $\mathcal{Z}_n^G(M,\partial M;\mathbb{Z}_2)$ with $\M$-fineness ${\bf f}_\M(\phi_i)<\delta_i$;
		\item[(ii)] there exists some sequence $k_i \to +\infty$ such that for all $x\in I(m, j_i)_0$,
		$$ \M (\phi_i(x)) \leq \sup \{ \M (\Phi(y)) : \al\in I(m, k_i)_m, x, y \in\al\}+\de_i,$$
		which implies $\bL(S)\leq\sup_{x\in I^m}\M(\Phi(x))$; 
		\item[(iii)] $\sup\{\mathcal F(\phi_i(x)-\Phi(x)) : x\in I(m,j_i)_0\}\leq \delta_i;$
		\item[(iv)] $\M(\phi_i(x))\leq \M(\Phi(x))+\de_i$, for all $x\in I_0(m, j_i)_0$.
	\end{itemize}
	
	Furthermore, if $\Phi|_{I^m_0}=0$, then $\phi_i$ can be chosen with $\phi_i\vert_{I_0(m,j_i)_0}=0 $, i.e. $S$ can be chosen as an $(m, \M)$-homotopy sequence of mappings into $\big(\Z_{n}^G(M, \partial M;\mZ_2), \{0\}\big)$. 
\end{theorem}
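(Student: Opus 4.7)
The plan is to adapt the proof of \cite[Theorem 4.12]{li2021min} to the equivariant setting, the sole essential change being the systematic use of the $G$-invariant isoperimetric lemmas (Lemma \ref{Lem:F-isoperimetric} and Lemma \ref{Lem:M-isoperimetric}) in place of their classical counterparts. Since $\Phi$ takes values in $\Z_n^G(M,\partial M;\mZ_2)$, the canonical representatives $T_x$ of $\Phi(x)$ are $G$-invariant by Lemma \ref{Lem:canonical representative}, so the output $\phi_i(x)$ will automatically lie in $\Z_n^G(M,\partial M;\mZ_2)$ provided every intermediate isoperimetric filling used in the construction is $G$-invariant.

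First I would choose, for each $i$, a very fine refinement scale $k_i$ together with a coarser subgrid $I(m,j_i)_0$ with $j_i \ll k_i$, and set $\phi_i(x)$ at each vertex $x \in I(m,j_i)_0$ essentially equal to the canonical representative of $\Phi(x)$ (modified near $\partial M$ as in \cite{li2021min}). For each pair of adjacent vertices $x,y \in I(m,j_i)_0$, the flat continuity of $\Phi$ combined with the no-concentration-of-mass on orbits assumption yields, via the standard Almgren--Pitts slicing and pigeonhole argument (compare \cite[Proposition 3.1]{pitts2014existence} and \cite[Section 13]{marques2014min}), a filling $Q_{xy}\in \mI_{n+1}(M;\mZ_2)$ of small mass supplied by the $\F$-isoperimetric lemma. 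Summing these masses along edges and invoking the $G$-invariant $\M$-isoperimetric lemma produces the $\M$-fineness bound $\delta_i \to 0$. Conclusions (ii) and (iii) then follow from the cell-level mass and $\F$-estimates exactly as in \cite[Theorem 4.12]{li2021min}, while (iv) follows from the $\mF$-continuity of $\Phi\vert_{I_0^m}$ via Lemma \ref{Lem:F-metric} and Lemma \ref{Lem:M-isoperimetric}.

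The only genuinely new ingredient is the verification that the isoperimetric fillings can always be chosen $G$-invariantly. This is furnished by the uniqueness built into the proofs of Lemmas \ref{Lem:F-isoperimetric} and \ref{Lem:M-isoperimetric}: with $\epsilon_M$ small enough, the filling $Q$ realizing the isoperimetric bound is the \emph{unique} element of $\mI_{n+1}(M;\mZ_2)$ with the stated properties, so any $g_{\#} Q$ must coincide with $Q$, forcing $Q \in \mI_{n+1}^G(M;\mZ_2)$. The Pitts slicing steps used to upgrade flat smallness to mass smallness remain compatible with $G$-equivariance because they can be performed against a $G$-invariant distance function (e.g.\ $\dist(\cdot, \partial M)$ or the distance to an orbit), so the entire construction stays inside the $G$-invariant category.

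The main obstacle I anticipate is the multi-scale bookkeeping required to coordinate the refined complex $I(m,k_i)$ with the coarser grid $I(m,j_i)$ while simultaneously handling the boundary contributions from $\partial M$; both issues are already resolved in \cite[Theorem 4.12]{li2021min} and neither interacts nontrivially with $G$-invariance, so the remaining arguments transfer essentially verbatim. For the final addendum, if $\Phi\vert_{I_0^m}=0$, one applies the $G$-invariant $\M$-isoperimetric lemma on each boundary cell to subtract off $G$-invariant correction chains of arbitrarily small mass, thereby enforcing $\phi_i\vert_{I_0(m,j_i)_0}=0$ at the cost of only a further negligible increase of the $\M$-fineness.
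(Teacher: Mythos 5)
Your proposal is correct and follows essentially the same strategy as the paper's own (very terse) proof: treat the Li--Zhou/Marques--Neves argument as a template, use the $G$-invariant $\F$-isoperimetric lemma in place of Almgren's Corollary 1.14, and observe that all slicing and cut-and-paste deformations can be performed against $G$-invariant functions (distance to an orbit, distance to $\partial M$) so that the whole construction stays within $\Z_n^G(M,\partial M;\mZ_2)$. The paper additionally pinpoints that the one place requiring an explicit equivariant modification is the deformation map $\psi_i$ in the analogue of Lemma 13.4 of Marques--Neves, for which it refers to the $G$-invariant cut-and-paste construction (\ref{Eq: pre-interpolation}); your description of this step as a ``Pitts slicing and pigeonhole argument against a $G$-invariant distance function'' is accurate in spirit, though it compresses the part where the actual $\M$-fineness bound comes from the pigeonhole refinement together with that cut-and-paste deformation, rather than from the $\M$-isoperimetric lemma alone. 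The key observation you correctly isolate --- uniqueness of the isoperimetric filling (for $\ep_M$ small) forcing $g_\# Q = Q$ --- is exactly the mechanism the paper uses in Lemmas \ref{Lem:F-isoperimetric} and \ref{Lem:M-isoperimetric}.
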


\begin{proof}
	The proof is essentially the same as \cite[Theorem 13.1]{marques2014min} and \cite[Theorem 4.12]{li2021min}, where the only explicit operations appeared in \cite[Theorem 13.4]{marques2014min}. 
	Hence, it is sufficient to adapt \cite[Lemma 13.4]{marques2014min} into a $G$-invariant free boundary version, and then this theorem follows from the lemmas in Section \ref{Sec:relative G-cycle} with a combinatorial argument as in \cite[Theorem 13.1]{marques2014min}. 
	Indeed, we can use the $G$-invariant isoperimetric Lemma \ref{Lem:F-isoperimetric} in place of \cite[Corollary 1.14]{almgren1962homotopy}, and define $\psi_i$ as (\ref{Eq: pre-interpolation}) to deliver the $G$-invariant free boundary version of \cite[Lemma 13.4]{marques2014min} for the space $\Z_n^G(M,\partial M; \mZ_2 )$. 
\end{proof}

\begin{remark}
	Motivated by the constructions of cones in \cite[Section 3.5, 3.7]{pitts2014existence}, Zhou \cite{zhou2017min} proved a stronger version of the discretization theorem for continuous maps of {\em boundary type} without the no mass concentration assumption. 
	In our equivariant setting, suppose there is no isolated orbit, then we can combine the constructions in Appendix \ref{Sec:proof-of-equivalence-a.m.v} with the arguments in \cite[Section 5]{zhou2017min} to show that Theorem \ref{Thm: discretization} is also valid for any $\F$-continuous map $\Phi$ of boundary type (i.e. there exists $Q(x)\in\bI_{n+1}^G(M;\mZ_2) $ so that $\Phi(x)=[\partial Q(x)]$) without the no mass concentration on orbits condition (b). 
\end{remark}

The following theorem shows we can generate an ${\bf M}$-continuous map from a discrete map with small $\M$-fineness, which is parallel to \cite[Theorem 4.14]{li2021min} and \cite[Theorem 3]{wang2022min}. 

\begin{theorem}[Interpolation Theorem]\label{Thm:interpolation}
	There exist positive constants $C_0=C_0(M,G,m)$ and $\delta_0=\delta_0(M,G)$  so that if
	$$\phi:I(m,k)_0\rightarrow \mathcal{Z}_n^G(M,\partial M;\mathbb{Z}_2)$$
	has ${\bf f}_{\M}(\phi)<\delta_0$, then there exists a map
	$$ \Phi: I^m \rightarrow \mathcal{Z}_n^G(M,\partial M;\mathbb{Z}_2)$$
	continuous in the ${\bf M}$-topology satisfying:
	\begin{itemize}
		\item[(i)] $\Phi(x)=\phi(x)$ for all $x\in I(m,k)_0$;
		\item[(ii)] if $\alpha$ is some $j$-cell in $I(m,k)$, then $\Phi$ restricted to $\alpha$ depends only on the values of $\phi$ assumed on the vertices of  $\alpha$;
		\item[(iii)] $\sup\{{\bf M}(\Phi(x)-\Phi(y)) :  x,y\mbox{ lie in a common cell of } I(m,k)\}\leq C_0{\bf f}_\M(\phi).$
	\end{itemize}
	Moreover, $\Phi|_{I^m_0}=0$ provided $\phi|_{I_0(m,k)_0} =0$. 
\end{theorem}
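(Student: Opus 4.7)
The plan is to adapt the classical Almgren-style cellular interpolation, as carried out in \cite[Theorem 14.1]{marques2014min} and \cite[Theorem 4.14]{li2021min}, by inserting the $G$-invariant $\M$-isoperimetric Lemma \ref{Lem:M-isoperimetric} in place of its non-equivariant counterpart at every filling step. The crucial observation is that once the isoperimetric choice has been upgraded to a $G$-invariant choice (via uniqueness, as in the proof of Lemma \ref{Lem:M-isoperimetric}), the $G$-invariance of the resulting extension $\Phi$ is automatic, and no symmetrization (which would spoil the mass bound) is needed.

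First I would fix $\delta_0 \leq \ep_M$ small enough (depending on $m$ and on the isoperimetric constant $C_M$ of Lemma \ref{Lem:M-isoperimetric}) so that all mass-differences that arise in the combinatorial recursion stay below $\ep_M$. Then I would extend $\phi$ to all of $I^m$ by a skeletal induction on $j = 0,1,\ldots,m$. On the $0$-skeleton, set $\Phi = \phi$. For the inductive step from $(j-1)$ to $j$: given a $j$-cell $\al \subset I(m,k)$ on whose boundary $\partial \al$ the map $\Phi$ has already been defined and is $\M$-continuous with controlled oscillation, apply Lemma \ref{Lem:M-isoperimetric} to pairs of adjacent values on the vertices of $\al$ to obtain $G$-invariant fillings $Q \in \mI_{n+1}^G(M;\mZ_2)$, and use these fillings to define $\Phi$ inside $\al$ by an Almgren-type construction (e.g.\ the cone/slab interpolation, or equivalently a sequence of translations of $\partial Q$). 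By construction, $\Phi|_\al$ depends only on the values of $\phi$ at the vertices of $\al$, giving condition (ii), and it is $\M$-continuous on $\al$.

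For the mass oscillation estimate (iii), on a $j$-cell the total movement of $\Phi$ is bounded by the sum of the masses of the $2^j$ isoperimetric fillings used in its construction, each of which is $\leq C_M \cdot \mathbf{f}_\M(\phi)$ by Lemma \ref{Lem:M-isoperimetric}. Iterating across dimensions yields the constant $C_0 = C_0(M, G, m)$ claimed in (iii). The $G$-invariance of $\Phi(x)$ for every $x \in I^m$ follows because each piece entering the construction is $G$-invariant: the uniqueness statement in Lemma \ref{Lem:M-isoperimetric} (for $\M(\tau_2-\tau_1) < \ep_M$) forces $g_\# Q = Q$ for all $g \in G$, so the whole extension lies in $\Z_n^G(M,\partial M;\mZ_2)$. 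The vanishing-on-boundary clause is preserved by the same uniqueness argument: if two adjacent vertex values are both $0$, then the unique filling is $Q = 0$, so the interpolated values on that cell vanish, and inductively $\Phi|_{I_0^m} = 0$ whenever $\phi|_{I_0(m,k)_0} = 0$.

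The main obstacle I expect is purely bookkeeping: tracking how $\mathbf{f}_\M(\phi)$ propagates as one moves up in cell dimension, so that the $\M$-isoperimetric lemma can be applied inductively without the cumulative oscillation exceeding $\ep_M$. This requires choosing $\delta_0$ depending on $m$ and on the combinatorics of $I(m,k)$, exactly as in \cite[Theorem 4.14]{li2021min}. Conceptually nothing new happens once the $G$-equivariant isoperimetric choice has been made canonical; substantively the proof proceeds exactly as in the non-equivariant free boundary case.
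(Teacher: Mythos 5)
Your high-level skeleton (skeletal induction, $\delta_0$ small so the $\M$-isoperimetric lemma applies at each level, $G$-invariance of the isoperimetric choice via uniqueness, and the observation that averaging over $G$ must be avoided because it wrecks the mass bound) is consistent with the paper. But the central claim---that once the isoperimetric choice $Q(\alpha)$ is $G$-invariant ``the $G$-invariance of the resulting extension $\Phi$ is automatic''---skips the hardest step and is not correct as stated.

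The Almgren extension is \emph{not} a cone/slab interpolation or ``a sequence of translations of $\partial Q$.'' It is built by first cutting each filling $Q(\alpha)$ into many small pieces along the level sets of distance functions to the simplices of a triangulation of $M$, and then deforming each piece to $0$ (continuously in $\M$) inside a controlled neighborhood. Both of these operations must themselves be $G$-equivariant; otherwise the intermediate currents $C_\Lambda(\tilde s,Q)$ and $D(\tilde s)(t,T)$ are not $G$-invariant even though the initial $Q$ is, and your extension leaves $\Z_n^G(M,\partial M;\mZ_2)$. The paper resolves this by invoking Illman's equivariant triangulation theorem to get a triangulation $\widetilde\triangle=\{\pi^{-1}(s):s\in\triangle\}$ of $M$ by $G$-cells, by using the ($G$-invariant) distance functions $\dist_M(\tilde s,\cdot)$ to those $G$-cells to produce $G$-invariant slices, and by constructing the deformation maps on the resulting uniform $G$-neighborhoods $N(\tilde s)$. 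This equivariant triangulation is a genuine additional input (previously carried out in \cite[Theorem 3]{wang2022min}) with no counterpart in \cite[Theorem 4.14]{li2021min}, and your proposal never mentions it. Without it there is no way to make the interpolation formula $\tilde h_\alpha$ land in $\mR_n^G$; the $G$-invariance does not come for free from the isoperimetric choice alone. (A smaller inaccuracy: in the free boundary setting you also produce $R(\alpha)\in\mR_n^G(\partial M;\mZ_2)$ from Lemma \ref{Lem:M-isoperimetric}; the paper notes that $R$ can be ignored after passing to the equivalence class, but your proposal doesn't address what happens to it.)

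So the gap is concrete: you need to explain how the cutting and deforming maps in the Almgren recursion are made $G$-equivariant, and for that you need the equivariant triangulation, not merely the $G$-invariant isoperimetric choice.
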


We call the map $\Phi$ in Theorem \ref{Thm:interpolation} \textit{the Almgren $G$-extension} of $\phi$. 

\begin{proof}
	The proof of this interpolation theorem is a combination of the arguments in \cite[Theorem 3]{wang2022min} and \cite[Theorem 4.14]{li2021min}. 
	We only consider the case of $\phi:I(m,0)_0\rightarrow \mathcal{Z}_n^G(M,\partial M;\mathbb{Z}_2)$.
	As for general ${\rm dmn}(\phi)=I(m,k)_0$, the conclusion can be made as in \cite[Theorem 14.2]{marques2014min}. 
	
	By Lemma \ref{Lem:M-isoperimetric}, we can take $\delta_0$ small enough such that for any $\alpha\in I(m,0)_1$ with $\partial \alpha=[b]-[a]$, $\phi([a])=\tau_1$, $\phi([b])=\tau_2$, there exist $Q(\al)\in\mI_{n+1}^G(M;\mZ_2)$ and $R(\al)\in\mR_n^G(\partial M;\mZ_2)$ satisfying 
	\begin{itemize}
		\item $T_2-T_1=\partial Q(\al)-R(\al)$,
		\item $\M(Q(\al))+\M(R(\al))\leq C_M\M(\tau_2-\tau_1)$,
	\end{itemize}
	where $T_i\in\tau_i,i\in\{1,2\}$, are the canonical representatives. 
	
	By Illman's work \cite{illman1983equivariant}\cite{illman2000Existence}, we can get a triangulation $\triangle$ of the orbit space $M/G$, which generates an equivariant triangulation $\widetilde{\triangle} := \{\pi^{-1}(s) : s\in\triangle\}$ of $M$ with some nice properties (see \cite[Appendix A]{wang2022min}). 
	Therefore, we can construct the cutting maps and the deforming maps for integral $G$-currents in the same way as in \cite[Theorem 3]{wang2022min}. 
	Specifically, since the distance function to any $G$-cell $\tilde{s}\in\widetilde{\triangle}$ is a $G$-invariant Lipschitz function, we can use $\dist_M(\tilde{s}, \cdot)$ to construct $G$-invariant slice current (\cite[Lemma 4]{wang2022min}). 
	Thus, for any finite set $\Lambda\subset \mI_{n+1}^G(M;\mathbb{Z}_2)$, we can use the $G$-invariant slices to associate every $\tilde{s}\in\widetilde{\triangle}$ to a $G$-neighborhood $L(\tilde{s})\subset U(\tilde{s}):= \cup_{\tilde{s}\subset\tilde{s}'}\tilde{s}'$ of $\tilde{s}$ and construct a cutting map $C_\Lambda: \widetilde{\triangle}\times\Lambda \rightarrow \mI_{n+1}^G(M;\mathbb{Z}_2)$ similar to \cite[Section 5]{almgren1962homotopy} such that
	\begin{eqnarray}
		C_\Lambda(\tilde{s},Q) &=& \Big( Q-\sum_{\tilde{s}'\prec\tilde{s}} C_\Lambda(\tilde{s}',Q) \Big) \cap L(\tilde{s}),
		\\
		{\bf M}\Big( \partial C_\Lambda(\tilde{s},Q) &-& \partial\big(Q-\sum_{\tilde{s}'\prec\tilde{s}} C_\Lambda(\tilde{s}',Q) \big )\cap L(\tilde{s})  \Big) 
		\\
		&\leq & C_0\cdot (\#\Lambda)\cdot{\bf M}(Q-\sum_{\tilde{s}'\prec\tilde{s}} C_\Lambda(\tilde{s}',Q)), \nonumber
		\\
		{\rm spt}(C_\Lambda(\tilde{s},Q))&\subset & U(\tilde{s}),\quad \forall \tilde{s}\in\widetilde{\triangle},~Q\in\Lambda,
	\end{eqnarray}
	where $C_0>0$ depends only on $\widetilde{\triangle}$ and $m$. 
	Note the $G$-neighborhood $L(\tilde{s})$ can be chosen with a uniform bound $N(\tilde{s})\subset \pi^{-1}({\rm Int}(\cup_{s \subset s'} s'))$ for all finite set $\Lambda\subset \mI_{n+1}^G(M;\mathbb{Z}_2)$, i.e. ${\rm spt}(C_\Lambda(\tilde{s},Q))\subset N(\tilde{s})$ for any finite set $\Lambda\subset \mI_{n+1}^G(M;\mathbb{Z}_2)$ and $Q\in\Lambda$. 
	Next, in the uniformly defined $G$-neighborhood $N(\tilde{s})$, the deforming maps can be constructed as in \cite[Theorem 3]{wang2022min}. 
	Thus, we get a map
	\begin{eqnarray*}
		D(\tilde{s}) : I\times \mI_{n}^G(N(\tilde{s});\mathbb{Z}_2) \rightarrow \mI_{n}^G(N(\tilde{s});\mathbb{Z}_2),
	\end{eqnarray*}
	so that
	\begin{itemize}
		\item $D(\tilde{s})$ is continuous in the ${\bf M}$-norm,
		\item $D(\tilde{s})(0,T)=T$ for any $\tilde{s}\in\widetilde{\triangle}$ and $T\in \mI_{n}^G(N(\tilde{s});\mathbb{Z}_2) $,
		\item $D(\tilde{s})(1,T)=0$ for any $\tilde{s}\in\widetilde{\triangle}$ and $T\in \mI_{n}^G(N(\tilde{s});\mathbb{Z}_2) $,
		\item ${\bf M}(D(\tilde{s})(t,T))\leq C {\bf M}(T)$, where $C=C(M,G)>0$, $\tilde{s}\in\widetilde{\triangle},~t\in I,~T\in \mI_{n}^G(N(\tilde{s});\mathbb{Z}_2)$.
	\end{itemize}

	Now, for any $k$-cell $\alpha\in I(m,0)_k$, we write the interpolation formula $\tilde{h}_\alpha: I^k\rightarrow \mR_{n}^G(U(\tilde{s});\mathbb{Z}_2)$ similar to \cite[Theorem 4.14]{li2021min}:
	\begin{equation*}
		\tilde{h}_\alpha(0) := T_\alpha\in\phi(\alpha)~{\rm ~the~canonical~representative},\quad {\rm for} ~k=0,
	\end{equation*}
	and for $k\geq 1$:
	\begin{equation*}
		\begin{split}
			&\tilde{h}_\alpha(x_1,\dots,x_k) := \sum_{\gamma\in\Gamma_\alpha} {\rm sign}(\gamma)\cdot 
			\\
			&\sum_{\tilde{s}_1,\dots,\tilde{s}_k\in\widetilde{\triangle}} D(\tilde{s}_1,x_1)\circ\cdots\circ D(\tilde{s}_k,x_k) \circ \partial \circ C_{\Lambda(\gamma_k)}(\tilde{s}_k)\circ\cdots\circ C_{\Lambda(\gamma_1)}(\tilde{s}_1) (Q(\gamma_1)),
		\end{split}
	\end{equation*}
	where $\Ga_{\al}$ is the set of all sequences $\{\ga_i\}_{i=1}^k$ such that $\ga_k=\al$, and $\ga_i$ is a $(\dim(\ga_{i+1})-1)$-face of $\ga_{i+1}$ for $1\leq i\leq k-1$. 
	Moreover, ${\rm sign}(\gamma)\in\{-1,1\}$ is defined by \cite[6.2]{almgren1962homotopy}, 
	and $\Lambda(\gamma_i )$ are all defined inductively as in \cite[Page 763]{marques2014min}. 
	Hence, for each $k$-cell $\al\in I(m,0)_k$, there is an $\M$-continuous function $h_\alpha:I^k\to\Z_n^G(M,\partial M;\mZ_2)$ defined by 
	$$h_\alpha(x_1,\dots,x_k) := [\tilde{h}_\alpha(x_1,\dots,x_k)].$$
	We mention that in the definition of $\tilde{h}_\alpha$, we do not need to cut or deform $R(\gamma_1)$ as in the proof of \cite[Theorem 4.14]{li2021min}. 
	This is because the term $R(\gamma_1)$ in $\partial M$ will be ignored after taking the equivalent class to get $h_\alpha$. 
	(Indeed, the chain maps $\phi_B^{n,\epsilon}$ and $\psi_B^n$ are zero in the proof of \cite[Theorem 7.5]{almgren1962homotopy}.)
	Finally, the rest of the proof is the same as in \cite[Page 763]{marques2014min}, and the last statement follows directly from the construction. 
\end{proof}

\begin{remark}\label{Rem:almgren iso and homo}
	As an application of Theorem \ref{Thm:interpolation} and the lemmas in Section \ref{Sec:relative G-cycle}, the Almgren's Isomorphism \cite[Theorem 7.5]{almgren1962homotopy} and Almgren's Homotopies \cite[Theorem 8.2]{almgren1962homotopy} remain true for the space $\mathcal Z_{n}^G(M,\partial M;\mathbb{Z}_2)$. 
	Moreover, we also have \cite[Corollary 3.12]{marques2017existence} holds for mappings into $\mathcal Z_{n}^G(M,\partial M;\mathbb{Z}_2)$. 
	Specifically, if $\{\phi_i\}_{i\in\N} $ is the sequence of maps associated to $\Phi:I^m\rightarrow \mathcal{Z}_n^G(M,\partial M;\mathbb{Z}_2)$ by Theorem \ref{Thm: discretization}, and $\Phi_i$ is the Almgren $G$-extension of $\phi_i$ given by Theorem \ref{Thm:interpolation} for $i$ large enough, then $\Phi$ is $G$-homotopic to $\Phi_i$ for $i$ large enough. 
	We omit the proofs since they are essentially the same as in \cite{almgren1962homotopy} and \cite{marques2017existence}. 
\end{remark}

\subsection{Tightening}
The \emph{tightening process} (see \cite[Section 4.3]{li2021min}) for a critical sequence $S^*=\{\phi_i^*\} \in \Pi$ is to make every $V\in {\bf C}(S^*) $ a stationary varifold with free boundary. 
For any $n$-varifold $V$ that is not stationary with free boundary, there is a vector field $X(V)\in \mathfrak{X}_{tan}(M)$ so that the mass of $V$ is decreasing under the flow $\{f_t^{X(V)}\}_{t \in [0,1]}$ generated by $X(V)$. 
The idea is to continuously associate every varifold $V$ with a vector field $X(V)\in \mathfrak{X}_{tan}(M)$ so that: if $V$ is stationary with free boundary, then $X(V)=0$; if $V$ is not stationary with free boundary, then $X(V)\neq 0$ is a mass decreasing variation vector field. 
Noting $f_t^{X(V)}(\partial M) = \partial M $, we can roughly take $\phi_i(x)= [(f^{X(|T_x|)}_1)_\# T_x ]$, $i\in\N$, as the tighten sequence, where $T_x$ is the canonical representative of $\phi_i^*(x)$. 
Here we say `roughly' because the push-forwards of $\phi_i^*$ by the equivariant variations $\{f^{X(|T_x|)}_t\}$ are only continuous in the $\mF$-topology, while the fineness of $\phi_i$ is defined with the $\M$-norm, which may not be well controlled under such deformations. 
To fill this gap, an interpolation and discretization process is needed (see \cite[Page 766-768]{marques2014min}). 
Nevertheless, if $\{\Phi_i^*\}\subset \bm{\Pi}$ is a continuous min-max sequence, then $\Phi_i(x)=[(f^{X(|T_x|)}_1)_\# T_x ]$ is also a min-max sequence, where $T_x\in \Phi_i^*(x)$ is canonical. 
Furthermore, we can use the $G$-invariant Discretization Theorem \ref{Thm: discretization} to generate a sequence of discrete mappings with finesses tending to zero.

\begin{proposition}[Tightening]\label{Prop:tightening}
	Let $\bm{\Pi}\in \pi_m' \big(\Z_{n}^G(M, \partial M;\mF;\mZ_2), \{0\}\big)$ be a continuous $G$-homotopy class. 
	For any min-max sequence $\{\Phi_i^*\}_{i\in\N}\subset \bm{\Pi}$, there exists another min-max sequence $\{\Phi_i\}_{i\in\N}$ for $\bm{\Pi}$ such that
	\begin{itemize}
		\item ${\bf C}(\{\Phi_i\}_{i\in\N}) \subset {\bf C}(\{\Phi_i^*\}_{i\in\N})$,
		\item every $G$-varifold $V\in {\bf C}(\{\Phi_i\}_{i\in\N})$ is stationary in $M$ with free boundary.
	\end{itemize}
	Moreover, there exist a sequence $k_i\to\infty$ and a sequence $S=\{\varphi_i\}_{i\in\N}$ of discrete mappings 
	$$\varphi_i: I(m, k_i)_{0}\rightarrow\big(\Z_{n}^G(M, \partial M;\mZ_2), \{0\}\big)$$
	so that 
	\begin{itemize}
		\item[(1)] $\mf_\M(\varphi_i)=\delta_i\to 0$, as $i\to\infty$; 
		\item[(2)] the Almgren $G$-extension of $\varphi_i$ is $G$-homotopic to $\Phi_i$ in the flat topology; 
		\item[(3)] ${\bf L}(S)={\bf L}(\{\Phi_i\}_{i\in\N})={\bf L}(\bm{\Pi})$;
		\item[(4)] ${\bf C}(S)={\bf C}(\{\Phi_i\}_{i\in\N})$.
	\end{itemize}
\end{proposition}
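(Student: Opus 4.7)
The plan is to follow the Li--Zhou tightening scheme, replacing test vector fields and push-forwards with their $G$-invariant counterparts, and then to invoke the equivariant discretization machinery already proved in Theorem~\ref{Thm: discretization}. The principal new ingredient is a continuous assignment
\[
 H : \{V \in \V_n^G(M) : \|V\|(M) \leq 2\bL(\bm{\Pi})\} \longrightarrow \mathfrak{X}^G_{tan}(M),
\]
such that $H(V)=0$ iff $V$ is stationary in $M$ with free boundary, and otherwise $\delta V(H(V))<0$ with a lower bound depending only on how far $V$ is from being stationary. To build $H$, I would first construct (as in \cite[Section~4.3]{li2021min}) a continuous family of vector fields in $\mathfrak{X}_{tan}(M)$ using a partition of unity on the set of non-stationary varifolds, and then average each selected vector field $Y$ over $G$ via $Y_G := \int_G (g^{-1})_* Y \, d\mu(g)$. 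Since $V$ is $G$-invariant and $G$ acts by isometries, $\delta V(Y_G) = \delta V(Y)$, while Lemma~\ref{Lem: G-stationary with free boundary and stationary} ensures that $H(V)=0$ detects exactly the stationary-with-free-boundary $G$-varifolds. Tangency to $\partial M$ is preserved by the averaging because $\partial M$ is $G$-invariant.

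With $H$ in hand, for each canonical representative $T_x$ of $\Phi_i^*(x)$ I would define $\Phi_i(x) := [(f_1^{H(|T_x|)})_\# T_x]$, where $f_t^{H(V)}$ is the isotopy generated by $H(V)$. Because $H(V) \in \mathfrak{X}^G_{tan}(M)$, the isotopy is $G$-equivariant and preserves $\partial M$, so $\Phi_i(x)\in \Z_n^G(M,\partial M;\mZ_2)$; continuity of $H$ together with continuity of the flat topology under push-forward gives $\F$-continuity of $\Phi_i$, while the mapping $x\mapsto |T_x|$ is $\mF$-continuous when $\Phi_i^*$ is, so $\Phi_i$ lies in $\bm{\Pi}$ after verifying the homotopy through the family $(f_t^{H(|T_x|)})_\# T_x$, $t \in [0,1]$. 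The mass-decreasing property of $H$ on non-stationary varifolds, combined with the standard compactness argument (cf.\ \cite[Proposition~4.17]{li2021min}), forces $\bL(\{\Phi_i\}) = \bL(\bm{\Pi})$, $\bC(\{\Phi_i\}) \subset \bC(\{\Phi_i^*\})$, and every element of $\bC(\{\Phi_i\})$ to be stationary with free boundary.

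To produce the discrete sequence $\varphi_i$, I would observe that each $\Phi_i$ is $\F$-continuous with uniformly bounded mass, and that $\Phi_i$ inherits no concentration of mass on orbits from $\Phi_i^*$ by Lemma~\ref{Lem: mass.continu.no.concent} (since the push-forward by a uniformly Lipschitz equivariant isotopy only distorts mass by a multiplicative factor close to $1$). Applying Theorem~\ref{Thm: discretization} to $\Phi_i$ and taking a suitable diagonal subsequence yields maps $\varphi_i : I(m,k_i)_0 \to (\Z_n^G(M,\partial M;\mZ_2),\{0\})$ with $\mf_\M(\varphi_i)\to 0$ and with $\sup_x \M(\varphi_i(x))$ approximating $\sup_x \M(\Phi_i(x))$, yielding (3). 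Remark~\ref{Rem:almgren iso and homo} then guarantees that the Almgren $G$-extension of $\varphi_i$ is $G$-homotopic to $\Phi_i$ in the flat topology, giving (2); and (4) follows because both sequences share the same limiting image varifolds.

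The main obstacle is the equivariant construction of $H$: one must carry out the Li--Zhou partition-of-unity argument in the weak topology of $\V_n^G(M)$ while maintaining $G$-invariance of the resulting vector field and uniform quantitative control on $\delta V(H(V))$ across orbit types. Averaging over $G$ is harmless for the quantitative estimate thanks to $G$-invariance of $V$, but continuity of the partition of unity must be checked using a $G$-invariant metric on $\V_n^G(M)$ (the restriction of $\mF$ works), and one must verify that the chosen test vector fields can be taken tangent to $\partial M$ before averaging, which follows from Lemma~\ref{Lem: G-stationary with free boundary and stationary} applied on small $\mF$-balls. Once $H$ is in place, the rest of the argument is a routine combination of continuous flow estimates and the equivariant discretization/interpolation theorems already established.
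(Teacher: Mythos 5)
Your proposal follows essentially the same route as the paper's proof: both reduce the pull-tight construction to $G$-invariant test vector fields via the averaging trick $Y\mapsto Y_G$ and the equivalence $\delta V(Y)=\delta V(Y_G)$ for $G$-varifolds (Lemma~\ref{Lem: G-stationary with free boundary and stationary}), push the continuous min-max sequence forward under the resulting equivariant isotopy, and then produce the discrete sequence via Theorem~\ref{Thm: discretization} together with Remark~\ref{Rem:almgren iso and homo} and a diagonal argument. Minor points the paper handles more carefully — the reparametrizing time function $\eta$ in the definition of $F^V$, the verification that the pull-tight map $H$ is itself $\mF$-continuous (not merely that $\Phi_i$ is $\F$-continuous), and the fact that no concentration of mass on orbits follows directly from $\mF$-continuity of $\Phi_i$ rather than being ``inherited'' through the Lipschitz bound — do not change the substance, and the proposal is correct.
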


Here we define 
${\bf L}(S) := \limsup_{i\rightarrow\infty} \max_{x\in \mathrm{dmn}(\varphi_i)} {\bf M}(\varphi_i(x))$ and ${\bf C}(S)$ as in Definition \ref{Def:critical set}, even though $S=\{\varphi_i\}_{i\in\N}$ may not be an $(m,{\bf M})$-homotopy sequence of mappings into $\big(\Z_{n}^G(M, \partial M;\mZ_2), \{0\}\big)$.

\begin{proof}
	Denote by $C:=\sup_{i\in \N}\sup_{x\in I^m}\M(\Phi_i^*(x)) < \infty$ and 
	\begin{eqnarray*}
		A&:=&\{V\in\mathcal{V}^G_n(M) : \|V\|(M)\leq C \},
		\\
		A_0&:=&\{V\in A : V {\rm ~is ~stationary~in~} M {\rm~with~free~boundary}\}.
	\end{eqnarray*} 
	Since $G$ acts by isometries, $A$ and $A_0$ are compact in the weak topology of varifolds (c.f. \cite[Lemma 3.4]{li2015general}). 
	By Lemma \ref{Lem: G-stationary with free boundary and stationary}, we only need to consider $Y\in \mathfrak{X}^G_{tan}(M)$. 
	Indeed, if $Y\in\mathfrak{X}_{tan}(M)$, we can take $Y_G\in \mathfrak{X}^G_{tan}(M)$ as in the proof of Lemma \ref{Lem: G-stationary with free boundary and stationary}. 
	Then, $ \delta V(Y)=\delta V(Y_G)$ for any $G$-varifold $V\in \V_n^G(M)$. 
	Therefore, we can follow the arguments in \cite[Page 764-765]{marques2014min} to get a continuous map $X: A \to \mathfrak{X}_{tan}^G(M)$ and a continuous time function $\eta:A \to[0,1]$ 
	such that 
	\begin{itemize}
		\item $X(V)=0$ and $\eta(V)=0$, if $V\in A_0$;
		\item $\delta V(X(V))<0$ and $\eta(V)>0$, if $V\in A\setminus A_0$;
		\item $\|(f^{X(V)}_{t})_\#V\|(M) < \|(f^{X(V)}_{s})_\#V\|(M) $ for all $V\in A$ and $0\leq s<t\leq \eta(V)$, 
	\end{itemize}
	where $\{f^{X(V)}_t\}$ are the equivariant diffeomorphisms generated by $X(V)$. 
	Consider $F^V(t,\cdot):=(f^{X(V)}_{\eta(V)t})_\#(\cdot)$ and the following $\mF$-continuous map:
	\begin{eqnarray}\label{Eq-pulltightmap}
		H:~I\times \big(\mathcal{Z}_n^G(M,\partial M;{\bf F};\mathbb{Z}_2)&\cap & \{\tau :  {\bf M}(\tau)\leq C \} \big)\nonumber
		\\
		&\rightarrow & \mathcal{Z}_n^G(M,\partial M;{\bf F};\mathbb{Z}_2)\cap  \{\tau :  {\bf M}(\tau)\leq C \},
		\\
		H(t,\tau) &:=& [F^{|T|}(t,T)], ~T\in\tau {\rm ~is~canonical,}\nonumber
	\end{eqnarray}
	which satisfies:
	\begin{itemize}
		\item $H(0,\tau)=\tau$;
		\item if $|T|$ is stationary with free boundary, then $H(t,\tau)=\tau$ for all $t\in[0,1]$;
		\item if $|T|$ is not stationary with free boundary, then ${\bf M}(H(1,\tau)) < \M(\tau)$.
	\end{itemize}
	Let $\Phi_i :=H(1,\Phi_i^*)$. 
	Then $\{\Phi_i\}_{i\in\N}\subset \bm{\Pi}$ is also a continuous min-max sequence, and every element in ${\bf C}(\{\Phi_i\}_{i\in\N})$ ($\subset {\bf C}(\{\Phi_i^*\}_{i\in\N})$) is stationary in $M$ with free boundary. 
	
	Next, by Lemma \ref{Lem: mass.continu.no.concent}, we can apply Theorem \ref{Thm: discretization} to each $\Phi_i$ and obtain a sequence of maps $\phi_j^i: I(m, k_j^i)_0 \to \Z_n^G(M,\partial M; \mZ_2)$ with $k_j^i<k_{j+1}^i$. 
	It then follows from Theorem \ref{Thm: discretization}(ii)(iii), Lemma \ref{Lem:lower-semicontinue of M}, \ref{Lem:F-metric}, and the continuity of the map $x\mapsto \M(\Phi_i(x))$ that 
	\begin{equation}\label{Eq: F estimates}
		\lim_{j\to\infty} \sup \{ \mF(\phi^i_j(x), \Phi_i(x)) : x\in I(m,k^i_j)_0  \} = 0. 
	\end{equation}
	For $j$ large enough, we can apply Theorem \ref{Thm:interpolation} to obtain the Almgren $G$-extension $\Phi^i_j$ of $\phi^i_j$. 
	As we mentioned in Remark \ref{Rem:almgren iso and homo}, the arguments in \cite[Corollary 3.12]{marques2017existence} suggest that $\Phi^i_j$ is $G$-homotopic to $\Phi_i$ (for $j$ large enough), and thus $\Phi^i_j\in {\bm \Pi}$. 
	Additionally, by Theorem \ref{Thm:interpolation}(i)(iii) and Theorem \ref{Thm: discretization}(ii), we have 
	\begin{equation}\label{Eq: width estimate}
		{\bf L}({\bm \Pi}) \leq {\bf L} (\{\Phi^{i}_{j} \}_{j\in \N} )= {\bf L} (\{\phi^i_j\}_{j\in\N}) \leq \sup _{x \in I^m} \{\M (\Phi_{i}(x) ) \} \rightarrow \mathbf{L}({\bm \Pi}) \text { ~as ~} i \rightarrow \infty. 
	\end{equation}
	After taking a subsequence $j(i)\to\infty$, we have $S=\{\varphi_i \}_{i\in\N}$, $\varphi_i := \phi^i_{j(i)}$, satisfies $\mf_\M(\varphi_i) \to 0$, the Almgren $G$-extension of $\varphi_i$ is $G$-homotopic to $\Phi_i$, and 
	\begin{itemize}
		\item ${\bf L}({\bm \Pi}) = {\bf L}(\{\varphi_i\}_{i\in\N})$ (by (\ref{Eq: width estimate}));
		\item $\lim_{i\to\infty} \sup \{ \mF(\varphi_i(x), \Phi_i(x)) : x\in I(m, k^i_{j(i)})_0  \} = 0$ (by (\ref{Eq: F estimates}));
		\item $\lim_{i\to\infty} \sup \{ \mF(\Phi_i(x), \Phi_i(y)) : x,y\in \alpha, \alpha \in  I(m, k^i_{j(i)})  \} = 0$ (by the $\mF$-continuity). 
	\end{itemize}
	It follows from the last two bullets that ${\bf C}(S)={\bf C}(\{\Phi_i\}_{i\in\N})$. 
\end{proof}

\subsection{Existence of $(G,\mZ_2)$-almost minimizing varifolds}
\label{SS:existence-am}

\begin{definition}[$G$-annulus neighborhood]\label{Def:annulus}
Let $p \in M$ and $r>0$. 
If $p \notin \partial M$, we assume $r<\min \{ {\rm Inj}(G\cdot p), \dist_M(p,\partial M) \}$. 
If $p \in \partial M$, we assume $r < r_{\textrm{Fermi}}^G(p)$ (see Lemma \ref{Lem:Fermi convex}). 
For any $s\in(0,r)$, the (relatively) open $G$-annular neighborhood is defined as 
\[ \A_{s,r}^G(p)= \left\{ \begin{array}{cl}
\tBcal_r^G(p) \setminus \tBcal_s^G(p) & \text{ if } p \in M \setminus \partial M, \\
\tBcal^{G,+}_r(p) \setminus \tBcal^{G,+}_s(p) & \text{ if } p \in \partial M. 
\end{array} \right. \]
\end{definition}

By the choice of $r$, $\exp_{G\cdot p}^\perp$ (or $\widetilde{\exp}_{G\cdot p}^\perp$ in Definition \ref{Def:Fermi exponential map}) is a $G$-equivariant diffeomorphism on $\A_{s,r}^G(p)$. 
For any $v\in {\bf N}(G\cdot p)$ and $\lambda >0$, $\lambda v$ has the same isotropy group as $v$, and thus $\A_{s,r}^G(p)$ contains no isolated orbit (Remark \ref{Rem: no isolated orbit}). 

\begin{definition}\label{Def:am-annuli}
	A $G$-varifold $V \in \V_n^G(M)$ is said to be \emph{$(G,\mZ_2)$-almost minimizing (of boundary type) in annuli with free boundary} if for each $p\in M$, there is $r_{am}(G\cdot p) >0$ so that $V$ is $(G,\mZ_2)$-almost minimizing (of boundary type) in $\A_{s,t}^G(p)$ with free boundary for all $0<s<t\leq r_{am}(G\cdot p)$. 
\end{definition}
\begin{remark}
	By Lemma \ref{Lem:Fermi convex}, we can replace $\mathcal{A}_{s,r}^G(p)$, $\dist_M(p,\partial M)$ by $A_{s, r}^G(p)\cap M$, $\dist_{\R^L}(p, \partial M)$ in the above definition, which is equivalent by shrinking $r_{am}$. 
\end{remark}

\begin{theorem}\label{Thm:exist amv}
	Let $S=\{\varphi_i\}_{i\in\N}$, 
	$$\varphi_i: I(m, k_i)_{0}\rightarrow\big(\Z_{n}^G(M, \partial M;\mZ_2), \{0\}\big),$$
	be a sequence of mappings with $k_i\to\infty$, $\mf_\M(\varphi_i) \to 0$, as $i\to\infty$, ${\bf L}(S)>0$, and every $G$-varifold $V\in{\bf C}(S)$ is stationary in $M$ with free boundary. 
	If no element $V\in {\bf C}(S)$ is $(G,\mZ_2)$-almost minimizing in annuli with free boundary, then there exists a sequence $S^*=\{\varphi_i^*\}_{i\in\N}$ of mappings
	$$\varphi_i^*: I(m, l_i)_{0}\rightarrow\big(\Z_{n}^G(M, \partial M;\mZ_2), \{0\}\big),$$
	for some $l_i\to\infty$ as $i\to\infty$, such that 
	\begin{itemize}
		\item[(i)] $\varphi_i$ and $\varphi_i^*$ are $m$-homotopic to each other in $\big(\Z_{n}^G(M, \partial M;\mZ_2), \{0\}\big)$ with $\M$-finenesses tending to zero,
		\item[(ii)] ${\bf L}(S^*)=\limsup_{i\rightarrow\infty} \max_{x\in \mathrm{dmn}(\varphi_i^*)} {\bf M}(\varphi_i^*(x))<{\bf L}(S)$.
	\end{itemize}
\end{theorem}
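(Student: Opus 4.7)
The strategy is to adapt Pitts' combinatorial argument (\cite[Theorem~4.10]{pitts2014existence}) to the present equivariant free-boundary framework, closely following the scheme of \cite[Section~4.4]{li2021min} with $G$-invariance and canonical representatives built in throughout. Negating Definition~\ref{Def:am-annuli} on $\mathbf{C}(S)$ tells us that for each $V\in\mathbf{C}(S)$ and each $p\in M$, arbitrarily small concentric $G$-annuli $\A_{s,r}^G(p)$ exist in which $V$ fails to be $(G,\mZ_2)$-almost minimizing with free boundary. The objective is to convert these pointwise failures into a globally coordinated deformation $\varphi_i\rightsquigarrow\varphi_i^*$ that is $m$-homotopic to $\varphi_i$ with vanishing $\M$-fineness while strictly lowering the width.

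First, I would extract uniform quantitative data from the compactness of $\mathbf{C}(S)\subset\V_n^G(M)$ in the $\mF$-metric (guaranteed by the mass bound $\bL(S)<\infty$). For each $V\in\mathbf{C}(S)$, pick a point $p=p(V)\in M$ and choose $Q$ concentric $G$-annuli $\{\A_{r_j,r_{j+1}}^G(p)\}_{j=0}^{Q-1}$ with $0<r_0<\cdots<r_Q\leq r_{am}(G\cdot p)$, where $Q$ is large depending only on $m$. The hypothesis forces $V$ to fail the amv condition in at least one of these annuli, and via the equivalence Theorem~\ref{Thm: equivalence-a.m.v} (combined with a diagonal upgrade and, where necessary, a passage to a slightly smaller annulus using Theorems~\ref{Thm: discretization}--\ref{Thm:interpolation}) this failure furnishes $\ep(V),\de(V)>0$ and an index $j(V)$ such that every $\tau\in\Z_n^G(M,\partial M;\mZ_2)$ with $\mF(|T|,V)\ll 1$ admits a $G$-invariant $(\ep(V),\de(V))$-deformation in $\M$ inside $\A^{(V)}:=\A_{r_{j(V)},r_{j(V)+1}}^G(p)$. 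A finite $\mF$-cover of $\mathbf{C}(S)$ then produces uniform constants $\ep_0,\de_0>0$, reference varifolds $V^{(1)},\dots,V^{(N)}$, and designated disjoint annuli $\A^{(1)},\dots,\A^{(N)}$ such that any $\tau$ whose canonical representative is $\mF$-close to some $V^{(k)}$ admits a prescribed mass-dropping deformation inside $\A^{(k)}$.

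With this uniform data in hand, the main step is a refinement-and-coloring argument on the cube complex. For large $i$, successively subdivide $I(m,k_i)\to I(m,l_i)$ (composing the Almgren $G$-extension of Theorem~\ref{Thm:interpolation} with the discretization of Theorem~\ref{Thm: discretization}) so that the $\M$-fineness within any cell of $I(m,l_i)$ is much smaller than $\de_0$. At each vertex $x\in I(m,l_i)_0$, if $|T_x|$ is $\mF$-far from $\mathbf{C}(S)$, then $\M(\varphi_i(x))<\bL(S)-\ep_0$ already for all sufficiently large $i$ and no change is needed; otherwise $\varphi_i(x)$ is replaced by the output of the prescribed deformation inside the associated annulus $\A^{(k)}$. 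Pitts' coloring of the refined complex, assigning colors indexing the $V^{(k)}$, ensures that adjacent vertices use deformations localized in pairwise disjoint annuli. Since each deformation is the identity outside its annulus, the equivariant $\M$-isoperimetric Lemma~\ref{Lem:M-isoperimetric} together with the canonical-representative formalism (Lemma~\ref{Lem:canonical representative}) let the local deformations be pasted into a single map $\varphi_i^*$, accompanied by an explicit $m$-homotopy $\varphi_i\simeq\varphi_i^*$ of $\M$-fineness bounded by $C\de_0+\mf_\M(\varphi_i)\to 0$. The uniform mass drop of at least $\ep_0$ at every modified vertex yields $\bL(S^*)\leq \bL(S)-\ep_0/2<\bL(S)$.

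The principal obstacle lies in ensuring that every gluing step simultaneously preserves $G$-invariance, the free-boundary tangency at $\partial M$, and the cell-wise $\M$-fineness budget. The uniqueness clauses inside the proofs of the $G$-invariant isoperimetric Lemmas~\ref{Lem:F-isoperimetric}--\ref{Lem:M-isoperimetric} are exactly what forces fillings produced within any $G$-annulus to be automatically $G$-invariant, while the Fermi half-tube description of $\A_{s,r}^G(p)$ for $p\in\partial M$ from Definition~\ref{Def:annulus} keeps boundary-adjacent deformations tangential to $\partial M$. Once these equivariance and boundary bookkeeping issues are handled, the combinatorial core of \cite[Theorem~4.10]{pitts2014existence} transfers with only notational modifications and yields both conclusions (i) and (ii) at once.
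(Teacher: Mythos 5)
Your proposal follows essentially the same strategy as the paper: adapt Pitts' \cite[Theorem 4.10]{pitts2014existence} to the $G$-invariant free-boundary setting, with the key substitutions being Theorem~\ref{Thm: equivalence-a.m.v} in place of Pitts' equivalence result, the equivariant isoperimetric lemmas in place of Almgren's, and cut-and-paste constructions as in Lemma~\ref{L:pre-interpolation}. The paper's own proof is even terser---it treats the interior case $p_V\notin\partial M$ by citing \cite[Theorem 5]{wang2022min} directly and only adapts Pitts' combinatorics for $p_V\in\partial M$---but both routes lean on the same three ingredients.

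One precision to watch: your opening line states that failure of Definition~\ref{Def:am-annuli} yields, ``for each $V\in\mathbf{C}(S)$ \emph{and each} $p\in M$,'' arbitrarily small annuli where amv fails. That is too strong; the negation only produces \emph{some} orbit $G\cdot p_V$ with this property, which is in fact what you use two sentences later when you ``pick a point $p=p(V)$.'' Likewise, once $V$ fails the amv condition at $p(V)$ the radius $r_{am}(G\cdot p)$ is not defined, so the bound $r_Q\le r_{am}(G\cdot p)$ should be replaced by ``$r_Q$ sufficiently small.'' These are wording issues rather than gaps. Finally, in Pitts' Part 5 the filling step is done with the $\F$-isoperimetric lemma (your Lemma~\ref{Lem:F-isoperimetric}) rather than the $\M$-version; you invoke both in passing so this does not derail the argument, but the $\F$-version is the one that converts the $(\ep,\de)$-deformations produced by the amv failure into integral fillings for the homotopy.
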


\begin{proof}
	The proof is essentially the same as \cite[Theorem 4.10]{pitts2014existence}, which has been modified to $G$-invariant settings in \cite[Theorem 5]{wang2022min} and free boundary settings in \cite[Theorem 4.21]{li2021min}. 
	Note we always use the canonical representatives to induce varifolds throughout the proof (instead of $|\varphi_i(x)|$). 
	By the assumption, for every $V\in{\bf C}(S)$, there exists an orbit $G\cdot p_V\subset M$ so that $V$ is not $(G,\mZ_2)$-almost minimizing in some arbitrary small $G$-annuli $\{A^G_{s,t}(p)\cap M\}$.  
	For the case $p_V\notin \partial M$, the sequence $S^*$ can be obtained by \cite[Theorem 5]{wang2022min} with Theorem \ref{Thm: equivalence-a.m.v} in place of \cite[Theorem 4]{wang2022min}. 
	If $p_V\in\partial M$, we can adapt the proof of \cite[Theorem 4.10]{pitts2014existence} into $G$-invariant free boundary settings with a few modifications. 
	Specifically, in \cite[Page 164, Part 2]{pitts2014existence}, we use Theorem \ref{Thm: equivalence-a.m.v} in place of \cite[Theorem 3.9]{pitts2014existence}. 
	Secondly, in \cite[Page 165, Part 5]{pitts2014existence}, the $\F$-isoperimetric Lemma \ref{Lem:F-isoperimetric} for $G$-invariant relative cycles shall be used in place of \cite[Corollary 1.14]{almgren1962homotopy}. 
	Finally, the cut-and-paste constructions in \cite[Page 166, Part 9]{pitts2014existence} can be done in a similar way to the proof of Case 1 in Lemma \ref{L:pre-interpolation} (see (\ref{Eq: pre-interpolation})). 
	The rest parts in \cite[Theorem 4.10]{pitts2014existence} are purely combinatorial, which can be easily adapted. 
\end{proof}

Given a continuous $G$-homotopy class $\bm{\Pi}\in \pi_m' \big(\Z_{n}^G(M, \partial M;\mF;\mZ_2), \{0\}\big)$, we can apply Theorem \ref{Thm:exist amv} to the sequence of discrete mappings $S=\{\varphi_i\}_{i\in\N}$ in Proposition \ref{Prop:tightening}: 

\begin{theorem}\label{Thm:exist amv in critical set}
	Let $\bm{\Pi}\in \pi_m' \big(\Z_{n}^G(M, \partial M;\mF;\mZ_2), \{0\}\big)$ be a continuous $G$-homotopy class. 
	There exists $V\in\V^G_n(M)$ such that 
	\begin{itemize}
		\item[(i)] $\|V\|(M)=\bL(\bm{\Pi})$;
		\item[(ii)] $V$ is stationary in $M$ with free boundary;
		\item[(iii)] $V$ is $(G,\mZ_2)$-almost minimizing in annuli with free boundary.
	\end{itemize}
	Moreover, if $\{\Phi_i\}_{i\in\N}$ is a min-max sequence for $\bm{\Pi}$ and $S=\{\varphi_i\}_{i\in\N}$ is a sequence of discrete mappings given by Proposition \ref{Prop:tightening}, then we can choose $V\in {\bf C}(S)\subset {\bf C}(\{\Phi_i\}_{i\in\N})$.
\end{theorem}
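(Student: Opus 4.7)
The plan is a Pitts-style proof by contradiction that combines the tightening Proposition~\ref{Prop:tightening} with the combinatorial existence Theorem~\ref{Thm:exist amv} and the Discretization/Interpolation Theorems~\ref{Thm: discretization}, \ref{Thm:interpolation}. Fix any min-max sequence $\{\Phi_i^*\}_{i\in\N}\subset \bm{\Pi}$ and apply Proposition~\ref{Prop:tightening} to produce both a tightened continuous min-max sequence $\{\Phi_i\}_{i\in\N}\subset \bm{\Pi}$ (every varifold in $\mathbf{C}(\{\Phi_i\})$ being stationary with free boundary) and a discrete sequence $S=\{\varphi_i\}_{i\in\N}$ with $\mf_\M(\varphi_i)\to 0$, $\mathbf{L}(S)=\mathbf{L}(\bm{\Pi})$, $\mathbf{C}(S)=\mathbf{C}(\{\Phi_i\})$, and whose Almgren $G$-extensions are $G$-homotopic to $\Phi_i$. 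Claims (i) and (ii) for the sought $V$ are automatic from $V\in\mathbf{C}(S)$ and the tightening proposition, so the whole problem reduces to locating a single $V\in\mathbf{C}(S)$ that is $(G,\mZ_2)$-almost minimizing in annuli with free boundary.

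Suppose, for the sake of contradiction, that \emph{no} $V\in\mathbf{C}(S)$ enjoys this property. Then the hypotheses of Theorem~\ref{Thm:exist amv} are met by $S$ (its elements are stationary with free boundary, $\mathbf{L}(S)=\mathbf{L}(\bm{\Pi})>0$, and $\mf_\M(\varphi_i)\to 0$), and the theorem yields a second discrete sequence $S^\ast=\{\varphi_i^\ast\}_{i\in\N}$ such that $\varphi_i$ is $m$-homotopic to $\varphi_i^\ast$ in $\bigl(\Z_n^G(M,\partial M;\mZ_2),\{0\}\bigr)$ with $\M$-finenesses $\de_i\to 0$, and such that
\[
\mathbf{L}(S^\ast)\;<\;\mathbf{L}(S)\;=\;\mathbf{L}(\bm{\Pi}).
\]

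Next I would convert $S^\ast$ back into a continuous family. By Theorem~\ref{Thm:interpolation}, for each $i$ large enough the Almgren $G$-extension $\Phi_i^\ast$ of $\varphi_i^\ast$ is an $\M$-continuous map $I^m\to \Z_n^G(M,\partial M;\mZ_2)$ vanishing on $\partial I^m$, and Theorem~\ref{Thm:interpolation}(iii) together with Theorem~\ref{Thm: discretization}(ii) gives
\[
\sup_{x\in I^m}\M(\Phi_i^\ast(x))\;\leq\;\max_{y\in\mathrm{dmn}(\varphi_i^\ast)}\M(\varphi_i^\ast(y))+C_0\,\mf_\M(\varphi_i^\ast),
\]
so $\limsup_{i\to\infty}\sup_{x\in I^m}\M(\Phi_i^\ast(x))\leq \mathbf{L}(S^\ast)<\mathbf{L}(\bm{\Pi})$. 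The crux is to show $\Phi_i^\ast\in\bm{\Pi}$: by Remark~\ref{Rem:almgren iso and homo} the Almgren $G$-extension of $\varphi_i$ is continuously $G$-homotopic to $\Phi_i$, while the Almgren-type homotopy machinery (applied to the discrete $m$-homotopy with vanishing $\M$-fineness between $\varphi_i$ and $\varphi_i^\ast$, using Remark~\ref{Rem:almgren iso and homo} once more) yields a continuous $\F$-homotopy between the Almgren extensions of $\varphi_i$ and $\varphi_i^\ast$. Chaining these $G$-homotopies places $\Phi_i^\ast$ in $\bm{\Pi}$, and the displayed bound then contradicts $\mathbf{L}(\bm{\Pi})=\inf_{\Phi\in\bm{\Pi}}\sup_{x\in I^m}\M(\Phi(x))$. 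Hence some $V\in\mathbf{C}(S)$ is $(G,\mZ_2)$-almost minimizing in annuli with free boundary; by Proposition~\ref{Prop:tightening} this $V$ also satisfies $V\in\mathbf{C}(\{\Phi_i\})\subset\mathbf{C}(\{\Phi_i^\ast\})$, giving the moreover clause.

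The genuinely delicate step is the homotopy-chaining in the previous paragraph: one must verify that the $m$-homotopy between $\varphi_i$ and $\varphi_i^\ast$ with $\M$-fineness $\de_i\to 0$ lifts, via the Almgren $G$-extension procedure from Theorem~\ref{Thm:interpolation}, to a continuous $\F$-homotopy between their Almgren extensions whose total mass along the homotopy stays bounded by $\mathbf{L}(S)+o(1)$. This is exactly the $G$-equivariant, free-boundary counterpart of \cite[Corollary~3.12]{marques2017existence}, whose validity in our setting was flagged in Remark~\ref{Rem:almgren iso and homo}; all remaining steps are purely combinatorial and follow the scheme of \cite[Theorem~4.10]{pitts2014existence}.
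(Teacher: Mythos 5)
Your proof is correct and follows essentially the same route as the paper: invoke Proposition \ref{Prop:tightening} to obtain the tightened continuous sequence $\{\Phi_i\}$ and discrete sequence $S=\{\varphi_i\}$, argue by contradiction via Theorem \ref{Thm:exist amv} to produce $S^*$ with smaller width, use Theorem \ref{Thm:interpolation} together with Remark \ref{Rem:almgren iso and homo} (the equivariant analogue of \cite[Corollary 3.12]{marques2017existence}) to place the Almgren $G$-extensions $\Phi_i^*$ back in $\bm{\Pi}$, and derive the contradiction ${\bf L}(\bm{\Pi})\leq {\bf L}(\{\Phi_i^*\})\leq {\bf L}(S^*)<{\bf L}(S)={\bf L}(\bm{\Pi})$. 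Your identification of the homotopy-chaining step as the delicate point is accurate, and the paper handles it exactly as you propose, by citing Remark \ref{Rem:almgren iso and homo}.
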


\begin{proof}
	Let $\{\Phi_i\}_{i\in\N}$ and $S=\{\varphi_i \}_{i\in\N}$ be given by Proposition \ref{Prop:tightening}. 
	If no element $V\in {\bf C}(S)$ is $(G,\mZ_2)$-almost minimizing in annuli with free boundary, then Theorem \ref{Thm:exist amv} gives a sequence $S^*=\{\varphi_i^*\}_{i\in\N}$ of mappings
	$\varphi_i^*: I(m, l_i)_{0}\rightarrow\big(\Z_{n}^G(M, \partial M;\mZ_2), \{0\}\big)$ so that $\varphi_i,\varphi_i^*$ are $m$-homotopic in $\big(\Z_{n}^G(M, \partial M;\mZ_2), \{0\}\big)$ with $\M$-finenesses tending to zero and ${\bf L}(S^*)<{\bf L}(S)$. 
	As we mentioned in Remark \ref{Rem:almgren iso and homo}, the arguments in \cite[Corollary 3.12]{marques2017existence} suggest that the Almgren $G$-extension $\Phi_i, \Phi_i^*$ of $\varphi_i,\varphi_i^*$ are $G$-homotopic for $i$ large enough, and thus $\Phi_i,\Phi_i^*\in {\bm \Pi}$ for $i$ large. 
	Now we have a contradiction 
	$$ {\bf L}({\bm \Pi}) \leq {\bf L}(\{\Phi_i^*\}_{i\in\N}) \leq {\bf L}(\{\varphi_i^*\}_{i\in\N}) < {\bf L}(\{\varphi_i \}_{i\in\N}) = {\bf L}({\bm \Pi})$$
	by Theorem \ref{Thm:interpolation}(i)(iii), Theorem \ref{Thm:exist amv}(ii), and Proposition \ref{Prop:tightening}(3). 
\end{proof}

\begin{remark}\label{Rem:boundary-type-amv}
	Note $\Phi\llcorner \partial I^m = 0$ for any $\Phi\in \bm{\Pi}$ and $\bm{\Pi}\in \pi_m' \big(\Z_{n}^G(M, \partial M;\mF;\mZ_2), \{0\}\big)$. 
	By Lemma \ref{Lem:F-isoperimetric}, every map $\Phi\in \bm{\Pi}$ is {\em boundary type}, i.e. there exists $Q\in\bI_{n+1}^G(M;\mZ_2) $ so that $\Phi(x)=[\partial Q]$. 
	Then we can further require $V$ in the above theorem to be $(G,\mZ_2)$-almost minimizing of {\em boundary type} in annuli with free boundary (in the sense of Definition \ref{Def:G-am-varifolds}). 
	Indeed, the explicit operations for currents in the proof of Theorem \ref{Thm: discretization} and \ref{Thm:exist amv} are based on the cut-and-paste constructions similar to the proof of Case 1 in Lemma \ref{L:pre-interpolation}, which deforms a boundary to a boundary (see (\ref{Eq: pre-interpolation})). 
	Hence, the discrete mappings in Proposition \ref{Prop:tightening} and Theorem \ref{Thm:exist amv} are all boundary type maps. 
	Since we always use the canonical representatives to induce varifolds (instead of $|\varphi_i(x)|$), the constructions in Theorem \ref{Thm:exist amv} can also be applied if `none of $V\in {\bf C}(S)$ is $(G,\mZ_2)$-almost minimizing of {\em boundary type} in annuli with free boundary'. 
	Therefore, the varifold $V$ in Theorem \ref{Thm:exist amv in critical set} shall be $(G,\mZ_2)$-almost minimizing of boundary type in annuli with free boundary (in the sense of Definition \ref{Def:G-am-varifolds}). 
\end{remark}

As we mentioned in Remark \ref{Rem:almgren iso and homo}, we have the Almgren's isomorphism for the relative $G$-cycle space: 
\begin{eqnarray}
	\mZ_2 & \cong & H_{n+1}(M, \partial M;\mZ_2) \cong  \pi^{\sharp}_1\big(\Z_{n}^G(M, \partial M; \M;\mZ_2), \{0\}\big) 
	\\
	&\cong & \pi_1\big(\Z_{n}^G(M, \partial M; \M;\mZ_2), \{0\}\big) \cong \pi_1\big(\Z_{n}^G(M, \partial M; \F; \mZ_2), \{0\}\big). \nonumber
\end{eqnarray}
Moreover, we have 
$\bL([\Phi]) > 0 $
provided $\Phi: [0,1] \to \big(\Z_{n}^G(M, \partial M;\M;\mZ_2), \{0\}\big)$ corresponds to the fundamental class $[M]\in H_{n+1}(M, \partial M;\mZ_2)$. 
By Lemma \ref{Lem:M-isoperimetric}, we also see that every map in $\bm{\Pi}_M = [\Phi]$ is a boundary type map. 
Hence we obtain the following corollary by applying Theorem \ref{Thm:exist amv in critical set} and Remark \ref{Rem:boundary-type-amv} to such $\bm{\Pi}_M$:

\begin{corollary}\label{Cor:exist amv}
	There exists a nontrivial $G$-invariant varifold $0\neq V\in\V_n^G(M)$ so that $\|V\|(M) = \bL(\bm{\Pi}_M)$, $V$ is stationary in $M$ with free boundary, and $V$ is $(G,\mZ_2)$-almost minimizing of boundary type in annuli with free boundary. 
\end{corollary}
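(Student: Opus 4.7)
The plan is to specialize Theorem \ref{Thm:exist amv in critical set} and Remark \ref{Rem:boundary-type-amv} to the homotopy class $\bm{\Pi}_M$ that corresponds to the fundamental class $[M] \in H_{n+1}(M, \partial M;\mZ_2)$ under the $G$-equivariant Almgren isomorphism noted in Remark \ref{Rem:almgren iso and homo}. To exhibit an explicit generator, I would average a smooth function $\R^L\to\R$ over $G$ to produce a smooth $G$-invariant function $f:M\to[0,1]$ with $f^{-1}(0)$ and $f^{-1}(1)$ nonempty. Then $\Phi(t):=[\partial(M\cap\{f\leq t\})]$ defines a flat-continuous map into $\Z_n^G(M,\partial M;\mZ_2)$ with $\Phi(0)=\Phi(1)=0$ that is manifestly of boundary type and represents a nontrivial element of $\pi_1'\big(\Z_n^G(M,\partial M;\mF;\mZ_2),\{0\}\big)$ under the identification with $H_{n+1}(M,\partial M;\mZ_2)\cong\mZ_2$.

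Next I would verify the two properties stated in the paragraph preceding the corollary: every element of $\bm{\Pi}_M$ is of boundary type and $\bL(\bm{\Pi}_M)>0$. The first is immediate from Lemma \ref{Lem:M-isoperimetric}, since any $\Phi'\in\bm{\Pi}_M$ is $G$-homotopic to $\Phi$ via an $\F$-continuous homotopy; applying the $G$-invariant filling lemma gives a $G$-invariant $Q(t)\in\mI_{n+1}^G(M;\mZ_2)$ with $[\partial Q(t)]=\Phi'(t)$. For positivity, if some $\Phi_i\in\bm{\Pi}_M$ satisfied $\sup_t\M(\Phi_i(t))<\epsilon_M$ for the isoperimetric threshold of Lemma \ref{Lem:M-isoperimetric}, then the uniqueness argument already used in Lemma \ref{Lem:F-isoperimetric} produces a continuous $G$-invariant filling $Q_i:[0,1]\to\mI_{n+1}^G(M;\mZ_2)$ with $\M(Q_i(t))\leq C_M\epsilon_M<\M(M)$. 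By the constancy theorem with $\mZ_2$ coefficients, the cycle $Q_i(1)-Q_i(0)$ must be either $0$ or $[M]$, and the mass bound forces it to be $0$, whence $[\Phi_i]=0$ in homology---a contradiction.

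With both facts in hand, Theorem \ref{Thm:exist amv in critical set} directly produces $V\in\V_n^G(M)$ with $\|V\|(M)=\bL(\bm{\Pi}_M)>0$, stationary in $M$ with free boundary, and $(G,\mZ_2)$-almost minimizing in annuli with free boundary; the positivity of the width gives $V\neq 0$. Since every map in $\bm{\Pi}_M$ is of boundary type, the discrete mappings constructed in Proposition \ref{Prop:tightening} and the cut-and-paste deformations underlying Theorem \ref{Thm:exist amv} preserve the boundary-type property, so Remark \ref{Rem:boundary-type-amv} upgrades the almost minimizing conclusion to the boundary-type version. The main subtlety I anticipate is the compatibility of the equivariant isoperimetric uniqueness with the combinatorial cut-and-paste arguments of the discretization and critical-sequence constructions; this is precisely what the $G$-invariant Lemmas \ref{Lem:F-isoperimetric} and \ref{Lem:M-isoperimetric} are designed to handle, so no additional work beyond invoking the machinery already established is expected.
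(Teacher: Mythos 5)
Your proof follows essentially the same route as the paper: obtain $\bm{\Pi}_M$ from the fundamental class $[M]\in H_{n+1}(M,\partial M;\mZ_2)$ via the Almgren isomorphism of Remark \ref{Rem:almgren iso and homo}, verify that every map in $\bm{\Pi}_M$ is of boundary type and that $\bL(\bm{\Pi}_M)>0$, and then apply Theorem \ref{Thm:exist amv in critical set} together with Remark \ref{Rem:boundary-type-amv}. The paper leaves the width-positivity and boundary-type assertions as one-line remarks in the paragraph preceding the corollary, so your proposal fleshes out details (the explicit sublevel-set sweepout and the constancy-theorem argument for positivity) that the paper takes as known, but the strategy is the same.
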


\section{Regularity of $(G,\mZ_2)$-almost minimizing varifolds}\label{Sec:regular}

The regularity of varifolds that are almost minimizing in annuli with free boundary is built in \cite[Section 5]{li2021min}. 
In this section, we adapt the proof in \cite{li2021min} to $G$-invariant settings and show our equivariant regularity result, combining which with Corollary \ref{Cor:exist amv} gives the main Theorem \ref{Thm:main}. 
As mentioned in Remark \ref{Rem:boundary-type-amv}, we only take {\em boundary type} almost minimizing varifolds into consideration. 
Additionally, since the interior regularity was already achieved in \cite[Theorem 7]{wang2022min}, we mainly focus on the regularity of the free boundary.

\begin{theorem}[Interior regularity theorem]\label{Thm:interior regul-amv}
	Let $2\leq n\leq 6$ and ${\rm Cohom(G)}\geq 3$. 
	If $V \in \mathcal{V}^G_n(M)$ is a $G$-varifold which is 
	\begin{itemize}
		\item stationary in $M$ with free boundary and 
		\item $(G,\mathbb{Z}_2)$-almost minimizing of boundary type in annuli with free boundary.
	\end{itemize}
	Then $\spt(\|V\|)\cap (M\setminus \partial M)$ is a smooth embedded $G$-invariant minimal hypersurface $\Sigma\subset M\setminus \partial M$. 
	Moreover, there exists $\{n_i\}_{i=1}^N \subset \N$ so that 
	$$V\llcorner (M \setminus \partial M)= \sum_{i=1}^N n_i |\Sigma_i|,$$
	where $\{\Sigma_i\}_{i=1}^N$ are the $G$-connected components of $\Sigma$. 
\end{theorem}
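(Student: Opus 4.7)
The strategy is to reduce the claim to the closed-case equivariant interior regularity theorem \cite[Theorem 7]{wang2022min}: once we verify $V$ satisfies the hypotheses of that theorem on a small $G$-tube sitting inside $M\setminus\partial M$, smoothness and the $G$-invariant multiplicity structure follow locally and can be globalized by a covering argument.

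First I localize. Given $p \in \spt(\|V\|) \setminus \partial M$, I choose $r_0 \in (0, r_{am}(G\cdot p))$ small enough that $\Clos(\tBcal_{r_0}^G(p)) \subset M\setminus \partial M$ and $\widetilde{\exp}_{G\cdot p}^\perp$ is a $G$-equivariant diffeomorphism there. For any $0<s<t\leq r_0$ the annulus $\A_{s,t}^G(p)$ lies entirely in the interior of $M$, so $\A_{s,t}^G(p)\cap \partial M = \emptyset$, and the quotient defining $\Z_n^G(\A_{s,t}^G(p), \A_{s,t}^G(p)\cap \partial M;\mZ_2)$ collapses to the usual group of $G$-invariant $\mZ_2$-cycles on an open subset of $\tM$. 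Thus the hypothesis that $V$ is $(G,\mZ_2)$-almost minimizing of boundary type in $\A_{s,t}^G(p)$ with free boundary specializes precisely to the $(G,\mZ_2)$-almost minimizing of boundary type in annuli condition of the closed setting.

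Next, by Proposition \ref{Prop:am-varifold is stable} applied in the interior, $V$ is stationary in $\tBcal_{r_0}^G(p)$. Combined with the previous paragraph, this is exactly the input of \cite[Theorem 7]{wang2022min}, whose conclusion is that $\spt(\|V\|)\cap \tBcal_{r_0}^G(p)$ is a smooth embedded $G$-invariant minimal hypersurface with integer multiplicities. Varying $p$ and covering $\spt(\|V\|)\cap (M\setminus\partial M)$ by countably many such $G$-tubes produces the global smooth $G$-invariant minimal hypersurface $\Sigma\subset M\setminus\partial M$ with $\spt(\|V\|)\cap (M\setminus \partial M)=\Sigma$. For the multiplicity decomposition, let $\{\Sigma_i\}_{i=1}^N$ be the $G$-connected components of $\Sigma$, finite in number thanks to the mass bound on $V$ and the monotonicity formula, which produces uniform area lower bounds on each component. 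The density $\Theta^n(\|V\|,\cdot)$ is integer-valued on $\Sigma$ by Allard's theorem; the constancy theorem on each connected component of $\Sigma$ together with $G$-invariance of $V$ then forces the density to equal some $n_i \in \mathbb{N}$ on each $\Sigma_i$, yielding $V\llcorner (M\setminus \partial M) = \sum_{i=1}^N n_i|\Sigma_i|$.

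The main obstacle I anticipate is a careful verification of the localization step: the ``boundary type'' almost-minimizing condition of Definition \ref{Def:G-am-varifolds}, formulated via approximating sequences $\tau_i = [\partial Q_i]$ for $Q_i \in \bI_{n+1}^G(M;\mZ_2)$ in the \emph{whole-manifold} relative cycle space, must be shown to restrict correctly to $G$-invariant boundaries in the closed-manifold sense on an interior $G$-tube (in particular one must produce the approximants for the localized annulus from the global ones, via a cut-and-paste using Lemma \ref{Lem:F-isoperimetric} analogous to the construction in the proof of Theorem \ref{Thm: equivalence-a.m.v}). Once this compatibility is established, the remaining steps are routine applications of the cited regularity theorem and the standard constancy/Allard arguments.
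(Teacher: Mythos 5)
Your overall strategy — localize to interior $G$-tubes, reduce to \cite[Theorem 7]{wang2022min}, then globalize and apply the constancy theorem — is the same as the paper's one-line proof, which reads: ``the theorem follows from the proof of \cite[Theorem 7]{wang2022min} and the constancy theorem \cite[2.4(5)]{pitts2014existence}.'' However, there is a gap in the way you invoke the cited result. You assert that after localizing, $V$ ``is exactly the input of \cite[Theorem 7]{wang2022min},'' but that theorem is stated under an additional global hypothesis, namely that $M\setminus M^{reg}$ is a submanifold of $M$ of dimension at most $n-2$. This hypothesis need not hold for the Lie group actions considered in the present paper, and your localization to a small tube $\tBcal_{r_0}^G(p)$ does not automatically make it hold either: if $p$ lies in a singular stratum, the local tube can still meet $M\setminus M^{reg}$ in a set that is not a single submanifold of dimension $\leq n-2$. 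The paper resolves this precisely by appealing to the \emph{proof} of \cite[Theorem 7]{wang2022min} rather than its statement, and by noting that the hypothesis here — $(G,\mZ_2)$-almost minimizing of boundary type in \emph{all} annuli rather than merely in ``regular annuli'' — is strictly stronger than the one used in \cite{wang2022min}, so the technical $M\setminus M^{reg}$ assumption (which was introduced only to obtain uniform estimates near singular orbits) becomes redundant. Your proposal does not identify this as the obstruction to quoting the theorem, and instead flags the ``boundary-type'' compatibility as the main issue, which is comparatively routine.

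Two further minor inaccuracies: for an interior point $p \notin \partial M$ the relevant equivariant diffeomorphism is $\exp_{G\cdot p}^\perp$, not the Fermi map $\widetilde{\exp}_{G\cdot p}^\perp$ (the latter is only defined for $p \in \partial M$); and invoking Proposition~\ref{Prop:am-varifold is stable} to obtain stationarity is superfluous, since the theorem statement already assumes $V$ is stationary in $M$ with free boundary.
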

\begin{proof}
	Note the assumption on $M\setminus M^{reg}$ in \cite[Section 6]{wang2022min} is redundant if $V\in \V^G(M)$ is $(G,\mathbb{Z}_2)$-almost minimizing of boundary type in annuli, instead of `in regular annuli'. 
	Hence, the theorem follows from the proof of \cite[Theorem 7]{wang2022min} and the constancy theorem \cite[2.4(5)]{pitts2014existence}. 
\end{proof}

We are going to show that $N$ is finite and each $\Sigma_i$ can be smoothly extended up to $\partial M$ as a hypersurface $\widetilde{\Sigma}_i$, which remains to be $G$-invariant and may have a free boundary in $\partial M$. 
Specifically, we have the following main regularity result indicating the Main Theorem \ref{Thm:main} by Corollary \ref{Cor:exist amv}.

\begin{theorem}[Main regularity]\label{Thm:main-regularity}
	Let $2\leq n\leq 6$ and ${\rm Cohom(G)}\geq 3$.  
	If $V \in \mathcal{V}^G_n(M)$ is a $G$-varifold which is 
	\begin{itemize}
		\item stationary in $M$ with free boundary, 
		\item $(G,\mathbb{Z}_2)$-almost minimizing of boundary type in annuli with free boundary,
	\end{itemize}
	then there exists $N \in \N$ and $n_i \in \N$, $i=1,\cdots,N$, such that
	$$V= \sum_{i=1}^N n_i |\Sigma_i|,$$
	where each $(\Sigma,\partial\Sigma)\stackrel{G}{\subset}(M,\partial M)$ is a smooth, compact, $G$-connected, almost properly embedded free boundary $G$-invariant minimal hypersurface.
\end{theorem}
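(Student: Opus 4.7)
The strategy is to carry out the boundary regularity scheme of Li--Zhou \cite[Section 5]{li2021min} step by step in the equivariant setting. Interior regularity is already supplied by Theorem \ref{Thm:interior regul-amv}, so only the local structure of $\spt(\|V\|)$ near $\partial M$ remains to be analysed. Concretely, for every orbit $G\cdot p$ with $p\in\spt(\|V\|)\cap\partial M$ I must show that $\spt(\|V\|)$ is, in some relatively open $G$-neighborhood of $G\cdot p$, a smooth almost properly embedded free boundary $G$-invariant minimal hypersurface. Once this local statement is established, the constancy theorem (\cite[2.4(5)]{pitts2014existence}) together with compactness of $M$ write $V$ as a finite integer combination $\sum_{i=1}^{N}n_{i}|\Sigma_{i}|$ over $G$-connected components, with each multiplicity $n_{i}$ globally constant on $\Sigma_i$.

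The local argument produces a $G$-equivariant \emph{replacement}. Fix $0<s<t<r_{am}(G\cdot p)$ and let $\{\tau_{i}\}\subset\mathfrak{a}_{n}^{G}(\A_{s,t}^{G}(p);\epsilon_{i},\delta_{i};\F)$ approximate $V$; by Remark \ref{Rem:boundary-type-amv} each $\tau_{i}$ is of boundary type. Minimising $\M$ among $G$-invariant relative $\mZ_{2}$-cycles that agree with the canonical representative of $\tau_{i}$ outside $\A_{s,t}^{G}(p)$, and extracting a weak limit via the compactness Lemma \ref{Lem:compactness for cycles} and the $G$-invariant isoperimetric Lemmas \ref{Lem:F-isoperimetric}, \ref{Lem:M-isoperimetric}, yields a varifold $V^{*}\in\V_{n}^{G}(M)$ that agrees with $V$ outside $\A_{s,t}^{G}(p)$, is stationary with free boundary in $M$ (Proposition \ref{Prop:am-varifold is stable}), and is $\M$-minimizing among $G$-invariant relative cycles in every relatively open $G$-subset of $\A_{s,t}^{G}(p)$ that contains no isolated orbit (by Theorem \ref{Thm: equivalence-a.m.v}). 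Standard regularity of $\M$-minimizing boundary-type currents, combined with Lemma \ref{Lem: G-stable and stable in U}, realises $V^{*}$ in each such subset as a smooth, stable, almost properly embedded free boundary $G$-invariant minimal hypersurface equipped with a $G$-invariant unit normal.

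Iterating the replacement on two overlapping $G$-annuli around $G\cdot p$, as in \cite[Section 5]{li2021min}, and patching by the strong maximum principle \cite[Theorem 2.5]{li2021min} and its free boundary analogue, produces a smooth almost properly embedded free boundary $G$-invariant minimal hypersurface $\widetilde{\Sigma}$ on a whole Fermi $G$-ball around $G\cdot p$; the necessary compactness for the sequence of stable $G$-replacements is supplied by Theorem \ref{Thm:compactness $G$-stable hypersurface}. Unique continuation and a further application of the maximum principle then identify $V$ with an integer multiple of $|\widetilde{\Sigma}|$ in this $G$-ball, giving the required local structure. Global finiteness of $N$ follows from the monotonicity formula (which supplies a uniform density lower bound on $\spt(\|V\|)$) together with compactness of $M$, while the multiplicities are constant on each $\Sigma_{i}$ by the constancy theorem.

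The chief technical obstacle is the absence of uniform geometric control near orbits $G\cdot p\subset\partial M$ whose isotropy is nonprincipal: the Fermi half-tube $\tBcal^{G,+}_{r}(p)$ is not a smooth product neighborhood of a single orbit-type stratum, and $\widetilde{\exp}^{\perp}_{G\cdot p}$ has uniform bounds only on compact $G$-subsets of a fixed stratum (cf.\ Remark \ref{Rem:lower bound of Fermi radius}). Following the viewpoint advocated in the introduction and in \cite[Appendix B]{li2021min}, I would partition $\partial M$ into its $G$-connected orbit-type strata $N^{j}_{i}$, treat each stratum as a ``boundary'' in its own right, and run the replacement and interpolation constructions stratum by stratum using the uniform Fermi radius on $\Clos(U)\cap N^{j}_{i}$. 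The strata are then combined by an induction on the orbit-type partial order, the smoothness being propagated to lower-dimensional strata through the $G$-invariant maximum principle exactly as in the proof of Lemma \ref{Lem: G-stable and stable in U}.
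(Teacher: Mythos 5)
The broad scheme you describe — local $G$-replacements in overlapping $G$-annuli, compactness of $G$-stable $G$-hypersurfaces, gluing, and singularity removal at the center orbit — does match the paper's strategy. However, there is a genuine and decisive gap in your treatment of the replacement's regularity. After constructing $V^{*}$ as a limit of cycles that minimize $\M$ \emph{among $G$-invariant competitors}, you immediately invoke ``standard regularity of $\M$-minimizing boundary-type currents.'' But the regularity theory you are appealing to (the free-boundary codimension-one $\mZ_{2}$-minimizer theory of \cite[Theorem~4.7]{guang2021min}) requires the current to minimize mass against \emph{all} nearby competitors, not merely the $G$-invariant ones; a $G$-symmetric local minimizer of a symmetric functional need not be a genuine local minimizer, so the cited regularity does not apply as you state it. The paper fills exactly this gap with Proposition~\ref{Prop:good-replace-2}: a Fleming--Lawson style averaging argument, taking $f_{P}(x)=\int_{G}1_{\spt(P)}(g\cdot x)\,d\mu(g)$, forming the $G$-invariant superlevel sets $P_{\lambda}=f_{P}^{-1}[\lambda,1]$, and using the coarea/slicing inequality (\ref{Eq:mass-1}) to show that a non-equivariant competitor with smaller mass would yield a $G$-invariant competitor with smaller mass, a contradiction. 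Without this proposition the replacement currents are only $G$-locally minimizing, and the regularity, hence the whole local structure theorem, does not follow.

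A second, lesser issue: your ``patching'' step relies on ``the strong maximum principle and its free boundary analogue'' and on ``unique continuation,'' but that glosses over the actual mechanism. Gluing $\Sigma_{1}$ and $\Sigma_{2}$ smoothly across $\tScal^{G,+}_{s_{2}}(p)$, and especially removing the singularity along the center orbit $G\cdot p$, requires (a) the classification of tangent cones on $\partial M$ (Proposition~\ref{Prop:classification tangent cone}), including the product splitting $C=T_{p}(G\cdot p)\times W$ from Lemma~\ref{Lem: tangent cone}; (b) the strengthened uniqueness of the tangent cone at $G\cdot p$ (Claim~\ref{Claim: unique tangent cone at center}); (c) the Allard-type free-boundary regularity of Gr\"uter--Jost \cite[Theorem 4.13]{gruter1986allard} when the tangent cone is a half-plane of density $1/2$; and (d) a Harvey--Lawson removable-singularity argument exploiting $\dim(G\cdot p)\leq n-2$. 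These are not reducible to the maximum principle alone, and you should name them.

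Your concluding paragraph about stratifying $\partial M$ by orbit type and running the construction stratum by stratum is the right technical intuition, but in the paper this idea lives primarily in the proof of Theorem~\ref{Thm: equivalence-a.m.v} (Appendix~\ref{Sec:proof-of-equivalence-a.m.v}) and in the uniform Fermi-radius estimate of Remark~\ref{Rem:lower bound of Fermi radius}, not directly in the regularity proof; what the regularity proof needs from it is already packaged in the almost-minimizing-in-annuli hypothesis and the volume ratio bound (Lemma~\ref{Lem: volum ratio bound}).
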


\subsection{Good $G$-replacement property}\label{SecS:replacement}

To begin with, let us show the existence of free boundary $G$-replacements. 
Since the interior regularity has been covered in \cite{wang2022min}, we only consider the $G$-neighborhoods $U\subset M$ with $U\cap\partial M\neq\emptyset$.

\begin{proposition}[Existence of $G$-replacements]\label{Prop:good-replacement-property}
	Let $V\in\V_n^G(M)$ be $(G,\mZ_2)$-almost minimizing of boundary type in a relatively open $G$-set $U \subset M$ with free boundary, and $K \subset U$ be a compact $G$-subset. 
	Then there exists $V^{*}\in \V_n^G(M)$, called \emph{a $G$-replacement of $V$ in $K$}, such that
	\begin{enumerate}
		\item[(i)] $V\lc (M\setminus K) =V^{*}\lc (M\setminus K)$;
		\item[(ii)] $\|V\|(M)=\|V^{*}\|(M)$;
		\item[(iii)] $V^{*}$ is $(G,\mZ_2)$-almost minimizing of boundary type in $U$ with free boundary;
		\item[(iv)] $V^{*}  =\lim_{i \to \infty} |T_i^*|$ as varifolds, where $T_i^*\in Z_n^G(M,\partial M;\mZ_2)$ such that $T_i^*$ is $G$-locally mass minimizing in $\interior_M(K)$ (relative to $\partial M$), and $T_i^*= \partial Q_i^*\llcorner (M\setminus\partial M) $ for some $Q_i^*\in\bI_{n+1}^G(M;\mZ_2)$. 
	\end{enumerate}
\end{proposition}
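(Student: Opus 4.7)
The plan is to adapt the equivariant replacement construction of Pitts \cite[Section 3.11]{pitts2014existence} to the free boundary setting along the lines of Li-Zhou \cite[Proposition 5.3]{li2021min}. First I would unfold Definition \ref{Def:G-am-varifolds} for boundary type: there exist $\epsilon_i, \delta_i \to 0$, cycles $\tau_i = [\partial Q_i]$ with $Q_i \in \bI_{n+1}^G(M; \mZ_2)$, and $\tau_i \in \mathfrak{a}_n^G(U; \epsilon_i, \delta_i; \F)$ whose canonical representatives $T_i \in \tau_i$ satisfy $\mF(|T_i|, V) \leq \epsilon_i$.

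For each $i$ I introduce the restricted deformation class
\[
\mathcal{C}_i := \bigl\{\sigma \in \Z_n^G(M, \partial M; \mZ_2) : \text{there is a $G$-invariant $(\epsilon_i,\delta_i)$-$\F$-deformation } \tau_i = \tau^{(0)},\ldots,\tau^{(q)} = \sigma \text{ with } \spt(\tau^{(j)} - \tau_i) \subset \interior_M(K)\bigr\}.
\]
Because $\tau_i \in \mathfrak{a}_n^G(U; \epsilon_i, \delta_i; \F)$, every $\sigma \in \mathcal{C}_i$ satisfies $\M(\sigma) \geq \M(\tau_i) - \epsilon_i$. Choosing a compact Lipschitz neighborhood retract with $K \subset \interior_M(K') \subset U$, the $G$-invariant compactness Lemma \ref{Lem:compactness for cycles} extracts from a minimizing sequence a minimizer $\tau_i^* \in \Z_n^G(M, \partial M; \mZ_2)$ of $m_i := \inf_{\sigma \in \mathcal{C}_i} \M(\sigma)$, with canonical representative $T_i^* \in \tau_i^*$. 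Since $\tau_i$ is of boundary type and each step of the deformation chain leading to $\tau_i^*$ can be realized through the $G$-invariant $\F$-isoperimetric choice of Lemma \ref{Lem:F-isoperimetric}, $\tau_i^*$ remains of boundary type: $\tau_i^* = [\partial Q_i^*]$ for some $Q_i^* \in \bI_{n+1}^G(M; \mZ_2)$, and $T_i^* = \partial Q_i^* \lc (M \setminus \partial M)$.

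Next, I would prove that $T_i^*$ is $G$-locally mass-minimizing in $\interior_M(K)$ relative to $\partial M$, which is property (iv). Any competitor $T' \in Z_n(M, \partial M; \mZ_2)$ with $\M(T') < \M(T_i^*)$ and $\spt(T' - T_i^*)$ compactly contained in $\interior_M(K)$, once $G$-averaged via the bi-invariant Haar measure $\mu$ and spliced back into $\tau_i^*$ through the $G$-invariant $\M$-isoperimetric Lemma \ref{Lem:M-isoperimetric}, produces an element of $\mathcal{C}_i$ of strictly smaller mass, which is a contradiction. Varifold compactness with the uniform bound $\M(T_i^*) \leq \M(\tau_i) + \delta_i$ then yields a subsequential limit $V^* = \lim_i |T_i^*| \in \V_n^G(M)$. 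Property (i) follows from $\spt(\tau_i^* - \tau_i) \subset K$ together with the vanishing of canonical representatives on $\partial M$, and property (ii) from the sandwich $\M(\tau_i) - \epsilon_i \leq \M(\tau_i^*) \leq \M(\tau_i)$ combined with $\mF(|T_i|, V) \to 0$ and the lower semicontinuity of mass (Lemma \ref{Lem:lower-semicontinue of M}).

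The main obstacle is property (iii): that $V^*$ itself inherits $(G,\mZ_2)$-almost minimizability of boundary type in $U$. Following \cite[Proposition 5.3]{li2021min}, I would concatenate any hypothetical $G$-invariant $(\epsilon, \delta)$-$\F$-deformation of $\tau_i^*$ supported in $U$ with the original deformation $\tau_i \to \tau_i^*$ to build a $(\epsilon + (\M(\tau_i) - \M(\tau_i^*)), \max(\delta_i, \delta); \F)$-deformation of $\tau_i$; a diagonal choice of parameters $\epsilon_i', \delta_i' \to 0$ then forces every such deformation of $\tau_i^*$ to reduce mass by at most $\epsilon_i'$, so that $\tau_i^* \in \mathfrak{a}_n^G(U; \epsilon_i', \delta_i'; \F)$, and hence $V^*$ verifies Definition \ref{Def:G-am-varifolds} with the boundary-type structure inherited from each $\tau_i^* = [\partial Q_i^*]$. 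The delicate points throughout are preserving $G$-invariance under each isoperimetric splicing (secured by Lemmas \ref{Lem:F-isoperimetric} and \ref{Lem:M-isoperimetric}), keeping the boundary-type writing alive under concatenation, and correctly handling the free boundary contribution $R \in \mR_n^G(\partial M; \mZ_2)$ that appears whenever $K$ meets $\partial M$.
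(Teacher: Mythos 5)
Your overall skeleton matches the paper's proof: define the restricted deformation class $\mathcal{C}_{\tau_i}$, extract a mass minimizer $\tau_i^*$ via the compactness Lemma~\ref{Lem:compactness for cycles}, use the $\F$-isoperimetric Lemma~\ref{Lem:F-isoperimetric} to preserve the boundary-type writing, and take a subsequential varifold limit. One small stylistic difference: for Claim~3 the paper shows $\tau_i^* \in \mathfrak{a}_n^G(U;\epsilon_i,\delta_i;\F)$ with the \emph{same} parameters, since a $G$-invariant $(\epsilon_i,\delta_i)$-deformation of $\tau_i^*$ concatenated with the chain from $\tau_i$ to $\tau_i^*$ already violates $\tau_i\in\mathfrak{a}_n^G(U;\epsilon_i,\delta_i;\F)$ because $\M(\tau_i^*)\le\M(\tau_i)$; your inflated parameters $\epsilon_i',\delta_i'$ are harmless but unnecessary.

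There is, however, a genuine conceptual slip in your treatment of property~(iv). You take a competitor $T'\in Z_n(M,\partial M;\mZ_2)$ that is not assumed $G$-invariant and propose to ``$G$-average it via the bi-invariant Haar measure $\mu$'' before splicing. Two problems: first, $G$-locally mass minimizing, as defined in this paper, only tests against competitors $S\in Z_n^G(M,\partial M;\mZ_2)$, so no averaging is needed for (iv) at all --- a $G$-invariant competitor of smaller mass already yields, after splicing via Lemma~\ref{Lem:M-isoperimetric}, a lower-mass element of $\mathcal{C}_{\tau_i}$ contradicting minimality. Second, even if you did want to produce a $G$-invariant competitor from an arbitrary one, the naive average $\int_G (g^{-1})_\# T'\,d\mu(g)$ is not a $\mZ_2$-current: convex combinations with continuous weights leave the category of integer-rectifiable flat chains mod~$2$. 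The actual upgrade from $G$-local to genuine local mass minimization is a separate statement (Proposition~\ref{Prop:good-replace-2}) proved by the Fleming--Lawson device: average the \emph{characteristic function} $1_{\spt(P)}$ of the filling to get an upper-semicontinuous $G$-invariant function $f_P$, pass to superlevel sets $P_\lambda=f_P^{-1}[\lambda,1]$ which are honest $G$-invariant integral currents, and use the coarea inequality $\int_0^1\|\partial P_\lambda\|\le\|\partial P\|$ to find a good $\lambda$. That machinery is not needed for (iv); if you intend to invoke it, it belongs to the next proposition, not this one.
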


Here $T\in Z_n^G(M, \partial M;\mZ_2)$ is said to be {\em $G$-locally mass minimizing} in $\interior_M(K)$ (relative to $\partial M$), if for any $p \in \interior_M(K)$, there exists $\rho>0$ such that 
$$\M(S) \geq \M(T),$$
 for all $S \in Z_n^G(M, \partial M;\mZ_2)$ with $S-T \in Z_n^G(A, B;\mZ_2)$, where 
 \begin{equation}\label{Eq: tube in M or partial M}
 	A = \left\{ \begin{array}{cl}
	\tBcal_\rho^{G}(p)  & \text{ if } p\in M\setminus\partial M, \\
	\tBcal_\rho^{G,+}(p)  & \text{ if } p \in \partial M, 
	\end{array} \right. \quad A\subset\subset \interior_M(K), \qquad B=A\cap\partial M.
 \end{equation}
Moreover, if $\M(S) \geq \M(T)$ for all $S \in Z_n(M, \partial M;\mZ_2)$ with $S-T \in Z_n(A, B;\mZ_2)$ ($S$ may not be $G$-invariant), then we say $T$ is {\em locally mass minimizing} in $\interior_M(K)$ (relative to $\partial M$). 

\begin{proof}
	Let $\epsilon,\delta>0$ be sufficiently small. 
	Given any $Q\in \bI_{n+1}^G(M;\mZ_2)$ and $\tau=[\partial Q]\in \mathfrak{a}^G_n(U;\epsilon,\delta;\F )$, define $\mathcal{C}_\tau$ to be the set of all $\sigma\in \Z_n^G(M,\partial M;\mathbb{Z}_2)$ such that there exists a finite sequence $\{\tau_i\}_{i=1}^q\subset \Z_{n}^G(M,\partial M;\mathbb{Z}_2)$ satisfying
	\begin{itemize}
		\item $\tau_0=\tau$, $\tau_q=\sigma$, and $\mbox{spt}(\tau-\tau_i)\subset K$ for all $i=1,\dots, q$;
		\item $\mathcal{F}(\tau_i-\tau_{i-1})\leq \delta$ for all $i=1,\dots, q$;
		\item ${\bf M}(\tau_i)\leq {\bf M}(\tau)+\delta$ for all $i=1,\dots, q$.
	\end{itemize}
	By the $G$-invariant $\F$-isoperimetric Lemma \ref{Lem:F-isoperimetric}, we can also assume every $\sigma\in \mathcal{C}_\tau$ is boundary type, i.e. $\sigma$ has a representative $\partial P$ for some $P\in\bI_{n+1}^G(M;\mZ_2)$. 
	
	Since $\Z_n^G(M,\partial M;\mathbb{Z}_2)$ is a closed subspace of $\Z_n(M,\partial M;\mathbb{Z}_2)$, the following three claims can be obtained by the same procedure as in the proof of \cite[Proposition 5.3, Step 1]{li2021min}. 
	Indeed, we can use lemmas in Section \ref{Sec:relative G-cycle} in place of those in \cite[Section 3.1]{li2021min}, and only take $G$-cycles and $G$-sets (like $\tBcal^{G,+}_r(p)$) into consideration. 
	Thus, we omit the proof: 
	
	\begin{claim}\label{Claim: replace 1}
		There exists $\tau^* \in \C_{\tau}$ such that 
		$\M(\tau^*)=\inf\{\M(\si):\ \si\in\C_{\tau}\}. $
	\end{claim}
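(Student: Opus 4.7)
The plan is to carry out the direct method of the calculus of variations, using the $G$-equivariant compactness and lower semicontinuity statements for equivalence classes of relative $(G,\mZ_2)$-cycles established in Section \ref{Sec:relative G-cycle}. Set $m_0:=\inf\{\M(\sigma):\sigma\in\C_\tau\}$ and choose a minimizing sequence $\{\sigma_j\}_{j\in\N}\subset \C_\tau$ with $\M(\sigma_j)\to m_0$. The defining properties of $\C_\tau$ yield the uniform bounds $\M(\sigma_j)\leq \M(\tau)+\delta$ together with $\spt(\sigma_j)\subset \spt(\tau)\cup K$; let $K'$ be a compact Lipschitz neighborhood retract of $M$ containing $\spt(\tau)\cup K$ in its relative interior. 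The compactness Lemma \ref{Lem:compactness for cycles} then supplies, after passing to a subsequence, a limit $\tau^*\in\Z_n^G(M,\partial M;\mZ_2)$ with $\F^{K'}(\sigma_j-\tau^*)\to 0$, and Lemma \ref{Lem:lower-semicontinue of M} yields both $\M(\tau^*)\leq\liminf_j\M(\sigma_j)=m_0$ and $\spt(\tau^*)\subset\spt(\tau)\cup K$.

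It remains to verify $\tau^*\in\C_\tau$, which will force $\M(\tau^*)\geq m_0$ and close the argument. First I would fix $j$ large enough that $\F(\sigma_j-\tau^*)\leq \delta$, which is possible because $\sigma_j-\tau^*$ is supported in the fixed compact set $K'$, so $\F^{K'}$-convergence implies $\F$-convergence. Then, letting $\{\tau_i^{(j)}\}_{i=0}^{q_j}$ be an admissible sequence witnessing $\sigma_j\in\C_\tau$, I would append $\tau_{q_j+1}^{(j)}:=\tau^*$ and check the three defining conditions for $\C_\tau$: the flat step-size condition holds by the choice of $j$; the mass bound $\M(\tau^*)\leq\M(\tau)+\delta$ follows from lower semicontinuity and $\M(\sigma_j)\leq\M(\tau)+\delta$; and the inclusion $\spt(\tau-\tau^*)\subset K$ is preserved under flat convergence since $K$ is closed and $\spt(\tau-\sigma_j)\subset K$ for every $j$.

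The main technical point is preserving the boundary-type property in the limit, which is required because the author has stipulated (via the $G$-invariant $\F$-isoperimetric lemma) that every element of $\C_\tau$ be of boundary type. Shrinking $\delta$ below the constant $\ep_M$ of Lemma \ref{Lem:F-isoperimetric} once and for all, and then applying that lemma to the pair $(\sigma_j,\tau^*)$ for $j$ sufficiently large, produces $Q^{**}\in\bI_{n+1}^G(M;\mZ_2)$ whose boundary realizes $\tau^*-\sigma_j$ modulo a current supported in $\partial M$. Writing $\sigma_j=[\partial Q_j]$ for some $Q_j\in\bI_{n+1}^G(M;\mZ_2)$ then yields $\tau^*=[\partial(Q_j+Q^{**})]$, so $\tau^*$ is boundary type as well. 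The essential input here is the $G$-invariance of the isoperimetric filling provided by Lemma \ref{Lem:F-isoperimetric}; without it the argument would only produce a possibly non-equivariant filling, and the limit could fail to lie in $\C_\tau$.
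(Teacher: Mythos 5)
Your proof is correct and follows the same route that the paper intends: the paper omits the argument with a pointer to \cite[Proposition 5.3, Step 1]{li2021min}, and that argument is precisely the direct method you carry out — a minimizing sequence controlled in mass and support by the definition of $\C_\tau$, sequential compactness from Lemma \ref{Lem:compactness for cycles}, lower semicontinuity and support containment from Lemma \ref{Lem:lower-semicontinue of M}, and then closure of $\C_\tau$ by appending the limit $\tau^*$ to a witnessing chain for a nearby $\sigma_j$. Your additional check that $\tau^*$ stays of boundary type via the $G$-invariant $\F$-isoperimetric Lemma \ref{Lem:F-isoperimetric} is the right observation, though strictly speaking this is already automatic once $\tau^*\in\C_\tau$ and $\delta<\ep_M$, since every chain of $\F$-fineness $\leq\delta$ starting from $\tau=[\partial Q]$ produces only boundary-type classes by the same lemma applied link by link.
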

	\begin{claim}\label{Claim: replace 2}
		 The canonical representative $T^* \in \tau^*$ is $G$-locally mass minimizing in $\interior_M(K)$.
	\end{claim}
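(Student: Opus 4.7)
The plan is to argue by contradiction. Suppose $T^*$ is not $G$-locally mass minimizing in $\interior_M(K)$, so there exist $p \in \interior_M(K)$, a radius $\rho > 0$, and a competitor $S \in Z_n^G(M,\partial M;\mZ_2)$ with $S - T^* \in Z_n^G(A, B;\mZ_2)$ and $\M(S) < \M(T^*)$, where $A$ is the $G$-tube or Fermi half-tube of radius $\rho$ around $G\cdot p$ and $B = A\cap\partial M$ as in (\ref{Eq: tube in M or partial M}). Shrinking $\rho$, we may arrange $A \subset\subset \interior_M(K)$, $\F(T^* - S) < \ep_M$ (so that Lemma~\ref{Lem:F-isoperimetric} applies), and $\M(T^*\lc A) < \delta/10$ (since $\|T^*\|(G\cdot p) = 0$ by the cohomogeneity assumption). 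Because $S\lc A^c = T^*\lc A^c$, the strict inequality $\M(S) < \M(T^*)$ forces $\M(S\lc A) < \M(T^*\lc A) < \delta/10$ as well. The goal is to exhibit $[S] \in \C_\tau$; this will contradict Claim~\ref{Claim: replace 1} since $\M([S]) \leq \M(S) < \M(T^*) = \M(\tau^*)$.

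To produce the required interpolation, apply Lemma~\ref{Lem:F-isoperimetric} to obtain $Q \in \mI_{n+1}^G(M;\mZ_2)$ with $\spt(T^* - S - \partial Q) \subset \partial M$ and $\M(Q) \leq C_M\F(T^* - S)$; enlarging $\rho$ slightly, we may take $\spt(Q) \subset \{f \leq \rho\}$, where $f$ denotes the $G$-invariant distance $\dist_M(\cdot, G\cdot p)$ (replaced by the Fermi distance from $G\cdot p$ when $p \in \partial M$, as in Appendix~\ref{Sec:Fermi half-tubes}). By the coarea formula, generic radii $0 = r_0 < r_1 < \cdots < r_N = \rho$ can be chosen so that the slices $\langle Q, f, r_i\rangle$ lie in $\mR_n^G(M;\mZ_2)$ with $\M(\langle Q, f, r_i\rangle) < \delta/10$, and each slab satisfies $\M(Q\lc\{r_{i-1} < f \leq r_i\}) < \delta$. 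Set $T_i := T^* - \partial\bigl(Q\lc\{f \leq r_i\}\bigr)$; the $\mZ_2$ slicing identity, combined with $\partial Q = T^* - S$ modulo $\partial M$, yields, modulo currents supported in $\partial M$,
\begin{equation*}
  T_i \;=\; T^*\lc\{f > r_i\} \,+\, S\lc\{f \leq r_i\} \,+\, \langle Q, f, r_i\rangle,
\end{equation*}
so $T_0 = T^*$ and $[T_N] = [S]$ in $\Z_n^G(M,\partial M;\mZ_2)$.

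Each $T_i$ is $G$-invariant because $Q$, $f$, and hence both $Q\lc\{f \leq r_i\}$ and its slice are $G$-invariant. The three conditions defining $\C_\tau$ are now straightforward: $\spt(\tau^* - [T_i]) \subset A \subset K$ is immediate; $\F([T_i] - [T_{i-1}]) \leq \M\bigl(Q\lc\{r_{i-1} < f \leq r_i\}\bigr) < \delta$; and the decomposition above, together with the three bounds $\M(S\lc A), \M(T^*\lc A), \M(\langle Q, f, r_i\rangle) < \delta/10$, yields $\M([T_i]) \leq \M(T^*) + 2\delta/10 \leq \M(\tau) + \delta$. Concatenating $\{[T_i]\}_{i=0}^N$ with the chain already realizing $\tau^* \in \C_\tau$ thus places $[S] \in \C_\tau$, producing the desired contradiction. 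The main technical hurdle is maintaining $G$-invariance throughout the slicing; this is resolved by the $G$-invariance of $f$ (so $\{f \leq r\}$ is a $G$-set) together with the equivariance of the slice operator, which forces $\langle Q, f, r_i\rangle \in \mR_n^G(M;\mZ_2)$ for almost every $r_i$.
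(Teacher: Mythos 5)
The overall strategy is the right one — suppose $T^*$ is not $G$-locally mass minimizing, pick a $G$-invariant competitor $S$ in a small tube $A$, and derive a contradiction with the minimality of $\M(\tau^*)$ over $\C_\tau$ by exhibiting $[S] \in \C_\tau$. But once you have shrunk $\rho$ so that $\M(T^*\lc A) < \delta/10$ (and hence $\M(S\lc A) < \delta/10$, so $\M(S-T^*) < \delta/5 < \delta$), the slicing construction is superfluous: you can simply append a single step. Take the finite chain $\tau=\tau_0,\dots,\tau_q=\tau^*$ witnessing $\tau^*\in\C_\tau$ and set $\tau_{q+1} := [S]$. Then $\spt(\tau - [S])\subset\spt(\tau-\tau^*)\cup\spt(T^*-S)\subset K$, $\F([S]-\tau^*)\leq\M(S-T^*)<\delta$, and $\M([S])\leq\M(S)<\M(T^*)=\M(\tau^*)\leq\M(\tau)$. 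So $[S]\in\C_\tau$ with $\M([S])<\M(\tau^*)$, a contradiction. This one-step argument is what the paper implicitly invokes via Li-Zhou, Proposition 5.3, Step 1; the detour through the isoperimetric filling $Q$ and the coarea slicing is not needed.

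If you do insist on the slicing route, there is a genuine gap: the assertion ``enlarging $\rho$ slightly, we may take $\spt(Q)\subset\{f\leq\rho\}$'' does not follow from Lemma~\ref{Lem:F-isoperimetric}. The $\F$-isoperimetric lemma controls only $\M(Q)$, not $\spt(Q)$; a priori the filling can live anywhere in $M$, and without $\spt(Q)\subset\Clos(A)$ the endpoint $T_N = T^*-\partial(Q\lc\{f\leq\rho\})$ need not equal $S$ modulo $\partial M$, so the chain does not actually reach $[S]$. To salvage the claim one must argue via constancy: since $\partial Q - (T^*-S)$ is supported in $\partial M$ and $\spt(T^*-S)\subset A$, $Q$ has vanishing boundary on the open connected set $(M\setminus\partial M)\setminus\Clos(A)$ (connected because $\dim(G\cdot p)\leq n-2$, so the tube has codimension $\geq 3$), hence $Q$ equals a constant multiple of $[[M]]$ there; the mass bound $\M(Q)\leq C_M\F(T^*-S)<C_M\ep_M$, which the choice of $\ep_M$ in Lemma~\ref{Lem:F-isoperimetric} makes smaller than $\M(M\setminus\Clos(A))$, then forces that constant to vanish, giving $\spt(Q)\subset\Clos(A)$. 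Without stating this, the step where you conclude $[T_N]=[S]$ is unsupported.
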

	\begin{claim}\label{Claim: replce 3}
		$\tau^*\in \mathfrak{a}^G_n(U;\epsilon,\delta;\F )$.
	\end{claim}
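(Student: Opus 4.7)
The plan is to argue by contradiction using a concatenation of deformations. Suppose for contradiction that $\tau^{*}\notin \mathfrak{a}^G_n(U;\epsilon,\delta;\F)$. Then by Definition \ref{a.m.deform} there is a $G$-invariant $(\epsilon,\delta)$-deformation $\{\sigma_j\}_{j=0}^{q}\subset \Z_n^G(M,\partial M;\mZ_2)$ of $\tau^{*}$ in $U$, so that $\sigma_0=\tau^{*}$, $\spt(\tau^{*}-\sigma_j)\subset U$, $\F(\sigma_j-\sigma_{j-1})\leq \delta$, $\M(\sigma_j)\leq \M(\tau^{*})+\delta$ for all $j$, and $\M(\sigma_q) < \M(\tau^{*})-\epsilon$. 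On the other hand, since $\tau^{*}\in \C_\tau$, there exists by the definition of $\C_\tau$ a finite sequence $\{\tau_i\}_{i=0}^{p}$ realizing $\tau^{*}$ from $\tau$, i.e.\ $\tau_0=\tau$, $\tau_p=\tau^{*}$, $\spt(\tau-\tau_i)\subset K$, $\F(\tau_i-\tau_{i-1})\leq \delta$, and $\M(\tau_i)\leq \M(\tau)+\delta$ for every $i$.

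I would then concatenate to form $\rho_k:=\tau_k$ for $0\leq k\leq p$ and $\rho_k:=\sigma_{k-p}$ for $p<k\leq p+q$. The plan is to verify directly that $\{\rho_k\}_{k=0}^{p+q}$ is a $G$-invariant $(\epsilon,\delta)$-deformation of $\tau$ in $U$. The support condition is immediate: for $k\leq p$ we have $\spt(\tau-\rho_k)\subset K\subset U$, while for $k>p$,
\[
\spt(\tau-\rho_k)\subset \spt(\tau-\tau^{*})\cup \spt(\tau^{*}-\sigma_{k-p})\subset K\cup U = U.
\]
The step-size condition $\F(\rho_k-\rho_{k-1})\leq \delta$ is clear for both halves, and holds at the junction $k=p+1$ since $\rho_p=\tau^{*}=\sigma_0=\rho_{p+1}$ up to choice (or one inserts a trivial intermediate step).

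For the mass bound, the first half gives $\M(\rho_k)\leq \M(\tau)+\delta$ by assumption. For the second half, the key observation is that $\tau\in \C_\tau$ trivially, so the minimality of $\tau^{*}$ in Claim \ref{Claim: replace 1} forces $\M(\tau^{*})\leq \M(\tau)$, hence
\[
\M(\rho_k)=\M(\sigma_{k-p})\leq \M(\tau^{*})+\delta \leq \M(\tau)+\delta
\]
for $k>p$. Finally, $\M(\rho_{p+q})=\M(\sigma_q) < \M(\tau^{*})-\epsilon \leq \M(\tau)-\epsilon$, which verifies the last bullet of Definition \ref{a.m.deform}. This contradicts the assumption $\tau\in \mathfrak{a}^G_n(U;\epsilon,\delta;\F)$ and proves the claim.

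I do not expect a serious obstacle here: the construction of $\C_\tau$ is designed exactly so that concatenation works, the $G$-invariance is automatic because both halves live in $\Z_n^G(M,\partial M;\mZ_2)$, and the inequality $\M(\tau^{*})\leq \M(\tau)$ coming from Claim \ref{Claim: replace 1} is what makes the mass bound propagate through the junction. The only point requiring a little care is aligning the indices at the junction $k=p$ so that no step violates $\F(\rho_k-\rho_{k-1})\leq \delta$, which is handled trivially by identifying $\tau_p$ with $\sigma_0$.
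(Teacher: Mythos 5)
Your proof is correct, and it is essentially the concatenation argument the paper points to in Li--Zhou's Proposition 5.3, Step 1: splice the $\C_\tau$-path from $\tau$ to $\tau^*$ onto the hypothetical $G$-invariant $(\epsilon,\delta)$-deformation of $\tau^*$, and use $\M(\tau^*)\leq\M(\tau)$ (which follows from the minimality in Claim~\ref{Claim: replace 1} since $\tau\in\C_\tau$) to carry the mass bounds across the junction. The resulting sequence is an admissible $G$-invariant $(\epsilon,\delta)$-deformation of $\tau$ in $U$, contradicting $\tau\in\mathfrak{a}^G_n(U;\epsilon,\delta;\F)$.
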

	
	Now, since $V\in \V_n^G(M)$ is $(G,\mZ_2)$-almost minimizing of boundary type in $U$ with free boundary, there exist a sequence $Q_i\in\bI_{n+1}^G(M;\mZ_2)$ and $\tau_i=[\partial Q_i]\in \mathfrak{a}_n^G(U; \ep_i, \de_i; \F)$ with $\ep_i, \de_i \to 0$, such that $V=\lim_{i\to\infty}|T_i|$, where $T_i \in \tau_i$ is the canonical representative. 
	By Claim \ref{Claim: replace 1}, there is a mass minimizer $\tau_i^* \in \C_{\tau_i}$ for each $i$ sufficiently large, so that $\tau_i^*=[\partial Q_i^*]$ for some $Q_i^*\in\bI_{n+1}^G(M;\mZ_2)$. 
	Denote $T_i^* \in \tau_i^*$ as the canonical representative. 
	Noting $\M(T_i^*)=\M(\tau_i^*)\leq \M(\tau_i)\leq 2\|V \|(M)$, the compactness theorem for $G$-varifolds implies that $|T_i^*|$ converges as $G$-varifolds to some $V^* \in \V_n^G(M)$ up to a subsequence. 
	Hence, combining the above Claim \ref{Claim: replace 1},\ref{Claim: replace 2},\ref{Claim: replce 3}, with the arguments in \cite[Proposition 5.3, Step2]{li2021min}, one verifies that $V^*$, $T_i^*$, and $Q_i^*$ satisfy all the requirements (i)-(iv). 
\end{proof}

The above proposition is parallel to \cite[Proposition 5.3]{li2021min}. 
However, $T_i^*$ is only a mass minimizer under local {\em $G$-equivariant} variations. 
In the following proposition, we are going to show that $T_i^*$ is locally mass minimizing and thus has good regularity.
The proof is adapted from an elegant argument of Fleming (c.f. Lawson's work \cite[Theorem 1]{lawson1972equivariant}).

\begin{proposition}[Existence of $G$-replacements II]\label{Prop:good-replace-2}
	The $G$-currents $T_i^* \in Z_n^G(M,\partial M;\mathbb{Z}_2)$ obtained in Proposition \ref{Prop:good-replacement-property} (iv) are locally mass minimizing in $\interior_M(K)$ (relative to $\partial M$). 
\end{proposition}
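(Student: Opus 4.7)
The strategy is Fleming's averaging trick as employed in Lawson \cite{lawson1972equivariant}, adapted to the $\mZ_2$ relative-cycle setting. The plan is to argue by contradiction: if $T := T_i^*$ fails to be locally mass minimizing at some $p \in \interior_M(K)$, then there exist $\rho > 0$ and $S \in Z_n(M, \partial M; \mZ_2)$ with $S - T \in Z_n(A, B; \mZ_2)$ (where $A$, $B$ are as in \eqref{Eq: tube in M or partial M}) and $\M(S) < \M(T)$. I will produce a $G$-invariant competitor $\tilde S \in Z_n^G(M, \partial M; \mZ_2)$ satisfying $\tilde S - T \in Z_n^G(A, B; \mZ_2)$ and $\M(\tilde S) \leq \M(S) < \M(T)$, contradicting the $G$-local mass minimality of $T$ proved in Proposition \ref{Prop:good-replacement-property}.

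The key reduction is to finite-perimeter sets. Since $T = \partial Q_i^* \llcorner (M \setminus \partial M)$ with $Q_i^* \in \bI_{n+1}^G(M; \mZ_2)$, one has $T = \partial^* F_0 \llcorner \interior M$ for a $G$-invariant $\mZ_2$-set $F_0 \subset M$ of locally finite perimeter. After passing to the canonical representative of $S$ and applying the local $\mZ_2$-isoperimetric lemma inside the geodesically convex half-tube $A$, one similarly writes $S = \partial^* F_1 \llcorner \interior M$ with $F_1 \triangle F_0 \subset \Clos(A)$, so that $\M(S) = \operatorname{Per}(F_1; \interior M)$ and $\M(T) = \operatorname{Per}(F_0; \interior M)$.

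Now form the Haar average
\begin{equation*}
  \bar h(x) := \int_G \mathbf{1}_{F_1}(g \cdot x)\, d\mu(g), \qquad x \in M,
\end{equation*}
which is a $G$-invariant $BV$ function on $M$ with values in $[0,1]$. Since $F_0$ and $A$ are both $G$-invariant and $F_1 = F_0$ outside $\Clos(A)$, $\bar h$ coincides with $\mathbf{1}_{F_0}$ on $M \setminus \Clos(A)$; in particular, for every $t \in (0,1)$ the $G$-invariant superlevel set $F^t := \{\bar h > t\}$ satisfies $F^t \triangle F_0 \subset \Clos(A)$. By subadditivity of total variation under Haar integration and the isometric invariance of the perimeter,
\begin{equation*}
  |D\bar h|(\interior M) \;\leq\; \int_G |D(\mathbf{1}_{F_1} \circ g)|(\interior M)\, d\mu(g) \;=\; \operatorname{Per}(F_1; \interior M) \;=\; \M(S),
\end{equation*}
and the coarea formula on the open manifold $\interior M$ gives
\begin{equation*}
  \int_0^1 \operatorname{Per}(F^t; \interior M)\, dt \;=\; |D\bar h|(\interior M) \;\leq\; \M(S).
\end{equation*}
Hence there exists $t^* \in (0,1)$ with $\operatorname{Per}(F^{t^*}; \interior M) \leq \M(S)$. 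Setting $\tilde S := \partial^* F^{t^*} \llcorner \interior M$ produces a $G$-invariant element of $Z_n^G(M, \partial M; \mZ_2)$ with $\tilde S - T \in Z_n^G(A, B; \mZ_2)$ and $\M(\tilde S) \leq \M(S) < \M(T)$, contradicting the $G$-local mass minimality and finishing the argument.

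The main technical obstacle is the free-boundary bookkeeping: one must confirm that the ``interior perimeter'' $|D\mathbf{1}_{(\cdot)}|(\interior M)$ is precisely the mass functional on canonical representatives (so that any trace on $\partial M$ is discarded on both sides of the comparison), and that the Haar averaging cannot create spurious boundary contributions outside $\Clos(A)$. Both follow from $\mathcal{H}^{n+1}(\partial M) = 0$ together with the $G$-invariance of $\partial M$, which guarantee that the coarea-averaging operation respects the relative-cycle class modulo $Z_n(B, B; \mZ_2)$ and keeps the modified set equal to $F_0$ off $\Clos(A)$.
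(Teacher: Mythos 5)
Your proposal is correct and follows essentially the same Fleming-averaging argument as the paper: both form a $G$-invariant average of an indicator function (the paper averages $1_{\spt(P)}$ to build the normal current $E_{f_P}=f_P\cdot[[M]]$, you average $1_{F_1}$ to build the BV function $\bar h$) and use a coarea formula to select a $G$-invariant level set whose relative-boundary mass is at most that of the non-equivariant competitor, contradicting the $G$-local minimality from Proposition \ref{Prop:good-replacement-property}. The paper reaches the coarea step via normal currents and \cite[4.5.9(12),(13)]{federer2014geometric}, while you use sets of finite perimeter and the BV coarea formula; these are equivalent formulations of the same geometric measure-theoretic fact.
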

\begin{proof}
	For any $\epsilon,\delta>0$ small enough, suppose $\tau=[\partial Q]\in \mathfrak{a}^G_n(U;\epsilon,\delta;\F ) $ for some $Q\in {\bf I}^G_{n+1}(M;\mathbb{Z}_2)$. 
	Let $\C_{\tau}$ be defined as in the proof of Proposition \ref{Prop:good-replacement-property} and $\tau^*=[\partial Q^*]\in\C_{\tau}$ be given in Claim \ref{Claim: replace 1} for some $Q^*\in {\bf I}^G_{n+1}(M;\mathbb{Z}_2)$. 
	Thus, the canonical representative $T^*$ of $\tau^*$ is $G$-locally mass minimizing in $\interior_{M}K$ by Claim \ref{Claim: replace 2}. 
	Hence, for any $p\in \interior_M(K)$, there exists $\rho>0$ such that $\|T^*\|(A)<\delta/2$ and $\M(S) \geq \M(T^*)$ for any $S \in Z_n^G(M, \partial M;\mZ_2)$ with $S-T^* \in Z_n^G(A, B;\mZ_2)$, where $A,B$ are defined in (\ref{Eq: tube in M or partial M}) with such $\rho>0$.

	Suppose there exists a current $S\in Z_n(M, \partial M;\mZ_2)$ (not $G$-invariant) with $S-T^* \in Z_n(A, B;\mZ_2)$ and $\M(S)<\M(T^*)$. 
	Denote $\sigma^*=[S]\in \Z_n(M, \partial M;\mZ_2)$. 
	Thus, $\M(\tau^*-\sigma^*)\leq \|T^*-S\|(M)\leq 2\|T^*\|(A)<\delta$. 
	Since $\delta>0$ is sufficiently small, we have $\sigma^*=[\partial P]$ for some $P\in\bI_{n+1}(M;\mZ_2)$ with $\spt(P-Q^*)\subset A$ 
	by the $\M$-isoperimetric lemma \cite[Lemma 3.17]{li2021min}. 
	
	Let $S^*=\partial P\llcorner (M\setminus \partial M)$ be the canonical representative of $\sigma^*$. 
	Define $f_{P}:M\to [0,1]$ as 
	\begin{eqnarray*}
		 f_P(x) := \int_G 1_{\spt(P)}(g\cdot x)~d\mu(g). 
	\end{eqnarray*}
	Since $\spt(P)$ is closed, the function $ 1_{\spt(P)}$ is upper-semicontinuous. 
	By Fatou's Lemma, $f_P$ is upper-semicontinuous. 
	Thus, 
	\begin{equation*}
			P_\lambda := f_P^{-1}[\lambda,1 ]\subset M 
	\end{equation*}
	are $G$-invariant closed sets in $M$. 

	Define $E_{f_P} := f_P\cdot [[M]]$, where $[[M]]=\underline{\tau}(M,1,\xi)$ is the integral current induced by $M$ and $\xi$ is an orientation (see the notations in \cite[27.1]{simon1983lectures}). 
	A direct computation shows that 
	\begin{eqnarray}\label{Eq-boundary-Ef}
		\partial E_{f_P}(\omega) &=& \int_{M}\int_G \langle d\omega, \xi\rangle ~{\rm 1}_{\mbox{spt}(P)}(g\cdot x)~d\mu(g)d\mathcal{H}^{n+1}(x) \nonumber
		\\
		&=& \int_G\int_{M} \langle d\omega, \xi\rangle ~{\rm 1}_{g^{-1}(\mbox{spt}(P))}(x)~d\mathcal{H}^{n+1}(x)d\mu(g)
		\\
		&=& \int_G \partial ((g^{-1})_\# P)(\omega) ~d\mu(g), \nonumber
	\end{eqnarray}
	for any $n$-form $\omega$ on $M$. 
	The lower semi-continuity of mass implies 
	\begin{eqnarray}
		\M(\partial E_{f_P}) &\leq & \int_G \M(\partial ((g^{-1})_\# P)) = \M(\partial P), \nonumber
		\\
		\| \partial E_{f_P}\|(M \setminus\partial M ) &\leq & \int_G \| \partial ((g^{-1})_\# P)\|(M \setminus\partial M )~d\mu(g) = \| \partial P\|(M \setminus\partial M ), \nonumber
	\end{eqnarray}
	where the equalities come from the fact that $G$ acts by isometries. 
	Additionally, it is clear that $\M(E_{f_P})\leq \M([[M]])$, which implies that $E_{f_P}$ is a normal current. 
	By \cite[4.5.9(12),(13)]{federer2014geometric}, we have $P_\lambda\in\mI_{n+1}^G(M;\mZ_2) $ for almost all $\lambda\in[0,1]$, and 
	\begin{equation}\label{Eq:mass-1}
		\int_0^1 \| \partial P_\lambda\|(M\setminus\partial M) = \| \partial E_{f_P}\|(M\setminus\partial M) \leq \|\partial P\|(M\setminus\partial M).
	\end{equation}
	
	Define $S_\lambda := \partial P_\lambda \llcorner (M\setminus\partial M)$. 
	Since $Q^*=P=P_\lambda$ outside $A$, we have $S_\lambda\in Z_n^G(M, \partial M ; \mZ_2)$ and $S_\lambda -T^* \in Z_n^G(A, B;\mZ_2)$ for almost all $\lambda\in [0,1]$. 
	By (\ref{Eq:mass-1}), the following inequality holds:
	\begin{eqnarray*}
		&& \int_0^1 \M(S_\lambda)~d\lambda \leq  \|\partial P\|(M\setminus\partial M) 
		= \M(S^*) ,
	\end{eqnarray*}
	which implies $\M(S_\lambda)\leq \M(S^*)$ for $\lambda$ in a positive measure set of $[0,1]$. 
	Hence, there exists $\lambda_0\in[0,1]$ so that $S_{\lambda_0}\in Z_n^G(M, \partial M ; \mZ_2)$, and 
	\begin{equation*}
		\M(S) \geq \M(\sigma^*) = \M(S^*)\geq \M(S_{\lambda_0})\geq \M(T^*),
	\end{equation*}
	which is a contradiction to the choice of $S$. 
\end{proof}

Combining the regularity of locally mass minimizing $\mZ_2$-currents with the compactness theorem, we can get the following regularity result for $G$-replacements.

\begin{lemma}[Regularity of $G$-replacement]\label{Lem:regular replacement}
	Suppose $2 \leq n \leq 6$. 
	Under the same hypotheses of Proposition \ref{Prop:good-replacement-property}, the restriction of the $G$-replacement $V^* \lc \interior_M(K)$ is integer rectifiable. 
	Denote $\Sigma:=\spt \|V^*\| \cap \interior_M(K)$. 
	Then $(\Sigma,\partial \Sigma) \stackrel{G}{\subset} \big( \interior_M(K),\interior_M(K) \cap \partial M \big)$ is a smooth, almost properly embedded, free boundary minimal $G$-hypersurface which is stable (\cite[Definition 2.10]{li2021min}) in any simply connected relative open subset $U'\subset \interior_MK$.
\end{lemma}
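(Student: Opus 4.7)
The plan is to apply the classical interior and free-boundary regularity theory for mass-minimizing $\mZ_2$-currents to each $T_i^*$, and then pass to the varifold limit using the equivariant compactness theorem. Fix $p\in\interior_M(K)$ and the radius $\rho>0$ provided by Proposition \ref{Prop:good-replace-2}, so that on the corresponding set $A$ (a geodesic tube or Fermi half-tube around $G\cdot p$) each $T_i^*$ is locally mass minimizing \emph{for all} $\mZ_2$-competitors, not merely $G$-invariant ones. Since $2\leq n\leq 6$, standard codimension-one regularity (interior regularity via De Giorgi, and free-boundary regularity as invoked in \cite[Theorem 2.2]{li2021min}) then shows that $\Sigma_i := \spt(T_i^*)\cap A$ is a smooth, almost properly embedded, stable, free boundary minimal hypersurface. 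The uniform bound $\sup_i\M(T_i^*)\leq \|V\|(M)+1$ coming from Proposition \ref{Prop:good-replacement-property} gives $\sup_i\Area(\Sigma_i)<\infty$.

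Next I verify the $G$-hypotheses needed to apply the equivariant compactness theorem. Because $T_i^* = \partial Q_i^*\lc(M\setminus\partial M)$ for some $Q_i^*\in\bI_{n+1}^G(M;\mZ_2)$, the smooth hypersurface $\Sigma_i$ is $G$-invariant and lies in the relative boundary of the $G$-invariant set associated to $Q_i^*$; in particular it carries a $G$-invariant unit normal (the inward normal to that set). Combined with the stability obtained in the previous paragraph, Lemma \ref{Lem: G-stable and stable in U} upgrades this to $G$-stability of $\Sigma_i$ in $A$.

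With the uniform area bound and $G$-stability in hand, Theorem \ref{Thm:compactness $G$-stable hypersurface} yields, after extracting a subsequence, smooth convergence on compact subsets of $A$ (possibly with integer multiplicities) of $\Sigma_i$ to an almost properly embedded free boundary minimal $G$-hypersurface $(\Sigma_\infty,\partial\Sigma_\infty)\stackrel{G}{\subset}(A,A\cap\partial M)$ that is stable in every simply connected relative open subset of $A$. The varifold convergence $|T_i^*|\to V^*$ on $A$ then identifies $V^*\lc A$ with $\sum_j n_j|\Sigma_\infty^j|$, so $V^*\lc A$ is integer rectifiable and $\spt\|V^*\|\cap A=\Sigma_\infty$. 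Covering $\interior_M(K)$ by countably many such tubes $A$ and patching via the uniqueness of the smooth limit gives that $(\Sigma,\partial\Sigma)\stackrel{G}{\subset}(\interior_M(K),\interior_M(K)\cap\partial M)$ has all the asserted properties.

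The main obstacle is the second paragraph: extracting a \emph{$G$-invariant} unit normal on each $\Sigma_i$, so that the compactness theorem (which is formulated only for $G$-stable hypersurfaces, a notion that \emph{a priori} demands such a normal) is in fact applicable. This step relies crucially on the boundary-type representation $T_i^*=\partial Q_i^*\lc(M\setminus\partial M)$ with $Q_i^*$ already $G$-invariant; without it, stability alone need not yield $G$-stability when $G$ is disconnected. A secondary subtlety is that the free-boundary regularity theory must be applied to $T_i^*$ as an \emph{unrestricted} $\mZ_2$-cycle, and this is precisely the strengthened minimization given by Proposition \ref{Prop:good-replace-2} via Fleming's slicing trick.
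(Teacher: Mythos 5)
Your argument correctly identifies the main ingredients---regularity of locally area-minimizing codimension-one $\mZ_2$-currents via Proposition~\ref{Prop:good-replace-2}, the $G$-invariant unit normal coming from the boundary-type representation $T_i^* = \partial Q_i^*\lc(M\setminus\partial M)$ with $Q_i^*\in\bI_{n+1}^G(M;\mZ_2)$, and the equivariant compactness Theorem~\ref{Thm:compactness $G$-stable hypersurface}---but there is a gap in where the ($G$-)stability comes from. You derive stability of $\Sigma_i$ from the \emph{local} mass minimization of Proposition~\ref{Prop:good-replace-2}, which is tested only in small tubes $A=\tBcal_\rho^{G,+}(p)$ (or $\tBcal_\rho^G(p)$), with $\rho=\rho(p)$ possibly small. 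Via Lemma~\ref{Lem: G-stable and stable in U} this yields $G$-stability of $\Sigma_i$ in each such tube $A$, and applying Theorem~\ref{Thm:compactness $G$-stable hypersurface} tube by tube does give smoothness of the limit on all of $\interior_M(K)$. But it only gives stability of $\Sigma_\infty$ in simply connected relative open subsets of each small tube $A$, not in an arbitrary simply connected relative open subset $U'\subset\interior_M(K)$, which is what the lemma asserts and what is genuinely used later (e.g.\ in Step~3 of the proof of Theorem~\ref{Thm:main-regularity}, stability of $\Sigma$ is needed on the set $T(p,\tfrac{s_2}{2})\cap M$, which is not contained in a single such tube). Stability in small balls does not imply stability in larger regions---this is exactly why ``$G$-stable in $U$'' is required globally as a hypothesis of the compactness theorem.

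The paper instead obtains $G$-stability of $T_i^*$ in all of $\interior_M(K)$ at once, from a different variational principle: not the local mass minimization of Proposition~\ref{Prop:good-replace-2}, but the \emph{global} minimality $\M(\tau_i^*)=\inf\{\M(\sigma):\sigma\in\C_{\tau_i}\}$ established in Claim~\ref{Claim: replace 1} in the proof of Proposition~\ref{Prop:good-replacement-property}. If some compactly supported $X_i\in\mathfrak{X}^G(M,\spt(T_i^*)\cap\interior_M(K))$ destabilized $\Sigma_i$, the equivariant flow $(f_t^{X_i})_\# T_i^*$ would produce, for small $t>0$, an element of $\C_{\tau_i}$ of strictly smaller mass than $\tau_i^*$, a contradiction. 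With $G$-stability in all of $\interior_M(K)$ in hand, one application of Theorem~\ref{Thm:compactness $G$-stable hypersurface} with $U=\interior_M(K)$ gives the stated conclusion. You should replace your local-stability step with this constrained-minimality argument (keeping your correct observation about the $G$-invariant normal, which is also what the paper uses).
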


\begin{proof}
	By \cite[Theorem 4.7]{guang2021min}, we have the regularity of locally area minimizing currents in codimension one with $\mZ_2$-coefficients. 
	Thus, Proposition \ref{Prop:good-replace-2} implies the $G$-currents $\{T_i^*\}_{i\in\N}$ in Proposition \ref{Prop:good-replacement-property} (iv) are smooth, properly embedded, free boundary minimal $G$-hypersurfaces in $\interior_M(K)$. 
	
	\begin{claim}
		$T_i^*$ is $G$-stable in $\interior_M(K)$ in the sense of Definition \ref{Def: G-stability}.
	\end{claim}
	\begin{proof}
		Since $T_i^*$ is a part of the boundary of $Q_i^*\in {\bf I}^G_{n+1}(M;\mathbb{Z}_2)$, we see ${\rm spt}(T_i^*)\cap\interior_M(K)\subset\partial ({\rm spt}(Q^*_i)\cap \interior_M(K))$ has a $G$-invariant outward pointing unit normal vector field. 
		Suppose $T_i^*$ is not $G$-stable in $\interior_M(K)$.
		Then there exists $X_i\in\mathfrak{X}^G(M,\spt(T_i^*)\cap \interior_M(K))$ with compact support in $\interior_M(K)$ such that 
		\begin{itemize}
			\item $\{T_i^t = (f^{X_i}_t)_\#T_i^*\}_{t\in(-t_0,t_0)}$ are smooth properly embedded $G$-hypersurfaces in $\interior_M(K)$, 
			\item $T_i^0=T_i^*$, and $T_i^t=T_i^*$ outside $\interior_M(K)$ for all $t\in(-t_0,t_0)$,
			\item $\M(T_i^t) < \M(T_i^*)$ for all $t \in (-t_0, 0)\cup (0,t_0)$,
		\end{itemize}
		where $\{f^{X_i}_t\}_{t\in(-t_0,t_0)}$ are the equivariant diffeomorphisms generated by $X_i$. 
		Note the first bullet comes from the facts that $X_i\in \mathfrak{X}^G(M,\spt(T_i^*)\cap \interior_M(K))$ and $T_i^*$ is properly embedded in $\interior_M(K)$. 
		Denote $\tau_i^t := [T_i^t]\in\Z_n^G(M,\partial M;\mZ_2)$ with canonical representative $T_i^t$ (since $T_i^t$ is properly embedded in $\interior_M(K)$ and $T_i^*$ is canonical). 
		Then for $t>0$ sufficiently small, we have 
		\begin{itemize}
			\item $\spt(\tau_i^t-\tau_i^*)\subset \spt(T_i^t-T_i^*)\subset K$,
			\item $\F(\tau_i^t-\tau_i^*)\leq\F(T_i^t-T_i^*)\leq \delta_i$,
			\item $\M(\tau_i^t)=\M(T_i^t)<\M(T_i^*)=\M(\tau_i^*)$.
		\end{itemize}
		It is clear that $\tau_i^t\in\C_{\tau_i^*}\subset \C_{\tau_i}$, where $\tau_i$, $\C_{\tau_i}$, are defined in the proof of Proposition \ref{Prop:good-replacement-property}. 
		This contradicts the minimality of $\M(\tau_i^*)$ in Claim \ref{Claim: replace 1}. 
	\end{proof}
	Finally, by the compactness theorem (Theorem \ref{Thm:compactness $G$-stable hypersurface}), the limit of $|T^*_i|$ (up to a subsequence) is a smooth, almost properly embedded, minimal $G$-hypersurface in $\interior_M(K)$, which is also stable (in the sense of \cite[Definition 2.10]{li2021min}) in any simply connected relative open subset of $\interior_M(K)$. 
\end{proof}

\subsection{Tangent cones and rectifiability} 
In this subsection, we use the $G$-replacements to show the rectifiability of the min-max $G$-varifold $V$ in Corollary \ref{Cor:exist amv} and classify its tangent cones. 
The following results are parallel to those in \cite[Section 5.2]{li2021min} (for compact manifolds with boundary) and \cite[Section 6.2]{wang2022min} (for closed $G$-manifolds).

\begin{lemma}[Uniform volume ratio bound]\label{Lem: volum ratio bound}
	Let $2\leq n\leq 6$. 
	Suppose $V \in \mathcal{V}^G_n(M)$ is $(G,\mathbb{Z}_2)$-almost minimizing of boundary type in annuli with free boundary, and is stationary in $M$ with free boundary. 
	There exists a constant $c=c(M)>1$ such that
	\begin{equation}\label{Eq: volume ratio bound}
		c^{-1}\leq \frac{\|V\|(B_\rho(p))}{\omega_n\rho^n}\leq c\cdot \frac{\|V\|(B_{\rho_0}(p))}{\omega_n\rho_0^n}
	\end{equation}
	 for all $p\in {\rm spt}(\|V\|)\cap \partial M,~\rho\in(0,\rho_0)$, where $\rho_0=\rho_0(M) $. 
	 
	 Moreover, $\Theta^n(\|V\|,p)\geq\theta_0>0$, for some constant $\theta_0>0$ and all $p\in {\rm spt}(\|V\|)\cap \partial M$.
\end{lemma}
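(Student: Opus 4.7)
The strategy is to adapt the argument of \cite[Lemma 5.5]{li2021min} to the equivariant setting, splitting the estimate into the upper bound (by monotonicity) and the lower bound (via the $G$-replacement of Proposition \ref{Prop:good-replacement-property}).

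For the upper inequality, Lemma \ref{Lem: G-stationary with free boundary and stationary} shows that $V$ is genuinely stationary (not merely $G$-stationary) with free boundary, so the free-boundary monotonicity formula \cite[Theorem 2.3]{li2021min} applies uniformly in $p\in\partial M$. It furnishes $\rho_0=\rho_0(M)>0$ and $C_1=C_1(M)>0$ such that the function $\rho\mapsto e^{C_1\rho}\|V\|(\tBcal_\rho(p))/(\omega_n\rho^n)$ is non-decreasing on $(0,\rho_0)$. Comparing the Fermi ball $\tBcal_\rho(p)$ with the Euclidean ball $B_\rho(p)\cap M$ up to a fixed bilipschitz factor depending only on the embedding $i:M\hookrightarrow\mathbb{R}^L$ yields the right half of \eqref{Eq: volume ratio bound} directly.

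For the lower inequality and the density bound, fix $p\in\spt\|V\|\cap\partial M$ and small radii $0<s<s_0<t_0<t\leq\min\{r_{am}(G\cdot p),\rho_0\}$. Apply Proposition \ref{Prop:good-replacement-property} on a compact $G$-subset $K$ whose relative interior contains $\A^{G}_{s_0,t_0}(p)$ and which lies in the almost-minimizing region $U=\A^{G}_{s,t}(p)$, producing a $G$-replacement $V^*$ with $V^*=V$ on $M\setminus K$ and $\|V^*\|(M)=\|V\|(M)$. Lemma \ref{Lem:regular replacement} then gives that $\Sigma:=\spt\|V^*\|\cap\interior_M(K)$ is a smooth almost properly embedded free-boundary minimal $G$-hypersurface, while $V^*$ remains stationary with free boundary on all of $M$. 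Since $V^*=V$ inside $\tBcal^{G,+}_{s_0}(p)$ and $p\in\spt\|V\|$, one has $p\in\spt\|V^*\|$; the maximum principle \cite[Theorem 2.5]{li2021min}, applied against the foliation of $M$ near $\partial M$ by convex Fermi half-spheres, forces $\Sigma$ to touch $\partial\tBcal^{G,+}_{s_0}(p)$. The classical interior monotonicity for the smooth minimal hypersurface $\Sigma$ then produces a point $q\in\Sigma$ close to $p$ with $\mathcal{H}^n(\Sigma\cap\tBcal_{t_0/2}(q))\geq c_2 t_0^n$ for a uniform $c_2=c_2(M)>0$, hence $\|V^*\|(B_{t_0}(p))\geq c_2 t_0^n$. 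Mass conservation on $K$ gives $\|V\|(B_{t_0}(p))\geq c_2t_0^n$, and combining with the monotonicity of $\rho\mapsto e^{C_1\rho}\|V\|(\tBcal_\rho(p))/(\omega_n\rho^n)$ both downward (to obtain the uniform lower ratio bound at every smaller $\rho$) and at $\rho\to 0$ delivers the left half of \eqref{Eq: volume ratio bound} together with a uniform positive density $\Theta^n(\|V\|,p)\geq\theta_0>0$.

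The main obstacle is the step forcing $\Sigma$ to reach the inner Fermi half-sphere $\partial\tBcal^{G,+}_{s_0}(p)$. This is what genuinely uses the stationary free-boundary condition on $V^*$, the almost proper embeddedness of $\Sigma$ from Lemma \ref{Lem:regular replacement}, and the convex barriers supplied by the Fermi half-sphere foliation (via \cite[Theorem 2.5]{li2021min}). The $G$-invariance enters only through the equivariant replacement construction of Proposition \ref{Prop:good-replacement-property}, so once those pieces are in place the argument structurally mirrors the non-equivariant proof of \cite[Lemma 5.5]{li2021min}.
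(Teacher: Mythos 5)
Your overall plan — monotonicity for the upper bound, $G$-replacement plus a density estimate on the replacement for the lower bound — does follow the paper's structure, but the proposal has a genuine gap in the equivariant bookkeeping at the step marked ``mass conservation on $K$ gives $\|V\|(B_{t_0}(p))\geq c_2 t_0^n$.'' Proposition \ref{Prop:good-replacement-property}(i)(ii) only guarantees that $V^*$ agrees with $V$ on $M\setminus K$ and that $\|V^*\|(M)=\|V\|(M)$; since $K$ is a $G$-invariant set and the replacement can redistribute mass \emph{within} $K$, these facts give $\|V\|(A)=\|V^*\|(A)$ for $G$-invariant $A\supset K$, but give nothing for a Euclidean ball $B_{t_0}(p)$ which meets $K$ only partially. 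Your chain therefore breaks: $\|V^*\|(B_{t_0}(p))$ and $\|V\|(B_{t_0}(p))$ need not be comparable, and indeed this is exactly where the equivariant setting diverges from \cite[Lemma 5.5]{li2021min}. In the paper this is handled by a five-times covering lemma: one chooses $\{p_i\}_{i=1}^N\subset G\cdot p$ with $B^G_{4r}(p)\subset\bigcup_i B_{20r}(p_i)$ and $B_{4r}(p_i)$ pairwise disjoint, then uses $G$-invariance of $\|V\|$ and $\|V^*\|$ to write $N\|V\|(B_{20r}(p))\geq\|V\|(B^G_{4r}(p))=\|V^*\|(B^G_{4r}(p))\geq N\|V^*\|(B_{4r}(p))$, and the factor $N$ cancels. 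Without this (or some equivalent device to pass from $G$-tubes to Euclidean balls) the lower volume ratio bound does not follow.

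A second, smaller issue is the claim that the interior monotonicity of the smooth $\Sigma$ ``produces a point $q\in\Sigma$ close to $p$.'' The replacement lives in a $G$-annulus around the whole orbit $G\cdot p$, which can extend far from $p$ along orbit directions, so $\Sigma$ being non-empty in $\A^G_{s_0,t_0}(p)$ does not \emph{a priori} put any of $\Sigma$ near $p$. The paper addresses this by noting $\Theta^n(\|V^*\|,g\cdot q)=\Theta^n(\|V^*\|,q)$ for all $g\in G$, so a high-density point can be translated into the local Euclidean annulus $A_{r,2r}(p)$. You also do not address the two cases $\partial\Sigma\neq\emptyset$ versus $\partial\Sigma=\emptyset$ (density $1/2$ at a free boundary point of $\Sigma$ versus invoking the interior argument from \cite[Lemma 10]{wang2022min}), though that is closer to a presentational omission. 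The covering-lemma gap, however, is substantive and needs to be filled.
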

\begin{proof}
	We only need to show the uniform lower bound of $\Theta^n(\|V\|, p)$ for $p\in {\rm spt}(\|V\|)\cap \partial M$, and (\ref{Eq: volume ratio bound}) then follows from the monotonicity formula \cite[Theorem 2.3]{li2021min}. 
	
	Fix any $p\in {\rm spt}(\|V\|)\cap \partial M$. 
	Let $r_{am}(G\cdot p)>0$ be defined in Definition \ref{Def:am-annuli}, $r_{\textrm{Fermi}}^G(p)>0$ be defined in Lemma \ref{Lem:Fermi convex}, and $r_{mono}>0$ be defined in \cite[Theorem 2.3]{li2021min}. 
	Take $r_0=\frac{1}{20}\min\{r_{am}(G\cdot p),r_{\textrm{Fermi}}^G(p),r_{mono}\} $.  
	Therefore, for any $r\in (0,r_0)$, we can apply Proposition \ref{Prop:good-replacement-property} to get a $G$-replacement $V^*$ of $V$ in $\Clos(A_{r,2r}^G(p))\cap M$. 
	Combining the maximum principle \cite[Theorem 2.5]{li2021min} with the mean-convexity of Fermi half-tubes $\tBcal^{G,+}_s(p)$ (Lemma \ref{Lem:Fermi convex}) for $s\in(0,r)$, we have 
	\begin{equation}\label{Eq:non-empty}
		\|V^*\|\llcorner A_{r,2r}^G(p)\neq 0. 
	\end{equation}
	Thus, $\Sigma:=\spt(\|V^*\|)\cap A_{r,2r}^G(p)\neq \emptyset$ is a smooth, almost properly embedded, free boundary, $G$-invariant minimal hypersurface in $\big( A_{r,2r}^G(p)\cap M, A_{r,2r}^G(p)\cap \partial M \big)$. 
	 
	 By the five-times covering lemma, there is a finite set $\{p_i\}_{i=1}^N\subset G\cdot p$ with
	$$B_{4r}^G(p)\subset \bigcup_{i=1}^NB_{20r}(p_i),\quad B_{4r}(p_i)\cap B_{4r}(p_j)=\emptyset,~i\neq j\in\{1,\dots,N\}.$$
	Moreover, we have $\|V\|(B_{20r}(p))=\|V\|(B_{20r}(p_i))$ since $V$ is $G$-invariant and $G$ acts by isometries. 
	Together with Proposition \ref{Prop:good-replacement-property}(i)(ii), we have
	\begin{eqnarray}\label{Eq:covering}
		N\|V\|(B_{20r}(p)) &\geq & \|V\|(\bigcup_{i=1}^NB_{20r}(p_i))\nonumber
		\\
		&\geq & \|V\|(B_{4r}^G(p))= \|V^*\|(B_{4r}^G(p))\geq N\|V^*\|(B_{4r}(p)),
	\end{eqnarray}
	which implies 
	$$\|V\|(B_{20r}(p))\geq \|V^*\|(B_{4r}(p))\geq \|V^*\|(B_{2r}(q)), $$ for all $q\in A_{r,2r}(p)$. 
	
	If $\partial\Sigma\neq\emptyset$, there exists $q\in \partial \Sigma\subset A_{r,2r}^G(p)\cap\partial M$ so that $\Theta^n(\|V^*\|,q)\geq 1/2$ by Lemma \ref{Lem:regular replacement}. 
	We can also take $q\in A_{r,2r}(p)$, since $\Theta^n(\|V^*\|,g\cdot q) = \Theta^n(\|V^*\|, q)$ for all $g\in G$.
	Then the monotonicity formula \cite[Theorem 2.3]{li2021min} for free boundary stationary varifolds implies 
	\begin{eqnarray}\label{Eq: volume ratiou}
		\frac{\|V\|(B_{20r}(p))}{\omega_n(20r)^n} &\geq &\frac{\|V^*\|(B_{2r}(q))}{\omega_n(20r)^n} 
		\geq  \frac{1}{10^nC_{mono}}\lim_{s\to 0}\frac{\|V^*\|(B_{s}(q))}{\omega_n s^n}
		\\
		&\geq & \frac{\Theta^n(\|V^*\|,q)}{10^nC_{mono}} =\frac{1}{2\times10^nC_{mono}} \nonumber .
	\end{eqnarray}
	If $\partial\Sigma=\emptyset$, then $\Sigma$ is stationary in $A_{r,2r}^G(p)\cap\widetilde{M} $, and the arguments in \cite[Lemma 10]{wang2022min} would carry over to give an inequality similar to (\ref{Eq: volume ratiou}).
	Thus, by taking $r\to 0$ in (\ref{Eq: volume ratiou}), there is a uniform lower bound on the density $\Theta^n(\|V\|,p)$ for all $p\in \spt(\|V\|)\cap \partial M$. 
\end{proof}

As mentioned in \cite[Remark 5.7]{li2021min}, Lemma \ref{Lem: volum ratio bound} does not imply the rectifiability of $V$, since we do not know whether $V$ has locally bounded first variations as a varifold in $\R^L$.

\begin{lemma}\label{Lem: tangent cone}
	Let $2\leq n\leq 6$. 
	Suppose $V \in \mathcal{V}^G_n(M)$ is $(G,\mathbb{Z}_2)$-almost minimizing of boundary type in annuli with free boundary, and is stationary in $M$ with free boundary. 
	For any $p\in{\rm spt}(\|V\|)\cap \partial M$ and $C\in{\rm VarTan}(V,p)$, we have: 
	\begin{itemize}
		\item[(i)] $C\in \mathcal{V}_n(T_pM)$ is rectifiable;
		\item[(ii)] $C$ is stationary in $T_pM$ with free boundary;
		\item[(iii)] $C$ is a cone and has the form $C=T_pG\cdot p\times W$, where $W\in \mathcal{V}_{n-{\rm dim}(G\cdot p)}({\bf N}_pG\cdot p)$. 
	\end{itemize}
\end{lemma}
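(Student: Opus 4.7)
The plan is to combine the free boundary argument of Li--Zhou \cite[Lemma 5.8]{li2021min} with the interior equivariant argument of \cite[Lemma 11]{wang2022min}. Fix $p\in\spt(\|V\|)\cap\partial M$ and a subsequence $r_i\to 0$ realizing $C=\lim_{i\to\infty}(\bleta_{p,r_i})_\#V$ as varifolds in $T_pM$. Since $\partial M$ is $G$-invariant, $G\cdot p\subset\partial M$ and $T_pG\cdot p\subset T_p\partial M$. Lemma~\ref{Lem: volum ratio bound} gives a uniform mass bound for the rescalings on every compact subset of $T_pM$, so the limit $C\in\V_n(T_pM)$ is well defined and statement (ii) follows from the stationarity of $V$ with free boundary in $M$ together with a standard Fermi-coordinate straightening of $\partial M$ at $p$, in the spirit of \cite[Theorem 2.3]{li2021min}.

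For (iii), the free boundary monotonicity formula \cite[Theorem 2.3]{li2021min} combined with the lower density bound of Lemma~\ref{Lem: volum ratio bound} forces $C$ to be a cone centered at $0\in T_pM$, by the standard equality case analysis. Next, I would exploit the $G$-invariance of $V$: for each $X\in\mathfrak{g}$, the conjugated flow $\bleta_{p,r_i}\circ\exp(t_i r_i X)\circ\bleta_{p,r_i}^{-1}$ with $t_i r_i\to t$ converges uniformly on compact subsets of $T_pM$ to translation by $tX_p\in T_pG\cdot p$. Passing $(\exp(sX))_\#V=V$ to the blow-up limit then shows that $C$ is invariant under every translation by a vector of $T_pG\cdot p$. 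A cone in $T_pM$ invariant under translations by the linear subspace $T_pG\cdot p$ splits as $T_pG\cdot p\times W$ with $W\in\V_{n-\dim(G\cdot p)}({\bf N}_pG\cdot p)$, giving (iii).

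Rectifiability (i) will be obtained via $G$-replacements. For $0<s<t<r_{am}(G\cdot p)$, apply Proposition~\ref{Prop:good-replacement-property} on $\Clos(\A^G_{s,t}(p))\cap M$ to produce a $G$-replacement $V^*$ of $V$; by Lemma~\ref{Lem:regular replacement}, $\spt(\|V^*\|)\cap\interior_M(\A^G_{s,t}(p)\cap M)$ is a smooth, almost properly embedded, free boundary, $G$-invariant minimal hypersurface, stable on simply connected relative open subsets. Iterating the replacement on overlapping $G$-annular shells centered at $G\cdot p$ (in the spirit of \cite[Theorem 5.9]{li2021min} and \cite[Lemma 11]{wang2022min}) and invoking the compactness Theorem~\ref{Thm:compactness $G$-stable hypersurface}, the rescaled replacements $(\bleta_{p,r_i})_\#V^*$ converge to rectifiable pieces that together cover $\spt(\|C\|)$ away from the origin; the density estimate at $0$ from Lemma~\ref{Lem: volum ratio bound} then upgrades $C$ to a rectifiable varifold on all of $T_pM$.

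The main obstacle, I expect, is the rectifiability iteration, which must be carried out uniformly near both the free boundary $T_p\partial M\subset T_pM$ and the lower-dimensional orbit type strata. Two features make it work: first, $\A^G_{s,t}(p)$ contains no isolated orbits (as noted after Definition~\ref{Def:annulus}), so Theorem~\ref{Thm: equivalence-a.m.v} applies in each annular shell; second, the Fermi half-tubes around $G\cdot p\subset\partial M$ are mean convex on the scale of Lemma~\ref{Lem:Fermi convex}, which combined with the maximum principle \cite[Theorem 2.5]{li2021min} prevents replacements from degenerating into $G\cdot p$ or $\partial M$ and ensures a nontrivial smooth limit in each annulus. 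A secondary difficulty is that the $G$-stable limit hypersurfaces produced by compactness are two-sided only in simply connected relative open subsets, so the rectifiable structure on $C$ must be glued across these local pieces using the $G$-invariance and the cone property from (iii).
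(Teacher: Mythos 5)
Your proposal is essentially the route the paper takes: (ii) is immediate from stationarity of the rescalings, (iii) combines the free-boundary monotonicity formula with the translation invariance of $C$ along $T_pG\cdot p$ coming from the $G$-invariance of $V$, and (i) rests on the $G$-replacement machinery together with Hausdorff convergence and a density lower bound. Two points deserve attention, though. First, the conjugated-flow formula should read $\bleta_{p,r_i}\circ\exp(t r_i X)\circ\bleta_{p,r_i}^{-1}$ with $t$ fixed (so the group element tends to the identity while the rescaled translation stabilizes at $tX_p$); as written, ``$t_i r_i\to t$'' would force $\exp(t_ir_iX)$ to converge to a nontrivial rotation and the construction falls apart. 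The paper's computation, isolating $A(r_i)\cdot p$ via the identity $\bleta_{p,r_i}\circ g(r_i)=\btau_{-A(r_i)\cdot p}\circ g(r_i)\circ\bleta_{p,r_i}$ with $g(t)=I_L+tA(t)$, makes this precise.

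Second, and more substantively, your ordering creates a circularity: you derive the splitting $C=T_pG\cdot p\times W$ in step (iii) \emph{before} establishing rectifiability in step (i), but translation invariance of the Radon measure $\|C\|$ along $T_pG\cdot p$ does not by itself yield a product decomposition of $C$ as a varifold (the Grassmannian component need not factor). The paper explicitly invokes rectifiability at exactly this point: ``Since $T_pM=T_p(G\cdot p)\times\mathbf N_p(G\cdot p)$ and $C$ is rectifiable, we have $C=T_p(G\cdot p)\times W$.'' So you need rectifiability (established via the replacement iteration and the doubled Li--Zhou argument) before, not after, the splitting. Related to this, your sketch for (i) leaves out the crux: the rescaled replacement limits are rectifiable but a priori differ from $C$ in the annular region where the replacement was performed, and the identification of the two (using that both are stationary cones with matching density at the vertex and matching measure outside the annulus) is where the real work is. Reordering to prove rectifiability and the cone property first, then translation invariance, then the splitting, fixes the logic and matches the paper.
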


\begin{proof}
	Let $r_i\rightarrow 0$ be a sequence such that $ C=\lim_{i\to \infty} V_i$, where $V_i={\bm \eta}_{p,r_i\#} V$. 
	Clearly, $C$ is stationary in $T_pM$ with free boundary. 
	By the arguments in \cite[Lemma 5.8 Claim 1,2]{li2021min} with Lemma \ref{Lem: volum ratio bound}, Theorem \ref{Thm:interior regul-amv}, \cite[(17)]{wang2022min} in place of \cite[Lemma 5.6, Theorem 5.1]{li2021min}, \cite[(5.8)]{schoen1981regularity}, we can also get: 
	\begin{equation}\label{Claim:Hausdorff converge}
		\mbox{${\rm spt}(\|V_i\|)$ converges to ${\rm spt}(\|C\|)$ in the Hausdorff topology,}
	\end{equation}
	and $\Theta^n(\|C\|,q)\geq \theta_0>0$, for all $q\in{\rm spt}(\|C\|)$.
	Moreover, the doubled arguments in \cite[Lemma 5.8]{li2021min} also imply $C$ is a rectifiable cone in $T_pM$.
	
	Fix $w\in T_p(G\cdot p)$. 
	Let $g(t)\subset G$ be a curve with $g(0)=e$ and $w=\frac{d}{dt}\big\vert_{t=0} g(t)\cdot p$.
	Hence, there exists a continuous curve $A(t)$ in the $L$-matrices space so that $g(t)=I_L+tA(t)$, where $I_L$ is the identity.
	This implies $A(0)\cdot p =w$. 
	One verifies that
	\begin{eqnarray}\label{Eq:splitting}
		(\bleta_{p, r_i}\circ g(r_i))(x) 
		&=& \frac{g(r_i)\cdot x - g(r_i)\cdot p}{r_i}+\frac{g(r_i)\cdot p -  p}{r_i} \nonumber
		\\
		&=& (\btau_{-A(r_i)\cdot p}\circ g(r_i)\circ \bleta_{p, r_i})(x).
	\end{eqnarray}
	Noting $V$ is $G$-invariant, $A(r_i)\cdot p\to w$, and $g(r_i)\to e$, we have $C=(\btau_{-w})_\# C$, which implies $C$ is invariant under the translation along $T_p(G\cdot p)$. 
	Since $T_pM=T_p(G\cdot p)\times {\bf N}_p(G\cdot p)$ and $C$ is rectifiable, we have $C=T_p(G\cdot p)\times W$ for some $W\in \mathcal{V}_{n-{\rm dim}(G\cdot p)}({\bf N}_pG\cdot p)$. 
\end{proof}

\begin{remark}\label{Rem: tangent-cone}
	Suppose $V_i \in \V_n^G(M)$ is a sequence of $G$-varifolds that are $(G,\mZ_2)$-almost minimizing of boundary type in annuli with free boundary, stationary in $M$ with free boundary, and satisfy the uniform density bound (\ref{Eq: volume ratio bound}). 
	Then, the above arguments also show that (i)(ii) and the splitting property in (iii) are valid for the varifold limit $\lim_{i\to\infty}(\bleta_{p,r_i})_\sharp V_i$. 
\end{remark}

Before we show the classification of tangent cones, let us introduce the following notations.

\begin{lemma}\label{Lem: part of tube}
	For any $p\in M$ and $0<s<r$, denote 
	\begin{eqnarray}\label{Eq: part tube for blow up}
		&&T(p ,r) :=  \left\{ x\in B^G_r(p) \left| \begin{array}{c} {\rm there~exists~a~unique~point~} y\in G\cdot p ~{\rm with}~\\
		d_{\R^L}(x,y)=d_{\R^L}(x,G\cdot p)<r {\rm ~and~}d_{\R^L}(y,p)<r \end{array} \right.\right\}, 
		\\
		&&T(p,s,r) :=T(p,r)\cap A^G_{s,r}(p),\nonumber
	\end{eqnarray}
	as a part of the $G$-tube $B^G_r(p)$ and $A^G_{s,r}(p)$ around $G\cdot p$ in $\R^L$ respectively, which are well-defined open sets for sufficiently small $r>s>0$. 
	Then, for $r$ small enough and $\alpha\in(0,1)$,
	\begin{itemize} 
		\item $T(p,s,r)\cap M$ is a simply connected relative open subset of $M$;
		\item ${\bm \eta}_{p,r_i}( T(p,\alpha r_i,r_i)\cap M )\to  B^{T_pG\cdot p}_1(0)\times A^{{\bf N}_pG\cdot p}_{\alpha,1}(0)$ as $r_i\to 0$, where $B^{T_pG\cdot p}_1(0)\subset T_pG\cdot p$ is an open ball, $A^{{\bf N}_pG\cdot p}_{\alpha,1}(0)\subset {\bf N}_pG\cdot p $ is an open annulus. 
	\end{itemize}
\end{lemma}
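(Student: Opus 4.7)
The strategy is to realize $T(p,r)\cap M$ as a \emph{tubular chart} around the arc $G\cdot p\cap B_r(p)$ in $\R^L$, via the nearest point projection onto the smooth submanifold $G\cdot p$, which is well-defined and smooth on the Euclidean tube $B_r^G(p)$ for $r$ smaller than the focal radius of $G\cdot p$. For such $r$, a point $x\in B_r^G(p)$ lies in $T(p,r)$ precisely when its unique foot point $y$ satisfies $|y-p|<r$, so the smooth map $(y,v)\mapsto y+v$ defines a diffeomorphism from
\[
\mathcal{D}_r:=\{(y,v):\; y\in G\cdot p,\; |y-p|<r,\; v\in{\bf N}_y^{\R^L}G\cdot p,\; |v|<r\}
\]
onto $T(p,r)$. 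Intersecting with $M$ and using the Fermi half-tube description from the Appendix near $\partial M$, the same construction parametrizes $T(p,r)\cap M$ by an open subset of the normal disk bundle of ${\bf N}^{M}G\cdot p$ over $G\cdot p\cap B_r(p)$, replaced by the corresponding half-disk bundle when $p\in\partial M$.

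For the first bullet, I would factor $T(p,s,r)\cap M$ as a smooth fibre bundle over $G\cdot p\cap B_r(p)$ whose fibre is the open annular region (respectively half-annular region) of inner radius $s$ and outer radius $r$ in ${\bf N}^{M}_qG\cdot p\cong\R^{n+1-\dim G\cdot p}$. Because $r$ is small, the base is diffeomorphic to a Euclidean ball in $T_pG\cdot p$ and hence simply connected. The cohomogeneity assumption ${\rm Cohom}(G)\ge 3$ forces $n+1-\dim G\cdot p\ge 3$, so each fibre is simply connected (open annuli and half-annuli in $\R^k$ with $k\ge 3$ are simply connected); the total space then inherits simple connectedness, for instance via the long exact sequence of the fibration, or more directly after trivialising the bundle by further shrinking $r$.

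For the second bullet, I would push the parametrization through $\bleta_{p,r_i}$. As $r_i\to 0$, the rescaled submanifold $\bleta_{p,r_i}(G\cdot p)$ converges smoothly on compact subsets of $\R^L$ to the linear subspace $T_pG\cdot p$, the rescaled normal spaces ${\bf N}^{\R^L}_{\bleta_{p,r_i}(y)}G\cdot p$ converge to the constant family ${\bf N}^{\R^L}_pG\cdot p$, and $\bleta_{p,r_i}(M)$ converges to $T_pM$, with $\bleta_{p,r_i}(\partial M)\to T_p\partial M$ when $p\in\partial M$. Composing with the limit of the Fermi charts, the image $\bleta_{p,r_i}(T(p,\alpha r_i,r_i)\cap M)$ Hausdorff-converges to the Euclidean product $B_1^{T_pG\cdot p}(0)\times A_{\alpha,1}^{{\bf N}_pG\cdot p}(0)$.

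The main obstacle is the case $p\in\partial M$: one must verify that the Fermi half-tube construction respects the product structure, and that the half-space constraint cut out by $M$ in the limit lives entirely in the normal factor. This reduces to the observation that for $r$ small the arc $G\cdot p\cap B_r(p)$ lies in the single orbit-type stratum $N_i^j\ni p$ of $\partial M$, hence is tangent to $\partial M$ at $p$, so the Fermi setup of Appendix~A applies uniformly and the smooth convergence argument goes through without splitting the base.
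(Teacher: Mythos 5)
Your proposal is essentially correct and follows the same route as the paper: decompose $T(p,r)\cap M$ as a fiber bundle over the arc $G\cdot p\cap B_r(p)$ with Euclidean ball (or half-ball) fibers, use ${\rm Cohom}(G)\geq 3$ to make the annular fiber simply connected, and blow up the parametrization under $\bleta_{p,r_i}$ to recover the product $B_1^{T_pG\cdot p}(0)\times A_{\alpha,1}^{{\bf N}_pG\cdot p}(0)$. The one place where your write-up is a bit glossier than the paper is the identification of the fiber of $T(p,r)\cap M$: you pass from the nearest-point projection in $\R^L$ (whose fiber over $y$ is a Euclidean disk in ${\bf N}_y^{\R^L}G\cdot p$, not an intrinsic object of $M$) to a chart by the half-disk bundle of ${\bf N}^M G\cdot p$ without quite spelling out the intersection with $M$. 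The paper sidesteps this by working with the Voronoi-type slice $S=\{q:\text{nearest orbit point is }p\}$ and then forming $S\cap M\cap B_r^G(p)$, which is directly a smooth slice of the $G$-action on $M$ diffeomorphic to a ball or half-ball, so the bundle $G\times_{G_p}S_r\cong M\cap B_r^G(p)$ comes out cleanly. These are two framings of the same construction; your argument would close the same way once you note that the nearest-point projection restricted to $M$ and the $G$-slice $S\cap M$ produce the same fibration. For the second bullet, your observation that the rescaled normal spaces converge to the constant family ${\bf N}_p^{\R^L}G\cdot p$ is precisely the content of the paper's equivariance identity $\bleta_{p,r_i}\circ g(r_i)=\btau_{-A(r_i)\cdot p}\circ g(r_i)\circ\bleta_{p,r_i}$, which the paper uses to control the slices through $g_i\cdot p$ as $g_i\to e$; so the two accounts agree, yours being somewhat more compressed.
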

\begin{proof}
	Let $S=\{q\in \R^L: d_{\R^L}(q,p)> d_{\R^L}(q,g\cdot p),\forall g\cdot p\neq p, g\in G\}$ be a slice of $G\cdot p$ at $p$ in $\R^L$. 
	Then for $r>0$ sufficiently small, since $S$ meets orbits transversally, $S\cap M$ is a smooth manifold of dimension $n+1-\dim(G\cdot p)$, and 
	$$S_r:= S\cap M\cap B^G_r(p) = S\cap M\cap B_r(p) \cong \mathbb{B}_r := \left\{ \begin{array}{cl}
	\mathbb{B}^{n+1-\dim(G\cdot p)}_r(0)  & \text{ if } p \notin  \partial M, \\
	\mathbb{B}^{n+1-\dim(G\cdot p)}_{r,+}(0)  & \text{ if } p \in \partial M, 
	\end{array} \right. $$
	where $\mathbb{B}^{n+1-\dim(G\cdot p)}_r(0)$ and $\mathbb{B}^{n+1-\dim(G\cdot p)}_{r,+}(0)$ are the $(n+1-\dim(G\cdot p))$-dimensional Euclidean open ball and half-ball. 
	Define the $G_p$ action on $G\times S_r$ as $h\cdot (g,q) = (gh^{-1}, hq)$, where $h\in G_p, g\in G, q\in S_r$. 
	Then $G\times_{G_p} S_r := G\times S_r / G_p$ is a fiber bundle over $G/G_p\cong G\cdot p$ with fiber $S_r$ (see \cite[Section 2.2]{berndt2016submanifolds}). 
	Define the action of $G$ on $G\times_{G_p} S_r$ by $h\cdot [g,q] = [hg, q]$, where $h,g\in G, q\in S_r$. 
	As in \cite[Section 2.2]{berndt2016submanifolds}, we have $G\times_{G_p} S_r$ is $G$-equivariant diffeomorphic to $M\cap B^G_r(p)$ by $[g,q]\mapsto g\cdot q$. 
	Using the local trivialization of the fiber bundle $G\times_{G_p} S_r$, we have the following diffeomorphisms: 
	$$T(p,r)\cap M \cong (B_r(p)\cap G\cdot p) \times S_r \cong \mathbb{B}^{\dim(G\cdot p)}_r(0)\times \mathbb{B}_r,$$
	for $r>0$ sufficiently small. 
	Additionally, for $0<s<r$, $S_r\setminus \Clos(S_s) = S\cap M\cap A^G_{s,r}(p)$ is diffeomorphic to $\mathbb{B}_r\setminus\Clos(\mathbb{B}_s)$, which is either an $(n+1-\dim(G\cdot p)) $-dimensional open annulus or a half-annulus. 
	Then the open $G$-subset $G\times_{G_p} (S_r \setminus \Clos(S_s)) \subset G\times_{G_p} S_r $ is also a fiber bundle and is equivariantly diffeomorphic to $ M\cap A^G_{s,r}(p) $. 
	The local trivialization gives
	$$T(p,s,r)\cap M \cong (B_r(p)\cap G\cdot p) \times (S_r\setminus \Clos(S_s)) \cong \mathbb{B}^{\dim(G\cdot p)}_r(0)\times (\mathbb{B}_r\setminus\Clos(\mathbb{B}_s)),$$
	which is simply connected since $\dim(\mathbb{B}_r\setminus\Clos(\mathbb{B}_s)) = n+1-\dim(G\cdot p)\geq{\rm Cohom}(G)\geq 3$. 
	
	Let $\{r_i\}_{i\in\N}$ be a sequence with $r_i>0$ and $r_i\to 0$ as $i\to\infty$. 
	After passing to a subsequence, we have the following locally uniformly convergences as $i\to\infty$: 
	\begin{equation*}
		{\bm \eta}_{p,r_i}(G\cdot p)\to T_pG\cdot p,\qquad {\bm \eta}_{p,r_i}(S\cap M)\to T_{p}(S\cap M) = {\bf N}_pG\cdot p.
	\end{equation*}
	Hence, $\lim_{i\to\infty} {\bm \eta}_{p,r_i}(G\cdot p\cap B_{r_i}(p)) = B^{T_pG\cdot p}_1(0)$ and 
	$$\lim_{i\to\infty}{\bm \eta}_{p,r_i}(S\cap T(p,\alpha r_i,r_i)\cap M) = \lim_{i\to\infty}{\bm \eta}_{p,r_i}(S \cap A_{\alpha r_i, r_i}(p) \cap M) = A^{{\bf N}_pG\cdot p}_{\alpha,1}(0).$$
	Therefore, for any $w\in B^{T_pG\cdot p}_1(0)$, there exists a sequence $\{g_i\}_{i\in\N}\subset G$ so that $g_i\cdot p\in B_{r_i}(p)$ and $\lim_{i\to\infty}{\bm \eta}_{p,r_i}(g_i\cdot p)=w $. 
	By the compactness of $G$ and the fact that $g_i\cdot p\to p$, we have $\lim_{i\to\infty}g_i =  g_0\in G_p $ up to a subsequence. 
	Noting $g_i\cdot p\in B_{r_i}(p)$, we have $(g_i\cdot S)\cap T(p,\alpha r_i,r_i)=g_i\cdot(S\cap T(p,\alpha r_i,r_i))$ by the definitions of $S$ and $T(p,s,r)$. 
	Hence, we can use (\ref{Eq:splitting}) to show that 
	$$\lim_{i\to\infty}{\bm \eta}_{p,r_i}(g_i\cdot S\cap T(p,\alpha r_i,r_i)\cap M) = \btau_{-w}\circ g_0(A^{{\bf N}_pG\cdot p}_{\alpha,1}(0))=\btau_{-w}(A^{{\bf N}_pG\cdot p}_{\alpha,1}(0)).$$
	Since $w\in B^{T_pG\cdot p}_1(0)$ is arbitrary, $\lim_{i\to\infty}{\bm \eta}_{p,r_i}(T(p,\alpha r_i,r_i)\cap M) \supset B^{T_pG\cdot p}_1(0)\times A^{{\bf N}_pG\cdot p}_{\alpha,1}(0) .$
	Moreover, because $T(p,\alpha r_i,r_i) = \cup_{\{g\in G:~ g\cdot p\in B_{r_i}(p)\}} g\cdot (S\cap T(p,\alpha r_i,r_i)) $, the above arguments also imply the reverse containment $\lim_{i\to\infty}{\bm \eta}_{p,r_i}(T(p,\alpha r_i,r_i)\cap M) \subset B^{T_pG\cdot p}_1(0)\times A^{{\bf N}_pG\cdot p}_{\alpha,1}(0) $, which gives the lemma. 
\end{proof}

Next, we show the classification of tangent cones and the rectifiability of the min-max $G$-varifolds in Corollary \ref{Cor:exist amv}. 

\begin{proposition}\label{Prop:classification tangent cone}
	Let $2\leq n\leq 6$. 
	Suppose $V \in \mathcal{V}^G_n(M)$ is $(G,\mathbb{Z}_2)$-almost minimizing of boundary type in annuli with free boundary, and is stationary in $M$ with free boundary. 
	For any $p\in{\rm spt}(\|V\|)\cap \partial M$ and $C\in{\rm VarTan}(V,p)$, we have: 
	\begin{itemize}
		\item[(i)] either $C=\Theta^n(\|V\|,p) \; |T_p(\partial M)|$ where $\Theta^n(\|V\|,p) \in \mathbb{N}$, 
		\item[(ii)] or $C=2 \Theta^n (\|V\|,p) \; |S \cap T_pM|$ for some $n$-plane $S \subset T_p \tM$ such that $S \perp T_p (\partial M)$, and $2 \Theta^n(\|V\|,p) \in \mathbb{N}$.
	\end{itemize}
	Moreover, for $\|V\|$-a.e. $p \in \partial M$, the tangent varifold of $V$ at $p$ is unique, and the set of $p \in \partial M$ in which case (ii) occurs as its unique tangent cone has $\|V\|$-measure zero. Hence, $V$ is rectifiable. 
\end{proposition}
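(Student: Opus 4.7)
The plan is to combine the product structure of tangent cones from Lemma \ref{Lem: tangent cone} with the regularity afforded by $G$-replacements (Lemma \ref{Lem:regular replacement}) to reduce the classification to a codimension-one cone problem in the normal slice, and then to deduce rectifiability via a boundary doubling argument as in \cite[Section 5.2]{li2021min}. Fix $p \in \spt(\|V\|) \cap \partial M$ and $C = \lim_{i\to\infty} (\bleta_{p,r_i})_\# V \in \VarTan(V,p)$. By Lemma \ref{Lem: tangent cone}, $C$ has the product form $C = |T_pG\cdot p| \times W$, where $W \in \V_{l-1}(\mathbf{N}_pG\cdot p)$ is a rectifiable stationary cone with free boundary on $\mathbf{N}_pG\cdot p \cap T_p\partial M$; here $l = n+1-\dim(G\cdot p) \geq {\rm Cohom}(G) \geq 3$, and $l-1\leq n \leq 6$, so low-dimensional codimension-one classification results are available.

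For each index $i$, I would apply Proposition \ref{Prop:good-replacement-property} to $V$ in $\Clos(\A_{r_i/2,r_i}^G(p))\cap M$ to obtain a $G$-replacement $V_i^*$. By Lemma \ref{Lem:regular replacement}, on the simply connected $G$-neighborhood $T(p,r_i/2,r_i)\cap M$ from Lemma \ref{Lem: part of tube}, each $V_i^*$ is induced by a smooth, almost properly embedded, stable, free boundary $G$-invariant minimal hypersurface. Rescaling by $\bleta_{p,r_i}$ and using Lemma \ref{Lem: part of tube} together with the compactness Theorem \ref{Thm:compactness $G$-stable hypersurface}, after passing to a subsequence the varifolds $(\bleta_{p,r_i})_\# V_i^*$ converge in $B_1^{T_pG\cdot p}(0) \times A_{1/2,1}^{\mathbf{N}_pG\cdot p}(0)$ to a smooth, stable, free boundary minimal hypersurface that is translation invariant along $T_pG\cdot p$; this invariance yields a splitting $|B_1^{T_pG\cdot p}(0)| \times W^*$ with $W^*$ a smooth stable free boundary minimal hypersurface in the half-annulus. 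Performing a second replacement in a smaller $G$-annulus as in the interior argument of \cite[Theorem 7]{wang2022min} and applying the maximum principle then forces $W$ itself to be a smooth stable free boundary minimal cone in $\mathbf{N}_pG\cdot p \cap T_pM$. In codimension one, any such cone smooth away from the origin is a union of half-$(l-1)$-planes through $0$, and the orthogonality condition on $\mathbf{N}_pG\cdot p\cap T_p\partial M$ then leaves exactly two possibilities: $W$ is a positive integer multiple of $|\mathbf{N}_pG\cdot p \cap T_p\partial M|$, giving alternative (i); or $W = 2\Theta\,|S' \cap T_pM|$ for some hyperplane $S' \subset \mathbf{N}_pG\cdot p$ meeting $T_p\partial M$ orthogonally, giving alternative (ii) with $S := T_pG\cdot p \oplus S'$. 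Integrality of $\Theta^n(\|V\|,p)$, respectively $2\Theta^n(\|V\|,p)$, is then immediate from the constancy theorem applied to the integer rectifiable currents $T_i^*$ of Proposition \ref{Prop:good-replacement-property}(iv).

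For the rectifiability conclusion, I would follow \cite[Section 5.2]{li2021min}: reflecting $V$ across $\partial M$ through Fermi coordinates produces a $(G\times\mZ_2)$-invariant varifold $\tilde V$ on a closed $G$-manifold, which is stationary (thanks to the free boundary condition), inherits $(G,\mZ_2)$-almost minimizing in annuli away from $\partial M$, and has densities in $\tfrac{1}{2}\N$ at all doubled boundary points by the classification proved above. Interior rectifiability and a.e. uniqueness of tangent cones from \cite[Section 6.2]{wang2022min} applied to $\tilde V$ then descend to give rectifiability of $V$ and uniqueness of its tangent cones $\|V\|$-a.e.\ on $\partial M$. Finally, the set of $p\in\partial M$ where alternative (ii) occurs as the unique tangent has $\|V\|$-measure zero by the density argument of \cite[Section 5.2]{li2021min}: at such points the doubled density is half-integer relative to the upper-semicontinuous density function, which is incompatible with uniqueness of tangent cones on a positive $\|V\|$-measure subset.

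The main technical obstacle is the step in the second paragraph, namely promoting the smoothness and stability of the blown-up replacements to the tangent cone $W$ itself. This requires iterating the $G$-replacement procedure at multiple scales and verifying that the successive blow-ups remain $T_pG\cdot p$-invariant free boundary minimal hypersurfaces in the appropriate half-annulus, using Lemma \ref{Lem: part of tube} to preserve the Fermi half-tube structure under rescaling. The interior analogue in \cite[Section 6.2]{wang2022min} bundles the orbit-invariance with Pitts' classical conical blow-up argument; adapting this to the free boundary setting while maintaining orthogonality at the boundary half-plane $\mathbf{N}_pG\cdot p\cap T_p\partial M$ is the key new ingredient, after which the low-dimensional classification of codimension-one free boundary minimal cones closes the argument.
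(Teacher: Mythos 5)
Your proposal assembles the right ingredients — product structure of tangent cones from Lemma~\ref{Lem: tangent cone}, regularity and stability of replacement blow-ups via Proposition~\ref{Prop:good-replacement-property} and Lemma~\ref{Lem:regular replacement}, the slice picture from Lemma~\ref{Lem: part of tube}, doubling, and the nonexistence of stable cones. But the step you flag as the ``main technical obstacle'' is where the argument genuinely breaks: you propose to close the gap between the cone $W$ (the tangent cone of $V$) and the blow-up $W^*$ of the replacement by ``performing a second replacement in a smaller $G$-annulus'' and ``applying the maximum principle.'' That does not identify $W$ with $W^*$. The paper never does a second replacement in this proposition. Instead it exploits the cone property of $C$ together with the monotonicity formula: since $C = T_pG\cdot p\times W$ is a cone (Lemma~\ref{Lem: tangent cone}(iii)), the doubled varifold $\overline{W}$ has exactly constant mass ratio $\|\overline{W}\|(B_r) = cr^{n-\dim(G\cdot p)}$; by the replacement property (i)(ii) and formula~(\ref{Eq: part tube formula}), the doubled replacement blow-up $\overline{W}'$ agrees with $\overline{W}$ outside the annulus $[\alpha,1]$, so its mass ratio attains the same constant at both ends; monotonicity then forces the ratio to be constant throughout, hence $\overline{W}'$ is a cone, and since two cones that agree on a ball $B_\alpha(0)$ coincide, $\overline{C} = \overline{D}$. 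This is what transfers the smoothness and stability of $D$ in the annulus onto $C$. Without this argument (or a correct substitute), your passage from ``$W^*$ is smooth, stable, free boundary minimal in a fixed half-annulus'' to ``$W$ is a smooth stable free boundary minimal cone'' is unjustified. Fixing $\alpha=1/2$ compounds this: the paper keeps $\alpha\in(0,1/2)$ arbitrary precisely so that, once $\overline{C}=\overline{D}$ is established for every $\alpha$, regularity propagates to all of $T_p\widetilde{M}\setminus\{0\}$ before invoking \cite[Theorem~7.6]{pitts2014existence}.

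A secondary issue is the rectifiability step. You propose reflecting the whole varifold $V$ across $\partial M$ via Fermi coordinates to get a $(G\times\mZ_2)$-invariant varifold on a closed $G$-manifold and then citing interior results. The doubled manifold is not a smooth Riemannian manifold unless $\partial M$ is totally geodesic, so the free boundary condition does not translate into genuine stationarity of $\tilde V$ with respect to a smooth metric, and the interior almost-minimizing structure is not inherited across the boundary in the way you need. The paper only doubles the \emph{tangent cone} (a linear object in $T_p\widetilde{M}$, where doubling is exact) and otherwise defers to the direct measure-theoretic arguments of \cite[Proposition~5.10, Claims~3--4]{li2021min}: upper semicontinuity of the density, the fact that uniqueness of the boundary-type tangent fails on a $\|V\|$-null set, and a covering argument showing that alternative~(ii) occurs on a $\|V\|$-null set. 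You would need to replace the global doubling with that local analysis (or establish that the doubled varifold is stationary with respect to a suitably regular metric, which requires additional work not present in your sketch).
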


\begin{proof}
	Let $r_i\rightarrow 0$ be a sequence with $ C=\lim_{i\to \infty} V_i$, where $V_i={\bm \eta}_{p,r_i\#} V$. 
	We also assume $r_i < \frac{1}{20}\min\{r_{am}(G\cdot p),r_{\textrm{Fermi}}^G(p),r_{mono}\}$. 
	Fix any $\alpha\in(0,\frac{1}{2})$. 
	
	By Proposition \ref{Prop:good-replacement-property}, there exists a $G$-replacement $V_i^*$ of $V$ in $K_i=\Clos(A_{\alpha r_i,r_i}^G(p))\cap M$. 
	As in (\ref{Eq:non-empty}), $\|V_i^*\|\llcorner\interior_M (K_i)\neq 0 $ by the maximum principle \cite[Theorem 2.5]{li2021min}. 
	By Proposition \ref{Prop:good-replacement-property}(ii) and the compactness theorem, we have a varifold limit (up to a subsequence) 
	$$ D:= \lim_{i\to\infty} {\bm \eta}_{p,r_i\#}V_i^*\in\mathcal{V}_n(T_pM).$$
	Moreover, by Remark \ref{Rem: tangent-cone}, $D$ is rectifiable stationary in $T_pM$ with free boundary and also has the form $D=T_pG\cdot p\times W'$ for some $W'\in \mathcal{V}_{n-{\rm dim}(G\cdot p)}({\bf N}_pG\cdot p)$.

	Define $\Sigma_i^*:={\rm spt}(\|V_i^*\|)\cap A_{\alpha r_i,r_i}^G(p)$, 
	and consider $\Sigma_i={\bm \eta}_{p,r_i\#}(\Sigma_i^*\cap T(p,\alpha r_i,r_i))$ (see the notations in (\ref{Eq: part tube for blow up})). 
	By Lemma \ref{Lem:regular replacement} and Lemma \ref{Lem: part of tube}, we have
	$$(\Sigma_i,\partial\Sigma_i )\subset ({\bm \eta}_{p,r_i}(M), {\bm \eta}_{p,r_i}(\partial M)) $$
	is a smooth, almost properly embedded, minimal hypersurface, which is stable in ${\bm \eta}_{p,r_i}( M\cap T(p,\alpha r_i,r_i) ) $. 
	Moreover, since $T(p,r)\subset B_{2r}(p)$, we can use the monotonicity formula (\cite[Theorem 2.5]{li2021min}) and the five-times covering lemma as in (\ref{Eq:covering}) to get a uniform bound on the mass of $\Sigma_i$:
	\begin{eqnarray*}
		\mathcal{H}^n(\Sigma_i) &=& \frac{\mathcal{H}^n(\Sigma_i^*\cap T(p,\alpha r_i,r_i))}{r_i^n} \leq  \frac{\|V_i^*\|(B_{2r_i}(p))}{r_i^n} 
		\\
		&\leq &\frac{\|V\|(B_{10r_i}(p))}{r_i^n} \leq C_{mono} \frac{\|V\|(B_{10r_1}(p))}{r_1^n}
	\end{eqnarray*}
	After applying the compactness theorem \cite[Theorem 2.13]{li2021min}, we have $(\Sigma_i,\partial \Sigma_i )$ converges (up to a subsequence) to some smooth almost properly embedded minimal hypersurface $(\Sigma_\infty,\partial \Sigma_\infty )\subset (T_pM,T_p(\partial M))$, which is also stable in $B^{T_pG\cdot p}_1(0)\times A^{{\bf N}_pG\cdot p}_{\alpha,1}(0)$. 
	This gives the regularity of $D$ since
	\begin{equation}
		\spt(\|D\|)\cap \big(B^{T_pG\cdot p}_1(0)\times A^{{\bf N}_pG\cdot p}_{\alpha,1}(0)\big)=\Sigma_\infty. 
	\end{equation}

	By \cite[Lemma 2.2]{li2021min}, we can consider the doubled varifolds $\overline{C}, \overline{D} \in \V_n(T_p\tM)$ of $C,D \in \V_n(T_pM)$, and the doubled varifolds $\overline{W}, \overline{W}' \in \V_{n-\dim(G\cdot p)}({\bf N}_p^{\tM}G\cdot p)$ of $W,W' \in \V_{n-\dim(G\cdot p)}({\bf N}_pG\cdot p)$, where ${\bf N}_p^{\tM}G\cdot p$ is the normal vector space of $G\cdot p$ at $p$ in $\tM$. 
	By \cite[Lemma 2.2]{li2021min}, $\overline{C}$ and $\overline{D}$ are stationary in $T_p\tM$. 
	Since $T_p\tM=T_pG\cdot p\times {\bf N}_p^{\tM}G\cdot p$, $\overline{C}=T_pG\cdot p\times\overline{W}$ and $\overline{D}=T_pG\cdot p\times \overline{W}' $, we also have $\overline{W}, \overline{W'}$ are stationary in ${\bf N}_p^{\tM}G\cdot p$. 
	Hence, by Lemma \ref{Lem: tangent cone}(iii), 
	$$\|\overline{W}\|(B_r(0)) \equiv cr^{n-\dim(G\cdot p)}, \quad \forall r>0,$$ for some constant $c>0$.
	Using Proposition \ref{Prop:good-replacement-property}(i)(ii) and (\ref{Eq: part tube formula}), we have 
	$$\|\overline{W}'\|(B_r(0))=\|\overline{W}\|(B_r(0)),\quad \forall r\in(0,\alpha)\cup(1,\infty).$$ 
	The monotonicity formula now implies 
	$\|\overline{W}'\|(B_r(0)) \equiv cr^{n-\dim(G\cdot p)}, $
	and thus $\overline{W}' $ and $\overline{D}$ are cones. 
	Additionally, we have $\overline{C}=\overline{D} $, since $\overline{W}$ agree with $\overline{W}'$ outside $A^{{\bf N}_p^{\tM}G\cdot p}_{\alpha,1}(0)$.

	Denote by $\overline{\Sigma}_\infty\subset\spt(\|\overline{D}\|)$ the doubled hypersurface of $\Sigma_\infty$, which is a stable minimal hypersurface without boundary. 
	Hence, $\overline{\Sigma}_\infty = \spt(\overline{D}) \cap A^{T_p\tM}_{\alpha,1}(0)$ is a smooth stable hypersurface without boundary. 
	Since $\alpha\in(0,1/2)$ is arbitrary, ${\rm spt}(\|\overline{D}\|)$ is a stable minimal cone with vertex $0\in T_p\tM$. 
	Meanwhile, the dimension of $T_p\tM$ is bounded by $3\leq n+1\leq 7 $, which implies ${\rm spt}(\|\overline{D}\|)$ is a hyperplane in $T_p\tM$ by the non-existence of stable cones (\cite[Theorem 7.6]{pitts2014existence}). 
	Hence, $\overline{C}=\overline{D}$ is a hyperplane with multiplicity, which is also invariant under the reflection $\theta_{\nu_{\partial M}(p)}  (u):= u - 2(u\cdot \nu_{\partial M}(p))\nu_{\partial M}(p)$. 
	Thus, either (i) or (ii) is satisfied. 
	
	Finally, the arguments in \cite[Proposition 5.10, Claim 3, 4]{li2021min} could carry over to show that the tangent varifold of $V$ at $p$ is unique for $\|V\|$-a.e. $p \in \partial M$, and $V$ is rectifiable.
\end{proof}


\subsection{Regularity of $(G,\mZ_2)$-almost minimizing varifolds.}

\begin{proof}[Proof of Theorem \ref{Thm:main-regularity}]
	Using Proposition \ref{Prop:good-replacement-property}, \ref{Prop:good-replace-2}, \ref{Prop:classification tangent cone}, and Lemma \ref{Lem: tangent cone}, the proof of Li-Zhou \cite[Page 37-45]{li2021min} would carry over under $G$-invariant restrictions. 
	Here, we only point out a few modifications. 
	First, for any $G\cdot p\subset \spt(\|V\|)\cap \partial M$, we can choose $r<\frac{1}{20}\min\{r_{am}(G\cdot p),r_{\textrm{Fermi}}^G(p),r_{mono}\}$ so that \cite[(5.7)]{li2021min} holds with $\tBcal^{G,+}_r(p), \tScal^{G,+}_r(p)$ in place of $\tBcal^{+}_r(p), \tScal^{+}_r(p)$. 
	Specifically, combining the maximum principle \cite[Theorem 2.5]{li2021min} with the mean-convexity of $\partial_{\rm rel}\tBcal^{G,+}_r(p) = \tScal^{G,+}_r(p)$ (Lemma \ref{Lem:Fermi convex}), we have
	\begin{equation}\label{Eq-convex-ball}
		\emptyset \neq {\rm spt}(W)\cap  \tScal^{G,+}_r(p)  = {\rm Clos}\big[{\rm spt}(W)\setminus {\rm Clos}(\tBcal^{G,+}_r(p)) \big] \cap  \tScal^{G,+}_r(p),
	\end{equation} 
	for all $W\in \mathcal{V}_n(M)$ that is stationary in $\tBcal^{G,+}_r(p)$ with free boundary and $W\llcorner \tBcal^{G,+}_r(p)\neq 0$. 
	
	{\bf Step 1.} To construct successive $G$-replacements as in \cite[Section 5.3, Step 1]{li2021min}, we just need to consider the $G$-annuli $\A^G_{s,t}(p) $ and use Proposition \ref{Prop:good-replacement-property}, \ref{Prop:good-replace-2} in place of \cite[Proposition 5.3]{li2021min}. 
	To be exact, we take $0<s<t<r $ and apply Proposition \ref{Prop:good-replacement-property}, \ref{Prop:good-replace-2} to find a $G$-replacement $V^*$ of $V$ in $\Clos(\A^G_{s,t}(p))$. 
	By Lemma \ref{Lem:regular replacement}, $\Sigma_1 := \spt(\|V^*\|)\cap \A^G_{s,t}(p)$ a smooth almost properly embedded free boundary minimal $G$-hypersurface, which is stable in any simply connected open subset of $\A^G_{s,t}(p)$. 
	Next, by Sard's theorem, we can take $s_2\in (s,t)$ so that $\tScal^{G,+}_{s_2}(p)$ intersects $\Sigma_1$ transversally (even at $\partial \Sigma_1$). 
	Then for any $s_1\in(0,s)$, Proposition \ref{Prop:good-replacement-property} and \ref{Prop:good-replace-2} give a $G$-replacement $V^{**}$ of $V^*$ in $\Clos(\A^G_{s_1,s_2}(p))$. 
	It also follows from Lemma \ref{Lem:regular replacement} that $\Sigma_2 := \spt(\|V^{**}\|)\cap \A^G_{s_1,s_2}(p)$ a smooth almost properly embedded free boundary minimal $G$-hypersurface, which is stable in any simply connected open subset of $\A^G_{s_1,s_2}(p)$.

	{\bf Step 2.} We claim that $\Sigma_1$ and $\Sigma_2$ can be glued smoothly across $\tScal^{G,+}_{s_2}(p)$. 
	Indeed, since the smoothly gluing result in \cite[Section 5.3, Step 2]{li2021min} is proved locally, the arguments of Li-Zhou would carry over with Proposition \ref{Prop:good-replacement-property}, \ref{Prop:classification tangent cone}, and Theorem \ref{Thm:interior regul-amv} in place of \cite[Proposition 5.3, 5.10, Theorem 5.1]{li2021min}. 
	
	{\bf Step 3.} Denote $\Sigma_{s_1}:= \Sigma_2$ to indicate the dependence of $s_1$. 
	Since $s_1\in (0, s)$ is arbitrary, we have a family of smooth, almost properly embedded, free boundary minimal $G$-hypersurfaces $\{\Sigma_{s_1} \subset \A^G_{s_1,s_2}(p)\}$, and each $\Sigma_{s_1}$ is stable in any simply connected open subset of $\A^G_{s_1,s_2}(p)$. 
	By {\bf Step 2} and the 	unique continuation of minimal hypersurfaces, we have $\Sigma_{s_1}=\Sigma_1$ in $\A^G_{s,s_2}(p)$ and $\Sigma_{s_1'} = \Sigma_{s_1}$ in $\A^G_{s_1, s_2}(p)$ for all $0<s_1'<s_1<s$. 
	Therefore, 
	$$ \Sigma := \bigcup_{0<s_1<s}\Sigma_{s_1} $$ 
	is a smooth, almost properly embedded, free boundary minimal $G$-hypersurface in $\tBcal^{G,+}_{s_2}(p)\setminus G\cdot p$, which is also stable in any simply connected open subset of $\tBcal^{G,+}_{s_2}(p)\setminus G\cdot p$. 
	Moreover, after replacing $\tBcal^+_s(p)$ and $\tScal^+_s(p)$ by $\tBcal^{G,+}_s(p)$ and $\tScal^{G,+}_s(p)$, the arguments in \cite[Page 42, Claim 3]{li2021min} would carry over, and thus $\spt(\|V\|) = \Sigma$ in the punctured half-tube $\tBcal^+_s(p)\setminus G\cdot p$. 
	
    {\bf Step 4.} In the final step, we remove the singularity at the center $G\cdot p$. 
    Firstly, by the unique continuation result in {\bf Step 3} and the arguments in \cite[Page 42, Claim 4]{li2021min}, we have for any false boundary point $q\in\Sigma$, $V$ is stationary in a neighborhood of $q$ as a varifold in $\tM$. 
    It then follows from the $G$-invariance of $V$ and the constancy theorem (outside the true boundary points of $\Sigma$) that 
    \begin{equation}\label{Eq: hypersurface with multiplicity}
    	V\llcorner ( \tBcal^{G,+}_{t}(p) \setminus G\cdot p ) = \sum_{i=1}^N n_i |\Sigma_i |,
    \end{equation}
    where each $\Sigma_i$ is a smooth, $G$-connected, almost properly embedded, free boundary minimal $G$-hypersurface that is stable in any simply connected open subset of $\tBcal^{G,+}_{t}(p) \setminus G\cdot p$. 
	In particular, $\Sigma$ and its $G$-connected components $\{\Sigma_i\}$ are stable in $(T(p, \frac{s_2}{2} )\setminus G\cdot p) \cap M$ by Lemma \ref{Lem: part of tube}. 
    Then, the arguments in \cite[Page 43, Claim 5]{li2021min} would carry over with $T(p, r, 2r)$ and Proposition \ref{Prop:classification tangent cone} in place of $A_{r,2,r}(p)$ and \cite[Proposition 5.10]{li2021min}, where $T(p, r, 2r)$ is a part of tube defined as in Lemma \ref{Lem: part of tube}. 
    Consequently, we have the following strengthened version of Proposition \ref{Prop:classification tangent cone}: 
    \begin{claim}\label{Claim: unique tangent cone at center}
    	The tangent varifolds space $\VarTan(V, p)$ of $V$ at $p$ is 
    	$${\rm either~} \left\{\Theta^{n}(\|V\|, p)\left|T_{p}(\partial M)\right|\right\}~{\rm or ~} \left\{2 \Theta^{n}(\|V\|, p)\left| S \cap T_{p} M\right| \left| \begin{array}{c} S \subset T_{p} \widetilde{M} \mbox{ is an $n$-plane},\\ S \perp T_{p}(\partial M)\end{array}\right. \right\}.$$
    \end{claim}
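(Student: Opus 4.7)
The plan is to strengthen Proposition \ref{Prop:classification tangent cone} at the orbit $G\cdot p$ by following the strategy of Li-Zhou [Page 43, Claim 5], now with the smooth hypersurface regularity (\ref{Eq: hypersurface with multiplicity}) on the punctured Fermi half-tube at our disposal. By Proposition \ref{Prop:classification tangent cone} every $C\in\VarTan(V,p)$ is already of one of the two stated forms, and by Lemma \ref{Lem: tangent cone}(iii) each $C$ splits as $T_pG\cdot p\times W_C$ for some $W_C\in\V_{n-\dim(G\cdot p)}({\bf N}_pG\cdot p)$. What remains is to show that the ``type'' (i) or (ii) is independent of the chosen rescaling sequence $r_i\to 0$.

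I would first reduce to the slice via the normal splittings $T_p\tM = T_pG\cdot p\times {\bf N}_p^{\tM}G\cdot p$ and $T_p(\partial M) = T_pG\cdot p\times (T_p(\partial M)\cap {\bf N}_p^{\tM}G\cdot p)$, so that the type of $C$ is fully determined by the type of $W_C$ inside ${\bf N}_pG\cdot p$. Then I would exploit the smooth structure obtained in Step 4: on each rescaled part-of-tube $\bleta_{p,r}(T(p,r,2r)\cap M)$, which is simply connected by Lemma \ref{Lem: part of tube}, each $\bleta_{p,r}(\Sigma_i\cap T(p,r,2r))$ is a smooth, almost properly embedded, free boundary minimal $G$-hypersurface that is stable there. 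The compactness Theorem \ref{Thm:compactness $G$-stable hypersurface} then yields smooth subsequential limits in $B_1^{T_pG\cdot p}(0)\times A_{1,2}^{{\bf N}_pG\cdot p}(0)$ as $r\to 0$, and these limits coincide with the restriction of the associated tangent cone to the corresponding annular region, exactly as in the proof of Proposition \ref{Prop:classification tangent cone}.

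Finally, I would run a connectedness argument on the scale interval $(0,r_0)$. Suppose for contradiction that both types (i) and (ii) appear in $\VarTan(V,p)$ along sequences $r_i^{(1)}, r_i^{(2)}\to 0$. Along the continuous one-parameter family of smooth, stable, rescaled hypersurfaces obtained above, the tangent plane at any fixed reference point in $A_{1,2}^{{\bf N}_pG\cdot p}(0)$ varies continuously in $r$. Type (i) forces these tangent planes to approach $T_p(\partial M)$, while type (ii) forces the rescaled hypersurface to meet $\partial M$ orthogonally, making the tangent planes perpendicular to $T_p(\partial M)$. No continuous family of $n$-planes can interpolate between these two extreme angles while remaining tangent to a smooth embedded minimal hypersurface, which produces the desired contradiction. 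I expect the main difficulty to be making this continuity across scales rigorous up to the free boundary: it should follow from the smooth convergence on compact subsets of the part-of-tube provided by Theorem \ref{Thm:compactness $G$-stable hypersurface}, combined with the free boundary regularity already invoked in the proof of Proposition \ref{Prop:classification tangent cone}, so that the tangent plane field on the rescaled minimal $G$-hypersurfaces depends continuously on the scale parameter.
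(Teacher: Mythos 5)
Your proposal is correct and follows essentially the same route the paper uses: the paper simply invokes Li-Zhou's Page 43, Claim 5 argument with $T(p,r,2r)$ replacing $A_{r,2r}(p)$ and Proposition \ref{Prop:classification tangent cone} replacing Li-Zhou's Proposition 5.10, which is exactly the connectedness-of-scales argument you describe. Your reduction to the slice via the normal splitting and your appeal to Theorem \ref{Thm:compactness $G$-stable hypersurface} for smooth convergence across scales are precisely the ingredients needed to make that carry-over rigorous in the equivariant setting.
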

    As in \cite[Page 44]{li2021min}, if the first case of Claim \ref{Claim: unique tangent cone at center} occurs, then $\partial\Sigma = \emptyset$ in a punctured neighborhood of $G\cdot p$, and $V$ is stationary in that punctured neighborhood as a varifold in $\tM$. 
    Since $\dim(G\cdot p)\leq n+1 - {\rm Cohom}(G)\leq n-2$, the stationarity can be extended across $G\cdot p$ by a standard extension argument (c.f. the proof in \cite[Theorem 4.1]{harvey1975extending}), and thus $V$ is stationary in a neighborhood of $G\cdot p$ in $\tM$. 
    Moreover, since $\dim(G\cdot p)\leq n-2$, the singularity at $G\cdot p $ can be removed by \cite[Corollary 2]{wickramasekera2014general}. 
    
	Now we consider the second case of Claim 5. 
	In this case, we have $p\in\Clos(\partial\Sigma)$ and $2 \Theta^{n}(\|V\|, p) = m \in \N$ (Proposition \ref{Prop:classification tangent cone}). 
	Since $\Sigma$ is stable in $(T(p, \frac{s_2}{2} )\setminus G\cdot p) \cap M$, it follows from the Compactness Theorem (\cite[Theorem 2.13]{li2021min}) that for any $r_i\to 0$, 
	\begin{equation}\label{Eq: blow up convergence}
		(\bleta_{p,r_i} )_\# V \to m |S\cap T_pM|
	\end{equation}
	locally smoothly in $\R^L \setminus \R^{\dim(G\cdot p)} \cong T_p\R^L\setminus T_pG\cdot p$ for some n-plane $S\subset T_p\tM$, $S\perp T_p(\partial M)$. 
	Thus, $\Sigma$ has no false boundary point in a neighborhood of $G\cdot p$. 
	Moreover, by (\ref{Eq: hypersurface with multiplicity}) and the locally smoothly convergence (\ref{Eq: blow up convergence}), we can take any $\sigma >0$ sufficiently small so that 
	\begin{equation}\label{Eq: graph decomposition}
		V\llcorner A^G_{\sigma/2, \sigma}(p) = \sum_{i=1}^{N(\sigma)} n_i(\sigma)|\Sigma_i(\sigma)|, \qquad V\llcorner T(p, \sigma/2, \sigma) = \sum_{i=1}^{N(\sigma)}\sum_{j=1}^{k_i(\sigma)} n_i(\sigma)|\Sigma_{i,j}(\sigma)|,
	\end{equation}
	where $\{\Sigma_i(\sigma)\}_{i=1}^{N(\sigma)}$ are the $G$-connected components of $\Sigma\cap A^G_{\sigma/2, \sigma}(p)$; $\{\Sigma_{i,j}(\sigma)\}_{j=1}^{k_i(\sigma)}$ are the graphical decomposition of $\Sigma_i(\sigma)$ in $T(p, \sigma/2, \sigma)$, i.e. $\Sigma_{i,j}(\sigma)$ is a graph over $T(p, \sigma/2, \sigma)\cap S$ for some $n$-plane $S\subset T_p\tM$ with $S\perp T_p(\partial M )$; $n_i(\sigma)$, $N(\sigma)$, and $\{k_i(\sigma)\}_{i=1}^{N(\sigma)}$ are positive integers so that 
	\begin{equation}\label{Eq: multiplicity}
		\sum_{i=1}^{N(\sigma)} n_i(\sigma)\cdot k_i(\sigma) = m. 
	\end{equation}
	It then follows from the continuity of $\Sigma$ that for $\sigma>0$ sufficiently small, $n_i(\sigma)$, $N(\sigma)$, and $\{k_i(\sigma)\}_{i=1}^{N(\sigma)}$ are constants independent on $\sigma$. 
	Hence, we can continue each $\Sigma_i(\sigma)$ and $\Sigma_{i,j}(\sigma)$ to $(B^G_\sigma(p)\setminus G\cdot p )\cap M$ and $(T(p,\sigma)\setminus G\cdot p)\cap M$ respectively, and get $\Sigma_i $, $\Sigma_{i,j}$. 
	Similar to the first case, by a standard extension argument (c.f. the proof in \cite[Theorem 4.1]{harvey1975extending}), each $\Sigma_i$ and $\Sigma_{i,j}$ can be extended as a stationary varifold in $B^G_\sigma(p)\cap M $ and $T(p,\sigma)\cap M$ with free boundary respectively. 
	Now, for any $C_{i,j}\in \VarTan(\Sigma_{i,j}, p)$, we have $C_{i,j} $ is supported on a half $n$-plane in $T_p \tM$ perpendicular to $T_p(\partial M )$ since we are in the second case of Claim \ref{Claim: unique tangent cone at center}. 
	Additionally, because every $\Sigma_{i,j}$ is stable, the Compactness Theorem \cite[Theorem 2.13]{li2021min} indicates that the limit $C_{i,j}$ of the rescaled convergence is a smooth stable hypersurface possibly with multiplicity, and thus $2\Theta^{n}(\|C_{i,j}\|, 0) \geq 1 $. 
	By (\ref{Eq: blow up convergence})(\ref{Eq: graph decomposition}) and (\ref{Eq: multiplicity}), we must have $2\Theta^{n}(\|C_{i,j}\|, 0) = 1 $. 
	Hence, every $\Sigma_{i,j}$ is stationary in $T(p,\sigma)\cap M$ with free boundary and $ \Theta^{n}(\|\Sigma_{i,j}\|, p) = \frac{1}{2}$. 
	We can now apply the Allard type regularity theorem for stationary rectifiable varifolds with free boundary \cite[Theorem 4.13]{gruter1986allard} to see the singularity of $\Sigma_{i,j}$ at $p$ can be removed. 
	By the $G$-invariance of $\Sigma_i$ and $\Sigma_i\cap T(p,\sigma) = \cup_{j=1}^{k_i}\Sigma_{i,j}$, we see $\Sigma_i$ extends as a smooth hypersurface across $G\cdot p$. 
	Thus, every $\Sigma_{i,j}$ is a smooth, almost properly embedded, free boundary minimal hypersurface in $(T(p,\sigma)\cap M, T(p,\sigma)\cap \partial M)$. 
	Finally, if $N>1 $ or $k_i>1$ for some $1\leq i \leq N$, then we can find $\Sigma_{i_1,j_1}$ and $\Sigma_{i_2, j_2}$ touching at $G\cdot p \cap T(p,\sigma)$ since $\dim(G\cdot p) \leq n-2$, which contradicts the classical maximum principle. 
	Therefore, $N=1$, $k_i=1$, and this finished the proof. 
\end{proof}


\appendix
	\addcontentsline{toc}{section}{Appendices}
	\renewcommand{\thesection}{\Alph{section}}

\section{Equivariant Riemannian extension}\label{Sec: extension}

In this appendix, we will briefly introduce the equivariant extending of a Riemannian $G$-manifold with boundary to a closed Riemannian $G$-manifold. 
Let $(M,g_{_M})$ be a compact connected Riemannian $(n+1)$-manifold with $\partial M\neq\emptyset$. 
Suppose $G$ is a compact Lie group acting as isometries on $M$. 
Denote by $\mu$ a bi-invariant Haar measure on $G$, which has been normalized to $\mu(G)=1$. 

\begin{lemma}
	There exists a closed Riemannian $(n+1)$-manifold $(\tM,g_{_{\tM}})$ so that 
	\begin{itemize}
		\item[(1)] $G$ acts on $\tM$ as isometries with the same cohomogeneity;
		\item[(2)] $M\subset \tM$, and the inclusion map is a $G$-equivariant isometric embedding.
	\end{itemize}
\end{lemma}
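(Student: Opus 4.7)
The plan is to extend in two stages: first open the manifold up past its boundary using an equivariant collar together with Pigola--Veronelli's Riemannian extension, and then close it up by a doubling construction in the sense of Kankaanrinta.

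For the first stage, by the equivariant collar theorem of \cite{kankaanrinta2007equivariant} there is a $G$-equivariant diffeomorphism $\psi:\partial M\times[0,2)\to U$ onto a $G$-invariant collar of $\partial M$ in $M$, with $G$ acting trivially on the $[0,2)$-factor. Using $\psi$ I attach an outer collar $\partial M\times(-1,0]$ to obtain a smooth $G$-manifold with boundary $M'$ containing $M$ in its interior as a $G$-invariant subset. By \cite{pigola2016smooth}, $g_{_M}$ extends smoothly to some Riemannian metric $g'$ on $M'$; this need not be $G$-invariant, but the average
\[ \tilde g' := \int_G \phi_g^* g'\, d\mu(g) \]
is. Since $g_{_M}$ is already $G$-invariant and each $\phi_g$ preserves $M$, one has $\tilde g'|_{M}=g_{_M}$.

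For the second stage, I shrink back to a compact $G$-domain $N:=M\cup \psi(\partial M\times[0,1])\subset M'$, which has smooth $G$-invariant boundary $\partial N$ and carries the $G$-invariant metric $\tilde g'|_N$. Before doubling, I modify $\tilde g'$ inside a thin $G$-invariant collar of $\partial N$ (disjoint from $M$) so that it takes the product form $g_{\partial N}+dt^2$ near $\partial N$; this interpolation uses $G$-equivariant cutoff functions obtained by pulling cutoffs in the $t$-variable back along the collar, so $G$-invariance and the property of agreeing with $g_{_M}$ on $M$ are preserved. I then set $\tilde M:=N\cup_{\partial N} N$ with the doubled metric $g_{_{\tilde M}}$, which is smooth across the seam thanks to the product structure; the two $G$-actions on the copies of $N$ coincide on $\partial N$ and glue equivariantly by \cite{kankaanrinta2007equivariant} to a smooth isometric $G$-action on $\tilde M$. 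The inclusion $M\hookrightarrow \tilde M$ is then a $G$-equivariant isometric embedding.

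For the cohomogeneity, every orbit in $\tilde M\setminus M$ lies in the enlarged collar and is of the form $G\cdot(p,t)$ with $p\in\partial M\subset M$, so it has the same dimension as $G\cdot p\subset M$. Hence $\max_{q\in\tilde M}\dim(G\cdot q)=\max_{p\in M}\dim(G\cdot p)$, and the cohomogeneity is preserved.

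The main obstacle is that smoothness and $G$-invariance at the seam must be arranged simultaneously: the Pigola--Veronelli extension produces smoothness but no equivariance, while naive doubling produces equivariance but destroys smoothness. The resolution is to first average to restore $G$-invariance on the open extension $M'$, and then reshape the metric to a product form near $\partial N$ in a $G$-equivariant way, exploiting the trivial $G$-action in the normal $t$-direction of the collar; all subsequent gluing is then both smooth and isometric.
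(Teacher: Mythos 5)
Your proof is correct, but it takes a genuinely different route from the paper's. The paper glues $M$ to a second copy $M_1$ along $\partial M$ at the level of smooth $G$-manifolds first (the glued differentiable structure coming from the equivariant collars of \cite{kankaanrinta2007equivariant}), so that the closed manifold $\tM$ exists before any metric is in play; then it applies \cite{pigola2016smooth} once, directly on the closed $\tM$, to extend $g_M$ from the compact set $M\subset\tM$ to a smooth (non-equivariant) metric on all of $\tM$, and finally $G$-averages. Because the non-equivariant extension is performed on a manifold that is already closed, there is no seam across which two metrics need to match, and hence no need to arrange a product structure. You instead extend $g_M$ past $\partial M$ onto an open collar extension $M'$, average, and then have to reshape the averaged metric to the product form $g_{\partial N}+dt^2$ near the new boundary $\partial N$ before doubling, precisely so the doubled metric is smooth across the seam; that reshaping (via a convex combination with $G$-invariant $t$-cutoffs) is where your argument differs. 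Both constructions are sound, and both rely on the same two inputs (equivariant collar/gluing from Kankaanrinta and the Riemannian extension from Pigola--Veronelli), but the paper's order of operations buys you a cleaner argument with no metric-matching step; in exchange, you verify the cohomogeneity claim explicitly, which the paper leaves implicit.
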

\begin{proof}
	Let $M_1$ be a copy of $M$ and $id:\partial M\to \partial M_1$ be a $G$-equivariant diffeomorphism. 
	By \cite{kankaanrinta2007equivariant}, we can define a closed smooth  $G$-manifold $\tM = M\cup_{id} M_1$ as the smooth equivariant gluing, whose differentiable structure is obtained from $G$-equivariant collars of $\partial M$. 
	Hence, $G$ acts on $\tM$ as diffeomorphisms, and $\tM$ induces the original differentiable structure of $M$ as well as $M_1$. 
	By \cite{pigola2016smooth}, there exists a Riemannian metric $\widetilde{g}_{_{\tM}}$ on $\tM$ so that $\widetilde{g}_{_{\tM}}\llcorner M=g_{_M}$. 
	Define a new Riemannian metric $g_{_{\tM}}$ as 
	$$ g_{_{\tM}}(u,v) :=\int_G \widetilde{g}_{_{\tM}} (g_*u,g_*v)~d\mu(g),\quad \forall u,v\in T_p\tM, p\in\tM. $$
	Clearly $g_{_{\tM}}$ is bilinear and symmetric. 
	Since $G$ acts on $\tM$ as diffeomorphisms, the tangent map $g_*$ is non-degenerate, which implies $g_{_{\tM}}(\cdot,\cdot)$ is positive definite. 
	By \cite[Theorem 3.3]{bredon1972introduction}, $g_{_{\tM}}(\cdot,\cdot)$ is smooth and thus a Riemannian metric. 
	This metric is also $G$-invariant since
	\begin{eqnarray*}
		g_{_{\tM}}(h_*u,h_*v) = \int_G \widetilde{g}_{_{\tM}} (g_*h_*u,g_*h_*v)~d\mu(g)= \int_G \widetilde{g}_{_{\tM}} ((g\circ h)_*u,(g\circ h)_*v)~d\mu(g) = g_{_{\tM}}(u,v),
	\end{eqnarray*}
	for all $h\in G$. 
	Finally, noting $\widetilde{g}_{_{\tM}}\llcorner M=g_{_M}$ and $g_{_M}$ is $G$-invariant, we have $g_{_{\tM}}\llcorner M=g_{_M} $. 
\end{proof}

\section{Fermi tube}\label{Sec:Fermi half-tubes}

In this appendix, we generalize the Fermi half-balls in \cite[Appendix A]{li2021min} to Fermi half-tubes.

Let $p\in\partial M$ and $(x_1,\dots,x_{\dim(G\cdot p)})$ be the geodesic normal coordinates of $G\cdot p$ centered at $p$. 
Denote $\{E_i\}_{i=\dim(G\cdot p)+1}^n$ to be the orthonormal sections of the normal bundle ${\bf N}^{\partial M}(G\cdot p)$ of $G\cdot p$ in $\partial M$ around $p$. 
Let $t=\dist_M(\cdot,\partial M)$ which is well-defined and smooth in a small relatively open neighborhood of $\partial M$ in $M$. 
As in \cite[Page 17]{gray2003tubes}, we can define the Fermi coordinates $(x_1,\dots,x_n)$ of $G\cdot p$ in $\partial M$ centered at $p$ relative to $(x_1,\dots,x_{\dim(G\cdot p)})$ and $\{E_{\dim(G\cdot p)+1},\dots,E_n\}$.
By \cite[Lemma 2.4]{gray2003tubes}, we have $\frac{\partial}{\partial x_i}\big\vert_{G\cdot p} = E_i$ around $p$ for $i\in\{\dim(G\cdot p)+1,\dots,n\}$, and $\{\frac{\partial}{\partial x_i}\vert_{G\cdot p}\}_{i=\dim(G\cdot p)+1}^n$ are orthonormal coordinate vector fields.

\begin{definition}
	The {\em Fermi coordinates of $G\cdot p\subset M$ centered at $p$} are defined as $(x_1,\dots,x_n,t)$, where $x_i$ and $t$ are defined as above. 
	By compositing the Fermi coordinate map at $p$ with $g^{-1}\in G$, $(x_1,\dots,x_n,t)$ are also Fermi coordinates of $G\cdot p\subset M$ centered at $g\cdot p$. 
\end{definition}

\begin{definition}[Fermi distance function from $G\cdot p$ around $p$]
	Let $(x_1,\dots,x_n,t)$ be a Fermi coordinate system of $G\cdot p\subset M$ centered at $p$. 
	The {\em Fermi distance function from $G\cdot p$ around $p$} is defined on a relative open neighborhood of $p$ in $M$ by 
	\begin{equation}\label{Eq:Fermi distance}
		\tilde{r}^G_p(q):=|(x_{\dim(G\cdot p)+1},\dots,x_n,t)|=\sqrt{x_{\dim(G\cdot p)+1}^2+\dots+x_n^2+t^2}. 
	\end{equation}
\end{definition}

By \cite[Lemma 2.6]{gray2003tubes}, $\tilde{r}^G_p$ does not depend on the choice of Fermi coordinates at $p$. 
Moreover, by compositing the Fermi coordinate map with $g^{-1}\in G$, we can see that $\tilde{r}^G_p$ is well-defined in a $G$-neighborhood $B^G_\rho(p)\cap M$ of $G\cdot p$. 
Indeed, we have the following result:

\begin{lemma}
	For any $q\in M$, denote $n_{\partial M}(q)$ to be the unique nearest (geodesic) project point of $q$ to $\partial M$, 
	which is well-defined in a neighborhood of $\partial M$. 
	Then {\em the Fermi distance function around $G\cdot p$} defined by
	\begin{equation}\label{Eq:Fermi distance 3}
		\tilde{r}_{G\cdot p}(q) := \sqrt{(\dist_{\partial M}(n_{\partial M}(q),G\cdot p))^2+(\dist_M(q, \partial M))^2}
	\end{equation}
	is a non-negative $G$-invariant function, which is well defined in a (relative) $G$-neighborhood of $G\cdot p$ in $M$, and coincides with $\tilde{r}^G_p$ around $p$. 
\end{lemma}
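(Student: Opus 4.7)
The plan is to verify the three assertions (non-negativity and $G$-invariance, well-definedness on a $G$-neighborhood, coincidence with $\tilde{r}^G_p$ near $p$) in turn. All three reduce to standard facts: the equivariance of the nearest-point projection to $\partial M$ and the defining property of Fermi coordinates for a submanifold.

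First I would establish well-definedness. Since $\partial M$ is a smooth compact submanifold of $M$, the nearest-point projection $n_{\partial M}$ is smooth and unique on some tubular neighborhood $U$ of $\partial M$ in $M$; because $G$ acts on $M$ by isometries preserving $\partial M$, we may shrink $U$ so that it is $G$-invariant. Choosing $\rho>0$ small enough that $B^G_\rho(p)\cap M\subset U$ then provides a $G$-neighborhood of $G\cdot p$ on which every quantity in \eqref{Eq:Fermi distance 3} makes sense. Non-negativity is immediate from the formula.

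For $G$-invariance I would argue as follows. Since $g\in G$ acts as an isometry and $g(\partial M)=\partial M$, we have $\dist_M(g\cdot q,\partial M)=\dist_M(q,\partial M)$. Moreover, the uniqueness of the nearest point forces the equivariance $n_{\partial M}(g\cdot q)=g\cdot n_{\partial M}(q)$ for $q\in U$. Then, using again that $g$ is an $\partial M$-isometry and that $G\cdot p$ is $G$-invariant,
\[
\dist_{\partial M}\bigl(n_{\partial M}(g\cdot q),G\cdot p\bigr)=\dist_{\partial M}\bigl(g\cdot n_{\partial M}(q),G\cdot p\bigr)=\dist_{\partial M}\bigl(n_{\partial M}(q),G\cdot p\bigr),
\]
and the two contributions to $\tilde r_{G\cdot p}$ are each $G$-invariant.

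Finally, for the coincidence with $\tilde r^G_p$ near $p$, I would work in the Fermi coordinates $(x_1,\dots,x_n,t)$ of $G\cdot p\subset M$ at $p$ introduced in the preceding paragraph of the paper. By construction, $t(q)=\dist_M(q,\partial M)$ is the last coordinate, while the first $n$ coordinates $(x_1,\dots,x_n)$ are exactly the Fermi coordinates of the point $n_{\partial M}(q)\in\partial M$ relative to the submanifold $G\cdot p\subset\partial M$ and the chosen orthonormal normal frame $\{E_{\dim(G\cdot p)+1},\dots,E_n\}$. Invoking the standard Fermi-coordinate identity (c.f.\ \cite[Lemma 2.6]{gray2003tubes}), on a sufficiently small chart
\[
\dist_{\partial M}\bigl(n_{\partial M}(q),G\cdot p\bigr)=\sqrt{x_{\dim(G\cdot p)+1}^2+\cdots+x_n^2}.
\]
Combining this with $t(q)=\dist_M(q,\partial M)$ and comparing with \eqref{Eq:Fermi distance} yields $\tilde r_{G\cdot p}(q)=\tilde r^G_p(q)$ in a neighborhood of $p$. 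The only genuinely subtle point, and therefore the main thing to be careful about, is ensuring that the Fermi-coordinate identity applies uniformly on the same $G$-neighborhood already chosen above; this is handled by taking $\rho$ smaller than the Fermi-tube radius of $G\cdot p$ inside $\partial M$, which is positive by the compactness of $G\cdot p$.
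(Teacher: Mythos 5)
Your proof is correct and takes essentially the same route as the paper: $G$-invariance follows from $G$ acting by isometries preserving $\partial M$, and the coincidence with $\tilde r^G_p$ follows from unwinding the Fermi coordinates (with the observation that $n_{\partial M}(q)=(x_1,\dots,x_n,0)$) and invoking Gray's identity $\sum_{i>\dim(G\cdot p)} x_i^2=\dist_{\partial M}(n_{\partial M}(q),G\cdot p)^2$. One small note: the paper attributes that identity to Lemma~2.7 of Gray (Lemma~2.6 is the coordinate-independence statement the paper uses earlier), so your citation should be to Lemma~2.7. You are actually a bit more explicit than the paper in pointing out the equivariance $n_{\partial M}(g\cdot q)=g\cdot n_{\partial M}(q)$, which is indeed the step needed to pass from the $G$-invariance of $\dist_{\partial M}(\cdot,G\cdot p)$ on $\partial M$ to the $G$-invariance of $\dist_{\partial M}(n_{\partial M}(\cdot),G\cdot p)$ on $M$.
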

\begin{proof}
	Since $G$ acts by isometries, both $\dist_{\partial M}(\cdot,G\cdot p)$ and $\dist_M(\cdot,\partial M)=t$ are $G$-invariant functions. 
	Hence, $\tilde{r}_{G\cdot p}$ is a $G$-invariant function, which is well defined in a $G$-neighborhood of $G\cdot p$ in $M$. 
	If $q=(x_1,\dots,x_n,t)$ in the Fermi coordinates of $G\cdot p$ around $p$. 
	Then we have $n_{\partial M}(q)=(x_1,\dots,x_n,0) $. 
	By \cite[Lemma 2.7]{gray2003tubes}, $\sum_{i=\dim(G\cdot p)+1}^n x_i^2=(\dist_{\partial M}(n_{\partial M}(q),G\cdot p))^2$. 
	Thus, the function $\tilde{r}_{G\cdot p}$ coincides with $\tilde{r}^G_p$ around $p$. 
\end{proof}

Since $\tilde{r}_{G\cdot p}$ is a $G$-invariant function, we can define the following $G$-subsets of $M$, which generalize the definitions of Fermi half-balls and Fermi half-spheres in \cite[Definition A.4]{li2021min}.

\begin{definition}\label{Def:Fermi tube}
	For any $p\in\partial M$, we define the {\em Fermi half-tube} and {\em Fermi half-cylinder} of radius $r$ centered at $G\cdot p$ respectively by 
	$$\tBcal^{G,+}_r(p) := \{q\in M~\vert~\tilde{r}_{G\cdot p}(q)<r \},\quad \tScal^{G,+}_r(p):=\{q\in M~\vert~\tilde{r}_{G\cdot p}(q)=r  \}. $$
\end{definition}

\begin{definition}\label{Def:Fermi exponential map}
	The {\em Fermi exponential map of $G\cdot p$} is a $G$-equivariant diffeomorphism on a $G$-invariant half-tube of ${\bf N}^{\partial M}(G\cdot p) \times \R^+\cong {\bf N}(G\cdot p)$ centered at the zero section $({\bf 0}, 0)$ given by: 
	\begin{eqnarray*}
		&& \widetilde{\exp}_{G\cdot p}^\perp(v,s) := \exp_{q'}(s\frac{\partial}{\partial t}),
	\end{eqnarray*}
	where $v\in {\bf N}^{\partial M}(G\cdot p)$, $s\in \R^+$, $q'=\exp_{G\cdot p}^{\partial M,\perp}(v)$.
\end{definition}

Using $\{x_i\}_{i=1}^{\dim(G\cdot p)}$ and $\{E_i\}_{i=\dim(G\cdot p)+1}^n$, the normal bundle ${\bf N}(G\cdot p)\cong {\bf N}^{\partial M}(G\cdot p) \times \R^+$ can be locally trivialized around $p$ with Fermi coordinates $(x_1,\dots,x_n,t)$. 
Therefore, in a neighborhood of $p$, the Fermi exponential map of $G\cdot p$ and the local trivialization of ${\bf N}(G\cdot p)$ gives the Fermi coordinate system of $G\cdot p\subset M$ centered at $p$.

Moreover, let 
\begin{equation}\label{Eq:Fermi slice}
	S_p:= \{q\in M : \mbox{$q$ is close to $p$ and } (\widetilde{\exp}_{G\cdot p}^\perp)^{-1}(q) \in 	{\bf N}^{\partial M}_p(G\cdot p) \times \R^+ \}
\end{equation}
be a (properly) embedded submanifold of $M$ around $p$, which is also a {\em slice} of $G\cdot p$ at $p$ by the arguments similar to \cite[Section 2.2]{berndt2016submanifolds}. 
By definitions, in a neighborhood of $(0,0)\in {\bf N}^{\partial M}_p(G\cdot p)\times \R^+ \cong T_p S_p$, we have 
\begin{equation}\label{Eq:Fermi exponential map}
	\widetilde{\exp}_{G\cdot p}^\perp = \widetilde{\exp}_p ,
\end{equation}
 where $\widetilde{\exp}_p$ is the Fermi exponential map at $p$ defined in \cite[Definition A.1]{li2021min}. 
 As a result, we can apply \cite[Lemma A.2, A.3]{li2021min} to $\widetilde{\exp}_{G\cdot p}^\perp $ on the slice $S_p$, and obtain the following Lemma, which is parallel to \cite[Lemma A.5]{li2021min}.

\begin{lemma}\label{Lem:Fermi convex}
	For any $p\in \partial M$, there exists a small constant $r_{\textrm{Fermi}}^G(p)>0$, such that for all $0<r<r_{\textrm{Fermi}}^G(p)$
\begin{itemize}
	\item $\tScal^{G,+}_r(p)$ is a smooth hypersurface meeting $\partial M$ orthogonally,
	\item $\tBcal^{G,+}_r(p)$ is relatively mean convex in the sense of \cite[Definition 2.4]{li2021min}, 
	\item $B_{r/2}^G(p) \cap M \subset \tBcal^{G,+}_r(p) \subset B_{2r}^G(p) \cap M$.
\end{itemize}
\end{lemma}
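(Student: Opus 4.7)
The plan is to reduce the lemma to its single-point analogue, namely \cite[Lemma A.5]{li2021min}, by combining $G$-equivariance with the slice structure encoded in (\ref{Eq:Fermi slice}) and (\ref{Eq:Fermi exponential map}). Since $G$ acts by isometries and preserves $\partial M$, both $\dist_{\partial M}(\,\cdot\,,G\cdot p)$ and $\dist_M(\,\cdot\,,\partial M)$ are $G$-invariant, so $\tilde r_{G\cdot p}$ is a $G$-invariant smooth function on a relative $G$-neighborhood of $G\cdot p$; in particular $\tScal^{G,+}_r(p)$ and $\tBcal^{G,+}_r(p)$ are $G$-sets. It therefore suffices to verify the three conclusions in a (non-equivariant) relative open neighborhood of an arbitrary point $g_0\cdot p$; and by conjugating by $g_0^{-1}\in G$ (an isometry of $M$ fixing $\partial M$) it suffices to verify them near $p$ itself.

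Near $p$, the identity $\widetilde{\exp}_{G\cdot p}^{\perp}=\widetilde{\exp}_p$ on the slice $S_p$ shows $\tilde r_{G\cdot p}\big|_{S_p}=\tilde r_p\big|_{S_p}$, so that
\[
\tScal^{G,+}_r(p)\cap S_p=\tScal^{+}_r(p)\cap S_p,\qquad \tBcal^{G,+}_r(p)\cap S_p=\tBcal^{+}_r(p)\cap S_p.
\]
Applying \cite[Lemma A.5]{li2021min} inside $S_p$ (viewed as a properly embedded submanifold of $M$ with boundary $S_p\cap\partial M$, near $p$) gives a threshold $r_0>0$ such that for $0<r<r_0$ the trace on $S_p$ is a smooth hypersurface meeting $S_p\cap\partial M$ orthogonally, with the Euclidean comparison $B_{r/2}(p)\cap S_p\subset \tBcal^{+}_r(p)\cap S_p\subset B_{2r}(p)\cap S_p$. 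The slice theorem identifies a $G$-tube of $G\cdot p$ with $G\times_{G_p}S_p$, and under this identification the $G$-sets $\tScal^{G,+}_r(p)$ and $\tBcal^{G,+}_r(p)$ are precisely the $G$-saturations of their slice traces. Because $G$ acts by isometries preserving $\partial M$, smoothness, orthogonality at $\partial M$, and the Euclidean inclusions propagate from the slice to the full $G$-tube; for the third bullet one uses that a minimizing Euclidean segment from $q$ to $G\cdot p$ lands essentially in a rotated slice, so $B^G_r(p)$ agrees with the $G$-saturation of a slice ball of comparable radius.

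The main obstacle is the relative mean convexity in the second bullet, which must be argued separately since $G\cdot p$ is generically positive-dimensional. At $q\in\tScal^{G,+}_r(p)$, one has the $G$-equivariant orthogonal decomposition
\[
T_q\tScal^{G,+}_r(p)=T_q(G\cdot q)\oplus\bigl(T_q\tScal^{G,+}_r(p)\cap T_qS_{q'}\bigr),
\]
where $q'\in G\cdot p$ is a nearest boundary-orbit point and $S_{q'}$ a slice at $q'$. The trace of the second fundamental form on the slice summand equals, up to higher order, the mean curvature of the classical Fermi half-sphere $\tScal^+_r(q')\subset S_{q'}$ and is of order $(n-\dim G\cdot p)/r$ by \cite[Lemma A.5]{li2021min}; the contribution from the orbit summand is bounded in terms of the second fundamental form of $G\cdot q$ in $M$, which is uniformly bounded as $q\to G\cdot p$ by continuity and compactness of $G\cdot p$. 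Hence for all sufficiently small $r$ the divergent $1/r$ slice term dominates and $\tScal^{G,+}_r(p)$ has mean curvature vector pointing into $\tBcal^{G,+}_r(p)$, giving relative mean convexity in the sense of \cite[Definition 2.4]{li2021min} (the orthogonality condition with $\partial M$ along $\tScal^{G,+}_r(p)\cap\partial M$ has already been established in the previous step).

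Finally, one defines $r^G_{\textrm{Fermi}}(p)$ to be the minimum of the three thresholds obtained above; this is strictly positive by compactness of $G\cdot p$ and uniformity of the estimates under the $G$-isometry action.
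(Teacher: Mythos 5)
Your slice reduction for the first and third bullets is sound and is essentially what the paper does: the identity $\widetilde{\exp}_{G\cdot p}^\perp = \widetilde{\exp}_p$ on the slice $S_p$ reduces the slice trace to Li-Zhou's Lemma A.5, and $G$-equivariance propagates the conclusions. The mean-convexity step, however, has a genuine gap in two places. First, the splitting $T_q\tScal^{G,+}_r(p) = T_q(G\cdot q) \oplus \bigl(T_q\tScal^{G,+}_r(p) \cap T_qS_{q'}\bigr)$ is not a direct sum in general: by the slice theorem $G\cdot q \cap S_{q'} = G_{q'}\cdot q$ is positive-dimensional whenever $\dim(G\cdot q) > \dim(G\cdot p)$, which happens for principal $q$ near a non-principal $p$; equivalently, $\dim T_q(G\cdot q) + \bigl(n - \dim(G\cdot p)\bigr) > n$ then forces an overlap. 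Second, and more seriously, the claim that the orbit term is controlled because $|A^{G\cdot q}|$ is uniformly bounded as $q \to G\cdot p$ is false in exactly the same situation: when orbits collapse, $|A^{G\cdot q}|$ diverges like $1/\tilde{r}_{G\cdot p}(q)$ (take $G = SO(2)$ rotating about a fixed $p$, so that $G\cdot q$ is a circle of radius $|q|$ with curvature $1/|q|$). Continuity-and-compactness does not apply because $G\cdot q$ does not converge to $G\cdot p$ as a smooth submanifold when their dimensions differ. The lemma is still true --- the divergent orbit term has the favorable sign, since the collapsing direction contributes to $\nabla\tilde{r}_{G\cdot p}$ --- but that sign is exactly what your argument needs to supply, and it does not.

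The paper avoids both problems by never decomposing along $T_q(G\cdot q)$. It works in the Fermi coordinates of $G\cdot p$ centered at $p$, whose first $k = \dim(G\cdot p)$ coordinate vectors $\partial/\partial x_i$ span a fixed $k$-dimensional distribution independent of $q$. Since $\tilde{r}_{G\cdot p}$ is constant in $x_1,\dots,x_k$, one gets $\Hess(\tilde{r}_{G\cdot p})_{ij} = -\Gamma^l_{ij}\,\partial_l\tilde{r}_{G\cdot p} = O(1)$ for $i,j\le k$ directly from the $O(1)$ bound on the Christoffel symbols in Fermi coordinates; the $1/r$ contributions from collapsing orbits then live automatically inside the remaining $(n+1-k)$-dimensional slice block, which is estimated by Li-Zhou's $\frac{1}{r}g_r$ model. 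To salvage your version, replace $T_q(G\cdot q)$ by the $k$-dimensional Fermi-coordinate complement $\Span\bigl(\partial/\partial x_1,\dots,\partial/\partial x_k\bigr)\big|_q$ of the slice directions and trace $\Hess(\tilde{r}_{G\cdot p})$ over it, rather than trying to control $A^{G\cdot q}$.
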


\begin{proof}
	Firstly, take any $q\in \tScal^{G,+}_r(p)$, and assume $(\widetilde{\exp}_{G\cdot p}^\perp)^{-1}(q) \in {\bf N}_p(G\cdot p)$. 
	Let $S$ be the slice of $G\cdot p$ at $p$ as in (\ref{Eq:Fermi slice}). 
	Under the Fermi coordinates centered at $p$, we have $q=(0,\dots, 0, x_{\dim(G\cdot p) +1},\dots, x_n, t)\in S$. 
	Let $i,j\in\{1,\dots,\dim(G\cdot p)\}$, $a,b\in\{\dim(G\cdot p) +1, \dots, n\}$. 
	By (\ref{Eq:Fermi exponential map}), the estimates on $g_{ab}$ and Christoffel symbols containing $a,b$ are the same as in \cite[Lemma A.2]{li2021min}. 
	Additionally, one can can derive as in \cite[Lemma A.2]{li2021min} the estimates $g_{ij}(q) = \delta_{ij} + O(\tilde{r}_{G\cdot p}(q))$, $g_{ia}(q) = O(\tilde{r}_{G\cdot p}(q))$, $g_{it}(q) = 0$, $g_{at}(q)= 0$, $g_{tt}(q)= 1$, $\Gamma_{it}^t(q)=\Gamma_{tt}^i(q)=\Gamma_{tt}^t(q)=0$, and the rest of the Christoffel symbols containing $i$ are $O(1)$. 

	If $q\in \tScal^{G,+}_r(p) \cap \partial M$, then $t=0$ and $\nabla \tilde{r}_{G\cdot p}(q)\in T_p\partial M$. 
	Since $\nabla \tilde{r}_{G\cdot p}$ is the normal vector filed on $\tScal^{G,+}_r(p) $, the $G$-hypersurface $\tScal^{G,+}_r(p)$ meets $\partial M$ orthogonally. 
	
	Combining the above estimates with the arguments in \cite[Lemma A3]{li2021min}, 
	we also have $\|\nabla \tilde{r}_{G\cdot p}(q) - \frac{ \partial }{\partial \tilde{r}_{G\cdot p}}(q)\| \leq C \tilde{r}_{G\cdot p}(q)$ and $\| {\rm Hess}(\tilde{r}_{G\cdot p})(q) - 0\otimes \frac{ 1}{r} \cdot g_{r}\|\leq C $, where $g_{r}$ is the metric on the Euclidean sphere $\partial \mathbb{B}_{r}^{n+1-\dim(G\cdot p)}(0)$ of radius $r=\tilde{r}_{G\cdot p}(q)$. 
	Then the second bullet follows from the fact that the second fundamental form of the level set $\tScal^{G,+}_r(p) = (\tilde{r}_{G\cdot p})^{-1}(r)$ at $q$ is ${\rm Hess}(\tilde{r}_{G\cdot p})(q)$.

	By (\ref{Eq:Fermi exponential map}), we have $\tBcal^+_r(p)\cap S=  \tBcal^{G,+}_r(p)\cap S$, where $\tBcal^+_r(p) $ is defined in \cite[Definition A.4]{li2021min}. 
	Hence, the estimates in \cite[Lemma A.2, A3]{li2021min} can be applied to $\tBcal^{G,+}_r(p)\cap S$ with coordinates $(x_{\dim(G\cdot p) +1},\dots, x_n, t)$. 
	Additionally, \cite[Lemma A.5]{li2021min} suggests that for $r>0$ sufficiently small, $\tBcal^{G,+}_r(p)\cap S$ is a relatively convex domain in $S$ satisfying 
	$$B_{r/2}^G(p) \cap S ~\subset~ \tBcal^{G,+}_r(p)\cap S ~\subset~ B_{2r}^G(p) \cap S, $$
	which gives the last bullet since $\tBcal^{G,+}_r(p) = \bigcup_{g\in G}(\tBcal^+_r(p)\cap g\cdot S)$. 
\end{proof}

\begin{remark}\label{Rem:lower bound of Fermi radius}
	We mention that the Fermi radius $r_{\textrm{Fermi}}^G(p)>0$ in the above lemma can be made uniformly in any compact subset of an orbit type stratum. 
	Indeed, suppose $U\subset \partial M$ is a $G$-subset so that every orbit in $\Clos(U)$ has the same orbit type, i.e. $\Clos(U)\subset N_i $ for some $i=1,\dots, m'$ (see the notations in (\ref{Eq: stratification of boundary and dimension})). 
	Note $\pi : N_i \to \pi(N_i)\subset M/G$ is a Riemannian submersion. 
	There exists a vector bundle ${\bf N}$ over $\Clos(U)$ with fiber ${\bf N}^{\partial M}_p(G\cdot p) \times \R^+  \cong {\bf N}_p(G\cdot p)$ at $p\in \Clos(U)$, and a smooth map $\widetilde{\exp}^\perp: {\bf N} \to \pi(\Clos(U))\times M$ given by $\widetilde{\exp}^\perp(p,v) := ([p], \widetilde{\exp}^\perp_{G\cdot p}(v))$. 
	One can verify that $d\widetilde{\exp}^\perp(p,0): T_{(p,0)}{\bf N} \to T_{([p],p)}(\pi(\Clos(U))\times M) \cong T_{[p]}\pi(\Clos(U)) \oplus T_p M$ is an isomorphism, and thus $\widetilde{\exp}^\perp$ is a diffeomorphism in a neighborhood $\mathcal{W}$ of $(p,0)$ in ${\bf N}$. 
	Consider the function $\tilde{r} : {\bf N} \to \mathbb{R}$ given by $\tilde{r}(q,w) = |w|$, which is smooth whenever $\tilde{r} \neq 0$. 
	Then $\tilde{r}_{G\cdot p}(q) = \tilde{r} \circ (\widetilde{\exp}^\perp)^{-1}([p], q)$ is smooth in $([p], q)\in \widetilde{\exp}^\perp( \mathcal{W} )$ whenever $\tilde{r}_{G\cdot p}(q) \neq 0 $. 
	Therefore, by the smoothness, the estimates in the proof of Lemma \ref{Lem:Fermi convex}, as well as the Fermi radius $r_{\textrm{Fermi}}^G$, can be made uniformly on any such compact set $\Clos(U)$. 
\end{remark}

\section{Proof of Theorem \ref{Thm: equivalence-a.m.v}}\label{Sec:proof-of-equivalence-a.m.v}

The proof of Theorem \ref{Thm: equivalence-a.m.v} is a combination of the work in \cite[Appendix B]{wang2022min} and \cite[Appendix B]{li2021min}. 
By definitions and $\F(\tau-\sigma)\leq\mF(\tau,\sigma)\leq 2\M(\tau-\sigma)$, it is obvious that (a) $\Rightarrow$ (b) $\Rightarrow$ (c) in Theorem \ref{Thm: equivalence-a.m.v}. 
As for (c) $\Rightarrow$ (d), we use the stratification of orbit types to generalize the arguments in \cite[Appendix B]{li2021min}.

First, we will need the following tube formula to transform the estimate of mass into the normal direction. 
Given any $p\in M$ and $r>0$, denote $k := \dim(G\cdot p)$, 
\begin{equation}\label{Eq: notations}
	{\rm Exp}_{G\cdot p}^\perp := \left\{ \begin{array}{cl}
	\exp_{G\cdot p}^\perp  & \text{ if } p \notin  \partial M, \\
	\widetilde{\exp}_{G\cdot p}^\perp  & \text{ if } p \in \partial M; 
	\end{array} \right. 
	~
	r_{G\cdot p}:= \left\{ \begin{array}{cl}
	\dist_M(G\cdot p, \cdot)  & \text{ if } p \notin  \partial M, \\
	\tilde{r}_{G\cdot p}  & \text{ if } p \in \partial M;
	\end{array} \right. 
\end{equation}
and $\B_r^G(p) := \{ q\in M: r_{G\cdot p}(q) < r\}$. 
By choosing $r>0$ sufficiently small, we may assume ${\rm Exp}_{G\cdot p}^\perp$ is a $G$-equivariant diffeomorphism on $\B_r^G(p)$. 
Then for any $q\in \B_r^G(p)$, there exists a unique $p_q\in G\cdot p$ so that $({\rm Exp}_{G\cdot p}^\perp)^{-1}(q)\in {\bf N}_{p_q}(G\cdot p)$. 
Let $B(p, \xi)\subset G\cdot p$ be a geodesic open neighborhood of $p$ in $G\cdot p$ with radius $\xi >0$. 
Define then 
$$T\big(B(p,\xi), r\big) := \{q\in \B_r^G(p): p_q\in B(p,\xi)\}, $$
which is a part of the tube $\B_r^G(p)$. 
Suppose $\widetilde{\Sigma} \subset \B_r^G(p)$ is a $G$-invariant hypersurface (or $n$-rectifiable set). 
Since $G$ acts by isometries, we have $\mathcal{H}^{k}(B(p,\xi)) = \mathcal{H}^{k}(B(p',\xi)) $ and $\mathcal{H}^n( \widetilde{\Sigma} \cap T\big(B(p,\xi), r\big) ) = \mathcal{H}^n( \widetilde{\Sigma} \cap T\big(B(p',\xi), r\big) ) $ for any $p'\in G\cdot p$. 
Therefore, 
\begin{eqnarray}\label{Eq: part tube formula}
	\mathcal{H}^n \left( \widetilde{\Sigma} \cap T\big(B(p,\xi), r\big) \right) &= & \frac{1}{\mathcal{H}^{k}(G\cdot p)}   \int_{G\cdot p} \int_{\widetilde{\Sigma}} 1_{T (B(p',\xi), r )}(q) ~d\mathcal{H}^n(q) d \mathcal{H}^k(p') \nonumber
	\\
	&=& \frac{1}{\mathcal{H}^{k}(G\cdot p)}   \int_{\widetilde{\Sigma}}\int_{G\cdot p}  1_{B(p_q,\xi)}(p')  ~d \mathcal{H}^k(p')d\mathcal{H}^n(q)  \nonumber
	\\
	&=& \frac{1}{\mathcal{H}^{k}(G\cdot p)}  \int_{\widetilde{\Sigma}} \mathcal{H}^k(B(p_q, \xi)) ~ d\mathcal{H}^n(q)  
	\\
	&=& \frac{\mathcal{H}^{k}(B(p,\xi))}{\mathcal{H}^{k}(G\cdot p)} \mathcal{H}^n( \widetilde{\Sigma} ).  \nonumber
\end{eqnarray}
Here, we used the fact that $1_{T (B(p',\xi), r )}(q) = 1_{B(p',\xi)}(p_q) = 1_{B(p_q, \xi)}(p') $. 
Now, we write $\Sigma := ({\rm Exp}_{G\cdot p}^\perp)^{-1}(\widetilde{\Sigma} )$, $U_\xi := ({\rm Exp}_{G\cdot p}^\perp)^{-1}(T(B(p,\xi), r))$, $\Sigma_p := \Sigma\cap {\bf N}_p(G\cdot p)$, $\widetilde{\Sigma}_p := {\rm Exp}_{G\cdot p}^\perp(\Sigma_p)$. 
Under the action of $G$, we can regard ${\bf N}(G\cdot p)$ as a fiber bundle over $G\cdot p\cong G/G_p$ (see \cite[Section 2.2]{berndt2016submanifolds}). 
By the $G$-invariance of $\widetilde{\Sigma}$, we have $\Sigma$ is a sub-bundle of ${\bf N}(G\cdot p)$ with fiber $\Sigma_p$. 
Then for $\xi >0$ sufficiently small, we have $U_\xi \cong B(p,\xi) \times \mathbb{B}_r(0)$ and $\Sigma\cap U_\xi  \cong  B(p,\xi) \times \Sigma_p$, where $\mathbb{B}_r(0)\subset {\bf N}_pG\cdot p$ is an open ball or half-ball. 
Therefore, we can use the area formula and (\ref{Eq: part tube formula}) to see 
\begin{eqnarray}\label{Eq:weyl-tube}
	\mathcal{H}^n( \widetilde{\Sigma} ) &=&  \frac{\mathcal{H}^{k}(G\cdot p)}{\mathcal{H}^{k}(B(p,\xi))} \int_{\Sigma_p}\int_{B(p,\xi)}   J({\rm Exp}_{G\cdot p}^\perp \llcorner \Sigma )   d\mathcal{H}^{k} d\mathcal{H}^{n-k} \nonumber
	\\
	&=& \mathcal{H}^{k}(G\cdot p) \int_{\widetilde{\Sigma}_p} \vartheta^{G\cdot p}({\widetilde{\Sigma}}, q) ~d\mathcal{H}^{n-k}(q),
\end{eqnarray}
where $\vartheta^{G\cdot p}({\widetilde{\Sigma}}, \cdot) = \big( \int_{B(p,\xi)} J({\rm Exp}_{G\cdot p}^\perp \llcorner \Sigma ) d\mathcal{H}^{k} \big)  /   \big(  \mathcal{H}^{k}(B(p,\xi)) \cdot J({\rm Exp}_{G\cdot p}^\perp \llcorner (\Sigma_p) ) \big)$.
Since $|d{\rm Exp}_{G\cdot p}^\perp|$ tends to $1$ as $r\to 0$, we have $\vartheta^{G\cdot p}({\widetilde{\Sigma}}, \cdot )= 1+o(1)  $ as $r\to 0$ (independent on $\widetilde{\Sigma}$). 
Hence, for any $\epsilon'\in (0, 1/3)$, there exists $r_{\epsilon' }(G\cdot p)>0$ so that 
\begin{equation}\label{Eq:bound-of-theta}
	\frac{1}{1+\epsilon'} \leq  \vartheta^{G\cdot p} \leq 1+\epsilon'
\end{equation}
holds in $\B_{r_{\epsilon' }(G\cdot p)}^G(p) $. 
By the arguments in Remark \ref{Rem:lower bound of Fermi radius}, such $r_{\epsilon'}$ can be taken uniformly in a compact $G$-subset of $M_{(G_p)}$ or $(\partial M)_{(G_p)}$. 
More generally, we have 
$$\int_{\widetilde{\Sigma}}\theta d\mH^n = \mathcal{H}^{k}(G\cdot p) \int_{\widetilde{\Sigma}_p} \vartheta^{G\cdot p}({\widetilde{\Sigma}}, q)\cdot \theta(q) ~d\mathcal{H}^{n-k}(q), $$
for any $G$-invariant $\mH^n$-measurable function $\theta\geq 0$ on $\widetilde{\Sigma}$. 

Next, let $U\subset M$ be a relative open $G$-set of $M$, and $W\subset\subset U$ be a relative open $G$-subset of $U$ containing no isolated orbit. 
Then for any $G\cdot q\subset W$, we define 
\begin{equation}\label{Eq: stratum}
	G\cdot q \subset 
		K := \left\{ \begin{array}{cl}
		M_i^j  & \text{ if } q \in M \setminus \partial M, \\
		N_i^j  & \text{ if } q \in \partial M, 
		\end{array} \right. 
\end{equation}
where $M_i^j, N_i^j$ are the $G$-connected components of some orbit type strata in $M$, $\partial M$ (see (\ref{Eq: stratification of M})(\ref{Eq: stratification of boundary})). 
Note $\dim(K)=n+1$ if $q\in M^{reg}$, $\dim(K)=n$ if $q\in (\partial M)^{reg}$, and $\dim(K) - k \geq 1$, where $k=\dim(G\cdot q)$. 
By Remark \ref{Rem:lower bound of Fermi radius} and a similar argument for $G\cdot q \subset M\setminus\partial M$, for any $\epsilon\in (0,1)$ and $\epsilon'\in (0,1/3)$, there exists $r_0 = r_0(\epsilon, \epsilon', q, M, G ) > 0$ sufficiently small such that for all $p\in K\cap \B_{r_0}^G(q) \subset \subset K$ the following properties hold: 
\begin{itemize}
	\item[(1)] $4 r_0 < \min \big\{ {\rm Inj}(G\cdot p), r_{\textrm{Fermi}}^G(p), r_{\epsilon'}(G\cdot p), {\rm Inj}_M, r_{\textrm{Fermi}} \big\}$, 
		where ${\rm Inj}(G\cdot p)$ is the injectivity radius of $G\cdot p$ in $\widetilde{M}$, ${\rm Inj}_M$ is the infimum of the injectivity radii for points in $\widetilde{M}$, 
		and $r_{\textrm{Fermi}}^G(p)$, $r_{\epsilon'}(G\cdot p)$, $r_{\textrm{Fermi}}$ are defined in Lemma \ref{Lem:Fermi convex}, (\ref{Eq:bound-of-theta}), \cite[Lemma A.5]{li2021min};
	\item[(2)] ${\rm Exp}_{G\cdot p}^\perp$ is an equivariant diffeomorphism on $\B_{4r_0}^G(p)\supset \B_{r_0}^G(q)$, thus we can define 
		$$E := {\rm Exp}_{G\cdot p}^\perp \big\vert_{({\rm Exp}_{G\cdot p}^\perp)^{-1}(\B_{4r_0}^G(p))}, \quad E_p := E\big\vert_{{\bf N}_p(G\cdot  p)\cap ({\rm Exp}_{G\cdot p}^\perp)^{-1}(\B_{4r_0}^G(p))},$$
		and define $S_p$ as the image of $E_p$, which is a slice of $G\cdot p$ at $p$ in $\B_{4r_0}^G(p)$;
	\item[(3)] $\max\{    ({\rm Lip}(E_p ))^{n-k}({\rm Lip}(E_p^{-1}))^{n-k} , ({\rm Lip}(E))^n ({\rm Lip}((E^{-1}))^n   \}   \leq \frac{9}{8} <2$;
	\item[(4)] ${\rm Lip}(r_{G\cdot p} ) \leq \frac{9}{8} < 2$;
	\item[(5)] if $x\in S_p\subset \B_{4r_0}^G(p)$, $\lambda\in [0,1]$, and $\omega\in \Lambda_{n-k} T_xS_p$, then $\lambda^{n-k}(1+\epsilon(1-\lambda)) \leq 1$, $E\circ {\bm \mu}_\lambda\circ E^{-1} (x) \in S_p\subset \B_{4r_0}^G(p)$, and 
		\begin{equation}\label{Eq: scale estimate}
			\| D(E_p\circ {\bm \mu}_\lambda\circ E^{-1}_p)_*\omega \| \leq \lambda^{n-k}(1+\epsilon(1-\lambda)) \|\omega \|. 
		\end{equation}
			(This follows from \cite[3.4(4)]{pitts2014existence}\cite[Lemma B.4]{li2021min} and the fact that $E_p = \exp_p$ or $\widetilde{\exp}_p$.)
\end{itemize} 
If $q\notin\partial M$, then we further assume $5 r_0 < \dist_M(q, \partial M)$.

\begin{lemma}\label{Lem:mass estimate under exponential map}
	Given any non-isolated orbit $G\cdot q\in M$, $\epsilon\in  (0, 1)$, and $\epsilon'\in  (0, 1/3)$, let $K$, $r_0$, $E$, $r_{G\cdot q}$, $\B_r^G$, ${\rm Exp}^{\perp}_{G\cdot q}$ be defined as above. 
	Let $Z:= \B_{r_0}^G(q)$, $k:=\dim(G\cdot q)$. 
	Then for any $p\in Z\cap K$, $\lambda\in [0,1]$, and $r>0$ with $\B^{G}_r(p)\subset\subset Z$, we have 
	\begin{equation}\label{Eq: cone estimates}
		{\bf M}((E\circ{\bm \mu}_\lambda\circ E^{-1})_\# T)\leq \lambda^{n-k}(1+\epsilon(1-\lambda))(1+\epsilon')^2{\bf M}(T) \leq (1+\epsilon')^2{\bf M}(T)
	\end{equation}
	holds for all $T\in Z^G_n \big(\Clos(\B^{G}_r(p)),\partial \Clos(\B^{G}_r(p));\mathbb{Z}_2 \big)$. 
	Moreover, denote 
	$$ 
	\partial_1T := \left\{ \begin{array}{cl}
		\partial T  & \text{ if } q \notin \partial M, \\
		\partial T\llcorner (\tScal^{G,+}_r(p))  & \text{ if } q \in \partial M, 
		\end{array} \right. 
	\mbox{ and }
	T_\lambda := E_\#\big(\delta_{\bf 0} \ttimes [E^{-1}_\#\partial_1 T-({\bm \mu}_\lambda\circ E^{-1})_\#\partial_1 T]\big),
	$$
	where ${\bf 0}$ is the zero-section in ${\bf N}(G\cdot p)$ and $\delta_{\bf 0} \ttimes S := h_\#([[0,1]]\times S)$, $h:\mathbb{R}\times {\bf N}(G\cdot p)\to {\bf N}(G\cdot p)$, $h(t,v)=tv$.
	Then 
	\begin{itemize}
		\item $ \partial T_\lambda-\big[\partial_1 T- (E\circ{\bm \mu}_\lambda\circ E^{-1})_\#\partial_1 T\big] \in Z_{n-1}^G(Z\cap\partial M,Z\cap\partial M;\mZ_2),$
		\item $\spt(T_\lambda)\subset \A_{\lambda r,r}^G(p)$, where $\A_{\lambda r,r}^G(p)$ is defined in Definition \ref{Def:annulus},
		\item $ {\bf M}(T_\lambda)\leq 2r(n-k)^{-1}(1-\lambda^{n-k}){\bf M}(\partial_1 T).$  
	\end{itemize}
\end{lemma}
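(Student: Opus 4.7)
The plan is to reduce both statements to Euclidean-type estimates on the normal slice $S_p$ via the tube formula \eqref{Eq:weyl-tube} and the pointwise bounds \eqref{Eq:bound-of-theta} and \eqref{Eq: scale estimate}. Because $E = {\rm Exp}_{G\cdot p}^\perp$ is $G$-equivariant and $\bm{\mu}_\lambda$ commutes with the fiberwise linear $G$-action on ${\bf N}(G\cdot p)$, the map $E\circ\bm{\mu}_\lambda\circ E^{-1}$ is $G$-equivariant on $\B^G_r(p)$, so $(E\circ\bm{\mu}_\lambda\circ E^{-1})_\# T$ is again a $G$-invariant relative cycle, and both sides of \eqref{Eq: cone estimates} split through the slice.

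For the first part, I would first represent $|T|$ as a $G$-invariant rectifiable $n$-varifold; the tube formula \eqref{Eq:weyl-tube} expresses $\M(T)$ as $\mathcal H^k(G\cdot p)\int_{\spt(T)\cap S_p}\vartheta^{G\cdot p}(|T|,\cdot)\,d\mathcal H^{n-k}$. Likewise, the pushforward $(E\circ\bm{\mu}_\lambda\circ E^{-1})_\# T$ is $G$-invariant and its slice with $S_p$ is $(E_p\circ\bm{\mu}_\lambda\circ E_p^{-1})_\#$ applied to the slice of $T$, by property (5) and the fact that $E$ maps $S_p$ into $S_p$ in the tube. Applying the area formula on the slice with the pointwise bound \eqref{Eq: scale estimate} yields $\M\big((E_p\circ\bm{\mu}_\lambda\circ E_p^{-1})_\# T_{S_p}\big)\leq \lambda^{n-k}(1+\epsilon(1-\lambda))\M(T_{S_p})$. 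Reinserting this into the tube formula and using \eqref{Eq:bound-of-theta} twice (once to pass from the full current to its slice, once to pass back after pushforward) absorbs the two factors of $(1+\epsilon')$, yielding \eqref{Eq: cone estimates}. Here the $G$-invariance of the slice $T_{S_p}$, and the fact that $S_p$ is transverse to orbits, are what let us safely apply the area formula only on the $(n-k)$-dimensional slice.

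For the cone current $T_\lambda$, I would work in the total space ${\bf N}(G\cdot p)$, where $\delta_{\bf 0}\ttimes(\cdot)$ is Almgren's linear homotopy cone via the dilation $h(t,v)=tv$. A direct computation with $h$ gives in the normal bundle
\[
\partial\bigl(\delta_{\bf 0}\ttimes[E^{-1}_\#\partial_1 T-(\bm{\mu}_\lambda\circ E^{-1})_\#\partial_1 T]\bigr)
= E^{-1}_\#\partial_1 T-(\bm{\mu}_\lambda\circ E^{-1})_\#\partial_1 T,
\]
and pushing forward by $E$ yields the first bullet modulo boundary pieces supported in $\partial M$ (these pieces are absorbed by the $\partial M$ part of $\partial T$ when $q\in\partial M$, giving the stated membership in $Z^G_{n-1}(Z\cap\partial M,Z\cap\partial M;\mathbb Z_2)$). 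The support bullet is immediate because $h$ maps $[0,1]\times({\bf N}(G\cdot p))$ at height $\lambda$ onto $\bm{\mu}_\lambda({\bf N}(G\cdot p))$, so $\spt(T_\lambda)\subset E\big(\bm{\mu}_{[\lambda,1]}E^{-1}(\spt\partial_1 T)\big)\subset \A^G_{\lambda r,r}(p)$. Finally, the mass bound comes from the standard cone inequality in the fiber combined with \eqref{Eq: scale estimate}: integrating $\|(Dh_{(t,\cdot)})_*\omega\|\leq t^{n-k}\|\omega\|$ in $t\in[\lambda,1]$ gives the factor $(1-\lambda^{n-k})/(n-k)$; the scale-$r$ factor $r$ appears from $\spt\partial_1 T\subset\partial\B^G_r(p)$; the factor $2$ absorbs $(1+\epsilon')^2$ coming from the two applications of the tube formula used to pass back to $M$.

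The main obstacle will be handling part 2 when $q\in\partial M$: the equality $\partial_1 T=\partial T\llcorner\tScal^{G,+}_r(p)$ discards the relative boundary in $\partial M$, and one must verify that the cone construction, applied only to this ``outer" piece, produces a $T_\lambda$ whose actual boundary differs from $\partial_1T-(E\circ\bm{\mu}_\lambda\circ E^{-1})_\#\partial_1T$ only by a relative cycle supported in $\partial M$. This reduces, via the Fermi coordinate description, to the corresponding property for Fermi half-balls already established in \cite[Appendix B]{li2021min}, but the verification requires carefully tracking the slice of $\partial T$ by $\partial M$ under $h$ and using that $\bm{\mu}_\lambda$ preserves $T_p\partial M$ in the Fermi chart. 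The remaining steps are routine combinations of the tube formula with the Euclidean estimates already packaged in properties (1)--(5).
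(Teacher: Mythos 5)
Your proposal takes essentially the same route as the paper: reduce via the tube formula \eqref{Eq:weyl-tube} and the bounds \eqref{Eq:bound-of-theta}, \eqref{Eq: scale estimate} to a Euclidean-type computation on the slice $S_p$, and use the cone construction $\delta_{\bf 0}\ttimes(\cdot)$ with the same Lipschitz bookkeeping. Two small imprecisions worth noting: your displayed boundary identity should read $\partial(\delta_{\bf 0}\ttimes S)=S-\delta_{\bf 0}\ttimes\partial S$ (the cone-over-$\partial S$ term is exactly what produces the $\partial M$-supported error when $q\in\partial M$, as you then say in words); and the factor $2$ in the last mass bound absorbs not only $(1+\epsilon')^2$ from the tube formula but also the Lipschitz Jacobians of $E_p$ and $E_p^{-1}$ — it is the combined product, controlled by property (3), that is $\leq 2$.
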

\begin{proof}
	With the notations in \cite[27.1]{simon1983lectures}, we write $\underline{\tau}(\widetilde{\Sigma}, \theta, \xi )$, $\theta\in\{0,1\}$, as a representative modulo $2$ of $T$ (see \cite[4.2.26]{federer2014geometric}).  
	By (\ref{Eq:weyl-tube})(\ref{Eq:bound-of-theta})(\ref{Eq: scale estimate}), we have $\M(T)=\int_{\widetilde{\Sigma}}\theta d\mH^n$ and 
	\begin{eqnarray*}
		 {\bf M}((E\circ{\bm \mu}_\lambda\circ E^{-1})_\# T) 
		 &\leq & (1+\epsilon') \mathcal{H}^k(G\cdot p)    \int_{E\circ{\bm \mu}_\lambda\circ E^{-1}(\widetilde{\Sigma})\cap Z_p} \theta \circ (E\circ{\bm \mu}_\lambda\circ E^{-1})^{-1} d\mH^{n-k}
		 \\
		 &\leq & (1+\epsilon') \mathcal{H}^k(G\cdot p)  \lambda^{n-k}(1+\epsilon(1-\lambda)) \int_{\widetilde{\Sigma}\cap Z_p} \theta \mH^{n-k}
		 \\
		 &\leq & (1+\epsilon')^2 \lambda^{n-k}(1+\epsilon(1-\lambda)) \M(T),
	\end{eqnarray*}
	where $Z_p = Z\cap S_p$ is the slice of $G\cdot p$ at $p$ in $Z$, and the second inequality comes from \cite[3.4 (7)]{pitts2014existence} and \cite[Lemma B.5]{li2021min} since ${\rm Exp}^\perp_{G\cdot p}=\exp_p$ or $\widetilde{\exp}_p$ in $Z_p$. 
	Additionally, the first two bullets follow directly from the definition of $T_\lambda$. 
	Because $\epsilon' < 1/3$ and (3), we have 
	\begin{equation}\label{Eq: no epsilon}
		(1+\epsilon' )^2 ({\rm Lip}(E_p ))^{n-k}({\rm Lip}(E_p^{-1}))^{n-k} \leq 2. 
	\end{equation}
	Hence, the inequality in the last bullet follows from (\ref{Eq:weyl-tube})(\ref{Eq:bound-of-theta})\cite[3.4(7)]{pitts2014existence}\cite[Lemma B.5]{li2021min} and a computation similar to the above. 
\end{proof}

Using the above lemma, we can adapt the proof of \cite[Lemma 3.5]{pitts2014existence} (and \cite[Lemma B.8]{li2021min}) straightforwardly and get the following result:
\begin{lemma}\label{L:pre-interpolation-1}
	Given any non-isolated orbit $G\cdot q\subset M$, $\epsilon\in(0,1)$ and $\epsilon'\in (0, 1/3)$, let $Z$, $K$, $E$ be defined as above. 
	Suppose $\delta>0$, $p\in K\cap Z$, $r>0$ with $\B^{G}_r(p)\subset\subset Z$, and $T\in Z^G_n \big(\Clos(\B^{G}_r(p)),\partial \Clos(\B^{G}_r(p));\mathbb{Z}_2 \big)$. 
	Assume additionally: 
	\begin{itemize}
		\item $\epsilon \mathbf{M}(T)< (n-k) \delta \text { and } \frac{2 r}{n-k} \mathbf{M}\left(\partial_1 T\right)+\delta \leq \mathbf{M}(T)$, where $k=\dim(G\cdot q)$;
		\item $\|T\| (\partial_{rel}(\B^{G}_s(p)))=0, \text { for all } 0 \leq s \leq r$.
	\end{itemize}
	Then for any $\beta>0$, there exists a sequence:
	$$T=R_0,R_1,\dots,R_m\in Z^G_n\big( \Clos(\B^{G}_r(p)),\partial \Clos(\B^{G}_r(p));\mathbb{Z}_2 \big)$$
	such that for each $i$, we have: 
	\begin{itemize}
		\item $\partial R_i\llcorner \partial_{rel}(\B^{G}_r(p)) =\partial_1 T$;
		\item ${\bf M}(R_i)\leq (1+\epsilon')^2{\bf M}(T)+\beta$;
		\item ${\bf M}(R_i-R_{i-1})\leq \beta $;
		\item $R_m=E_\# \big( \delta_{\bf 0} ~\ttimes ~ E^{-1}_\#\partial_1 T \big)$;
		\item ${\bf M}(R_m)\leq \frac{2 r}{n-k} \mathbf{M}\left(\partial_1 T\right)\leq {\bf M}(T)-\delta$.
	\end{itemize} 
\end{lemma}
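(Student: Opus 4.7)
The plan is to construct a continuous one-parameter family $\{R(\lambda)\}_{\lambda\in[0,1]}$ of equivariant relative cycles interpolating between $T$ at $\lambda=1$ and the cone $E_\#(\delta_{\bf 0} \ttimes E_\#^{-1}\partial_1 T)$ at $\lambda=0$, bound its mass uniformly via Lemma \ref{Lem:mass estimate under exponential map}, and then discretize it using a slicing argument to extract the desired $R_0,\dots,R_m$. Specifically, I would set
\[
R(\lambda) := (E\circ{\bm \mu}_\lambda\circ E^{-1})_\# T + T_\lambda,
\]
where $T_\lambda$ is the $G$-invariant coning slab of $\partial_1 T$ defined in Lemma \ref{Lem:mass estimate under exponential map}. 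Since $E$ is $G$-equivariant, the radial homothety ${\bm \mu}_\lambda$ in each fiber ${\bf N}_{p'}(G\cdot p)$ commutes with the $G$-action, and $T,\partial_1 T$ are $G$-invariant, every $R(\lambda)\in Z_n^G(\Clos(\B_r^G(p)),\partial\Clos(\B_r^G(p));\mZ_2)$. The endpoint identities $R(1)=T$ and $R(0)=E_\#(\delta_{\bf 0}\ttimes E_\#^{-1}\partial_1 T)$ are immediate from $\bm{\mu}_1=\mathrm{id}$, $\bm{\mu}_0=0$.

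Next, I would check the boundary identity $\partial R(\lambda)\lc\partial_{rel}(\B_r^G(p)) = \partial_1 T$. The relation $\partial T_\lambda-[\partial_1 T-(E\circ{\bm \mu}_\lambda\circ E^{-1})_\#\partial_1 T]\in Z_{n-1}^G(Z\cap\partial M,Z\cap\partial M;\mZ_2)$ from Lemma \ref{Lem:mass estimate under exponential map} exactly cancels the radial image of $\partial_1 T$, leaving $\partial_1 T$ on the outer sphere (or Fermi half-cylinder when $q\in\partial M$), while any residual $(n-1)$-cycle is absorbed into $\partial M$. Combined with the hypothesis $\|T\|(\partial_{rel}\B_s^G(p))=0$, which ensures no charge sits on intermediate spheres, this gives the first bullet.

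For the mass estimate, Lemma \ref{Lem:mass estimate under exponential map} yields
\[
\M(R(\lambda))\leq (1+\epsilon')^2\lambda^{n-k}(1+\epsilon(1-\lambda))\M(T)+\tfrac{2r}{n-k}(1-\lambda^{n-k})\M(\partial_1 T).
\]
The key elementary inequality $\lambda^{n-k}(1-\lambda)\leq(1-\lambda^{n-k})/(n-k)$, combined with the two standing hypotheses $\epsilon\M(T)<(n-k)\delta$ and $\tfrac{2r}{n-k}\M(\partial_1 T)\leq\M(T)-\delta$, together with $\lambda^{n-k}(1+\epsilon(1-\lambda))\leq1$ from property~(5) of the setup, yields $\M(R(\lambda))\leq(1+\epsilon')^2\M(T)+\beta$ for any $\beta>0$ provided $\epsilon'$ is chosen small a priori, and the terminal estimate $\M(R_m)\leq\tfrac{2r}{n-k}\M(\partial_1 T)\leq\M(T)-\delta$ follows directly.

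Finally, for the discretization, I would follow Pitts' slicing strategy from \cite[Lemma~3.5]{pitts2014existence}, adapted verbatim except that all slicing is performed with respect to the $G$-invariant radial coordinate $r_{G\cdot p}$ (which is $G$-invariant and Lipschitz on $Z$). Cutting $T$ and $T_\lambda$ by the level sets $\{r_{G\cdot p}=\lambda r\}$ produces $G$-invariant slices by \cite[4.3.8]{federer2014geometric}; picking the partition $1=\lambda_0>\lambda_1>\cdots>\lambda_m=0$ fine enough so that each $\M(R(\lambda_i)-R(\lambda_{i-1}))\leq\beta$ is possible because the total variation of $\lambda\mapsto\M(R(\lambda))$ is controlled by $\int_0^1|\frac{d}{d\lambda}\M(R(\lambda))|d\lambda$, which is finite and bounded in terms of $\M(T)+\M(\partial_1 T)$. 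The main obstacle is ensuring the slice-and-paste surgery preserves $G$-invariance and the correct boundary condition simultaneously — this is where the $G$-equivariance of $E$ and of $r_{G\cdot p}$ (rather than individual geodesic coordinates) is essential, and where a direct transcription of \cite[Lemma B.8]{li2021min} would fail without the reduction to a single stratum $K$ encoded in the choice of $r_0$ via Remark \ref{Rem:lower bound of Fermi radius}.
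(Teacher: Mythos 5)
Your construction of the continuous family $R(\lambda) := (E\circ{\bm \mu}_\lambda\circ E^{-1})_\# T + T_\lambda$ is the right object, your verification of $G$-invariance and the boundary condition is fine, and the mass estimate for a single $R(\lambda)$ via Lemma \ref{Lem:mass estimate under exponential map} together with the two hypotheses is exactly what the paper uses. The gap is in the final discretization step, and it is fatal as written: you claim that a fine enough partition $1=\lambda_0>\cdots>\lambda_m=0$ gives $\M(R(\lambda_i)-R(\lambda_{i-1}))\leq\beta$ because the total variation of $\lambda\mapsto\M(R(\lambda))$ is bounded. But $\M(A-B)$ is not controlled by $|\M(A)-\M(B)|$. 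Concretely, for two nearby scales $\lambda_{i-1}>\lambda_i$, the currents $(E\circ{\bm\mu}_{\lambda_{i-1}}\circ E^{-1})_\# T$ and $(E\circ{\bm\mu}_{\lambda_i}\circ E^{-1})_\# T$ are two nested, generically disjoint copies of $T$ (think of two concentric arcs), so their difference has mass on the order of $2\M(T)$ no matter how close $\lambda_{i-1}$ and $\lambda_i$ are. Unless $T$ is already a radial cone there is no cancellation, so the map $\lambda\mapsto R(\lambda)$ is $\F$-continuous but not $\M$-continuous, and no refinement of the parameter interval produces a sequence with small $\M$-fineness.

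The paper's proof therefore does not run through a continuous family at all; it imports the nested cut-and-paste machinery of \cite[Lemma 3.5, Parts 1--9]{pitts2014existence}, in which each step $R_{i-1}\to R_i$ modifies the current only inside a thin $G$-annulus whose $\|T\|$-mass is made small by the zero-slicing hypothesis $\|T\|(\partial_{rel}\B_s^G(p))=0$, so that $\M(R_i-R_{i-1})$ is genuinely controlled. The modifications the paper makes to Pitts' argument are exactly the ones you correctly identify in your last paragraph: replacing $u_p,B_r(p)$ by $r_{G\cdot p},\B_r^G(p)$, using Lemma \ref{Lem:mass estimate under exponential map} in place of \cite[3.4(4)]{pitts2014existence}, using $\beta'=\beta/2$ to absorb the extra $(1+\epsilon')^2$ factor, and staying within a fixed stratum $K$ so the normal exponential map is uniformly controlled. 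To repair your proof you would need to replace the single parameter-slicing step by these nested cut-and-paste moves; as it stands, the third bullet of the conclusion is not established.
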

\begin{proof}
	The proof is essentially the same as \cite[Lemma 3.5]{pitts2014existence}, and we only point out some modifications. 
	First, given any $\beta >0$, let $\beta' = \frac{\beta}{2}$. 
	Then we replace the number $\beta$ in \cite[Lemma 3.5]{pitts2014existence} by $\beta'$. 
	Secondly, we use $r_{G\cdot p}$, $\B^G_r(p)$ and $\A_{s,r}^G(p)$ in place of $u_p$, $B_r(p)$ and $An(p,s,r)$ in \cite[Lemma 3.5]{pitts2014existence}. 
	Thirdly, in Part2-5 of \cite[Lemma 3.5]{pitts2014existence}, we use Lemma \ref{Lem:mass estimate under exponential map} in place of \cite[3.4(4)]{pitts2014existence}. 
	Since 
	$(E\circ{\bm \mu}_\lambda\circ E^{-1}) \circ (E\circ{\bm \mu}_\lambda\circ E^{-1}) = E\circ{\bm \mu}_{\lambda^2}\circ E^{-1},$ our mass estimates have just one more term of $(1+\epsilon' )^2$ than those in \cite[Lemma 3.5]{pitts2014existence}. 
	Nevertheless, we could use (\ref{Eq: no epsilon}) and the fact that $(1+ \epsilon'^2 )\beta' < \beta $ to eliminate $\epsilon'$ in some places. 
\end{proof}

We also have the following lemma as a modification of \cite[Lemma B.7]{li2021min}. 
\begin{lemma}\label{L:zero-boundary}
	Suppose $Z, K$ are given as in Lemma \ref{Lem:mass estimate under exponential map}, and $V\in \mathcal{V}^G_n(M)$ is rectifiable in $Z$, then for $\mathcal{H}^{\dim(K)}$-a.e. $p\in Z\cap K$, we have $\|V\|(\partial_{rel}\B^{G}_r(p))=0$ for all $r>0$ with $\B^{G}_r(p)\subset\subset Z$. 
\end{lemma}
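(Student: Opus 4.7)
The plan is to prove the lemma via a counting argument exploiting the fact that a rectifiable set of locally finite $\mathcal{H}^n$-mass cannot concentrate positive measure on uncountably many pairwise $\mathcal{H}^n$-essentially disjoint $n$-dimensional hypersurfaces. First, using rectifiability I would write $\|V\|\lc Z = \theta\,\mathcal{H}^n\lc\Sigma$ with $\Sigma\subset Z$ a $G$-invariant $n$-rectifiable set and $\theta$ a positive $G$-invariant density, and truncate via $\Sigma_N := \{q\in\Sigma:\theta(q)\geq 1/N\}$ so that $\mathcal{H}^n(\Sigma_N\cap Z)<\infty$. The object of study is then the bad set $\mathcal{B}\subset K\cap Z$ of centers $p$ admitting at least one radius $r>0$ with $\B_r^G(p)\subset\subset Z$ and $\|V\|(\partial_{rel}\B_r^G(p))>0$.

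The crux is the following uniqueness step: for distinct pairs $(G\cdot p_1,r_1)\neq(G\cdot p_2,r_2)$ with $p_i\in K\cap Z$, I claim the level hypersurfaces $\Gamma_i:=\partial_{rel}\B_{r_i}^G(p_i)$ satisfy $\mathcal{H}^n(\Gamma_1\cap\Gamma_2)=0$. Each $\Gamma_i$ is a smooth regular $n$-level set of the $G$-invariant function $r_{G\cdot p_i}$ away from its central orbit, so an $\mathcal{H}^n$-positive intersection would force the tangent planes to coincide on a dense subset of the intersection, and hence (by agreement of regular level sets) the two smooth hypersurfaces to coincide on an open piece of $M$. On that open piece, following the shared inward normal geodesic and using that $\mathrm{Exp}^\perp_{G\cdot p}$ is a diffeomorphism on $\B_{4r_0}^G(p)$ (property (2) preceding Lemma \ref{Lem:mass estimate under exponential map}) recovers the nearest orbit and the distance uniquely, contradicting the distinctness of the pairs.

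Granted the uniqueness, the pieces $\Sigma_N\cap\Gamma$ indexed over distinct pairs are $\mathcal{H}^n$-essentially disjoint inside $\Sigma_N$, so
\[
\sum_{\text{distinct}\,(G\cdot p,\,r)}\!\!\mathcal{H}^n\big(\Sigma_N\cap\partial_{rel}\B_r^G(p)\big)\ \leq\ \mathcal{H}^n(\Sigma_N\cap Z)\ <\ \infty,
\]
which forces only countably many pairs to contribute positively. Taking the union over $N\in\mathbb{N}$, only countably many pairs $(G\cdot p,r)$ satisfy $\|V\|(\partial_{rel}\B_r^G(p))>0$. Consequently $\mathcal{B}$ is contained in a countable union of $G$-orbits $\bigcup_j G\cdot p_j\subset K$; since the ambient $W$ (hence $Z$ and $K\cap Z$) contains no isolated orbit, every orbit in $K$ has dimension $k<\dim(K)$, so $\mathcal{H}^{\dim(K)}(G\cdot p_j)=0$ for each $j$, and countable subadditivity gives $\mathcal{H}^{\dim(K)}(\mathcal{B})=0$.

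The principal obstacle is the uniqueness claim in the second paragraph: rigorously ruling out that two distinct orbit/radius pairs produce $G$-tubes coinciding on an $\mathcal{H}^n$-positive set. In the interior case this should follow from the uniqueness of the nearest orbit under the inward normal geodesic flow within the injectivity radius, while the boundary case $q\in\partial M$ needs the analogous argument using $\widetilde{\exp}^\perp_{G\cdot p}$ together with the Fermi half-tube geometry from Appendix \ref{Sec:Fermi half-tubes}. The smallness constraints on $r_0$ built into the setup of Lemma \ref{Lem:mass estimate under exponential map} (items (1)--(5)) supply exactly the control needed to make both cases rigorous.
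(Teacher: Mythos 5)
Your route differs from the paper's: you aim for a countability argument built on pairwise $\mathcal{H}^n$-essential disjointness of the tube boundaries $\partial_{rel}\B_r^G(p)$, whereas the paper fixes $x\in\spt\|V\|$, integrates over centers $p\in Z\cap K$ via Fubini, and bounds the dimension of $\{p : T_xV\perp\nabla r_{G\cdot p}(x)\}$ using the orbit-type stratification and $\mathrm{Cohom}(G)\geq 3$ (with separate treatment of the principal stratum versus the singular strata, and a bad set $K'$ removed to make the Fubini work). Your Steps 2--4 (countable sum, containment of $\mathcal{B}$ in countably many orbits, each of $\mathcal{H}^{\dim K}$-measure zero by non-isolatedness) are correct as stated.

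The gap is in the uniqueness step. You argue: an $\mathcal{H}^n$-positive intersection forces tangent coincidence at density points, ``and hence (by agreement of regular level sets) the two smooth hypersurfaces coincide on an open piece of $M$.'' This implication is false in general. Two smooth hypersurfaces can agree on a set of positive $\mathcal{H}^n$-measure with empty interior without coinciding on any open piece: take $\Gamma_1=\{y=0\}\subset\R^{n+1}$ and $\Gamma_2=\{y=g(x)\}$ where $g\geq 0$ is smooth and vanishes exactly on a fat Cantor set (such $g$ exists by Whitney's theorem); the intersection has positive measure but no interior. So ``tangent planes coincide on a dense/full-measure subset'' does not give you an open overlap, and your subsequent geodesic argument --- which reads off the nearest orbit and radius from an open shared patch --- never gets started. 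To make the uniqueness claim rigorous you should instead work directly at a density-one point $x$ of $\Gamma_1\cap\Gamma_2$: there $T_x\Gamma_1=T_x\Gamma_2$ forces $\nabla r_{G\cdot p_1}(x)=\nabla r_{G\cdot p_2}(x)$, so the normal geodesic from $x$ meets $G\cdot p_1$ and $G\cdot p_2$ orthogonally at distances $r_1,r_2$; one then bounds the dimension of the set of such $x$ inside $\Gamma_1$ by $k=\dim(G\cdot p)<n$, using that within the injectivity/Fermi radius the normal exponential map is a diffeomorphism, that orbits partition $M$, and the cohomogeneity assumption. But at that point the argument you need is essentially the same geodesic-plus-dimension count the paper runs (including the separate treatment of the boundary case via $\widetilde{\exp}^\perp_{G\cdot p}$ and of $K\subset M^{reg}$ versus $K\subset M\setminus M^{reg}$), so the seeming savings of the countability approach largely evaporate.
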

\begin{proof}
	For simplicity, we write $\mathcal{S}_r^G(p) := \partial_{rel} \B_r^G(p)$ and $k := \dim(G\cdot p)$ for $p\in Z\cap K$. 
	Note $K=M_i^j$ or $N_i^j$, and $\dim(K) - k \geq 1$ since orbits in $K$ are non-isolated. 
	The proof is separated into two cases depending on whether $Z\cap K$ is contained in $\partial M$: 
	
	{\bf Case 1}: $Z\cap K\subset M\setminus\partial M$. 
	We denote 
	$$ K' := \left\{ \begin{array}{cl}
		\emptyset  & \text{ if } K \subset M^{reg}, \\
		K  & \text{ if } K \subset M\setminus M^{reg}. 
		\end{array} \right. $$
	Thus, $\dim(\mathcal{S}_r^G(p)\cap K') \leq \max\{ 0, n_{i,j} -1 \} \leq n-1$, and $\|V\|(\mathcal{S}_r^G(p)) = \|V\|(\mathcal{S}_r^G(p)\setminus K') $. 
	By the rectifiability of $V$ and \cite[15.3]{simon1983lectures}, we have 
	\begin{eqnarray*}
		\|V\|(\mathcal{S}_r^G(p)) &=& \|V\|(\mathcal{S}_r^G(p)\setminus K') 
		\\
		&=& \|V\|(\{x\in \mathcal{S}_r^G(p)\setminus K' : T_xV\subset T_x(\mathcal{S}_r^G(p)) \}), 
	\end{eqnarray*}
	where $T_xV$ is the approximate tangent space of $V$. 
	Hence, we only need to prove that for $\mathcal{H}^{\dim(K)}$-a.e. $p\in Z\cap K$, 
	$$\|V\|(	\{ x\in Z\setminus K' : T_xV\subset T_x(\mathcal{S}_{r_{G\cdot p}(x)}^G(p))   \}) = 0.$$
	By Fubini's theorem, 
	\begin{eqnarray*}
		&& \int_{Z\cap K} \|V\|(	\{ x\in Z\setminus K' : T_xV\subset T_x(\mathcal{S}_{r_{G\cdot p}(x)}^G(p))   \}) ~d\mathcal{H}^{\dim(K)}(p)
		\\
		&=& \int_{Z\setminus K'} \mathcal{H}^{\dim(K)}(\{ p\in Z\cap K : T_xV\subset T_x(\mathcal{S}_{r_{G\cdot p}(x)}^G(p))  \}) ~d\|V\|(x).
	\end{eqnarray*}
	Now, it is sufficient to show that 
	\begin{equation}\label{Eq: orthogonal}
		\mathcal{H}^{\dim(K)}(\{ p\in Z\cap K : T_xV\bot \nabla r_{G\cdot p}(x)  \}) = 0, \quad \mbox{for $\|V\|$-a.e. $x\in Z\setminus K'$}. 
	\end{equation}
	For fixed $x\in Z\setminus K'$, suppose $p\in Z\cap K$ satisfies $T_xV\bot \nabla r_{G\cdot p}(x)$ and $\dist_M(G\cdot p, x) = \dist_M(p, x)$. 
	Since $T_xV$ is an $n$-dimensional subspace of $T_xM$ and $r_{G\cdot p} = \dist_M(G\cdot p, \cdot )$, all such $p$ lie in a geodesic $\gamma(t)$ starting at $\gamma(0)= x$ with $\dot{\gamma}(0) \in (T_xV)^\perp$. 
	If $K\subset M^{reg}$, then 
	$$\dim(\{ p\in Z\cap K : T_xV\bot \nabla r_{G\cdot p}(x)  \} ) \leq 1 + k < n+1 = \dim(K), ~\mbox{(by Cohom$(G)\geq 3$),}$$
	which implies (\ref{Eq: orthogonal}). 
	If $K\subset M\setminus M^{reg}$. 
	Because $x\notin K$ and ${\rm Exp}_{G\cdot p}^\perp$ is an equivariant diffeomorphism on $Z$, then we have $G_x\subsetneq G_p$. 
	Thus, $p$ is a cut point of $\exp_{G\cdot x}^\perp$ in the direction $\dot{\gamma}(0)$. 
	Since $\dim((T_xV)^\perp)=1$, there are at most two such cut points $p$. 
	Therefore, 
	$$\dim(\{ p\in Z\cap K : T_xV\bot \nabla r_{G\cdot p}(x)  \} ) \leq k < \dim(K),~ \mbox{(by the non-isolated assumption)},$$
	which also implies (\ref{Eq: orthogonal}).
	
	{\bf Case 2}: $Z\cap K\subset \partial M$. 
	Let $Z' := \{x\in Z: n_{\partial M}(x)\in K\}$ be a submanifold in $Z$ with dimension $n_{i,j}' +1$, where $n_{\partial M}$ is the nearest projection to $\partial M$. 
	Denote 
	$$ K' := \left\{ \begin{array}{cl}
		\emptyset  & \text{ if } K \subset (\partial M)^{reg}, \\
		Z'  & \text{ if } K \subset \partial M\setminus (\partial M)^{reg}, 
		\end{array} \right. $$
	which satisfies $\dim(\mathcal{S}_r^G(p)\cap K') \leq \max\{0, n_{i,j}' \} \leq n-1$, and $\|V\|(\mathcal{S}_r^G(p)) = \|V\|(\mathcal{S}_r^G(p)\setminus K') $. 
	By the same arguments as in {\bf Case 1}, we only need to verify (\ref{Eq: orthogonal}) for such $K$ and $K'$. 
	
	Given $x\in Z\setminus K'$, suppose $p\in Z\cap K$ satisfies $T_xV\bot \nabla r_{G\cdot p}(x)$ and $\dist_{\partial M}(G\cdot p, n_{\partial M}(x)) = \dist_{\partial M}(p, n_{\partial M}(x))$. 
	Noting $T_xV$ is an $n$-dimensional subspace of $T_x\widetilde{M}$ and $r_{G\cdot p} = \tilde{r}_{G\cdot p}$, we can apply the arguments in \cite[Lemma B.7 Claim]{li2021min} to see $\nabla^{\partial M} r_{G\cdot p}(n_{\partial M}(x) )$ lies in an affine subspace of $T_{n_{\partial M}(x)}\partial M$ with dimension at most $n+1 -n =1$. 
	Since $\tilde{r}_{G\cdot p}\llcorner\partial M = \dist_{\partial M}(G\cdot p, \cdot )$, we also have $|\nabla^{\partial M} r_{G\cdot p}(n_{\partial M}(x) )| = 1 $. 
	Therefore, there are at most two possible choices $\{v_1, v_2\}$ for $\nabla^{\partial M} r_{G\cdot p}(n_{\partial M}(x) )$. 
	If $K\subset (\partial M)^{reg}$, then all such $p$ lie in at most two geodesic curves of $\partial M$. 
	Thus, (\ref{Eq: orthogonal}) follows from 
	$$\dim(\{ p\in Z\cap K : T_xV\bot \nabla r_{G\cdot p}(x)  \} ) \leq 1 + k < n = \dim(K),  ~\mbox{(by Cohom$(G)\geq 3$)}.$$
	If $K\subset \partial M \setminus (\partial M)^{reg}$, we have $G_{n_{\partial M}(x)}\subsetneq G_p$ since $n_{\partial M}(x)\notin K$ and ${\rm Exp}_{G\cdot p}^\perp$ is an equivariant diffeomorphism on $Z$. 
	Again, such $p$ is a cut point of $\exp_{G\cdot n_{\partial M}(x)}^{\partial M, \perp}$ in the direction $-v_1$ or $-v_2$. 
	Hence, there are at most two such points $p$, which implies 
	$$\dim(\{ p\in Z\cap K : T_xV\bot \nabla r_{G\cdot p}(x)  \} ) \leq  k .$$
	Since every orbit in $K$ is non-isolated, (\ref{Eq: orthogonal}) follows from $k<k+1\leq \dim(K)$. 
\end{proof}

Next, we have the following pre-interpolation lemma, which generalizes \cite[Lemma B.3]{li2021min}. 

\begin{lemma}\label{L:pre-interpolation}
	Suppose $L>0$, $\delta>0$, $\tau\in \mathcal{Z}^G_n(M,\partial M;\mathbb{Z}_2)$, $W\subset\subset U$ is a relative open $G$-set containing no isolated orbit. 
	Given $\sigma_0\in \mathcal{Z}^G_n(M,\partial M;\mathbb{Z}_2)$ with 
	$${\rm spt}(\sigma_0-\tau)\subset W\quad {\rm and}\quad {\bf M}(\sigma_0)\leq L, $$
	then there exists $\epsilon=\epsilon(L,\delta,W,\tau,\sigma_0)>0$ such that if $\sigma\in \mathcal{Z}^G_n(M, \partial M;\mathbb{Z}_2)$ satisfies 
	\begin{itemize}
		\item ${\rm spt}(\sigma-\tau)\subset W$,
		\item ${\bf M}(\sigma)\leq L$,
		\item $\mathcal{F}(\sigma-\sigma_0)\leq\epsilon$,
	\end{itemize}
	there exists a sequence $\sigma=\tau_0,\tau_1,\dots,\tau_m=\sigma_0\in \mathcal{Z}^G_n(M,\partial M;\mathbb{Z}_2)$ such that for each $j$, we have 
	\begin{itemize}
		\item[(i)] ${\rm spt}(\tau_j-\tau)\subset U$,
		\item[(ii)] ${\bf M}(\tau_j)\leq L+\delta$,
		\item[(iii)] ${\bf M}(\tau_j-\tau_{j-1})\leq\delta$.
	\end{itemize}
\end{lemma}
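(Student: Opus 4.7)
The proof will follow the strategy of \cite[Lemma 3.7]{pitts2014existence} and \cite[Lemma B.3]{li2021min}, but throughout we must work inside $\Z_n^G(M,\partial M;\mZ_2)$ so that every intermediate cycle remains $G$-invariant. The main ingredients will be the $G$-invariant $\F$-isoperimetric Lemma \ref{Lem:F-isoperimetric}, the equivariant cone construction and local interpolation from Lemma \ref{Lem:mass estimate under exponential map} and Lemma \ref{L:pre-interpolation-1}, and the measure-zero slice Lemma \ref{L:zero-boundary}.

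First, I will set up a good finite cover. Since $\Clos(W)$ is compact and contains no isolated orbit, each $q\in\Clos(W)$ lies in some $G$-connected stratum $K$ with $\dim(K)-\dim(G\cdot q)\geq 1$. Fix $\epsilon_0\in(0,1)$ and $\epsilon'_0\in(0,1/3)$ to be chosen later (depending only on $L,\delta$). Using the uniform Fermi radius on compact subsets of each stratum (Remark \ref{Rem:lower bound of Fermi radius}) together with conditions (1)--(5) preceding Lemma \ref{Lem:mass estimate under exponential map}, I can cover $\Clos(W)$ by finitely many $G$-tubes $\{\B^G_{r_k/4}(p_k)\}_{k=1}^N$, each with $\B^G_{4r_k}(p_k)\subset\subset U$, on which the equivariant Fermi exponential $E_k={\rm Exp}^\perp_{G\cdot p_k}$ provides the cone deformation $E_k\circ \bm{\mu}_\lambda\circ E_k^{-1}$ with the mass estimate (\ref{Eq: cone estimates}).

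Next, I will produce and localize the isoperimetric choice. Provided $\epsilon<\epsilon_M$ from Lemma \ref{Lem:F-isoperimetric}, any $\sigma$ with $\F(\sigma-\sigma_0)\leq\epsilon$ admits $Q\in\bI_{n+1}^G(M;\mZ_2)$ with $\spt(T_\sigma-T_{\sigma_0}-\partial Q)\subset\partial M$ and $\M(Q)\leq C_M\epsilon$; by uniqueness (cf.\ the proof of Lemma \ref{Lem:F-isoperimetric}) and the fact that $\spt(\sigma-\sigma_0)\subset\Clos(W)$, shrinking $\epsilon$ forces $\spt(Q)$ into an arbitrarily small $G$-neighborhood of $\Clos(W)\cup(\spt(\sigma_0-\tau)\cup\spt(\sigma-\tau))\subset W$, which can be arranged to lie in the finite cover above. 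I then slice $Q$ successively by the $G$-invariant distance functions $r_{G\cdot p_k}$; since these functions are $G$-invariant and $\partial_{rel}\B^G_{s}(p_k)$ carries zero $\|Q\|+\|\partial Q\|$ mass for almost every $s$ (Lemma \ref{L:zero-boundary} applied to $|Q|$ and $|\partial Q\lc(M\setminus\partial M)|$), I obtain a decomposition $Q=Q_1+\cdots+Q_N+R_\partial$ with each $Q_k\in\bI_{n+1}^G(M;\mZ_2)$ supported in $\B^G_{r_k/2}(p_k)$, $R_\partial\in\mR_{n+1}^G(\partial M;\mZ_2)$, and the co-area inequality giving $\sum_k\M(\partial Q_k\lc(M\setminus\partial M))\leq \M(\partial Q\lc(M\setminus\partial M))+o(1)\leq 2L+1$.

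I will then build the sequence $\{\tau_j\}$ inductively. Let $S_j:=\sigma+[\partial(Q_1+\cdots+Q_j)\lc(M\setminus\partial M)]\in\Z_n^G(M,\partial M;\mZ_2)$, so $S_0=\sigma$ and $S_N=\sigma_0$ modulo boundary-supported cycles, with $\spt(S_j-\tau)\subset U$. Between $S_{j-1}$ and $S_j$ the change is localized inside $\B^G_{r_j/2}(p_j)$ and has total mass at most $2\M(Q_j)+2\M(\partial Q_j\lc(M\setminus\partial M))$. On this small tube I invoke Lemma \ref{L:pre-interpolation-1} with the cone deformation $E_j\circ\bm{\mu}_\lambda\circ E_j^{-1}$ applied to the localized boundary contribution, producing a finite chain from $S_{j-1}$ to $S_j$ whose consecutive $\M$-differences are at most $\delta$ and whose total masses are controlled by $(1+\epsilon'_0)^2 L + C(\M(Q_j)+\M(\partial Q_j))\leq L+\delta$, the last inequality coming from choosing $\epsilon'_0$ and then $\epsilon$ small enough in terms of $\delta,L,N$. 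Concatenating these $N$ local chains yields the required sequence $\{\tau_j\}$.

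The main obstacle will be the simultaneous control of (ii) and (iii): the $(1+\epsilon'_0)^2$ blow-up in (\ref{Eq: cone estimates}) must be absorbed into the mass budget $L+\delta$ after accumulating errors over all $N$ tubes, while the slicing pieces $Q_k$ must be small enough that the hypothesis $\epsilon\M(T)<(n-k)\delta$ in Lemma \ref{L:pre-interpolation-1} is satisfied. Both are handled by a careful, mutually dependent choice of parameters: first $N$ is determined by $W$ and $U$, then $\epsilon'_0$ is chosen so that $(1+\epsilon'_0)^{2N}L<L+\delta/2$, and finally $\epsilon$ is taken small enough that $C_M\epsilon$ together with the cone-step count yields consecutive mass jumps below $\delta$ and slice-piece masses below the threshold in Lemma \ref{L:pre-interpolation-1}.
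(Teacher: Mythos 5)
The paper proves this lemma by contradiction: it assumes a sequence $\sigma_j$ with $\F(\sigma_j-\sigma_0)\to 0$ that cannot be connected to $\sigma_0$, passes to a varifold limit $V=\lim|S_j|$, and then splits into two cases according to whether $V$ concentrates mass $>\alpha:=\delta/5$ on some orbit in $W$. In Case~1 (no concentration), the tube radii are chosen \emph{adaptively} so that $\|V\|$ (hence $\|S_j\|,\|S_0\|$, by semicontinuity) has mass $<2\alpha$ in each $G$-tube; the interpolation is then a direct cut-and-paste $(\ref{Eq: pre-interpolation})$ with no use of Lemma~\ref{L:pre-interpolation-1}. In Case~2, the mass concentration at an orbit $G\cdot q$ is precisely what supplies the gap condition needed to invoke Lemma~\ref{L:pre-interpolation-1} (properties (e),(f),(h) in the proof), after which one reduces to Case~1.

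Your proposal attempts a \emph{direct} construction with a fixed geometric cover $\{\B^G_{r_k/4}(p_k)\}$ whose radii depend only on $W,U$ and the strata, and proposes to use Lemma~\ref{L:pre-interpolation-1} to bridge $S_{j-1}$ and $S_j$ inside each tube. This route has a genuine gap. You only note the first hypothesis $\epsilon\M(T)<(n-k)\delta$ of Lemma~\ref{L:pre-interpolation-1}, but the crucial one is
$$\frac{2r}{n-k}\,\M(\partial_1 T)+\delta\ \leq\ \M(T),$$
which asserts that $T=S_{j-1}\lc\B^G_{r_j/2}(p_j)$ exceeds the mass of the cone over its slice by at least $\delta$. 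For a \emph{fixed}-radius tube this need not hold: $T$ can easily have mass far below $\delta$ (then the hypothesis fails outright), or mass close to that of the cone over its slice (e.g.\ $T$ thin, spread out, or nearly conical). In those situations you cannot apply Lemma~\ref{L:pre-interpolation-1}, and you also cannot do a direct swap when the mass of $T$ is large, because that single step would violate (iii). The paper's contradiction argument is designed exactly to escape this dilemma: it first produces the limit varifold $V$, uses it to locate concentration orbits, shrinks tubes around those orbits until (e),(f),(h) hold (making the cone hypothesis verifiable), and chooses the remaining tubes so that $V$ has mass $<2\alpha$ on each (making direct swaps admissible). Because the concentration set moves with $\sigma$, a truly direct construction would require adaptively choosing tubes for each $\sigma$ in the $\F$-ball, with no uniform control on the number of tubes or their radii; the paper's limit-varifold argument is what makes the two regimes and their parameters uniform in $\sigma$.

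As a minor remark, your worry about a $(1+\epsilon'_0)^{2N}$ accumulation across tubes is unfounded (and harmless): after the cone deformation in the $j$-th tube you return to $S_j$, so the $(1+\epsilon')^2$ overshoot is only ever applied once, to the mass inside the current tube, not compounded. The real issue is the unverified gap hypothesis above, not the parameter bookkeeping.
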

\begin{proof}
	Suppose the lemma is false, then there exists a sequence $\{\sigma_j\}_{j=1}^\infty\subset \mathcal{Z}^G_n(M,\partial M;\mathbb{Z}_2)$ so that 
	$${\rm spt}(\sigma_j-\tau)\subset W,\quad {\bf M}(\sigma_j)\leq L,\quad \mathcal{F}(\sigma_j-\sigma_0)\to 0, $$
	but none of the $\sigma_j$ can be connected to $\sigma_0$ by a finite sequence in $\mathcal{Z}^G_n(M,\partial M;\mathbb{Z}_2)$ satisfying (i)-(iii). 
	Denote by $S_j$ the canonical representative of $\sigma_j$. 
	Noting $\M(S_j)=\M(\sigma_j)\leq L$, we can suppose $|S_j|\to V\in\mathcal{V}^G_n(M)$ up to a subsequence, and hence 
	\begin{equation}\label{Eq:Appendix 1}
		\|S_0\|(A)\leq \|V\|(A),
	\end{equation}
	for any Borel $A\subset M$ by \cite[Lemma 3.7]{li2021min}. 
	
	Let $\alpha = \delta/5$. 
	The proof of this lemma is now divided into two cases as in \cite[Page 49]{li2021min} depending on whether there exists any orbit $G\cdot q$ in $W$ with $\|V\|(G\cdot q)$ lager than $\alpha$:
	\begin{itemize}
		\item {\bf Case 1}: $\|V\|(G\cdot q)\leq \alpha$ for all $q\in W$;
		\item {\bf Case 2}: $\{q\in W : \|V\|(G\cdot q)>\alpha\}\neq\emptyset$.
	\end{itemize}

	 For {\bf Case 1}, there are finite geodesic $G$-tubes $\{\tBcal^G_{r_i}(p_i)\}_{i=1}^m$ in $\widetilde{M}$ with $p_i\in\spt(\|V\|)\cap W$ so that 
	\begin{itemize}
		\item $\Clos(\tBcal_{r_i}^G(p_i)) \cap M \subset U $, and $\Clos(\tBcal_{r_i}^G(p_i)) \cap \Clos(\tBcal_{r_k}^G(p_k))\cap M=\emptyset$ for all $i\neq k\in\{1,\dots, m\}$; 
		\item $\|V\| (\tScal_{r_i}^G(p_i) \cap M)=0$;
		\item $\|V\| (\Clos(\tBcal_{r_i}^G(p_i)))  <2\al$;
		\item $\|V\| (W \setminus \cup_{i=1}^m \Clos(\tBcal_{r_i}^G(p_i)) ) <2\al$.
	\end{itemize}
	By (\ref{Eq:Appendix 1}), the weight measure of $S_0$ also satisfies all the above inequalities.
	
	Noting $\F(\si_j-\si_0)\to 0$, the $G$-invariant $\F$-isoperimetric Lemma \ref{Lem:F-isoperimetric} gives an integral $G$-current $Q_j\in\bI_{n+1}^G(M;\mZ_2)$ for every $j$ large enough satisfying:
	\begin{itemize}
		\item $\spt(S_j-S_0-\partial Q_j)\subset \partial M$;
		\item $\lim_{j\rightarrow\infty}\M(Q_j)=0$.
	\end{itemize}
	Note the distance function $d^G_p=\dist_{\tM}(G\cdot p,\cdot)$ to an orbit is a $G$-invariant function. 
	Thus, the slice of $Q_j$ by $d^G_p$ is also a $G$-invariant current. 
	Hence, we can find a sequence $r_i^j\searrow r_i$ as $j\to\infty$ for every $i\in\{1,\dots,m\}$ so that all the eight statements in the middle of \cite[Page 49]{li2021min} hold with $\tBcal_{r_i^j}^G(p_i),\tScal_{r_i^j}^G(p_i) $ in place of $\tBcal_{r_i^j}(p_i),\tScal_{r_i^j}(p_i)$ respectively. 
	Then we can construct a sequence of $G$-currents $R_i^j\in Z^G_n(M,\partial M;\mZ_2)$ by 
	\begin{eqnarray}\label{Eq: pre-interpolation}
		&&R_0^j  :=  S_j,\nonumber
		\\
		&&R_i^j   :=  S_j - \sum_{s=1}^i \left( \partial [Q_j\lc \Clos(\tBcal_{r_s^j}^G(p_s))]-(\partial Q_j) \lc [ \Clos(\tBcal_{r_s^j}^G(p_s)) \cap \partial M] \right),
		\\
		 &&R_{m+1}^j  :=  S_0. \nonumber
	\end{eqnarray}
	One can follow the process in \cite[Page 50]{li2021min} to show that the sequence $\tau_i^j=[R_i^j]\in\Z_n^G(M,\partial M;\mZ_2)$ satisfies all the requirements (i)-(iii), which is a contradiction.

	As for {\bf Case 2}, it is clear that $ \{q\in W~:~ \|V\|(G\cdot q)>\alpha\}$ can only contain a finite number of orbits since $\|V\|\leq L$. 
	First, let us assume that $\{q\in W~:~ \|V\|(G\cdot q)>\alpha\}=\{G\cdot q\}$.

	Denote $k=\dim(G\cdot q)$, and denote ${\rm Exp}_{G\cdot p}^\perp$, $r_{G\cdot p}$, $\B^G_r(p)$, $\A^G_{s,t}$ as in (\ref{Eq: notations}), Definition \ref{Def:annulus}. 
	Take $\epsilon\in (0,1)$ and $\epsilon'\in (0,1/3)$ such that 
	$$ 2\epsilon \|V\|(U)\leq (n-k)\frac{\alpha}{2} \quad \mbox{   and    }\quad 2(2\epsilon'+\epsilon'^2)\|V\|(U)\leq \frac{\alpha}{64}.$$ 
	Since $G\cdot q$ is non-isolated, let $K$ be given as in (\ref{Eq: stratum}), and $Z$ be the open $G$-neighborhood of $q$ as in Lemma \ref{L:pre-interpolation-1} corresponding to $\epsilon,\epsilon'$. 
	By \cite[2.1(14b)]{pitts2014existence}, we have 
	\begin{eqnarray*}
		\lim_{j\to \infty}\|S_j\|\llcorner {\rm Clos}(\A^G_{s,r}(p)) = \|V\|\llcorner {\rm Clos}(\A^G_{s,r}(p)), 
		\quad 
		\lim_{r\to 0}\|V\|({\rm Clos}(\B^{G}_r(q))\setminus G\cdot q) = 0,
	\end{eqnarray*}
	whenever $p\in Z\cap K$, $0<s<r$, and $\|V\|(\partial \A^G_{s,r}(p))=0$. 
	By a similar argument as in \cite[Page 119]{pitts2014existence}, one verifies that there exists a positive integer $J$ with the following properties:
	
	For each $j\geq J$, there exists $p_j\in Z\cap K $, $0<\frac{s_j}{2}<r_j<s_j$, such that 
	\begin{itemize}
		\item[(a)] $p_j\to q$, $s_j\to 0$, as $j\to\infty$;
		\item[(b)] $\B^{G}_{r_j/4}(q)\subset \B^{G}_{r_j/2}(p_j)\subset \B^{G}_{2s_j}(p_j)\subset Z$, where $\B^G_r(p)$ and $r_{G\cdot p}$ are defined in (\ref{Eq: notations});
		\item[(c)] $\|S_j\|(\partial_{rel} \B^{G}_s(p_j))=0,~\forall 0<s\leq r_j$;
		\item[(d)] $\langle S_j, r_{G\cdot p_j}, r_j\rangle $ is a rectifiable $G$-invariant current with $\mZ_2$-coefficients; 
		\item[(e)] $\lim_{j\to\infty} \|S_j\|({\rm Clos}(\A_{s_j/2,2s_j}^G(p_j)))=0$;
		\item[(f)] $16  \|S_j\|({\rm Clos}( \A_{s_j/2,2s_j}^G(p_j))) \geq 3r_j {\bf M}(\langle S_j, r_{G\cdot p_j},r_j\rangle)$;
		\item[(g)] $\epsilon \|S_j\|(U)\leq (n-k)\frac{\alpha}{2}$, and $(2\epsilon'+\epsilon'^2)\|S_j\|(U)\leq \frac{\alpha}{64}$;
		\item[(h)] $\frac{2r_j}{n-k}{\bf M}(\langle S_j, r_{G\cdot p_j},r_j\rangle) + \frac{\alpha}{2} \leq \|S_j\|({\rm Clos}(\B^{G}_{r_j}(p_j)))$;
		\item[(i)] $\lim_{j\to\infty} |S_j|\llcorner (M\setminus {\rm Clos}(\B^{G}_{r_j}(p_j))) = V\llcorner (M\setminus (G\cdot q))$.
	\end{itemize}
	We mention that (c) comes from Lemma \ref{L:zero-boundary}, (g) comes from the choice of $\epsilon$ as well as $\epsilon'$, and (h) comes from (e) and (f). 
	Define then 
	$$\widetilde{S}_j := S_j\llcorner(M\setminus {\rm Clos}(\B^{G}_{r_j}(p_j)) ) + ({\rm  Exp}^\perp_{G\cdot p_j})_\#(\delta_{\bf 0} \ttimes ({\rm  Exp}^\perp_{G\cdot p_j})^{-1}_\# \langle S_j, r_{G\cdot p_j},r_j\rangle).$$
	Noting $(2\epsilon'+\epsilon'^2)\|S_j\|(U)\leq \frac{\delta}{2}$ by (g), and 
	$$\partial_1( S_j\llcorner\Clos(\B^{G}_{r_j}(p_j)  ) ) = \langle S_j, r_{G\cdot p_j}, r_j\rangle,$$
	we can apply Lemma \ref{L:pre-interpolation-1} with $r=r_j$, $\delta=\frac{\alpha}{2}$, $p=p_j$, $\beta=\frac{\delta}{2}$, $T=S_j\llcorner \B^{G}_{r_j}(p_j)$ to obtain a finite sequence $\{R_i^j\}_{i=0}^{m_j}\subset Z_n^G(M,\partial M;\mZ_2)$ connecting $S_j$ and $\widetilde{S}_j$ so that the sequence $\{[R_i^j]\}_{i=0}^{m_j}$ satisfies (i)-(iii). 
	Moreover, we have 
	\begin{itemize}
		\item ${\bf M}(\widetilde{S}_j )\leq {\bf M}(S_j)-\frac{\alpha}{2}$ by the last inequality in Lemma \ref{L:pre-interpolation-1},
		\item $\M( ({\rm  Exp}^\perp_{G\cdot p_j})_\#(\delta_{\bf 0} \ttimes ({\rm  Exp}^\perp_{G\cdot p_j})^{-1}_\# \langle S_j, r_{G\cdot p_j}, r_j\rangle) )     \leq   C r_j {\bf M}(\langle S_j, r_{G\cdot p_j}, r_j\rangle) \to 0$, by Lemma \ref{Lem:mass estimate under exponential map}, (f), and (e),
		\item thus, $\lim_{j\to\infty}\widetilde{S}_j=\lim_{j\to\infty}S_j\in \sigma_0$ as currents,
		\item $\lim_{j\to\infty}|\widetilde{S}_j|=V\llcorner (M\setminus{G\cdot q})$.
	\end{itemize}
	Now, the procedure in {\bf Case 1} gives a finite sequence connecting $[\widetilde{S}_j]$ to $[S_0]=\sigma_0$ so that (i)-(iii) are valid for $j$ large enough, which is a contradiction. 
		
	As for the case that $\{q\in W : \|V\|(G\cdot q)>\alpha\}$ contains more than one orbit, we notice that ${\bf M}(R_{m_j}^j)\leq {\bf M}(S_j)-\frac{\alpha}{2}$ and $\lim_{j\to\infty} |R_{m_j}^j|=V\llcorner(M\setminus{G\cdot q})$. 
	Hence, one can carry out an induction argument to get a contradiction. 
\end{proof}

Combining Lemma \ref{L:pre-interpolation} with a covering argument as in \cite[Page 124]{pitts2014existence}, we have the following interpolation lemma:
\begin{lemma}\label{L:interpolation}
	Suppose $L>0$, $\delta>0$, $W\subset\subset U$ is an open $G$-set containing no isolated orbit, and $\tau\in \mathcal{Z}^G_n(M,\partial M;\mathbb{Z}_2)$. 
	Then there exists $\epsilon=\epsilon(L,\delta,W,\tau)>0$ such that if $\sigma_1,\sigma_2\in \mathcal{Z}^G_n(M,\partial M;\mathbb{Z}_2)$ satisfies 
	\begin{itemize}
		\item ${\rm spt}(\sigma_i-\tau)\subset W$, for $i=1,2$;
		\item ${\bf M}(\si_i)\leq L$, for $i=1,2$;
		\item $\mathcal{F}(\si_1-\si_2)\leq\epsilon$,
	\end{itemize}
	there exists a sequence $\si_1=\tau_0,\tau_1,\dots,\tau_m=\si_2\in \mathcal{Z}^G_n(M, \partial M;\mathbb{Z}_2)$ such that for each $j$: 
	\begin{itemize}
		\item[(i)] ${\rm spt}(\tau_j-\tau)\subset U$,
		\item[(ii)] ${\bf M}(\tau_j)\leq L+\delta$,
		\item[(iii)] ${\bf M}(\tau_j-\tau_{j-1})\leq\delta$.
	\end{itemize}
\end{lemma}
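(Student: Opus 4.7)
The plan is to deduce Lemma \ref{L:interpolation} from Lemma \ref{L:pre-interpolation} through a compactness and covering argument, following the template of \cite[Page 124]{pitts2014existence}. The key observation is that in Lemma \ref{L:pre-interpolation} the constant $\epsilon$ depends on a particular $\sigma_0$, so to upgrade to a uniform $\epsilon = \epsilon(L,\delta,W,\tau)$ one must quantify over an appropriate compact family of cycles.

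First I would introduce the set
$$X := \{\sigma \in \mathcal{Z}^G_n(M,\partial M;\mZ_2) : \spt(\sigma-\tau) \subset \Clos(W),\ \M(\sigma) \leq L\}.$$
Every element $\sigma \in X$ has support in the fixed compact set $K_0 := \Clos(W)\cup\spt(\tau) \subset M$, since $\spt(\sigma) \subset \spt(\sigma-\tau)\cup\spt(\tau)$. Choosing a compact Lipschitz neighborhood retract $K'$ with $K_0 \subset \interior_M(K')$, Lemma \ref{Lem:compactness for cycles} then shows that $X$ is sequentially compact in the $\F^{K'}$ topology; because all elements of $X$ share the common compact support $K_0$, this is equivalent to sequential compactness in the global $\F = \F^M$ topology on $X$.

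Next, for each $\eta \in X$ I would invoke Lemma \ref{L:pre-interpolation} (with $\sigma_0 = \eta$ and the given $\delta$) to produce a constant $\epsilon(\eta) > 0$ such that any $\sigma \in X$ with $\F(\sigma-\eta) \leq \epsilon(\eta)$ can be connected to $\eta$ by a finite sequence in $\mathcal{Z}^G_n(M,\partial M;\mZ_2)$ satisfying (i)--(iii). By $\F$-compactness of $X$, cover $X$ with finitely many open balls $\{B_\F(\eta_j, \epsilon(\eta_j)/2)\}_{j=1}^N$ and set
$$\epsilon := \min_{1 \leq j \leq N} \epsilon(\eta_j)/2 > 0.$$
Then for any $\sigma_1,\sigma_2 \in X$ with $\F(\sigma_1-\sigma_2) \leq \epsilon$, pick $j$ with $\sigma_1 \in B_\F(\eta_j, \epsilon(\eta_j)/2)$; the triangle inequality gives $\F(\sigma_2-\eta_j) \leq \epsilon(\eta_j)$. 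Lemma \ref{L:pre-interpolation} then yields sequences from $\sigma_1$ to $\eta_j$ and from $\sigma_2$ to $\eta_j$ both satisfying (i)--(iii), and concatenating the first with the reversal of the second delivers the desired sequence joining $\sigma_1$ to $\sigma_2$.

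The bookkeeping part that I expect to require the most care is verifying that the compact-support constraint is preserved under the relevant convergence (so that the $\F^{K'}$-compactness from Lemma \ref{Lem:compactness for cycles} actually yields compactness of $X$ inside $X$, rather than only within a larger cycle space), and confirming that the $\F^{K'}$ topology and the global $\F$ topology induce the same notion of Cauchy sequence on $X$ thanks to the uniform support bound $\spt(\sigma)\subset K_0$; both are standard but must be checked explicitly because the pre-interpolation lemma is stated in the global $\F$ metric. Once that is in place, the covering and concatenation steps are purely combinatorial and go through verbatim.
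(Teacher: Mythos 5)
Your proposal follows the same route the paper gestures at — the one-line proof in the text says precisely ``Combining Lemma~\ref{L:pre-interpolation} with a covering argument as in \cite[Page 124]{pitts2014existence}'' — and your compactness-plus-Lebesgue-covering argument with concatenation through a common center $\eta_j$ is the standard execution of that. The mass bound, uniform support in the compact set $\Clos(W)\cup\spt(\tau)$, and Lemma~\ref{Lem:compactness for cycles} do give the required sequential $\F$-compactness, and the triangle-inequality/concatenation bookkeeping is sound.

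There is one genuine gap you have not addressed: Lemma~\ref{L:pre-interpolation} requires $\spt(\sigma_0-\tau)\subset W$ with $W$ \emph{open}, but the centers $\eta$ of your cover live in the compact set
$$X = \{\sigma:\ \spt(\sigma-\tau)\subset\Clos(W),\ \M(\sigma)\le L\},$$
which is the closure in $\F$ of the set you actually care about; a limit $\eta$ may have $\spt(\eta-\tau)\cap(\Clos(W)\setminus W)\ne\emptyset$, and then Lemma~\ref{L:pre-interpolation} does not literally apply at $\eta$. You flag ``support preservation under convergence'' as a bookkeeping concern, but the real issue is different: closedness of $X$ is exactly what you get, and it is what causes the mismatch with the open-set hypothesis of the pre-interpolation lemma. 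The case analysis in the proof of Lemma~\ref{L:pre-interpolation} (whether any orbit $G\cdot q\subset W$ has $\|V\|(G\cdot q)>\alpha$) and its use of the non-isolated assumption take place over $W$, so applying it at a limit whose support-difference touches $\partial W$ requires either (a) a slightly enlarged relative open $G$-set $W'$ with $\Clos(W)\subset W'\subset\subset U$ still containing no isolated orbit — which exists precisely when $\Clos(W)$ itself contains no isolated orbit, since the set of isolated orbits is a finite union of compact orbits and hence closed — or (b) observing that in every application in the paper $W$ is a $G$-annulus $\A^G_{s,t}(p)$, whose closure is automatically free of isolated orbits by the remark following Definition~\ref{Def:annulus}, so that (a) is available. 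Without one of these observations your argument, as written, is not a complete deduction from the stated hypotheses.
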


The interpolation lemma \ref{L:interpolation} implies that if two equivalent classes of $G$-invariant relative $n$-cycles are close enough in the flat metric and only different in a relative open $G$-set containing no isolated orbit, then there exists a finite interpolation sequence of equivalent classes of $G$-invariant relative $n$-cycles with small ${\bf M}$-fineness, small increasing of mass, and small change of supports. 

Finally, combining Lemma \ref{L:interpolation} with the arguments in \cite[Theorem 3.10]{pitts2014existence}(\cite[Appendix B]{li2021min}), one can easily verify that $(c)\Rightarrow (d)$ in Theorem \ref{Thm: equivalence-a.m.v}.

\bibliographystyle{abbrv}

\providecommand{\bysame}{\leavevmode\hbox to3em{\hrulefill}\thinspace}
\providecommand{\MR}{\relax\ifhmode\unskip\space\fi MR }
\providecommand{\MRhref}[2]{%
  \href{http://www.ams.org/mathscinet-getitem?mr=#1}{#2}}
\providecommand{\href}[2]{#2}

\bibliography{reference.bib}   

\end{document}